\setlist[enumerate,1]{label=(\alph*)}
\setlist[enumerate,2]{label=(\roman*)}
\setlist[enumerate]{font=\normalfont,leftmargin=2.8em}
\theoremstyle{plain}
\newtheorem{theorem}{Theorem}[section]
\newtheorem{proposition}[theorem]{Proposition}
\newtheorem{lemma}[theorem]{Lemma}
\newtheorem{corollary}[theorem]{Corollary}
\theoremstyle{definition}
\newtheorem{definition}[theorem]{Definition}
\newtheorem*{example*}{Example}
\theoremstyle{remark}
\newtheorem{remark}[theorem]{Remark}
\newtheorem{question}[theorem]{Question}
\newtheorem*{question*}{Question}
\newtheorem{claim}[theorem]{Claim}
\newcounter{char}
\ifnum\value{char}<27
\edef\csname\Alph{char}\Alph{char}\endcsname{\noexpand\mathbb{\Alph{char}}}
\edef\csname\Alph{char}bb\endcsname{\noexpand\mathbb{\Alph{char}}}
\edef\csname\Alph{char}ca\endcsname{\noexpand\mathcal{\Alph{char}}}
\edef\csname\Alph{char}sc\endcsname{\noexpand\mathscr{\Alph{char}}}
\edef\csname\Alph{char}fr\endcsname{\noexpand\mathfrak{\Alph{char}}}
\edef\csname\alph{char}fr\endcsname{\noexpand\mathfrak{\alph{char}}}
\edef\csname\Alph{char}bf\endcsname{\noexpand\mathbf{\Alph{char}}}
\edef\csname\alph{char}bf\endcsname{\noexpand\mathbf{\alph{char}}}
\edef\csname\Alph{char}rm\endcsname{\noexpand\mathrm{\Alph{char}}}
\edef\csname\alph{char}rm\endcsname{\noexpand\mathrm{\alph{char}}}
\ifnum\value{char}<27
\edef\csname b\Alph{char}\endcsname{\noexpand\mathbb{\Alph{char}}}
\edef\csname c\Alph{char}\endcsname{\noexpand\mathcal{\Alph{char}}}
\DeclareMathOperator{\Bl}{Bl}
\DeclareMathOperator{\Ho}{H}
\DeclareMathOperator{\im}{im}
\DeclareMathOperator{\Kahler}{\Omega}
\DeclareMathOperator{\NS}{NS}
\DeclareMathOperator{\Mori}{NE}
\newcommand*{\Category}[1]{\operatorname{\mathbf{#1}}}
\newcommand*{\Db}{\Category{D}^{\mathrm{b}}}
\newcommand*{\tf}{\mathrm{tf}}
\newcommand*{\Lder}{\mathbf{L}}
\newcommand*{\Rder}{\mathbf{R}}
\newcommand*{\coker}{\mathrm{coker}}
\newcommand*{\sHom}{\mathcal{H}om\,}
\newcommand*{\sIsom}{\mathcal{I}som\,}
\newcommand*{\sExt}{\mathcal{E}xt\,}
\newcommand*{\isoarrow}[1]{\arrow[#1,"\rotatebox{90}{\(\sim\)}"
]}
\renewcommand*{\PP}{\mathbf{P}}
\renewcommand*{\AA}{\mathbf{A}}
\newcommand*{\Grass}{\mathbf{Gr}}
\newcommand*{\cf}{cf.\@\xspace}
\newcommand*{\loccit}{loc.~cit\xperiod}
\title[Higher-dimensional O'Grady's resolutions]{Higher-dimensional O'Grady's resolutions \\ and their exceptional locus}
\author{Luigi Martinelli}
\begin{document}

\hypersetup{
  pdfauthor={L. Martinelli},
  pdftitle={Higher-dimensional O'Grady's resolutions and their exceptional locus}
}

\address{Fakultät für Mathematik, Universität Bielefeld, D-33501, Bielefeld, Germany}
\email{lmartinelli@math.uni-bielefeld.de}

\subjclass[2020]{14D20, 14J60, 14J28}

\keywords{K3 surfaces, moduli spaces of sheaves, irreducible symplectic varieties, resolution of singularities}


\begin{abstract}
For any $n \ge 3$, we consider the moduli space $M_n$ of semistable sheaves with Mukai vector $2(1,0,1-n)$ on a K3 surface. The moduli space $M_n$ is singular and lacks a crepant resolution, but might still admit a \emph{categorical} crepant one. As a preliminary to exploring this possibility, we study the explicit resolution of singularities of $M_n$ constructed by O'Grady, and provide a global description of its exceptional locus.
\end{abstract}

\maketitle

\section{Introduction}
\subsection*{Motivation}
Hyper-Kähler (HK) manifolds are the higher-dimensional analogues of K3 surfaces. While their geometry is very rich, only few deformation classes of HK manifolds are known, and it seems tremendously hard to provide new ones. The theory of moduli spaces of sheaves on K3 surfaces has proved to be a source of important examples.

\smallskip

Let us fix once and for all a smooth projective complex K3 surface $X$. Pick an element $\mathbf{v}$ in $\Ho_{\mathrm{alg}}^*(X,\mathbb{Z}) \coloneqq \Ho^0(X, \mathbb{Z})\oplus \NS(X) \oplus \Ho^4(X, \mathbb{Z})$, $H$ a $\mathbf{v}$-generic polarisation, and consider the moduli space $M(\mathbf{v})$ of $H$-semistable sheaves on $X$ with Mukai vector $\mathbf{v}$. 

Write $\mathbf{v}=m \mathbf{v}_0$, with $m \in \mathbb{N}$ a multiplicity and $\mathbf{v}_0=(r,c,a)$ primitive in $\Ho_{\mathrm{alg}}^*(X,\mathbb{Z})$. It is convenient to study the properties of  $M(\mathbf{v})$ according to $m$ and $\mathbf{v}_0^2 \coloneqq c^2-2ra$. To avoid technicalities and simplify the exposition, we assume that $r > 0$ and $\mathbf{v}_0^2 \ge 2$. Different situations occur.

\begin{enumerate}
\item[(1)] If $m=1$, then $M(\mathbf{v})$ is smooth; it is a HK manifold, deformation equivalent to the Hilbert scheme of $1+\frac{1}{2}\mathbf{v}_0^2$ points on $X$.

\item[(2)] As soon as $m \ge 2$, $M(\mathbf{v})$ is singular of dimension $2+m^2\mathbf{v}_0^2$. As proved in \cite[Proposition 2.0.3]{O'GdmsK3}, \cite[Th\'eorème 1.1]{LSsO'G} and \cite[Theorem 6.2]{KLSssms}, the singularities of $M(\mathbf{v})$ are symplectic in the sense of Beauville \cite{Bss}; in particular, they are rational Gorenstein \cite[Proposition 1.3]{Bss}. One may wonder whether a crepant resolution $\widetilde{M}(\mathbf{v}) \to M(\mathbf{v})$ exists.

\begin{enumerate}
\item[(2.1)] If $m=2$ and $\mathbf{v}_0^2=2$, $M(\mathbf{v})$ does admit a crepant resolution, and $\widetilde{M}(\mathbf{v})$ yields the OG10-deformation class of HK manifolds (see \cite{O'GdmsK3}, \cite{LSsO'G}).

\item[(2.2)] In any other case, such a resolution does not exist \cite{KLSssms}: no further deformation class of HK manifolds can be obtained by resolving the singularities of $M(\mathbf{v})$.
\end{enumerate}
\end{enumerate}
In case (2.2), however, one may still ask the following 
\begin{question}[{\cf \cite[Question 3.13]{AcHKc}\label{existence of a categorical crepant resolution}}]
Does $M(\mathbf{v})$ admit a \emph{categorical} crepant resolution?
\end{question}
To provide some insight into this question, we need to spend a few words on non-commutative algebraic geometry. Inspired by the successful study of a smooth variety $Y$ through its derived category $\Db(Y)$ of coherent sheaves, non-commutative algebraic geometry suggests that any triangulated category with similar properties should be regarded as (the derived category of) a \emph{non-commutative variety}.

This viewpoint has already been fruitful. For instance, the study of K3 categories - non-commutative counterparts of K3 surfaces - provided new proofs of some classical results on cubic fourfolds. Other applications include the theory of singularities: with respect to classical resolutions, their categorical analogues, defined by Kuznetsov in \cite{Kldcr}, have better minimality properties; also crepant resolutions, suitably reinterpreted, become less rare. Kuznetsov's idea is that, for a resolution $\pi \colon \widetilde{Y} \to Y$ of rational singularities, $\Db(\widetilde{Y})$ is in a sense too big, and should be replaced with a non-commutative variety $\widetilde{\mathscr{D}}\subset \Db(\widetilde{Y})$ that still resolves $Y$. The main theorem of \loccit allows to construct such a $\widetilde{\mathscr{D}}$ when the exceptional locus of $\pi$ is an irreducible divisor $E$, and $\Db(E)$ admits a semiorthogonal decomposition of a special kind. Note that an explicit resolution of $Y$ and a thorough understanding of the geometry of $E$ are both required.

\smallskip

Should a categorical crepant resolution $\widetilde{\mathscr{D}}$ of $M(\mathbf{v})$ in case (2.2) exist, the construction of OG10 would no longer appear sporadic, but part of a more general, although non-commutative, phenomenon. 
Should $\widetilde{\mathscr{D}}$ have, in addition, trivial Serre functor, it would definitely be an example, and motivate a new definition, of \emph{hyper-Kähler category}, the higher-dimensional analogue of a K3 category (the previous tentative definition proposed by Abuaf in \cite{AcHKc} is indeed difficult to check in practice).

\subsection*{O'Grady's examples}
The easiest testing ground for \cref{existence of a categorical crepant resolution} is probably the series of moduli spaces that were explicitly desingularised by O'Grady in \cite{O'GdmsK3I}. For any of them, he even exhibited a resolution whose exceptional locus is an irreducible divisor. The purpose of this paper is to get a good control of the global geometry of this divisor: in the quest of a categorical crepant resolution, this is the necessary preliminary step.

\smallskip

Here is the setting. Fix $n \ge 3$, and consider the primitive vector $\mathbf{v}_0=(1,0,1-n) \in \Ho_{\mathrm{alg}}^*(X,\mathbb{Z})$. Note in passing that $\mathbf{v}_0$ is the Mukai vector of the ideal sheaf $I_Z$ of a length-$n$ closed subscheme $Z \subset X$, so that $M(\mathbf{v}_0) \simeq X^{[n]}$. We focus our attention on the singular moduli space $M_n \coloneqq M(2 \mathbf{v}_0)$. Its singular locus is described as the symmetric square $\mathrm{S}^2 X^{[n]}$, and parametrises polystable sheaves of the form $I_{Z_1} \oplus I_{Z_2}$, for $[Z_1], [Z_2] \in X^{[n]}$.

\begin{wrapfigure}[6]{r}{0cm}
\begin{tikzcd}[scale cd=1.0, ampersand replacement=\&]
\widehat{M}_n \ar[dd,"\widehat{\pi}"'] \ar[rd] \& \\
\& \widetilde{M}_n \ar[ld,"\widetilde{\pi}"] \\
M_n
\end{tikzcd}
\end{wrapfigure}
O'Grady's procedure for desingularising $M_n$ is rather complicated. 

First, performing three blow-ups, he obtains a resolution $\widehat{\pi} \colon \widehat{M}_n \to M_n$, whose exceptional locus is a simple normal crossing divisor with irreducible components $\widehat{\mathrm{\Omega}}$, $\widehat{\mathrm{\Sigma}}$ and $\widehat{\mathrm{\Delta}}$. 

Second, contracting $\widehat{\mathrm{\Delta}}$ and then the image of $\widehat{\mathrm{\Omega}}$, he gets a smooth, projective variety $\widetilde{M}_n$, together with a resolution morphism $\widetilde{\pi} \colon \widetilde{M}_n \to M_n$. The exceptional locus of $\widetilde{\pi}$ is the image $\widetilde{\mathrm{\Sigma}}$ of $\widehat{\mathrm{\Sigma}}$ under the two contractions.

\subsection*{Results}
Our main contribution is an explicit global description of $\widehat{\mathrm{\Sigma}}$, which leads to a better understanding of the geometry of $\widetilde{\mathrm{\Sigma}}$. Before stating the results, we need to introduce some notation.

\smallskip

The singular locus $\mathrm{S}^2 X^{[n]}$ of $M_n$ is realised as the quotient of $X^{[n]} \times X^{[n]}$ by the involution $\iota$ interchanging the two factors. Let $\Xca^{[n]}$ be the blow-up of $X^{[n]} \times X^{[n]}$ along the diagonal $X^{[n]}$. The involution $\iota$ lifts to an involution $\widetilde{\iota}$ of $\Xca^{[n]}$. 
On $\Xca^{[n]}$,  Markman \cite{MBBccc} constructed a vector bundle $V$ of rank $2n-2$, whose pullback by $\widetilde{\iota}$ is isomorphic to $V^\vee$. The incidence correspondence $\mathbb{I} \subset \PP(V) \times _{\Xca^{[n]}} \PP(V^\vee)$ is preserved by the induced action of $\bm{\mu}_2$, the cyclic group of order two. Let $\widehat{\mathbb{I}}$ be the blow-up of $\mathbb{I}$ along the fixed locus $\mathbb{I}^{\bm{\mu}_2}$; the $\bm{\mu}_2$-action on $\mathbb{I}$ lifts to $\widehat{\mathbb{I}}$.

\begin{theorem}\label{thm:global description Sigma hat}
The irreducible divisor $\widehat{\mathrm{\Sigma}} \subset \widehat{M}_n$ is isomorphic to $\widehat{\mathbb{I}}/\bm{\mu}_2$. 
\end{theorem}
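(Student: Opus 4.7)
My plan is to analyse $\widehat{\pi}$ étale-locally along the singular locus of $M_n$ via a Kuranishi / Luna slice description of $M_n$ at polystable sheaves, identify the exceptional divisor of the first of the three blow-ups with an incarnation of $\mathbb{I}/\bm{\mu}_2$, and then globalise via Markman's bundle $V$. First, fix a polystable sheaf $F = I_{Z_1} \oplus I_{Z_2}$ with $[Z_1] \ne [Z_2]$. By the standard Kuranishi description at polystable points (as in O'Grady and Arbarello-Saccà), an étale neighbourhood of $[F]$ in $M_n$ is the product of a smooth étale chart of $\mathrm{S}^2 X^{[n]}$ near $\{[Z_1],[Z_2]\}$ and a conical singular factor $\overline{\mathcal N}$, itself the affine symplectic reduction $\mu^{-1}(0)/\!/\mathbb{G}_m$ for the $\mathbb{G}_m$-action on $V \oplus V^\vee$ with weights $(1,-1)$, where $V = \mathrm{Ext}^1(I_{Z_1}, I_{Z_2})$ and $\mu(v,\phi) = \phi(v)$; concretely, $\overline{\mathcal N}$ is the affine cone of trace-zero rank-$\le 1$ tensors in $V \otimes V^\vee$. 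Blowing up its vertex then produces an exceptional divisor isomorphic to $\{([v],[\phi]) \in \PP(V) \times \PP(V^\vee) : \phi(v) = 0\}$, i.e. the local incarnation of $\mathbb{I}$.

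To globalise, I would invoke Markman's bundle $V$ on $\Xca^{[n]}$ as a universal incarnation of these local $\mathrm{Ext}^1$'s over $(X^{[n]})^2$ minus the diagonal, and use $\widetilde{\iota}^* V \simeq V^\vee$ to match the $\bm{\mu}_2$-symmetry that realises $\mathrm{S}^2 X^{[n]}$ as the quotient of $(X^{[n]})^2$. Gluing the étale-local identifications over the smooth locus $\mathrm{S}^2 X^{[n]} \setminus X^{[n]}$ then yields an isomorphism between the first exceptional divisor restricted above that locus and $\mathbb{I}/\bm{\mu}_2$ restricted above the complement of the $\widetilde{\iota}$-fixed locus $E \subset \Xca^{[n]}$. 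Over the diagonal $X^{[n]} \subset \mathrm{S}^2 X^{[n]}$ the polystable sheaf is $I_Z^{\oplus 2}$ with $\mathrm{Aut} = \mathrm{GL}_2$, the Luna slice becomes a non-abelian symplectic reduction, and the first blow-up alone leaves singularities. Here the second blow-up of $\widehat{\pi}$ is essential: one verifies that its restriction to the strict transform of the first exceptional divisor is exactly the blow-up of $\mathbb{I}/\bm{\mu}_2$ along the image of $\mathbb{I}^{\bm{\mu}_2}$, and that the third blow-up leaves $\widehat{\mathrm{\Sigma}}$ untouched. Compatibility with the $\bm{\mu}_2$-equivariant blow-up $\widehat{\mathbb{I}} \to \mathbb{I}$ along its $\bm{\mu}_2$-fixed locus then yields $\widehat{\mathrm{\Sigma}} \simeq \widehat{\mathbb{I}}/\bm{\mu}_2$.

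The main technical hurdle is the analysis over the diagonal: one must match the non-abelian $\mathrm{GL}_2$-symplectic reduction with the description of $\mathbb{I}$ restricted to $E$ (on which $\widetilde{\iota}$ acts trivially, so that $V|_E$ acquires a $\widetilde{\iota}$-invariant self-duality, symmetric or skew, which governs $\mathbb{I}^{\bm{\mu}_2}$), and verify that the downstairs sequence ``blow up the singular locus, then blow up the proper transform of the diagonal'' corresponds, through the $\bm{\mu}_2$-cover, to the upstairs sequence ``blow up $\mathbb{I}^{\bm{\mu}_2}$ in $\mathbb{I}$, then quotient by $\bm{\mu}_2$''. In addition, one must confirm that the third blow-up of $\widehat{\pi}$, whose exceptional divisor is $\widehat{\mathrm{\Delta}}$, does not further modify $\widehat{\mathrm{\Sigma}}$.
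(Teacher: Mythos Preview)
Your proposal misreads the structure of O'Grady's three blow-ups, and this is not a cosmetic indexing issue. In O'Grady's procedure the \emph{first} blow-up is along $\mathrm{\Omega}_Q$, the locus of sheaves $I_Z^{\oplus 2}$; over a point with $Z_1 \neq Z_2$ this first blow-up is an isomorphism, so there is no exceptional divisor to identify with $\mathbb{I}/\bm{\mu}_2$ there. The divisor $\widehat{\mathrm{\Sigma}}$ arises from the \emph{second} blow-up (along the strict transform $\mathrm{\Sigma}_R$), and its GIT quotient is $\mathbb{I}/\bm{\mu}_2$; then the \emph{third} blow-up, along $\mathrm{\Delta}_S$, does intersect $\mathrm{\Sigma}_S$ nontrivially --- in $(\mathrm{\Delta}_S \cap \mathrm{\Sigma}_S)^s \sslash \mathrm{PGL}(N) \simeq \PP(\Lca^{\perp_\omega}/\Lca) = \mathbb{I}^{\bm{\mu}_2}$ --- and it is precisely this third blow-up that turns $\mathbb{I}/\bm{\mu}_2$ into $\widehat{\mathbb{I}}/\bm{\mu}_2$. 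Your claim that ``the third blow-up leaves $\widehat{\mathrm{\Sigma}}$ untouched'' is therefore false, and the mechanism you propose for producing $\widehat{\mathbb{I}}$ from $\mathbb{I}$ (via the second blow-up) is attached to the wrong step.

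Even after reindexing, the proposed globalisation has a genuine gap. The Luna--Kuranishi isomorphisms $(\Vca,[q]) \simeq (\mathrm{\Upsilon}_F^{-1}(0),o)$ are not canonical: they depend on a choice of semiuniversal deformation, unique only up to non-canonical isomorphism. Patching such local identifications of the normal cone into a global isomorphism with $\mathbb{I}/\bm{\mu}_2$ requires controlling a cocycle that your outline does not address. The paper avoids this entirely by working globally at the Quot-scheme level: it uses the identification $\Kahler^1_{Q^{ss}} \simeq \sExt^2_{Q^{ss}}(\Fca,\Kca)$ to compute the conormal sheaf $\Nca^\vee_{\mathrm{\Sigma}_R^{ss}/R^{ss}}$ directly, obtaining $h^*\Nca^\vee_{\mathrm{\Sigma}_R^{ss}/R^{ss}} \simeq u^*[(V' \oplus V)(E)]$ as an honest isomorphism of sheaves on the principal $\mathrm{PO}(2)$-bundle $U \to \mathrm{\Sigma}_R^{ss}$, and then identifies the normal cone inside this normal bundle via the global Yoneda pairing. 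Descent along the principal bundles (the elementary \cref{prop:blow-up and principal G bundles}) then gives $\mathrm{\Sigma}_S^s \sslash \mathrm{PGL}(N) \simeq \mathbb{I}/\bm{\mu}_2$ and, after the third blow-up, $\widehat{\mathrm{\Sigma}} \simeq \widehat{\mathbb{I}}/\bm{\mu}_2$, with no gluing required.
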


As a by-product, we obtain also a global description of $\widehat{\mathrm{\Delta}}$. This is achieved through the study of two $\bm{\mu}_2$-invariant divisors $\widehat{\mathbb{J}}$ and $\widehat{\mathbb{E}}$ on $\widehat{\mathbb{I}}$, whose quotient by $\bm{\mu}_2$ gives $\widehat{\mathrm{\Sigma}} \cap \widehat{\mathrm{\Omega}}$ and $\widehat{\mathrm{\Sigma}} \cap \widehat{\mathrm{\Delta}}$ respectively. For the precise statement, we need further notation. The tangent bundle $\Tca$ of $X^{[n]}$ comes equipped with a symplectic form $\omega$. Let $\alpha \colon \Grass^\omega(2, \Tca) \to X^{[n]}$ be the relative symplectic Grassmannian, $\Aca \subset \alpha^*\Tca$ its tautological rank-2 bundle, and $\Aca^{\perp_\omega}$ the orthogonal complement of $\Aca$ with respect to $\alpha^*\omega$. Set also $\Oca_\alpha(1)\coloneqq \wedge^2 \Aca^\vee$.
\begin{corollary}\label{cor:global description Delta hat}
The irreducible divisor $\widehat{\mathrm{\Delta}} \subset \widehat{M}_n$ is isomorphic to $\PP(\mathrm{S}^2 \Aca) \times_{\Grass^\omega(2, \Tca)} \PP(\Gca)$, where $\Gca$ is an extension of $\Oca$ by $(\Aca^{\perp_\omega}/\Aca) \otimes \Oca_\alpha(1)$ on $\Grass^\omega(2, \Tca)$. 
\end{corollary}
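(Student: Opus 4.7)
The plan is to realise $\widehat{\mathrm{\Delta}}$ as a projective bundle over the centre of O'Grady's third (and last) blow-up, and then to identify that centre with $\PP(S^2 \Aca)$ and its normal bundle with the pullback of $\Gca$. This parallels --- and re-uses --- the local Kuranishi and Luna-slice analysis that drives the proof of \cref{thm:global description Sigma hat}; in particular, the description of $\widehat{\mathbb{E}}/\bm{\mu}_2$ as $\widehat{\mathrm{\Sigma}} \cap \widehat{\mathrm{\Delta}}$ can serve as a consistency check throughout.

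First I would pin down the centre $C \subset M_n^{(2)}$ of the third blow-up, so that by the standard behaviour of blow-ups along smooth centres one has $\widehat{\mathrm{\Delta}} = \PP(N_{C/M_n^{(2)}})$. The local structure of $M_n^{(2)}$ over $[Z] \in X^{[n]}$ is governed, through Luna's étale slice theorem, by the Kuranishi model $\mu^{-1}(0)/\!/PGL_2$ for the polystable sheaf $I_Z^{\oplus 2}$, where
\[
\mu \colon T_{[Z]}X^{[n]} \otimes \mathfrak{sl}_2 \longrightarrow \mathfrak{sl}_2
\]
is the moment map built from the symplectic form $\omega$ on $T_{[Z]}X^{[n]}$ and the Lie bracket on $\mathfrak{sl}_2$. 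Tracking how the first two blow-ups act on this model, the fibre of $C$ over $[Z]$ should parametrise pairs $(A,[s])$ with $A \subset T_{[Z]}X^{[n]}$ an isotropic $2$-plane and $[s] \in \PP(S^2 A)$ a non-zero symmetric tensor up to scalars; globalising, this identifies $C$ with the projective bundle $\alpha_C \colon \PP(S^2\Aca) \to \Grass^\omega(2, \Tca)$.

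Second, I would compute $N_{C/M_n^{(2)}}$. The filtration $A \subset A^{\perp_\omega} \subset T_{[Z]}X^{[n]}$ induced by the isotropic $2$-plane yields a natural filtration on the normal bundle; the Kuranishi analysis should show that $N_{C/M_n^{(2)}}$ is the pullback $\alpha_C^*\Gca$ of a rank-$(2n-3)$ bundle $\Gca$ on $\Grass^\omega(2, \Tca)$. The symplectic reduction $\Aca^{\perp_\omega}/\Aca$ contributes the main piece, twisted by $\Oca_\alpha(1) = \wedge^2\Aca^\vee$ to record the $GL(A)$-weight carried by these directions, while the trivial quotient $\Oca$ registers the remaining transverse direction; the non-triviality of the extension should reflect the absence of a globally equivariant splitting over $\Grass^\omega(2,\Tca)$. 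Combining the two steps yields
\[
\widehat{\mathrm{\Delta}} = \PP(\alpha_C^*\Gca) = \PP(S^2\Aca) \times_{\Grass^\omega(2,\Tca)} \PP(\Gca),
\]
which is the desired description. As a sanity check one can recompute dimensions: $\dim \Grass^\omega(2,\Tca) = 6n-5$, and a $\PP^2 \times \PP^{2n-4}$-bundle over it has dimension $8n-7 = \dim M_n - 1$, matching the fact that $\widehat{\mathrm{\Delta}}$ is a divisor.

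The hard part is the second step: verifying that the normal bundle of $C$ matches globally with $\alpha_C^*\Gca$, with exactly this extension class. The local slice models are transparent, but patching them across $\Grass^\omega(2, \Tca)$ demands careful bookkeeping of equivariant data --- precisely what upgrades the bare rank count into the specified extension of $\Oca$ by $(\Aca^{\perp_\omega}/\Aca) \otimes \Oca_\alpha(1)$. As an alternative plausibility check, one can compare the restriction of this description to $\widehat{\mathrm{\Sigma}} \cap \widehat{\mathrm{\Delta}}$ with the explicit model $\widehat{\mathbb{E}}/\bm{\mu}_2$ provided by \cref{thm:global description Sigma hat}.
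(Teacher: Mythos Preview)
Your plan has a genuine structural gap and also diverges from the paper's route in an instructive way.

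\textbf{The gap.} Your first step appeals to ``standard behaviour of blow-ups along smooth centres'' to write $\widehat{\mathrm{\Delta}} = \PP(N_{C/M_n^{(2)}})$. But the intermediate moduli space $S^s \sslash \mathrm{PGL}(N)$ is \emph{not} smooth along $C$: by O'Grady's analysis it carries finite quotient singularities precisely there, and this is the whole reason the third blow-up is needed. Moreover, by Kirwan's result (\cref{prop:Kirwan blow-up}) the descended blow-up centre on the moduli side need not even be the reduced scheme $C$. So the exceptional divisor is a priori only the projectivised normal \emph{cone}, and the identification with a projective bundle requires work. O'Grady does supply this (\cref{prop:Delta hat}), but via a different argument: he shows the fibres of $\widehat{\mathrm{\Delta}} \to \PP(\mathrm{S}^2\Aca)$ are projective spaces by direct local computation, and upgrades the fibration to a projective bundle using that $\widehat{\mathrm{\Delta}} \cap \widehat{\mathrm{\Omega}}$ cuts each fibre in a hyperplane (\cref{rem:Delta hat and Omega hat}). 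You cannot shortcut this.

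\textbf{How the paper proceeds instead.} The paper does \emph{not} compute the normal bundle of $C$ directly. It takes O'Grady's \cref{prop:Delta hat} as input: this already gives $\widehat{\mathrm{\Delta}} \simeq \PP(\Sca)$ for some bundle $\Sca$ on $\PP(\mathrm{S}^2\Aca)$ sitting in an extension $0 \to t^*(\Aca^{\perp_\omega}/\Aca) \otimes \Mca_1 \to \Sca \to \Oca \to 0$ with an \emph{unknown} line bundle $\Mca_1$. The new content is to pin down $\Mca_1$ and the extension class. For this, the paper defines $\Gca$ on the $\widehat{\mathrm{\Sigma}}$ side: \cref{lem:normal bundles in I and J} computes the normal bundle to the $\bm{\mu}_2$-fixed locus $\PP(\Lca^{\perp_\omega}/\Lca) \subset \mathbb{I}$ and shows it has the form $\vartheta^*\Gca \otimes \Oca_\vartheta(-1)$ under the isomorphism $\mathrm{\Lambda} \colon \PP(\Aca) \xrightarrow{\sim} \PP(\Lca^{\perp_\omega}/\Lca)$ of \cref{proposition:DeltaS-SigmaS}. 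Then \cref{prop:Delta global} matches $\Sca$ with $t^*\Gca$ by restricting along the rank-one locus $i \colon \PP(\Aca) \hookrightarrow \PP(\mathrm{S}^2\Aca)$: since $i^*\Sca \simeq \vartheta^*\Gca$ by \cref{cor:Delta-Sigma; Delta-Sigma-Omega}, injectivity of $i^*$ on Picard groups forces $\Mca_1 \simeq t^*\Oca_\alpha(1)$, and full faithfulness of $\vartheta^*$ on the relevant $\mathrm{Ext}^1$ group then forces $\Sca \simeq t^*\Gca$.

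In short: what you call a ``consistency check'' at the end --- comparing with $\widehat{\mathrm{\Sigma}} \cap \widehat{\mathrm{\Delta}} \simeq \widehat{\mathbb{E}}/\bm{\mu}_2$ --- is in fact the \emph{engine} of the paper's proof, not an afterthought. The corollary genuinely is a corollary of \cref{thm:global description Sigma hat}: $\Gca$ is defined there, and the $\widehat{\mathrm{\Delta}}$ statement is deduced by restriction plus Picard-group rigidity, avoiding entirely the global patching of slice models that you flag as the hard part.
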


Together with O'Grady's global description of $\widehat{\mathrm{\Omega}}$, these two results complete the analysis of the exceptional locus of $\widehat{\pi}$.

Lastly, we explore the relation between $\widehat{\mathbb{I}}$ and $\widetilde{\mathrm{\Sigma}}$. To this end, we make use of Mori theory, mimicking the computations that led O'Grady from $\widehat{M}_n$ to $\widetilde{M}_n$. Contracting first $\widehat{\mathbb{E}}$, then the image of $\widehat{\mathbb{J}}$ we get a singular projective variety $\widetilde{\mathbb{I}}$. We make explicit the two blow-ups that allow to recover $\widehat{\mathbb{I}}$ from $\widetilde{\mathbb{I}}$, and show that the $\bm{\mu}_2$-action on the former descends to the latter.

\begin{theorem}\label{thm:global description of Sigma tilde}
The exceptional locus $\widetilde{\mathrm{\Sigma}}$ of the resolution $\widetilde{\pi} \colon \widetilde{M}_n \to M_n$ is isomorphic to $\widetilde{\mathbb{I}}/\bm{\mu}_2$.
\end{theorem}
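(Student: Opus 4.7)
The plan is to transport the two-step contraction $\widehat{M}_n \to M_n' \to \widetilde{M}_n$ through the isomorphism $\widehat{\mathrm{\Sigma}} \cong \widehat{\mathbb{I}}/\bm{\mu}_2$ supplied by \cref{thm:global description Sigma hat}, and to show that, under this identification, it lifts along the quotient map $\widehat{\mathbb{I}} \to \widehat{\mathbb{I}}/\bm{\mu}_2$ to the two-step contraction $\widehat{\mathbb{I}} \to \mathbb{I}' \to \widetilde{\mathbb{I}}$ introduced just above the statement. Once this compatibility is established, the universal property of quotients by the finite group $\bm{\mu}_2$ yields $\widetilde{\mathrm{\Sigma}} \cong \widetilde{\mathbb{I}}/\bm{\mu}_2$.

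First I would record the set-theoretic consequence of \cref{thm:global description Sigma hat} and \cref{cor:global description Delta hat} that, under the isomorphism, the two intersections $\widehat{\mathrm{\Sigma}} \cap \widehat{\mathrm{\Delta}}$ and $\widehat{\mathrm{\Sigma}} \cap \widehat{\mathrm{\Omega}}$ correspond respectively to $\widehat{\mathbb{E}}/\bm{\mu}_2$ and $\widehat{\mathbb{J}}/\bm{\mu}_2$; this is essentially how the divisors $\widehat{\mathbb{E}}$ and $\widehat{\mathbb{J}}$ were defined in the first place. Next, I would identify the extremal ray of $\widehat{M}_n$ whose contraction produces $M_n'$: following O'Grady's argument in \cite{O'GdmsK3I}, this class is realised by fibres of a projective bundle structure on $\widehat{\mathrm{\Delta}}$. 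The key step is to show that, through the identification of $\widehat{\mathrm{\Sigma}} \cap \widehat{\mathrm{\Delta}}$ with $\widehat{\mathbb{E}}/\bm{\mu}_2$, those fibres are exactly the $\bm{\mu}_2$-orbits of the fibres of the extremal ray on $\widehat{\mathbb{I}}$ that contracts $\widehat{\mathbb{E}}$ onto $\mathbb{I}'$. Since the latter contraction is $\bm{\mu}_2$-equivariant by construction, it descends to a morphism on $\widehat{\mathbb{E}}/\bm{\mu}_2 \subset \widehat{\mathrm{\Sigma}}$, yielding a closed immersion of $\mathbb{I}'/\bm{\mu}_2$ onto the proper transform of $\widehat{\mathrm{\Sigma}}$ inside $M_n'$.

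The second step of the resolution is treated analogously: one contracts the image of $\widehat{\mathrm{\Omega}}$ in $M_n'$, whose restriction to the image of $\widehat{\mathrm{\Sigma}}$ contracts the image of $\widehat{\mathbb{J}}/\bm{\mu}_2$; lifting through the $\bm{\mu}_2$-quotient produces exactly the contraction $\mathbb{I}' \to \widetilde{\mathbb{I}}$, and gives $\widetilde{\mathrm{\Sigma}} \cong \widetilde{\mathbb{I}}/\bm{\mu}_2$. The main technical hurdle is the first step, where one must verify that the numerical class contracted by $\widehat{M}_n \to M_n'$ pulls back, through the closed immersion $\widehat{\mathrm{\Sigma}} \hookrightarrow \widehat{M}_n$ and the finite cover $\widehat{\mathbb{I}} \to \widehat{\mathbb{I}}/\bm{\mu}_2 \cong \widehat{\mathrm{\Sigma}}$, to the extremal class of $\widehat{\mathbb{E}}$ on $\widehat{\mathbb{I}}$. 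This reduces to a local normal-bundle computation along $\widehat{\mathbb{E}}$, combined with the explicit description of how $\widehat{\mathbb{I}}$ sits inside the iterated blow-up chain producing $\widehat{M}_n$; the fact that $\bm{\mu}_2$ acts freely in codimension one on $\widehat{\mathbb{E}}$ and $\widehat{\mathbb{J}}$ prevents the quotient from introducing pathologies incompatible with the Mori-theoretic contractions at play, so that no further regularity check is needed on the descended morphisms.
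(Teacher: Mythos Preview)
Your strategy matches the paper's: construct the contractions on $\widehat{\mathbb{I}}$ via Mori theory, show they are $\bm{\mu}_2$-equivariant, and then identify the quotients with the images $\overline{\mathrm{\Sigma}}$ and $\widetilde{\mathrm{\Sigma}}$ of $\widehat{\mathrm{\Sigma}}$ under O'Grady's two contractions. However, two points in your execution need correction.

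First, your claim that ``$\bm{\mu}_2$ acts freely in codimension one on $\widehat{\mathbb{E}}$'' is false. Since $\widehat{\mathbb{E}}$ is the exceptional divisor of the blow-up of $\mathbb{I}$ along its $\bm{\mu}_2$-fixed locus $\PP(\Lca^{\perp_\omega}/\Lca)$, and $\bm{\mu}_2$ acts by $-1$ on the fibres of the normal bundle, the induced action on $\widehat{\mathbb{E}} = \PP(\Nca_{\PP(\Lca^{\perp_\omega}/\Lca)/\mathbb{I}})$ is \emph{trivial}. This is in fact why $\widehat{\mathrm{\Sigma}}\cap\widehat{\mathrm{\Delta}} \simeq \widehat{\mathbb{E}}/\bm{\mu}_2 \simeq \widehat{\mathbb{E}}$ (\cref{cor:Delta-Sigma; Delta-Sigma-Omega}). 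Fortunately this does not harm the argument: the fibres of $\widehat{\mathbb{E}} \to \PP(\Gca)$ are $\bm{\mu}_2$-stable regardless, so the contraction descends; but your justification for why the quotient introduces no pathologies has to be rewritten.

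Second, and more seriously, you never justify why the descended morphisms $\overline{\mathbb{I}}/\bm{\mu}_2 \to \overline{\mathrm{\Sigma}}$ and $\widetilde{\mathbb{I}}/\bm{\mu}_2 \to \widetilde{\mathrm{\Sigma}}$ are isomorphisms rather than mere bijections. The paper handles this via Zariski's main theorem, which requires checking that the targets are normal. This is not automatic: $\overline{\mathrm{\Sigma}}$ and $\widetilde{\mathrm{\Sigma}}$ are singular. The paper argues that each is a Cartier divisor in the smooth ambient variety $\overline{M}_n$ (resp.\ $\widetilde{M}_n$), hence Cohen--Macaulay, and is smooth off a subset of codimension $\ge 2$; normality then follows from Serre's criterion. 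Your proposal omits this entirely, and the phrase ``closed immersion'' in your first step is asserted without proof.

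As a minor remark on efficiency: rather than pulling back O'Grady's extremal class to $\widehat{\mathbb{I}}$ and matching it with $\widehat{\sigma}$, the paper simply observes that the composite $\widehat{\mathbb{I}} \to \widehat{\mathrm{\Sigma}} \to \overline{\mathrm{\Sigma}}$ is constant on the fibres of $g\colon\widehat{\mathbb{I}}\to\overline{\mathbb{I}}$ and invokes \cite[II, Lemme 8.11.1]{EGA} to factor it through $\overline{\mathbb{I}}$. This avoids the normal-bundle computation you propose and is both shorter and cleaner.
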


\subsection*{Plan of the paper} 
In \cref{sec:moduli spaces}, we recall the classical construction of the moduli space $M_n$ as the GIT quotient of a certain Quot scheme.

\cref{sec:The desingularisation procedure} details O'Grady's construction of the resolution of singularities $\widehat{\pi} \colon \widehat{M}_n \to M_n$; it is deduced by GIT from three blow-ups performed at the level of the Quot scheme.

For the exceptional divisor of the second of these blow-ups, we provide a global description in \cref{sec:The exceptional divisor of the second blow-up}. This description involves Markman's vector bundle $V$ (whose construction we present in \cref{sec:Markman}), and is crucial for the proof of \cref{thm:global description Sigma hat} and \cref{cor:global description Delta hat}, which is given in \cref{sec:description of Sigma hat and Delta hat}.

In \cref{sec:divisorial contractions of M hat}, we briefly mention how $\widetilde{M}_n$ is obtained from $\widehat{M}_n$, and in \cref{sec: contractions of I hat} we prove \cref{thm:global description of Sigma tilde}.

\subsection*{Notation and conventions} Our work heavily relies on O'Grady's papers \cite{O'GdmsK3} and \cite{O'GdmsK3I}, in particular on the first section of the former, and on the third section of the latter. For the reader's convenience, we adhere as much as possible to the notation adopted therein. 

\smallskip

We work over the field $\mathbb{C}$ of complex numbers. 

\subsubsection{} Let $Y$ be a scheme of finite type over $\mathbb{C}$. 
Given a coherent $\Oca_Y$-module $\Fca$, we write $\Fca^\vee = \sHom(\Fca,\Oca)$ for its dual, and $\mathbb{D}(\Fca) = \Rder \sHom(\Fca,\Oca)$ for its derived dual. If $Y$ is integral, we denote by $\Fca_\tf$ the quotient of $\Fca$ by its torsion subsheaf. If $\Fca$ is finite locally free, the associated projective bundle is 
\[
\PP(\Fca) \coloneqq \mathbf{Proj}(\mathrm{S}^\bullet \Fca^\vee) \xrightarrow{\tau}  Y,
\]
and the corresponding tautological quotient line bundle is denoted $\Oca_\tau(1)$.

\subsubsection{} Let $\sigma\colon W \to Y$ be a morphism of schemes of finite type over $\mathbb{C}$.

\smallskip

Given a scheme $Z$ over $Y$, we sometimes denote the base change $Z \times_Y W$ as $\sigma^*Z$. If $\Sca$ is a quasi-coherent graded $\Oca_Y$-algebra, and $Z =  \mathbf{Spec}(\Sca)$, we set $\PP(Z) \coloneqq \mathbf{Proj}(\Sca)$. In particular, $\sigma^*Z \simeq \mathbf{Spec}(\sigma^* \Sca)$ and $\sigma^* \PP(Z) \simeq \mathbf{Proj}(\sigma^* \Sca)$.

\smallskip

The sheaf of relative differentials of $W$ over $Y$ will be denoted as $\Kahler^1_\sigma$ or $\Kahler^1_{W/Y}$. If $Y = \mathrm{Spec} (\mathbb{C})$, we simply write $\Kahler^1_W$. Suppose, in addition, that $\sigma$ is smooth, so that $\Kahler^1_\sigma$ is locally free. Its dual $\Tca_\sigma \coloneqq (\Kahler^1_\sigma)^\vee$ is the \emph{relative tangent bundle} of $\sigma$. If $Y = \mathrm{Spec} (\mathbb{C})$, we simply write $\Tca_W$, the \emph{tangent bundle} of $W$. The tangent space of $W$ at any closed point $w$ will be denoted $\mathrm{T}_{W,w}$.

\subsubsection{}\label{normal cones} Let $Y$ be a scheme of finite type over $\mathbb{C}$, and $W$ a closed subscheme of $Y$ defined by a sheaf of ideals $\Ica_W \subset \Oca_Y$. The \emph{normal cone} to $W$ in $Y$ is the $W$-scheme
\[
C_{W/Y} \coloneqq \mathbf{Spec} \ ( \oplus_{m \ge 0} \ \Ica_W^{m}/{\Ica_W^{m+1}}).
\]
By \cite[B.6.3]{Fit}, $\PP(C_{W/Y})$ equals the exceptional divisor of the blow-up  $\mathrm{Bl}_W Y$ of $Y$ along $W$.

Assume that the conormal sheaf $\Ica_W/\Ica_W^2$ to $W$ in $Y$ is finite locally free. Its dual is the \emph{normal bundle} to $W$ in $Y$, and will be denoted as $\Nca_{W/Y}$. There is a closed immersion of $C_{W/Y}$ in the total space $N_{W/Y}$ of $\Nca_{W/Y}$; it is defined by the graded sheaf of ideals
\[
\oplus_{m \ge 0} \Jca_m \coloneqq \left[\oplus_{m \ge 0} \mathrm{S}^m (\Ica_W/\Ica_W^2) \twoheadrightarrow \oplus_{m \ge 0} \Ica_W^m/\Ica_W^{m+1} \right].
\]
Note that $\Jca_0$ and $\Jca_1$ vanish. The graded ideal sheaf generated by $\Jca_2$ in $\oplus_{m \ge 0} \mathrm{S}^m (\Ica_W/\Ica_W^2)$ defines a closed subscheme $H_{W/Y}$ of $N_{W/Y}$, called the \emph{Hessian cone} to $W$ in $Y$. We have inclusions
\[
C_{W/Y} \subset H_{W/Y} \subset N_{W/Y}.
\]
\subsubsection{}\label{Notation GIT}
For a quick introduction to group actions and quotients in algebraic geometry, we recommend \cite[Section 4.2]{HLgmss}. We follow essentially the terminology and notation adopted therein. So, if $\mathrm{G}$ is an affine algebraic group acting on a scheme $Y$ of finite type over $\mathbb{C}$, we reserve the notation $Y \sslash \mathrm{G}$ for the good quotient of $Y$ by $\mathrm{G}$  (see \cite[Definition 4.2.2]{HLgmss}), when it exists. In particular, if $Y$ is projective, $\mathrm{G}$ is reductive, the $\mathrm{G}$-action on $Y$ is linearised in an ample line bundle $\Lca$ and $Y^{ss} \coloneqq Y^{ss}(\Lca) \subset Y$ is the corresponding semistable locus, then the notation for the GIT quotient of $Y$ by $\mathrm{G}$ will be $Y^{ss} \sslash \mathrm{G}$ (such a good quotient exists by \cite[Theorem 4.2.10]{HLgmss}).  

However, when dealing with a $\mathrm{G}$-linearisation of a quasi-coherent $\Oca_Y$-module $\Fca$ (see \cite[Definition 4.2.5]{HLgmss}), we prefer to talk of a $\mathrm{G}$-\emph{equivariant structure} on $\Fca$. A quasi-coherent $\Oca_Y$-module equipped with a $\mathrm{G}$-equivariant structure will be called a $\mathrm{G}$-\emph{equivariant sheaf}.

\subsubsection{} \label{sec: notation projections and base change} Throughout the paper, $X$ will be a fixed smooth projective complex K3 surface with very ample line bundle $\Oca(1)$. We will need to consider the fibre product of $X$ with several schemes $Y$ over $\mathbb{C}$. The pullback of $\Oca(1)$ to $Y \times X$ will be denoted $\Oca(1)$ as well; moreover, we shall denote as $p_Y \colon Y \times X \to Y$ the corresponding projection onto the first component. 
Given $\Eca$ and $\Fca$ two coherent $\Oca_{Y \times X}$-modules, we denote as $\sExt_Y^m(\Eca,\Fca)$ the degree-$m$ cohomology sheaf of the complex $\Rder p_{Y,*} \circ \Rder \sHom(\Eca, \Fca)$; if $m=0$, we write also $\sHom_Y(\Eca, \Fca)$. We denote the base change of $\sigma \colon W \to Y$ to $Y \times X$ by $\sigma_X \colon W \times X \to Y \times X$. It is worth noting that, since $p_Y$ is flat, by \cite[Corollary 2.23]{Ket} we have an isomorphism of functors
\[
\Lder \sigma^* \circ \Rder p_{Y,*} \simeq \Rder p_{W,*} \circ \Lder \sigma_X^*.
\]

\subsection*{Acknowledgements} This paper developed from the second chapter of my PhD thesis. I would like to thank my advisor Emanuele Macrì for suggesting the project, and for his constant encouragement and support. I would also like to thank: Riccardo Carini, Alexander Samokhin and Haitao Zou for helpful conversations; Eyal Markman and Kieran O'Grady for answering my questions; Laurent Manivel and Antonio Rapagnetta, who read a preliminary version of this paper; Alexander Kuznetsov for valuable discussions and detailed comments on this manuscript.

\medskip

This research was funded by the European Research Council (ERC) under the European Union’s Horizon 2020 research and innovation programme – ERC-2020-SyG-854361-HyperK, by the Deutsche Forschungsgemeinschaft (DFG, German Research Foundation) under Germany’s Excellence Strategy – EXC-2047/1 – 390685813, and by the Deutsche Forschungsgemeinschaft (DFG, German Research Foundation) – Project-ID 491392403 – TRR 358.

\section{Moduli spaces on a K3 surface}\label{sec:moduli spaces}

In this section, after some generalities on the construction and the local study of moduli spaces of sheaves on a K3 surface, we focus on the series of examples studied by O'Grady in \cite{O'GdmsK3I} and \cite{O'GdmsK3}.

\smallskip

Let $X$ be a smooth projective complex K3 surface. Fix on $X$ a vector $\mathbf{v} \in \Ho^*_{\mathrm{alg}}(X, \mathbb{Z})$ and a $\mathbf{v}$-generic polarisation $H= \Oca(1)$. Then, one can consider the moduli space $M(\mathbf{v}) \coloneqq M_H(X,\mathbf{v})$ of $H$-semistable sheaves on $X$ with Mukai vector $\mathbf{v}$. 

\subsection{The construction}\label{sec:construction of moduli space}
Let us recall some aspects of the construction of $M(\mathbf{v})$; we refer to \cite[Section 4.3]{HLgmss} for more details. 

All coherent sheaves on $X$ with Mukai vector $\mathbf{v}$ have the same Hilbert polynomial $P_\mathbf{v}$ with respect to the polarisation $H$. Moreover, if we choose a sufficiently large integer $k = k(\mathbf{v})$, the following holds: for any $H$-semistable sheaf $F$ on $X$ with Mukai vector $\mathbf{v}$, $F(k)$ is globally generated and $\Ho^m(X,F(k))$ vanishes for any $m \ge 1$. Therefore, $F$ can be realised as a quotient
\begin{equation*}
\Hca \coloneqq \Oca(-k)^{\oplus N} \xrightarrow{q} F,
\end{equation*}
where $N \coloneqq P_\mathbf{v}(k) = \chi(X,F(k))= \mathrm{dim}  \Ho^0(X,F(k))$. This defines a closed point $[q \colon \Hca \to F]$ in the Quot scheme $\mathrm{Quot}_X^{\Hca,P_\mathbf{v}}$.
More precisely, $[q]$ lies in the open subscheme
\[
Q^{ss} \coloneqq \{ [q \colon \Hca \to F] \mid F \text{ is $H$-semistable and } \Ho^0(X,\Hca(k)) \to \Ho^0(X,F(k)) \text{ is an isomorphism} \}.
\]
Denote by $Q^s$ the open subset of $Q^{ss}$ parametrising $H$-stable sheaves, and by $Q$ the closure in $\mathrm{Quot}_X^{\Hca,P_\mathbf{v}}$ of $Q^{ss}$.

On $\mathrm{Quot}_X^{\Hca,P_\mathbf{v}}$ the algebraic group $\mathrm{GL}(N)$ acts on the right by composition, preserving $Q^{ss}$ and $Q$; the centre of $\mathrm{GL}(N)$ acts trivially. The induced $\mathrm{PGL}(N)$-action on $Q$ can be linearised so that $Q^{ss}$ (resp.\ $Q^s$) coincides with the set of $\mathrm{PGL}(N)$-semistable (resp.\ stable) points. Note that the stabiliser of $[q \colon \Hca \to F] \in Q^{ss}$ in $\mathrm{PGL}(N)$ is identified with $\mathrm{PAut}(F)\coloneqq \mathrm{Aut}(F)/\mathbb{C}^\times$. 

The moduli space $M(\mathbf{v})$ is defined as the GIT quotient $Q^{ss} \sslash \mathrm{PGL}(N)$. Its closed points are in one-to-one correspondence with $H$-polystable sheaves on $X$ with Mukai vector $\mathbf{v}$. The quotient of $Q^s$ by $\mathrm{PGL}(N)$ is the open subscheme of $M(\mathbf{v})$ that parametrises $H$-stable sheaves. We denote it as $M(\mathbf{v})^s$.

\subsubsection{Cotangent sheaf of $Q^{ss}$}\label{sec:cotangent sheaf} As it will play a crucial role in this paper, let us recall the global description of the cotangent sheaf of $Q^{ss}$ given in \cite[Theorem 3.1]{LcsQs}. Let
\begin{equation}\label{tautological sequence Quot scheme}
\begin{tikzcd}
0 \ar[r] & \Kca \ar[r] & \Oca(-k)^{\oplus N} \ar[r] & \Fca \ar[r] & 0   
\end{tikzcd}
\end{equation}
be the restriction to $Q^{ss} \times X$ of a tautological short exact sequence on $\mathrm{Quot}_X^{\Hca,P_\mathbf{v}} \times X$. 
Then there is a natural isomorphism of $\mathrm{GL}(N)$-equivariant $\Oca_{Q^{ss}}$-modules
\begin{equation}\label{cotangent sheaf Quot scheme}
\Kahler^1_{Q^{ss}} \xrightarrow{\sim} \sExt_{Q^{ss}}^2(\Fca,\Kca).
\end{equation}
For any morphism $f \colon Y \to Q^{ss}$, we denote the pullback of \eqref{tautological sequence Quot scheme} along $f_X \colon Y \times X \to Q^{ss} \times X$ by
\[
\begin{tikzcd}
0 \ar[r] & \Kca_Y \ar[r] & \Oca(-k)^{\oplus N} \ar[r] & \Fca_Y \ar[r] & 0.  
\end{tikzcd}
\]
This sequence is still exact because $\Fca$ is $Q^{ss}$-flat; moreover, $f^*\Kahler^1_{Q^{ss}} \simeq \sExt_Y^2(\Fca_Y,\Kca_Y)$.

\subsection{Local structure}\label{sec:local structure via deformation theory and Luna's theorem} The local study of $Q^{ss}$ and $M(\mathbf{v})$ can be reduced to a deformation-theoretic problem via Luna's étale slice theorem.

\subsubsection{Deformation theory}\label{sec:deformation theory}
Let $F$ be a polystable sheaf on $X$. Associated with $F$ is a certain deformation functor $\mathrm{Def}_F$. We shall not need its precise definition; the interested reader can find it in \cite[Appendix 2.A.6]{HLgmss} or \cite[Section 3]{ASsmssK3snqv}.
The tangent space of $\mathrm{Def}_F$, which parametrises first-order deformations of $F$, is canonically isomorphic to $\mathrm{Ext}^1(F,F)$. An obstruction space for $\mathrm{Def}_F$ is given by $\mathrm{Ext}^2(F,F)_0 \coloneqq \ker[\mathrm{Tr} \colon \mathrm{Ext}^2(F,F) \to \Ho^2(\Oca_X)]$. The group $\mathrm{Aut}(F)$ acts on the tangent and obstruction spaces by conjugation; in particular, these actions descend to $\mathrm{PAut}(F)$.

One can show that the functor $\mathrm{Def}_F$ admits semiuniversal deformations. Given two such deformations, their base spaces are non-canonically isomorphic. The isomorphism between them is not unique, but unique is the induced tangent map.

\smallskip

As $F$ is a polystable sheaf on a K3 surface,  the parameter space of a semiuniversal deformation for $\mathrm{Def}_F$ admits a particularly simple description. Consider the Yoneda square map
\begin{equation}\label{Yoneda square}
\mathrm{\Upsilon}_F \colon \mathrm{Ext}^1(F,F) \to \mathrm{Ext}^2(F,F)_0, \ \ \ e \mapsto e \cup e.
\end{equation}
It is a $\mathrm{PAut}(F)$-equivariant morphism of affine varieties.
\begin{proposition} \label{prop:formality implies quadraticity}
The formal completion of $\mathrm{\Upsilon}_F^{-1}(0)$ at the origin $o \in \mathrm{Ext}^1(F,F)$ is the parameter space of a semiuniversal deformation for $\mathrm{Def}_F$.
\end{proposition}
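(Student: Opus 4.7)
\medskip

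\noindent\textbf{Proof plan.}\ The plan is to model $\mathrm{Def}_F$ by a differential graded Lie algebra, establish \emph{formality} of this dg-Lie algebra, and then recognise its Maurer--Cartan scheme as the quadratic cone $\mathrm{\Upsilon}_F^{-1}(0)$. As a concrete model I would pick a bounded resolution $E^\bullet \to F$ by locally free $\Oca_X$-modules and form the traceless endomorphism dg-Lie algebra
\[
\gfr \coloneqq \ker\!\big(\mathrm{tr}\colon \mathcal{E}nd^\bullet(E^\bullet) \to \Oca_X\big),
\]
equipped with the commutator bracket. Its cohomology reproduces the tangent--obstruction data recalled in \cref{sec:deformation theory}: $H^1(\gfr) = \mathrm{Ext}^1(F,F)$ and $H^2(\gfr) = \mathrm{Ext}^2(F,F)_0$. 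By the general formalism in \cite[Appendix~2.A.6]{HLgmss} and \cite[Section~3]{ASsmssK3snqv}, the functor $\mathrm{Def}_F$ is canonically isomorphic to the Maurer--Cartan functor attached to $\gfr$.

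The next step is \emph{formality}: for a polystable sheaf $F$ on a projective K3 surface, I would appeal to the fact that $\gfr$ is quasi-isomorphic, as a dg-Lie algebra, to its cohomology $H^\bullet(\gfr)$ equipped with zero differential and the induced bracket. This is specific to K3 (and more generally Calabi--Yau) surfaces; it depends essentially on the global holomorphic symplectic form on $X$, and may be obtained via Kaledin's general formality result for derived categories of symplectic varieties, or through more direct Hodge-theoretic arguments exploiting the Serre-duality pairing on $\mathrm{Ext}^1(F,F)$. By the Goldman--Millson theorem, a quasi-isomorphism of dg-Lie algebras induces an isomorphism of associated deformation functors, and hence $\mathrm{Def}_F \simeq \mathrm{Def}_{H^\bullet(\gfr)}$.

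Finally, for any graded Lie algebra $\hfr$ with vanishing differential, the functor $\mathrm{Def}_\hfr$ is pro-represented by the formal completion at the origin of the affine quadratic cone $\{e \in \hfr^1 : [e,e] = 0\}$. Applied to $\hfr = H^\bullet(\gfr)$, this yields a closed subscheme of $\mathrm{Ext}^1(F,F)$ cut out by a quadratic map with values in $\mathrm{Ext}^2(F,F)_0$; a routine comparison of the commutator bracket on $\gfr$ with the Yoneda product of Ext-classes identifies $\tfrac{1}{2}[e,e]$ with $e \cup e = \mathrm{\Upsilon}_F(e)$, so the cone equals $\mathrm{\Upsilon}_F^{-1}(0)$. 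Throughout, $\mathrm{PAut}(F)$-equivariance is automatic, since trace, bracket and cup product all commute with conjugation. The main obstacle is the formality step: it is the only non-formal input, it depends crucially on $X$ being a K3 surface, and everything else reduces to a standard application of Maurer--Cartan theory together with the identification of the cohomological Lie bracket with the Yoneda product.
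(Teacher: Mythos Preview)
Your proposal is correct and follows essentially the same route as the paper. The paper's own proof consists of two citations: it invokes \cite[Section~3]{ASsmssK3snqv} for the passage from formality to quadraticity via Maurer--Cartan theory (exactly the Goldman--Millson-type argument you outline), and cites \cite[Theorem~1.1]{BMMfcmsKd0} and \cite[Theorem~1.2]{BZfcK3s} for the formality of $\Rder\mathrm{Hom}(F,F)$ itself; you have correctly unpacked what stands behind those references.
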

\begin{proof}
As explained in \cite[Section 3]{ASsmssK3snqv}, the statement follows from the formality of the differential graded Lie algebra $\Rder \mathrm{Hom}(F,F)$, which was proved in \cite[Theorem 1.1]{BMMfcmsKd0} (see also \cite[Theorem 1.2]{BZfcK3s}).
\end{proof}

\subsubsection{Luna's theorem} Let $F$ be a polystable sheaf on $X$ with Mukai vector $\mathbf{v}$, and $[F] \in M(\mathbf{v})$ the corresponding point of the moduli space. Choose $[q \colon \Hca \to F]$ a lift of $[F]$ in $Q^{ss}$. Since $F$ is polystable, the $\mathrm{PGL}(N)$-orbit of $[q]$ is closed in $Q^{ss}$; the stabiliser of $[q]$ in $\mathrm{PGL}(N)$ is reductive, and isomorphic to $\mathrm{PAut}(F)$. 

Luna's étale slice theorem (\cf \cite{SSaT}) states the existence of a locally closed, affine $\mathrm{PAut}(F)$-invariant subscheme $\Vca \subset Q^{ss}$ containing $[q]$ with étale morphisms
\[
(\Vca \times \mathrm{PGL}(N)) \sslash \mathrm{PAut}(F) \to Q^{ss} \ \ \ \text{and} \ \ \ \Vca \sslash \mathrm{PAut}(F) \to Q^{ss} \sslash \mathrm{PGL}(N) = M(\mathbf{v}).
\]
Such a $\Vca$ will be called an étale slice in $[q]$. Its relation to the deformation theory of $F$ is clarified by the following
\begin{proposition}\cite[Proposition 1.2.3]{O'GdmsK3}\label{prop:semiuniversal deformation}
Let $\widehat{\Vca}$ be the formal completion of $\Vca$ at $[q]$. Let $\Fca_{\widehat{\Vca}}$ be the restriction to $\widehat{\Vca} \times X$ of the tautological quotient sheaf $\Fca$ on $Q^{ss} \times X$. Then $(\widehat{\Vca},\Fca_{\widehat{\Vca}})$ is a semiuniversal deformation for $\mathrm{Def}_F$.
\end{proposition}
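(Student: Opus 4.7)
The plan is to verify the two defining conditions of a semiuniversal deformation for $\mathrm{Def}_F$: the Kodaira-Spencer map $T_{\widehat{\Vca},[q]} \to \mathrm{Ext}^1(F,F)$ induced by the family $\Fca_{\widehat{\Vca}}$ must be an isomorphism, and any formal deformation of $F$ over an Artinian local $\mathbb{C}$-algebra must be (non-canonically) pulled back from $(\widehat{\Vca},\Fca_{\widehat{\Vca}})$.

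For the tangent condition, I would compute the Kodaira-Spencer map at the level of $Q^{ss}$ first. The tautological sequence \eqref{tautological sequence Quot scheme} gives the classical identification $T_{Q^{ss},[q]} \simeq \mathrm{Hom}(K,F)$, where $K$ denotes the restriction of $\Kca$ to the fibre over $[q]$. Applying $\mathrm{Hom}(-,F)$ to $0 \to K \to \Hca \to F \to 0$ yields the exact sequence
\[
0 \to \mathrm{Hom}(F,F) \to \mathrm{Hom}(\Hca,F) \to \mathrm{Hom}(K,F) \xrightarrow{\kappa_{Q^{ss}}} \mathrm{Ext}^1(F,F) \to \mathrm{Ext}^1(\Hca,F),
\]
whose rightmost term vanishes by the choice of $k$, since $\mathrm{Ext}^1(\Hca,F) = \Ho^1(X,F(k))^N = 0$. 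The connecting map $\kappa_{Q^{ss}}$ is the Kodaira-Spencer map of the tautological family on $Q^{ss}$; it is therefore surjective, and its kernel agrees with the tangent space at $[q]$ to the $\mathrm{GL}(N)$-orbit, since $\mathrm{Hom}(\Hca,F) = \mathrm{End}(\mathbb{C}^N)$ is the Lie algebra of $\mathrm{GL}(N)$ acting infinitesimally on $Q^{ss}$, with stabiliser-subalgebra $\mathrm{Hom}(F,F)$. By the transversality built into Luna's étale slice theorem, $T_{\Vca,[q]}$ is a $\mathrm{PAut}(F)$-stable complement to the tangent of the $\mathrm{PGL}(N)$-orbit; the restriction of $\kappa_{Q^{ss}}$ to $T_{\Vca,[q]}$ is therefore an isomorphism onto $\mathrm{Ext}^1(F,F)$.

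For versality, given a formal deformation $(A,F_A)$ of $F$ with $A$ Artinian local, cohomology and base change together with the vanishing of $\Ho^{\ge 1}(X,F(k))$ imply that the pushforward $p_{\mathrm{Spec}(A),*}(F_A(k))$ is locally free of rank $N$ and that $F_A(k)$ is globally generated. A trivialisation lifting the distinguished one at the closed point produces a surjection $\Hca \boxtimes \Oca_A \twoheadrightarrow F_A$, and hence a morphism $\mathrm{Spec}(A) \to Q^{ss}$ extending $[q]$ along which $\Fca$ pulls back to $F_A$. Using that the map $(\Vca \times \mathrm{PGL}(N))\sslash \mathrm{PAut}(F) \to Q^{ss}$ from Luna's theorem is étale at $[q]$, one can inductively translate this morphism order by order by a suitable element of $\mathrm{PGL}(N)$ so that it factors through $\Vca$, yielding the required factorisation through $\widehat{\Vca}$. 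The principal technical point is precisely this last step: the $\mathrm{PGL}(N)$-element used at each stage depends on $A$ and on $F_A$, and this non-canonical translation is exactly the ambiguity that prevents the deformation from being universal, while the tangent map remains canonical — which is what semiuniversality asks for.
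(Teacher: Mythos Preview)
The paper does not actually supply a proof of this proposition: it is stated with a direct citation to \cite[Proposition 1.2.3]{O'GdmsK3} and no argument is given in the present text. So there is no proof here to compare against.

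That said, your outline is the standard argument and is essentially correct. The tangent computation via the exact sequence obtained from $\mathrm{Hom}(-,F)$ applied to $0\to K\to\Hca\to F\to 0$, together with the vanishing $\mathrm{Ext}^1(\Hca,F)\simeq \Ho^1(X,F(k))^{\oplus N}=0$, indeed identifies $\mathrm{Ext}^1(F,F)$ with the normal space to the $\mathrm{PGL}(N)$-orbit at $[q]$; Luna's transversality then gives the Kodaira--Spencer isomorphism for the slice. For versality, the lifting argument via global generation and cohomology-and-base-change producing a map $\mathrm{Spec}(A)\to Q^{ss}$ is correct; the one point worth making more precise is the last step, where you translate the resulting map into the slice. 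The cleanest way is to use that the \'etale map $(\Vca\times\mathrm{PGL}(N))\sslash\mathrm{PAut}(F)\to Q^{ss}$ induces an isomorphism on completions at $[q]$, and then project to $\widehat{\Vca}$; this avoids the order-by-order translation by hand and makes the factorisation immediate, while the $\mathrm{PGL}(N)$-ambiguity you mention is exactly what accounts for non-universality.
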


\begin{corollary}\label{cor:slice as analytic deformation space}
Let $[q \colon \Hca \to F] \in Q^{ss}$, with $F$ polystable. Let $\Vca \subset Q^{ss}$ be an \'etale slice in $[q]$, and let $\mathrm{\Upsilon}_F$ denote the Yoneda square map \eqref{Yoneda square}. Then there is a $\mathrm{PAut}(F)$-equivariant isomorphism of analytic germs 
\begin{equation}\label{iso analytic germs}
(\Vca,[q]) \simeq (\mathrm{\Upsilon}_F^{-1}(0),o).
\end{equation}
\end{corollary}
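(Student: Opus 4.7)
The plan is to combine \cref{prop:semiuniversal deformation} and \cref{prop:formality implies quadraticity} via the uniqueness of semiuniversal deformations, and then to upgrade the resulting isomorphism of formal germs to one of analytic germs, keeping track of the $\mathrm{PAut}(F)$-action throughout. The main obstacle will be the equivariance: since semiuniversal deformations are only unique up to non-canonical isomorphism, getting $\mathrm{PAut}(F)$-equivariance out of raw uniqueness requires a little additional work.

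First I would apply \cref{prop:semiuniversal deformation} to identify $(\widehat{\Vca},\Fca_{\widehat{\Vca}})$ as a semiuniversal formal deformation of $F$, observing that the $\mathrm{PAut}(F)$-action on $\Vca$ provided by Luna's slice theorem equips this deformation with a $\mathrm{PAut}(F)$-equivariant structure. On the other side, \cref{prop:formality implies quadraticity} identifies the formal completion of $\mathrm{\Upsilon}_F^{-1}(0)$ at the origin as a second semiuniversal formal deformation, whose $\mathrm{PAut}(F)$-equivariant structure comes from the equivariance of the Yoneda map \eqref{Yoneda square}. From the standard uniqueness statement for semiuniversal deformations, I extract an isomorphism of pointed formal germs $(\widehat{\Vca},[q]) \simeq (\widehat{\mathrm{\Upsilon}_F^{-1}(0)},o)$.

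Next I would promote this formal isomorphism to an isomorphism of analytic germs. Both $\Vca$ and $\mathrm{\Upsilon}_F^{-1}(0)$ are affine schemes of finite type over $\mathbb{C}$, and the functor of pointed isomorphisms between two such germs satisfies the hypotheses of Artin's approximation theorem. Hence the existence of an isomorphism of formal completions implies the existence of an isomorphism of analytic germs; this yields a (not yet equivariant) isomorphism $(\Vca,[q]) \simeq (\mathrm{\Upsilon}_F^{-1}(0),o)$.

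Finally, I would handle the $\mathrm{PAut}(F)$-equivariance, which is the crux of the statement. The key fact to exploit is that $\mathrm{PAut}(F)$ is reductive (this is where polystability of $F$ enters). I would either run the uniqueness argument for semiuniversal deformations internally to the category of $\mathrm{PAut}(F)$-equivariant formal deformations, averaging a candidate isomorphism over the group using Haar measure / Reynolds-type arguments to obtain a genuinely equivariant one, or equivalently invoke an equivariant version of Artin approximation in the presence of a reductive group action (compare the equivariant slice constructions of \cite{SSaT}). Either way produces the required $\mathrm{PAut}(F)$-equivariant analytic-germ isomorphism~\eqref{iso analytic germs}.
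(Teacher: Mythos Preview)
Your proposal is correct and follows essentially the same route as the paper: both combine \cref{prop:formality implies quadraticity} and \cref{prop:semiuniversal deformation} via uniqueness of semiuniversal deformations to obtain a formal isomorphism, and then upgrade it to a $\mathrm{PAut}(F)$-equivariant analytic one using reductivity of $\mathrm{PAut}(F)$. The paper simply outsources the last step to \cite[Section 4]{ASsmssK3snqv}, whereas you sketch the ingredients (averaging over the reductive group, equivariant Artin approximation) that lie behind that reference; one minor caution is that your steps should not be read strictly sequentially—the equivariance must be built in together with the passage from formal to analytic, as your final paragraph acknowledges.
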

\begin{proof}
By \cref{prop:formality implies quadraticity} and \cref{prop:semiuniversal deformation}, the formal neighbourhoods of $\mathrm{\Upsilon}_F^{-1}(0)$ at the origin $o \in \mathrm{Ext}^1(F,F)$ and of $\Vca$ at $[q]$ are each the parameter space of a semiuniversal deformation for $\mathrm{Def}_F$; in particular, they are isomorphic. As explained in \cite[Section 4]{ASsmssK3snqv}, a (formal) isomorphism between them can be chosen to be $\mathrm{PAut}(F)$-equivariant, and even lifted to a $\mathrm{PAut}(F)$-equivariant isomorphism between the analytic germs $(\Vca,[q])$ and $(\mathrm{\Upsilon}_F^{-1}(0),o)$. 
\end{proof}
In particular, under \eqref{iso analytic germs}, points with the same stabiliser in $\mathrm{PAut}(F)$ correspond to each other.
\begin{remark}\label{rem:transitivity of stabilisers}
For a suitable $\mathrm{PAut}(F)$-invariant \emph{analytic} neighbourhood $\Vca'$ of $[q]$ in $\Vca$, the multiplication map $\Vca' \times \mathrm{PGL}(N) \to Q^{ss}$ is a principal $\mathrm{PAut}(F)$-bundle onto an analytic open subset of $Q^{ss}$ (see the last paragraph of \cite[Section 4]{SSaT}). We easily deduce that, for any $[q'] \in \Vca'$, the stabiliser of $[q'] \in \Vca$ in $\mathrm{PAut}(F)$ equals the stabiliser of $[q'] \in Q^{ss}$ in $\mathrm{PGL}(N)$.
\end{remark}
\subsection{O'Grady's examples} 
Fix an integer $n \ge 3$, and consider the primitive vector 
\[
\mathbf{v}_0 \coloneqq (1,0,1-n) \in \Ho^*_{\mathrm{alg}}(X,\mathbb{Z}).
\]
The associated moduli space $M(\mathbf{v}_0)$ can be identified with the Hilbert scheme $X^{[n]}$ of $n$ points on $X$. We shall denote by $\Ica$ the universal ideal sheaf on $X^{[n]} \times X$, and by $\Tca \coloneqq \Tca_{X^{[n]}}$ the tangent bundle of $X^{[n]}$; by \cite[Theorem 10.2.1]{HLgmss}, we have 
\begin{equation*}
\Tca \simeq \sExt^1_{X^{[n]}}(\Ica,\Ica).
\end{equation*} 
Grothendieck-Verdier duality yields the isomorphisms 
\begin{equation*}
\sExt_{X^{[n]}}^2(\Ica, \Ica) \simeq \sHom_{X^{[n]}}(\Ica, \Ica)^\vee \simeq \Oca\quad \text{and} \quad
\sExt_{X^{[n]}}^1(\Ica, \Ica) \simeq \sExt_{X^{[n]}}^1(\Ica, \Ica)^\vee.
\end{equation*}
The second isomorphism defines a symplectic form $\omega$ on $\Tca$. Its restriction to any fibre $\mathrm{Ext}^1(I_Z,I_Z)$ over $[Z] \in X^{[n]}$ will be denoted by $\omega$ as well.

\smallskip
Now, consider the moduli space
\[
M_n \coloneqq M(2 \mathbf{v}_0).
\]
This $(8n-6)$-dimensional variety was studied by O'Grady in \cite{O'GdmsK3I} and \cite{O'GdmsK3}, and will be the main character of this paper. Sticking to the notation of \cref{sec:construction of moduli space}, it is constructed as the GIT quotient of $Q$ by $\mathrm{PGL}(N)$. The strictly semistable locus $Q^{ss} \setminus Q^s$ was determined by O'Grady. 
\begin{lemma}\cite[Lemma 1.1.5]{O'GdmsK3} A point $[q \colon \Hca \to F] \in Q^{ss}$ is strictly semistable if and only if $F$ sits into a short exact sequence
\[
\begin{tikzcd}
0 \ar[r] & I_{Z_1} \ar[r] & F \ar[r] & I_{Z_2} \ar[r] & 0,
\end{tikzcd}
\]
where, for $i=1,2$, $Z_i$ is a length-$n$ subscheme of $X$ and $I_{Z_i}$ is its ideal sheaf.
\end{lemma}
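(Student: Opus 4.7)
The plan is to analyse the Jordan--Hölder filtration of a strictly semistable $F$. By the construction recalled in \cref{sec:construction of moduli space}, a point $[q] \in Q^{ss}$ is strictly GIT-semistable exactly when the corresponding sheaf $F$ is semistable but not stable, so the lemma reduces to classifying such sheaves with Mukai vector $2\mathbf{v}_0$.

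First I would exploit the primitivity of $\mathbf{v}_0=(1,0,1-n)$ in $\Ho^*_\mathrm{alg}(X,\mathbb{Z})$. Every stable Jordan--Hölder factor $F_i$ of $F$ shares the reduced Hilbert polynomial of $F$, hence has Mukai vector a positive rational multiple of $\mathbf{v}_0$; the $2\mathbf{v}_0$-genericity of $H$ together with the primitivity of $\mathbf{v}_0$ forces this multiple to be a positive integer. Since $\sum_i \mathbf{v}(F_i) = 2\mathbf{v}_0$ and the filtration must have length at least two (as $F$ is not stable), there are exactly two factors, each of Mukai vector $\mathbf{v}_0$.

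Next I would identify these factors via the isomorphism $M(\mathbf{v}_0) \simeq X^{[n]}$ recalled in this section: a stable sheaf with Mukai vector $\mathbf{v}_0$ is precisely the ideal sheaf $I_Z$ of some $[Z] \in X^{[n]}$. The two-step Jordan--Hölder filtration $0 \subset F_1 \subset F$ with $F_1 \simeq I_{Z_1}$ and $F/F_1 \simeq I_{Z_2}$ then yields the desired exact sequence.

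For the converse, any extension of $I_{Z_2}$ by $I_{Z_1}$ has Mukai vector $2\mathbf{v}_0$ and admits a two-step filtration whose factors share the reduced Hilbert polynomial of $F$, so $F$ is semistable; the presence of $I_{Z_1}$ as a proper subsheaf with the same reduced Hilbert polynomial forbids stability, making $[q]$ strictly semistable. I expect no serious obstacle: the only subtle point is the classification of possible Jordan--Hölder factors, which is controlled by the primitivity of $\mathbf{v}_0$ and the $2\mathbf{v}_0$-genericity of the polarisation.
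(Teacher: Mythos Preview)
The paper does not give its own proof of this lemma; it simply records the statement with a citation to \cite[Lemma 1.1.5]{O'GdmsK3}. Your argument via the Jordan--H\"older filtration is the standard one and is correct: genericity of $H$ forces each stable factor to have Mukai vector proportional to $\mathbf{v}_0$, primitivity makes the proportionality constant an integer, and the constraint $\sum_i \mathbf{v}(F_i)=2\mathbf{v}_0$ with at least two factors pins down two factors of Mukai vector $\mathbf{v}_0$, which are then identified with ideal sheaves via $M(\mathbf{v}_0)\simeq X^{[n]}$. The converse is immediate, as you say. One small point worth making explicit is that $F$, being $H$-semistable of positive rank, is pure of dimension $2$, hence torsion-free; this ensures that the rank-$1$ Jordan--H\"older factors are themselves torsion-free and therefore of the form $I_Z$ rather than $I_Z\otimes L$ with nontrivial $L$ or with a torsion summand.
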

As a consequence, $Q^{ss}\setminus Q^s$ can be written as the disjoint union of
\begin{align*}
\mathrm{\Omega}_Q^\circ & \coloneqq \{ [q \colon \Hca \to F] \mid F \simeq I_Z^{\oplus 2}, [Z] \in X^{[n]} \}, \\
\mathrm{\Gamma}_Q^\circ & \coloneqq \{ [q \colon \Hca \to F] \mid F \ \text{is a non-trivial extension of} \ I_Z \ \text{by} \ I_Z, [Z] \in X^{[n]} \}, \\
\mathrm{\Sigma}_Q^\circ & \coloneqq \{ [q \colon \Hca \to F] \mid F \simeq I_{Z_1} \oplus I_{Z_2}, [Z_1],[Z_2] \in X^{[n]}, Z_1 \neq Z_2 \}, \\
\mathrm{\Lambda}_Q^\circ & \coloneqq \{ [q \colon \Hca \to F] \mid F \ \text{is a non-trivial extension of} \ I_{Z_1} \ \text{by} \ I_{Z_2}, [Z_1],[Z_2] \in X^{[n]}, Z_1 \neq Z_2 \}.
\end{align*}
The stabiliser of $[q \colon \Hca \to F] \in Q^{ss}$ in $\mathrm{PGL}(N)$, computed in \cite[Corollary 1.1.8]{O'GdmsK3}, is 
\begin{equation}\label{stabilisers in PGL(N)}
\mathrm{St}([q]) \simeq \begin{cases} \mathrm{PGL}(2)  &\text{if} \  [q] \in \mathrm{\Omega}_Q^\circ \\
(\mathbb{C}, +) &\text{if} \ [q] \in \mathrm{\Gamma}_Q^\circ \\
\mathbb{C}^\times &\text{if} \ [q] \in \mathrm{\Sigma}_Q^\circ \\
\{1\} &\text{if} \ [q] \in \mathrm{\Lambda}_Q^\circ \sqcup Q^s.
\end{cases}
\end{equation}
 
Let $\mathrm{\Omega}_Q$ and $\mathrm{\Sigma}_Q$ be the Zariski closures of $\mathrm{\Omega}_Q^\circ$ and of $\mathrm{\Sigma}_Q^\circ$ in $Q^{ss}$, respectively. As the dimension of the stabilisers is an upper semicontinuous function on $Q^{ss}$, we have 
\begin{equation}\label{closures of OmegaQ and SigmaQ}
\mathrm{\Omega}_Q = \mathrm{\Omega}_Q^\circ \quad \text{and} \quad \mathrm{\Sigma}_Q^\circ \subset \mathrm{\Sigma}_Q \subset \mathrm{\Omega}_Q^\circ \sqcup \mathrm{\Sigma}_Q^\circ \sqcup \mathrm{\Gamma}_Q^\circ.
\end{equation}
Let us describe the local structure of $Q^{ss}$ at points of $\mathrm{\Sigma}_Q^\circ$  and $\mathrm{\Omega}_Q^\circ$. We use the tools of \cref{sec:local structure via deformation theory and Luna's theorem}, which were computed in \cite[Sections 1.4 and 1.5]{O'GdmsK3}.

\smallskip

\subsubsection{Points of $\mathrm{\Sigma}_Q^\circ$}\label{Points in SigmaQ} Let $[q \colon \Hca \to F]$ be a point of $\mathrm{\Sigma}_Q^\circ$. The sheaf $F$ is isomorphic to $I_{Z_1} \oplus I_{Z_2}$, for some \emph{distinct} length-$n$ closed subschemes $Z_1$ and $Z_2$ of $X$. The projective automorphism group $\mathrm{PAut}(F)$ is isomorphic to $\mathbb{C}^\times$.

\smallskip
The tangent and obstruction spaces of $\mathrm{Def}_F$ are
\begin{gather*}
\mathrm{Ext}^1(F,F) \simeq \mathrm{Ext}^1(I_{Z_1},I_{Z_2}) \oplus \mathrm{Ext}^1(I_{Z_2},I_{Z_1}) \oplus \mathrm{Ext}^1(I_{Z_1},I_{Z_1}) \oplus \mathrm{Ext}^1(I_{Z_2},I_{Z_2}), \\
\mathrm{Ext}^2(F,F)_0 = \{ (\alpha,\beta) \in  \mathrm{Ext}^2(I_{Z_1},I_{Z_1}) \oplus \mathrm{Ext}^2(I_{Z_2},I_{Z_2}) \mid \mathrm{Tr}(\alpha) + \mathrm{Tr}(\beta) = 0 \}.
\end{gather*}
The group $\mathrm{PAut}(F)$ acts trivially on $\mathrm{Ext}^2(F,F)_0$. On $\mathrm{Ext}^1(F,F)$, it acts as follows: given
\[
e_{ij} \in \mathrm{Ext}^1(I_{Z_i},I_{Z_j}), \quad 1 \le i,j \le 2,
\]
and for an appropriate choice of isomorphism $\mathrm{PAut}(F) \simeq \mathbb{C}^\times$, we have
\[
\lambda(e_{12},e_{21},e_{11},e_{22}) = (\lambda e_{12}, \lambda^{-1} e_{21},e_{11},e_{22}), \quad \lambda \in \mathbb{C}^\times.
\]
In particular, the stabiliser of $(e_{12},e_{21},e_{11},e_{22}) \in \mathrm{Ext}^1(F,F)$ in $\mathrm{PAut}(F)$ is
\begin{equation}\label{stabilisers in C*}
\mathrm{St}(e_{12},e_{21},e_{11},e_{22}) = \begin{cases} \mathbb{C}^\times &\text{if } e_{12} = e_{21} = 0 \\
\{ 1 \} & \text{otherwise.}
\end{cases}
\end{equation}
Moreover, the Yoneda square map $\mathrm{\Upsilon}_F$ reads
\[
\mathrm{\Upsilon}_F(e_{12},e_{21},e_{11},e_{22}) = (e_{21} \cup e_{12}, e_{12} \cup e_{21}).
\]

\smallskip

Now, pick $\Vca \subset Q^{ss}$ an étale slice in $[q]$. By \cref{cor:slice as analytic deformation space}, there is a $\mathrm{PAut}(F)$-equivariant isomorphism of analytic germs
\begin{equation*}
(\Vca,[q]) \simeq (\mathrm{\Upsilon}_F^{-1}(0),o).
\end{equation*}
The points on both sides can be compared by means of their stabilisers in $\mathrm{PAut}(F)$, given in \eqref{stabilisers in PGL(N)} via \cref{rem:transitivity of stabilisers}, and in \eqref{stabilisers in C*}, respectively. We deduce from \eqref{closures of OmegaQ and SigmaQ} an isomorphism of analytic germs\footnote{The analytic germs that we claim to be isomorphic are reduced, because they are associated to reduced schemes: if we endow a closed subset $Y$ of $Q^{ss}$ with the reduced induced scheme structure, then $\Wca \coloneqq \Vca \times_{Q^{ss}}Y$ is reduced, too. Indeed, as the morphism $\Vca \times \mathrm{PGL}(N) \to Q^{ss}$ is smooth, so is $\Wca \times \mathrm{PGL}(N) \to Y$; since $Y$ is reduced, so are $\Wca \times \mathrm{PGL}(N)$ \cite[IV, Proposition 17.5.7]{EGA} and, in particular, $\Wca$.}
\[
(\mathrm{\Sigma}_Q \cap \Vca, [q]) \simeq (\{0 \} \times \mathrm{Ext}^1(I_{Z_1},I_{Z_1}) \times \mathrm{Ext}^1(I_{Z_2},I_{Z_2}),o).
\]
Thus, the fibre over $[q]$ of the normal cone $C_{\mathrm{\Sigma}_Q \cap \Vca/\Vca}$ to $\mathrm{\Sigma_Q} \cap \Vca$ in $\Vca$ (and, by Luna's theorem, also the fibre over $[q]$ of the normal cone $C_{\mathrm{\Sigma}_Q /Q^{ss}}$ to $\mathrm{\Sigma_Q}$ in $Q^{ss}$) can be expressed as
\begin{equation}\label{fibres normal cone SigmaQ}
(C_{\mathrm{\Sigma_Q} /Q^{ss}})_{[q]} \simeq (C_{\mathrm{\Sigma_Q} \cap \Vca/\Vca})_{[q]} \simeq \{ (e_{12}, e_{21}) \in \mathrm{Ext}^1(I_{Z_1},I_{Z_2}) \oplus \mathrm{Ext}^1(I_{Z_2},I_{Z_1}) \mid e_{12} \cup e_{21} = 0 \}.
\end{equation}
The $\mathrm{PAut}(F)$-action on $(C_{\mathrm{\Sigma_Q}/Q^{ss}})_{[q]}$ is given by $\lambda (e_{12},e_{21}) = (\lambda e_{12}, \lambda^{-1} e_{21})$. The corresponding GIT quotient is the affine cone over
\[
\{ ([e_{12}], [e_{21}]) \in \PP \mathrm{Ext}^1(I_{Z_1},I_{Z_2}) \times \PP \mathrm{Ext}^1(I_{Z_2},I_{Z_1}) \mid e_{12} \cup e_{21} = 0 \},
\]
regarded as a closed subscheme of $\PP (\mathrm{Ext}^1(I_{Z_1},I_{Z_2}) \otimes \mathrm{Ext}^1(I_{Z_2},I_{Z_1}))$ via the Segre imbedding.

\subsubsection{Points of $\mathrm{\Omega}_Q^\circ$}\label{sec:Points in in OmegaQ} Let $[q \colon \Hca \to F]$ be a point of $\mathrm{\Omega}_Q^\circ$. The sheaf $F$ is isomorphic to $I_Z^{\oplus 2}$, for some closed subscheme $Z$ of $X$ of length $n$. The projective automorphism group  $\mathrm{PAut}(F)$, which is isomorphic to $\mathrm{PGL}(2)$, admits also the following useful description. Let $W$ be the Lie algebra of $\mathrm{PGL}(2)$. The adjoint action of $\mathrm{PGL}(2)$ on $W \simeq \mathfrak{sl}(2)$ preserves the Killing form. We deduce a group homomorphism $\mathrm{PGL}(2) \to \mathrm{SO}(W)$, which turns out to be an isomorphism. 

\smallskip

Set
\[
\mathrm{E}_Z \coloneqq \mathrm{Ext}^1(I_Z,I_Z).
\]
The tangent and obstruction spaces of $\mathrm{Def}_F$ are
\begin{gather*}
\mathrm{Ext}^1(F,F) \simeq \mathrm{E}_Z \otimes \mathfrak{gl}(2) \simeq (\mathrm{E}_Z \otimes W) \oplus (\mathrm{E}_Z \otimes \mathbb{C}\mathrm{Id}), \\
\mathrm{Ext}^2(F,F)_0 \simeq \mathrm{Ext}^2(I_Z,I_Z) \otimes W \simeq W.
\end{gather*}
The action of $\mathrm{PAut}(F) \simeq \mathrm{SO}(W)$ on these spaces is deduced from the tautological one on $W$, and the trivial one on $\mathbb{C}\mathrm{Id}$. 

Now, the Killing form $\kappa$ (resp.\ the Lie bracket) of $W$ yields identifications
\begin{equation*}
\mathrm{E}_Z \otimes W \simeq \mathrm{Hom}(W,\mathrm{E}_Z) \quad \text{and} \quad \wedge^2 W^\vee \simeq \wedge^2 W  \quad (\text{resp.}\wedge^2 W \xrightarrow{\sim} W).
\end{equation*}
Up to a non-zero multiplicative factor, the Yoneda square map $\mathrm{\Upsilon}_F$ for $F$ reads accordingly
\[
\mathrm{\Upsilon}_F \colon \mathrm{Hom}(W,\mathrm{E}_Z) \times (\mathrm{E}_Z \otimes \mathbb{C}\mathrm{Id}) \to \wedge^2 W^\vee, \quad (\varphi, e) \mapsto \varphi^* \omega
\]
(recall that $\omega \in \wedge^2 \mathrm{E}_Z^\vee$ denotes the symplectic form on $\mathrm{E}_Z$). Thus, setting
\begin{equation*}
\mathrm{Hom}^{\omega}(W,\mathrm{E}_Z) \coloneqq \{ \varphi \in \mathrm{Hom}(W, \mathrm{E}_Z) \mid \im(\varphi) \ \text{is isotropic for} \ \omega \},
\end{equation*}
the zero locus of $\mathrm{\Upsilon}_F$ is
\[
\mathrm{\Upsilon}_F^{-1}(0) = \mathrm{Hom}^\omega(W,\mathrm{E}_Z)  \times (\mathrm{E}_Z \otimes \mathbb{C}\mathrm{Id}).
\]
The stabiliser in $\mathrm{PAut}(F) \simeq \mathrm{SO}(W)$ of any pair $(\varphi, e) \in \mathrm{Hom}^\omega(W,\mathrm{E}_Z)  \times (\mathrm{E}_Z \otimes \mathbb{C}\mathrm{Id})$ equals the stabiliser of its first component $\varphi$, which is easily computed as
\begin{equation}\label{stabilisers in PGL(2)}
\mathrm{St}(\varphi) \simeq \begin{cases} \mathrm{SO}(W) &\text{if }  \varphi = 0 \\
(\mathbb{C}, +) &\text{if } \mathrm{rk}(\varphi) = 1  \text{ and }\ker(\varphi)^{\perp_\kappa} \ \text{is isotropic for } \kappa \\
\mathbb{C}^\times &\text{if } \mathrm{rk}(\varphi) = 1  \text{ and }\ker(\varphi)^{\perp_\kappa} \ \text{is non-isotropic for } \kappa \\
\{1\} &\text{if } \mathrm{rk}(\varphi) \ge 2.
\end{cases}
\end{equation}
Note that the analytic (and even Zariski) closure in $\mathrm{Hom}^\omega(W,\mathrm{E}_Z)$ of the set of homomorphisms with stabiliser isomorphic to $\mathbb{C}^\times$ equals
\begin{equation*}
\mathrm{Hom}_1(W, \mathrm{E}_Z) \coloneqq  \{ \varphi \in \mathrm{Hom}(W, \mathrm{E}_Z) \mid \mathrm{rk}(\varphi) \le 1 \}.
\end{equation*}

Now, let $\Vca \subset Q^{ss}$ be an étale slice in $[q]$. By \cref{cor:slice as analytic deformation space}, there is a $\mathrm{PAut}(F)$-equivariant isomorphism of analytic germs
\begin{equation} \label{Q local description}
(\Vca, [q]) \simeq (\mathrm{Hom}^{\omega}(W,\mathrm{E}_Z) \times (\mathrm{E}_Z \otimes \mathbb{C}\mathrm{Id}),o).
\end{equation}
The points on both sides can be compared by means of their stabilisers in $\mathrm{PAut}(F)$, given in \eqref{stabilisers in PGL(N)} via \cref{rem:transitivity of stabilisers}, and in \eqref{stabilisers in PGL(2)}, respectively; in particular, in an analytic neighbourhood of $[q]$, the closure of $\Vca \cap \mathrm{\Sigma}_Q^\circ$ in $\Vca$ is described as $\Vca \cap (\mathrm{\Omega}_Q^\circ \sqcup \mathrm{\Sigma}_Q^\circ \sqcup \mathrm{\Gamma}_Q^\circ)$. 
From \eqref{closures of OmegaQ and SigmaQ} we deduce that \eqref{Q local description} restricts to isomorphisms of analytic germs
\begin{gather}
(\mathrm{\Sigma}_Q \cap \Vca, [q]) \simeq (\mathrm{Hom}_1(W,\mathrm{E}_Z) \times (\mathrm{E}_Z \otimes \mathbb{C}\mathrm{Id}),o),  \label{SigmaQ local description} \\
(\mathrm{\Omega}_Q \cap \Vca,[q]) \simeq (\{ 0 \} \times (\mathrm{E}_Z \otimes \mathbb{C}\mathrm{Id}),o). \label{OmegaQ local description}
\end{gather}
This allows us to describe the fibre over $[q]$ of the normal cones $C_{\mathrm{\Omega}_Q \cap \Vca/\Vca}$ to $\mathrm{\Omega}_Q \cap \Vca$ in $\Vca$  and $C_{\mathrm{\Omega}_Q \cap \Vca/\mathrm{\Sigma}_Q \cap \Vca}$ to $\mathrm{\Omega}_Q \cap \Vca$ in $\mathrm{\Sigma}_Q \cap \Vca$ (and, by Luna's theorem, also the fibre over $[q]$ of the normal cones $C_{\mathrm{\Omega}_Q /Q^{ss}}$ to $\mathrm{\Omega}_Q$ in $Q^{ss}$ and $C_{\mathrm{\Omega}_Q/\mathrm{\Sigma}_Q}$ to $\mathrm{\Omega}_Q$ in $\mathrm{\Sigma}_Q$): we have $\mathrm{PAut}(F)$-equivariant isomorphisms
\begin{gather}
(C_{\mathrm{\Omega}_Q/Q^{ss}})_{[q]} \simeq (C_{\mathrm{\Omega}_Q \cap \Vca/\Vca})_{[q]} \simeq \mathrm{Hom}^{\omega}(W, \mathrm{E}_Z), \label{normal cone OmegaQ fibres} \\
(C_{\mathrm{\Omega}_Q/\mathrm{\Sigma}_Q})_{[q]} \simeq (C_{\mathrm{\Omega}_Q \cap \Vca/\mathrm{\Sigma}_Q \cap \Vca})_{[q]} \simeq \mathrm{Hom}_1(W,\mathrm{E}_Z). \label{fibres cone to OmegaQ in SigmaQ}
\end{gather}

\subsubsection{} As it is useful for the study of $Q^{ss}$ along $\mathrm{\Omega}_Q$ or $\mathrm{\Sigma}_Q$, and of $\mathrm{\Sigma}_Q$ along $\mathrm{\Omega}_Q$, we recall here the notion of normal flatness; we refer to \cite[Chapter IV, Section 21]{HOIebu} for more details.
\begin{definition}[Hironaka]
Let $Y$ be a scheme of finite type over $\mathbb{C}$, and $V$ a closed subscheme of $Y$ defined by a sheaf of ideals $\Jca \subset \Oca_Y$. Given a point  $y$ of $V$, we say that $Y$ is \emph{normally flat along $V$ at $y$} if the algebra $(\oplus_{m \ge 0} \Jca^m/\Jca^{m+1})_y$ is a flat $\Oca_{V,y}$-module. We say that $Y$ is \emph{normally flat along $V$} if it is so at any point of $V$ (or, equivalently, at any \emph{closed} point of $V$). 
\end{definition}
\begin{lemma}\label{lem:normal flatness} Let $Y$ be a scheme of finite type over $\mathbb{C}$, $V$ a closed subscheme of $Y$ and $y$ a closed point of $V$. 
\begin{enumerate}
\item[(i)] Let $f \colon Y' \to Y$ be a flat morphism. Consider in $Y'$ the closed subscheme $V' \coloneqq Y' \times_Y V$, and in $V'$ a closed point $y'$ whose image under $f$ is $y$. Then $Y$ is normally flat along $V$ at $y$ if and only if $Y'$ is normally flat along $V'$ at $y'$.
\item[(ii)] Assume that $V$ is smooth at $y$. Then $Y$ is normally flat along $V$ at $y$ if and only if the natural morphism 
\[
(C_{V/Y})_y \times C_{y/V} \to C_{y/Y} 
\]
is an isomorphism.
\end{enumerate}
\end{lemma}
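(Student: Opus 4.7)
For part \textbf{(i)}, the plan is a direct faithfully-flat-descent argument on the normal-cone algebra. Let $\Jca \subset \Oca_Y$ be the ideal sheaf of $V$. Since $V' = Y' \times_Y V$, its ideal sheaf in $Y'$ is $\Jca \Oca_{Y'}$, and flatness of $f$ supplies, for every $m \ge 0$, a canonical graded isomorphism
\[
(\Jca^m/\Jca^{m+1}) \otimes_{\Oca_V} \Oca_{V'} \isomto (\Jca \Oca_{Y'})^m/(\Jca\Oca_{Y'})^{m+1}.
\]
Taking stalks at $y'$ and summing over $m$, the normal-cone algebra of $V'$ in $Y'$ at $y'$ is obtained from that of $V$ in $Y$ at $y$ by base change along $\Oca_{V,y} \to \Oca_{V',y'}$. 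The latter is a flat local homomorphism of local rings, hence faithfully flat; flatness of a module is invariant in both directions under faithfully flat base change, so the equivalence follows.

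For part \textbf{(ii)}, this is Hironaka's classical normal-flatness criterion; I propose to cite \cite[Chapter IV, Section 21]{HOIebu} for the full proof and only outline the strategy. Setting $R = \Oca_{Y,y}$, $\mathfrak{m} = \mathfrak{m}_{Y,y}$, $\Ica$ the ideal of $V$ in $R$, $S = R/\Ica$, and $\mathfrak{n} = \mathfrak{m}/\Ica$, the smoothness of $V$ at $y$ forces $S$ to be regular and $\mathrm{gr}_\mathfrak{n} S$ to be a polynomial $k(y)$-algebra. One equips each graded piece $\mathfrak{m}^n/\mathfrak{m}^{n+1}$ with the decreasing filtration $F^p = (\Ica^p \mathfrak{m}^{n-p} + \mathfrak{m}^{n+1})/\mathfrak{m}^{n+1}$, which refines the $\mathfrak{m}$-adic filtration by the $\Ica$-adic one. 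The associated bigraded algebra compares naturally with the coordinate algebra $(\mathrm{gr}_\Ica R \otimes_S k(y)) \otimes_{k(y)} \mathrm{gr}_\mathfrak{n} S$ of $(C_{V/Y})_y \times C_{y/V}$, and this comparison realises the natural morphism of the statement. A Hilbert-function and Nakayama argument then identifies ``the natural morphism is an isomorphism'' with ``every $\Ica^n/\Ica^{n+1}$ is free over $S$'', that is, with $S$-flatness of $\mathrm{gr}_\Ica R$, which is exactly normal flatness along $V$ at $y$.

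The hard point in \textbf{(ii)} is to control the interaction of the two filtrations and to produce, under the normal-flatness hypothesis, the splitting that makes the bigraded pieces recover $\mathfrak{m}^n/\mathfrak{m}^{n+1}$; this is precisely what Hironaka's analysis in \loccit achieves, and I would not reproduce it here. In \textbf{(i)}, by contrast, no such obstacle arises: the argument is purely formal once one knows that flat base change commutes with taking powers of ideals and quotients thereof.
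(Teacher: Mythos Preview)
Your proposal is correct and follows essentially the same approach as the paper: the paper merely states that (i) is ``easily verified'' and that (ii) is \cite[Chapter IV, Corollary 21.11]{HOIebu} phrased in geometric terms, so your more detailed faithfully-flat-descent argument for (i) and your citation-plus-outline for (ii) are exactly in line with what the author intended.
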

\begin{proof}
The first statement is easily verified. The second is \cite[Chapter IV, Corollary 21.11]{HOIebu}, phrased in geometric terms.
\end{proof}
\begin{proposition}\label{normal flatness of Q} The following hold true.
\begin{enumerate}
\item[(i)] $Q^{ss}$ is normally flat along $\mathrm{\Omega}_Q=\mathrm{\Omega}_Q^\circ$. 
\item[(ii)] $\mathrm{\Sigma}_Q$ is normally flat along $\mathrm{\Omega}_Q=\mathrm{\Omega}_Q^\circ$.
\item[(iii)] $Q^{ss}$ is normally flat along $\mathrm{\Sigma}_Q$ at any point of $\mathrm{\Sigma}_Q^\circ$. 
\end{enumerate}
In particular, the natural morphisms $\PP(C_{\mathrm{\Omega_Q}/Q^{ss}}) \to \mathrm{\Omega}_Q$ and $\PP(C_{\mathrm{\Omega_Q}/\mathrm{\Sigma}_Q}) \to \mathrm{\Omega}_Q$ are flat.
\end{proposition}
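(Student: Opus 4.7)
The plan is to verify normal flatness at each closed point separately, reducing via Luna's slice theorem to a computation on the étale slices whose explicit local geometry was determined in \cref{Points in SigmaQ} and \cref{sec:Points in in OmegaQ}. So first I would fix a closed point $[q]$ of $\mathrm{\Omega}_Q$ (for (i) and (ii)) or of $\mathrm{\Sigma}_Q^\circ$ (for (iii)), pick an étale slice $\Vca \subset Q^{ss}$ at $[q]$, and restrict to the $\mathrm{PAut}(F)$-invariant analytic neighbourhood $\Vca' \subset \Vca$ of \cref{rem:transitivity of stabilisers}, on which the multiplication map $\Vca' \times \mathrm{PGL}(N) \to Q^{ss}$ is smooth. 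Since $\mathrm{\Omega}_Q$ and $\mathrm{\Sigma}_Q$ are $\mathrm{PGL}(N)$-invariant, they pull back to $(\mathrm{\Omega}_Q \cap \Vca') \times \mathrm{PGL}(N)$ and $(\mathrm{\Sigma}_Q \cap \Vca') \times \mathrm{PGL}(N)$ respectively, and normal flatness is preserved both by the flat base change of \cref{lem:normal flatness}(i) and by taking products with the smooth factor $\mathrm{PGL}(N)$. This reduces the three assertions to the analogous statements on the slice itself at the point $[q]$.

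The key input is then that, in each case, the analytic local description realises the ambient slice as a product of a (possibly singular) cone and a smooth linear factor, the latter corresponding precisely to the subscheme whose normal flatness I want to check. For instance, for (i), the isomorphism \eqref{Q local description} combined with \eqref{OmegaQ local description} identifies $(\Vca, [q])$ with $(\mathrm{Hom}^\omega(W, \mathrm{E}_Z) \times (\mathrm{E}_Z \otimes \mathbb{C}\mathrm{Id}), o)$ so that $\mathrm{\Omega}_Q \cap \Vca$ corresponds to the smooth second factor. Since an analytic isomorphism of germs induces an isomorphism on formal completions of local rings, and hence on the associated graded algebras computing normal cones, the tangent cone $C_{[q]/\Vca}$ splits as a product, which by \eqref{normal cone OmegaQ fibres} reads exactly as $(C_{\mathrm{\Omega}_Q \cap \Vca / \Vca})_{[q]} \times C_{[q] / \mathrm{\Omega}_Q \cap \Vca}$; the criterion of \cref{lem:normal flatness}(ii) then applies because $\mathrm{\Omega}_Q \cap \Vca$ is smooth at $[q]$. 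For (ii) I would run the same argument using \eqref{SigmaQ local description} together with \eqref{fibres cone to OmegaQ in SigmaQ}; for (iii), using the local description of \cref{Points in SigmaQ} together with \eqref{fibres normal cone SigmaQ}. In all three cases the product structure at the analytic level immediately produces the required product of tangent cones.

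The final assertion follows formally: normal flatness of $Q^{ss}$ (resp. $\mathrm{\Sigma}_Q$) along $\mathrm{\Omega}_Q$ means that the graded $\Oca_{\mathrm{\Omega}_Q}$-algebra defining $C_{\mathrm{\Omega}_Q/Q^{ss}}$ (resp.\ $C_{\mathrm{\Omega}_Q/\mathrm{\Sigma}_Q}$) is flat, so the corresponding $\mathbf{Spec}$ is flat over $\mathrm{\Omega}_Q$, and so therefore is its projectivisation. The main obstacle I anticipate is the bookkeeping in the Luna reduction — in particular, ensuring that the analytic-germ product structure genuinely descends to the desired product decomposition of tangent cones — but this is essentially formal and rests on the compatibility of tangent cones with analytic isomorphism of germs.
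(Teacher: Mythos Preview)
Your proposal is correct and follows essentially the same strategy as the paper: reduce via Luna's slice and flat base change (\cref{lem:normal flatness}(i)) to the explicit local models of \cref{Points in SigmaQ} and \cref{sec:Points in in OmegaQ}. The only minor difference is in the endgame: you invoke the product criterion of \cref{lem:normal flatness}(ii) on the slice, whereas the paper pushes the chain of flat cartesian squares one step further---through the formal completion and then the projection $\mathrm{Hom}^\omega(W,\mathrm{E}_Z)\times(\mathrm{E}_Z\otimes\mathbb{C}\mathrm{Id})\to\mathrm{Hom}^\omega(W,\mathrm{E}_Z)$---and concludes from the trivial fact that any scheme is normally flat along a single point; this bypasses the need to check that the \emph{natural} comparison map in \cref{lem:normal flatness}(ii) is the one your product splitting realises.
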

\begin{proof}
As all these statements are proved similarly, we deal just with the first one. Pick a closed point of $\mathrm{\Omega}_Q^\circ= \mathrm{\Omega}_Q$; in the notation of \cref{sec:Points in in OmegaQ}, which we keep in this proof, it is of the form $[q \colon \Hca \to I_Z^{\oplus 2}]$, with $[Z] \in X^{[n]}$. Let $\Vca \subset Q^{ss}$ be an \'etale slice in $[q]$, and set $\Wca \coloneqq \Vca \times_{Q^{ss}} \mathrm{\Omega}_Q$. Recalling \eqref{OmegaQ local description}, we have a commutative diagram with cartesian squares and flat horizontal arrows:
\[
\begin{tikzcd}[column sep= 0.7em]
\{ 0 \} \ar[d,hook] & \{ 0 \} \times (\mathrm{E}_Z \otimes \mathbb{C}\mathrm{Id}) \ar[l] \ar[d,hook] & \mathrm{Spec}(\widehat{\Oca}_{\Wca, [q]}) \ar[l] \ar[r] \ar[d,hook] & \Wca \ar[d,hook]  & \Wca \times \mathrm{PGL}(N) \ar[l] \ar[r] \ar[d,hook] & \mathrm{\Omega}_Q \ar[d,hook] \\
\mathrm{Hom}^{\omega}(W, \mathrm{E}_Z) & \mathrm{Hom}^{\omega}(W, \mathrm{E}_Z) \times (\mathrm{E}_Z \otimes \mathbb{C}\mathrm{Id}) \ar[l] & \mathrm{Spec}(\widehat{\Oca}_{\Vca, [q]}) \ar[l] \ar[r] & \Vca & \Vca \times \mathrm{PGL}(N) \ar[l] \ar[r] & Q^{ss}.
\end{tikzcd}
\]
By \cref{lem:normal flatness}(i), the result follows from the normal flatness of $\mathrm{Hom}^{\omega}(W, \mathrm{E}_Z)$ along $\{ 0 \}$.
\end{proof}

\subsubsection{The singular locus of $M_n$} 
From the stratification $\mathrm{\Omega}_Q \subset \mathrm{\Sigma}_Q \subset Q^{ss}$ at the level of the Quot scheme, we deduce by GIT quotient a stratification of the moduli space:
\begin{equation}\label{stratification moduli space}
\mathrm{\Omega}_Q \sslash \mathrm{PGL}(N) \quad \subset \quad \mathrm{\Sigma}_Q \sslash \mathrm{PGL}(N) \quad \subset \quad Q^{ss} \sslash \mathrm{PGL}(N) = M_n.
\end{equation}
The first term parametrises polystable sheaves of the form $I_Z^{\oplus 2}$, with $[Z] \in X^{[n]}$; therefore, $\mathrm{\Omega}_Q \sslash \mathrm{PGL}(N) \simeq X^{[n]}$.

The middle term parametrises polystable sheaves of the form $I_{Z_1} \oplus I_{Z_2}$, with $[Z_1],[Z_2] \in X^{[n]}$; thus, it equals the strictly semistable locus $M_n \setminus M_n^s$, as well as the image of the natural morphism
\[
X^{[n]} \times X^{[n]} \to M_n, \ \ \ ([Z_1], [Z_2]) \mapsto [I_{Z_1} \oplus I_{Z_2}].
\]
This morphism is invariant for the involution $\iota$ interchanging the factors of $X^{[n]} \times X^{[n]}$, and identifies $M_n \setminus M_n^s  = \mathrm{\Sigma}_Q \sslash \mathrm{PGL}(N)$ with the symmetric square $\mathrm{S}^2 X^{[n]} = (X^{[n]} \times X^{[n]}) / \iota$.

\smallskip

The singular locus of $M_n$ equals $M_n \setminus M_n^s \simeq \mathrm{S}^2 X^{[n]}$ (cf.\ \cite[Proposition 6.1]{KLSssms}). In turn, $\mathrm{S}^2 X^{[n]}$ is singular along the diagonal $X^{[n]}$. All in all, the stratification \eqref{stratification moduli space} can be written as
\[
 X^{[n]} \subset \mathrm{S}^2 X^{[n]} \subset M_n,
\]
with $X^{[n]}$, $\mathrm{S}^2 X^{[n]} \setminus X^{[n]}$ and $M_n \setminus \mathrm{S}^2 X^{[n]}$ non-singular.

\section{The desingularisation procedure}\label{sec:The desingularisation procedure}
In \cite{O'GdmsK3}, O'Grady exhibited an explicit resolution of singularities of $M_n=Q^{ss} \sslash \mathrm{PGL}(N)$. In this section, we detail his construction, with complements due to Choy and Kiem \cite{ChoyKiem}. Proofs are omitted, unless they present a technical tool that is repeatedly used, or help reconstructing the very concise arguments of  \cite{ChoyKiem}. Before starting, let us state a couple of general results concerning the delicate relation between quotients and blow-ups.
\subsection{Quotients and blow-ups}
We keep the terminology and conventions of \ref{Notation GIT}. Throughout this subsection, $Y$ denotes a scheme of finite type over $\mathbb{C}$, $\mathrm{G}$ a reductive affine algebraic group acting on $Y$, $V$ a $\mathrm{G}$-invariant closed subscheme of $Y$, $\pi \colon \widetilde{Y} \to Y$ the blow-up of $Y$ along $V$ and $\widetilde{V}= \pi^{-1}(V)$ its exceptional divisor. The action of $\mathrm{G}$ on $Y$ lifts naturally to an action on $\widetilde{Y}$, and the line bundle $\Oca(-\widetilde{V})$ carries a natural $\mathrm{G}$-equivariant structure. The following propositions relate the quotients of $Y$ and $\widetilde{Y}$ by $\mathrm{G}$.

\begin{proposition}\label{prop:Kirwan blow-up}
Assume that $Y$ is projective and that the $\mathrm{G}$-action on $Y$ is linearised in an ample line bundle $\Lca$. Then the following hold.
\begin{enumerate}
\item[(i)] For $d \gg 0$, the $\mathrm{G}$-equivariant line bundle $\pi^*\Lca^{\otimes d} \otimes \Oca(-\widetilde{V})$ on $\widetilde{Y}$ is very ample, and the corresponding semistable and stable loci do not depend on $d$. Let us denote them as $\widetilde{Y}^{ss}$ and $\widetilde{Y}^{s}$, respectively; they satisfy
\[
\widetilde{Y}^{ss} \subset \pi^{-1}(Y^{ss}) \quad \text{and} \quad \widetilde{Y}^s \supset \pi^{-1}(Y^s).
\]
\item[(ii)] For $d$ sufficiently divisible, the GIT quotient $\widetilde{Y}^{ss} \sslash \mathrm{G}$ is identified with a blow-up of $Y^{ss} \sslash \mathrm{G}$, whose centre has the same support as $V^{ss} \sslash \mathrm{G}$.
\end{enumerate}
\end{proposition}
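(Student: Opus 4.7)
The plan is to split the argument along the two parts of the statement.

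For part (i), the ampleness of $\pi^*\Lca^{\otimes d}\otimes \Oca(-\widetilde V)$ for $d\gg 0$ follows from the standard fact that $\Oca(-\widetilde V)$ is $\pi$-ample---which is immediate from the presentation of $\widetilde Y$ as $\mathbf{Proj}_Y\bigoplus_{m\ge 0}\Ica_V^m$---combined with ampleness of $\Lca$; very ampleness then follows from Serre vanishing after possibly enlarging $d$. To pin down the (semi)stable locus, I invoke the Hilbert--Mumford numerical criterion: for any one-parameter subgroup $\lambda$ of $\mathrm{G}$ and any $\tilde y\in \widetilde Y$,
\[
\mu^{\pi^*\Lca^{d}\otimes \Oca(-\widetilde V)}(\tilde y,\lambda) \;=\; d\cdot\mu^{\Lca}(\pi(\tilde y),\lambda) + \mu^{\Oca(-\widetilde V)}(\tilde y,\lambda).
\]
A boundedness argument---restricting, up to $\mathrm{G}$-conjugacy, to one-parameter subgroups of a fixed maximal torus, and using the properness of $\widetilde Y$ to bound the second summand uniformly in $(\tilde y,\lambda)$---shows that for $d\gg 0$ the sign of this weight is governed by the $\Lca$-term whenever it is nonzero, and otherwise by the $d$-independent $\Oca(-\widetilde V)$-term. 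This simultaneously yields the stabilisation of $\widetilde Y^{ss}$ and $\widetilde Y^s$ and the claimed inclusions: if $\pi(\tilde y)\notin Y^{ss}$, then any destabilising $\lambda$ for $\pi(\tilde y)$ also destabilises $\tilde y$ for $d\gg 0$; dually, if $\pi(\tilde y)\in Y^s$ then $\mu^{\Lca}(\pi(\tilde y),\lambda)>0$ for every nontrivial $\lambda$, giving stability of $\tilde y$ for $d\gg 0$.

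For part (ii), I rely on the graded-ring presentation of the GIT quotient. Setting $\mathcal{M}\coloneqq \pi^*\Lca^{\otimes d}\otimes \Oca(-\widetilde V)$, for $d$ suitably divisible one has
\[
\widetilde Y^{ss}\sslash \mathrm{G} \simeq \mathbf{Proj}\bigoplus_{m\ge 0}\Ho^0(\widetilde Y,\mathcal{M}^{\otimes m})^{\mathrm{G}},\qquad Y^{ss}\sslash \mathrm{G} \simeq \mathbf{Proj}\bigoplus_{m\ge 0}\Ho^0(Y,\Lca^{\otimes dm})^{\mathrm{G}}.
\]
The projection formula together with the identification $\pi_*\Oca(-m\widetilde V)=\Ica_V^m$ for $m$ large (Artin--Rees) yields $\mathrm{G}$-equivariant isomorphisms $\Ho^0(\widetilde Y,\mathcal{M}^{\otimes m})\simeq \Ho^0(Y,\Lca^{\otimes dm}\otimes\Ica_V^m)$. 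Hence the graded invariant ring computing $\widetilde Y^{ss}\sslash \mathrm{G}$ is, up to passing to a Veronese subring, the Rees algebra inside $\bigoplus_m\Ho^0(Y,\Lca^{\otimes dm})^{\mathrm{G}}$ of the homogeneous ideal $J$ generated by $\bigoplus_m\Ho^0(Y,\Lca^{\otimes dm}\otimes\Ica_V)^{\mathrm{G}}$. Passing to $\mathbf{Proj}$ identifies $\widetilde Y^{ss}\sslash \mathrm{G}$ with the blow-up of $Y^{ss}\sslash \mathrm{G}$ along the closed subscheme $W$ cut out by $J$. Finally, because $\mathrm{G}$-invariants on $Y^{ss}$ separate closed orbits, the support of $W$ is exactly the image of $V^{ss}$ in the quotient, namely $V^{ss}\sslash \mathrm{G}$.

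The main obstacle is the uniform boundedness step in part (i) that establishes independence of the (semi)stable loci from $d$: controlling the $\Oca(-\widetilde V)$-weights $\mu^{\Oca(-\widetilde V)}(\tilde y,\lambda)$ uniformly over all pairs requires reducing, by conjugation, to a fixed maximal torus and exploiting the properness of $\widetilde Y$ to bound these weights on the relevant family of limits $\lim_{t\to 0}\lambda(t)\tilde y$. Part (ii) is then largely bookkeeping, modulo the standard compatibility between $\mathbf{Proj}$ of an invariant Rees algebra and the blow-up of the corresponding GIT quotient, which must be verified only on the appropriate semistable loci.
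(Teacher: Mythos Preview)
Your proposal is correct and follows exactly the approach of Kirwan's paper \cite{KPdqnvBn}, which is precisely what the paper cites without further elaboration (the paper gives no self-contained argument here, only references to \cite[3.1--3.2]{KPdqnvBn} for (i) and \cite[Lemma 3.11]{KPdqnvBn} for (ii)). One small imprecision: the identity $\pi_*\Oca(-m\widetilde V)=\Ica_V^m$ for large $m$ is not Artin--Rees but the standard comparison between a graded algebra and its saturation under $\mathbf{Proj}$; this does not affect the argument.
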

\begin{proof}
For statement (i), see \cite[3.1--3.2]{KPdqnvBn}. For (ii), see \cite[Lemma 3.11]{KPdqnvBn} and its proof.
\end{proof}
In general, it is hard to determine the scheme structure of the centre appearing in \cref{prop:Kirwan blow-up}(ii), which may be non-reduced even if $V$ and hence $V^{ss} \sslash \mathrm{G}$ are reduced. Nevertheless, things are better behaved in the following situation.

\begin{proposition}\label{prop:blow-up and principal G bundles}
Assume that the $\mathrm{G}$-action on $Y$ is set-theoretically free and that a good quotient $Y \sslash  \mathrm{G}$ of $Y$ by $\mathrm{G}$ exists (in particular, a good quotient $V \sslash \mathrm{G}$ of $V$ by $\mathrm{G}$ exists, and the $\mathrm{G}$-action on $\widetilde{Y}$ is set-theoretically free). Then the blow-up of $Y \sslash \mathrm{G}$ along $V \sslash \mathrm{G}$ and its exceptional divisor are good quotients of $\widetilde{Y}$ and $\widetilde{V}$ by $\mathrm{G}$, respectively.
\end{proposition}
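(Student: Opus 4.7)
The strategy is to exploit the fact that, under the given hypotheses, the quotient map $\pi_Y \colon Y \to Y \sslash \mathrm{G}$ is faithfully flat, and to invoke the compatibility of blow-ups with flat base change. Set $B \coloneqq \mathrm{Bl}_{V\sslash \mathrm{G}}(Y \sslash \mathrm{G})$, with exceptional divisor $E$. The plan is to produce a $\mathrm{G}$-equivariant isomorphism
\[
\widetilde{Y} \simeq Y \times_{Y\sslash \mathrm{G}} B, \qquad \widetilde{V} \simeq Y \times_{Y\sslash \mathrm{G}} E,
\]
from which the conclusion will follow by descent.

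First I would check that $\pi_Y$ is a principal $\mathrm{G}$-bundle, at least étale-locally on $Y \sslash \mathrm{G}$. Since we are in characteristic zero, all subgroup schemes of the reductive group $\mathrm{G}$ are reduced (Cartier), so set-theoretic freeness implies scheme-theoretic freeness; combined with the existence of the good quotient, this puts us in the setting of Luna's étale slice theorem, whose output (after collapsing the trivial stabilisers) is that $\pi_Y$ is étale-locally of the form $U \times \mathrm{G} \to U$. In particular, $\pi_Y$ is faithfully flat. The same holds for $\pi_V \colon V \to V \sslash \mathrm{G}$ (whose existence is automatic because $V$ is closed and $\mathrm{G}$-invariant, so its image $V \sslash \mathrm{G}$ in $Y \sslash \mathrm{G}$ is a closed subscheme, and the induced quotient structure is good). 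Flatness of $\pi_Y$ also gives the scheme-theoretic identification $\pi_Y^{-1}(V \sslash \mathrm{G}) = V$, since both sides are cut out by the pullback of the ideal sheaf of $V \sslash \mathrm{G}$.

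Next I would apply the flat base change property of blow-ups: for a flat morphism $f \colon Y \to Z$ and a closed subscheme $W \subset Z$, one has $\mathrm{Bl}_{f^{-1}(W)} Y \simeq Y \times_Z \mathrm{Bl}_W Z$, with the exceptional divisor pulling back to the exceptional divisor. Applied to $\pi_Y$ and $V \sslash \mathrm{G} \subset Y \sslash \mathrm{G}$, this yields the two displayed isomorphisms above. Both are $\mathrm{G}$-equivariant, with $\mathrm{G}$ acting trivially on $B$ and $E$ and in the natural way on $Y$.

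Finally, since $\widetilde{Y} \to B$ is obtained by pulling back the principal $\mathrm{G}$-bundle $\pi_Y$ along $B \to Y \sslash \mathrm{G}$, it is itself a principal $\mathrm{G}$-bundle; in particular $B$ is a good quotient of $\widetilde{Y}$ by $\mathrm{G}$, and analogously $E$ is a good quotient of $\widetilde{V}$. The main technical hurdle I anticipate is the justification of the principal bundle structure for $\pi_Y$: this is a standard but non-trivial consequence of Luna's theorem under scheme-theoretic freeness, and once it is in hand everything else reduces to the well-known flat base change formula for blow-ups.
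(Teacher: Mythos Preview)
Your proposal is correct and follows essentially the same route as the paper: use Luna's \'etale slice theorem to identify $\pi_Y$ as a principal $\mathrm{G}$-bundle (hence flat), show $\pi_Y^{-1}(V\sslash\mathrm{G}) = V$, and then invoke flat base change for blow-ups. One minor point: your justification of $\pi_Y^{-1}(V\sslash\mathrm{G}) = V$ (``both sides are cut out by the pullback of the ideal sheaf'') is slightly elliptical, since only the left-hand side is \emph{a priori} so defined; the paper handles this more cleanly by observing that both $V \to V\sslash\mathrm{G}$ and $\pi_Y^{-1}(V\sslash\mathrm{G}) \to V\sslash\mathrm{G}$ are principal $\mathrm{G}$-bundles, and a morphism of principal bundles over the same base is an isomorphism.
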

\begin{proof}
By Luna's \'etale slice theorem, we deduce from the assumptions that the quotient morphism $\psi \colon Y \to Y \sslash \mathrm{G}$ is a principal $\mathrm{G}$-bundle (\cf \cite[Corollary 4.2.13]{HLgmss}); the same holds true for $V \to V \sslash \mathrm{G}$. Now, the natural map $V \to Y \times _{Y \sslash \mathrm{G}}(V \sslash \mathrm{G})$ is a morphism of principal $\mathrm{G}$-bundles over $V \sslash \mathrm{G}$, hence it is an isomorphism; in other words, $\psi^{-1}(V \sslash \mathrm{G}) \simeq V$. Since $\psi$ is a universal geometric quotient and is flat, the conclusion follows by the commutativity of blowing-up with flat base change.
\end{proof}

\begin{remark}\label{Rem:set-theoretic free implies free}
For a $\mathrm{G}$-action on $Y$ satisfying the assumptions of \cref{prop:blow-up and principal G bundles}, we have just seen that the quotient morphism $Y \to Y \sslash \mathrm{G}$ is a principal $\mathrm{G}$-bundle. In particular, such an action is free (i.e.\ the morphism $Y \times \mathrm{G} \to Y \times Y, \  (y,g) \mapsto (yg,y)$ is a closed immersion) and, if $Y$ is smooth, so is $Y \sslash \mathrm{G}$.
\end{remark}

\subsection{Outline of the resolution}
In the light of \cref{prop:Kirwan blow-up}, to resolve $M_n=Q^{ss} \sslash \mathrm{PGL}(N)$, O'Grady works at the level of the Quot scheme. He takes first the blow-up $\pi_R \colon R \to Q^{ss}$ of $Q^{ss}$ along $\mathrm{\Omega}_Q$, then the blow-up $\pi_S \colon S \to R$ of $R$ along the strict transform $\mathrm{\Sigma}_R$ of $\mathrm{\Sigma}_Q$. The semistable locus of $S$ is smooth and coincides with the stable locus \cite[Claim 1.8.10]{O'GdmsK3}; in particular, all points of $S^{ss}=S^s$ have finite stabilisers. 

The induced morphism $S^s \sslash \mathrm{PGL}(N) \to M_n$ is not yet a resolution of singularities: to resolve the remaining finite quotient singularities of $S^s \sslash \mathrm{PGL}(N)$, a last blow-up is needed; once more, it will come from the blow-up $\pi_T \colon T \to S$ of $S$ along a suitable centre. 

\subsection{Double quotients} \label{rem:triple description of the centres}
In $Q^{ss}$, $R$, $S$ and $T$, we shall consider several closed subschemes that are geometrically significant for the desingularisation procedure; typically, they could be the centre or the exceptional divisor of some blow-up. For any such subscheme $Y$, we would like to have an explicit global description, and to deduce one for its GIT quotient by $\mathrm{PGL}(N)$.

This task gets easier when there exists a variety $U$ equipped with commuting actions of $\mathrm{PGL}(N)$ and of a reductive group $\mathrm{G}$, such that $Y = U \sslash \mathrm{G}$. The quotient of $U$ by $\mathrm{G}$ first, and then by $\mathrm{PGL}(N)$ yields $Y$ and $Y \sslash \mathrm{PGL}(N)$, respectively. But since the actions of $\mathrm{PGL}(N)$ and $\mathrm{G}$ on $U$ commute, one can also take the quotients in the reverse order:
\[
\begin{tikzcd}[column sep = 0.8em]
& U \ar[ld] \ar[rd] \arrow[loop left, "\mathrm{G}"] \arrow[loop right, "\mathrm{PGL}(N)"]& \\
U \sslash \mathrm{PGL}(N) \ar[rd] \arrow[loop left,looseness=3, "\mathrm{G}"] & & Y = U \sslash \mathrm{G} \ar[ld] \arrow[loop right,looseness=3, "\mathrm{PGL}(N)"] \\
& Y \sslash \mathrm{PGL}(N). &
\end{tikzcd}
\]
This picture is slightly inaccurate: indeed, for a GIT quotient, the given actions need to be linearised, and one has to take care of the corresponding semistable loci. To deal rigorously with the situations that shall occur in this paper, we will use the following

\begin{proposition}\label{prop:commuting actions} Let $\mathrm{G}_1$ and $\mathrm{G}_2$ be reductive affine algebraic groups. Let $Y$ be a quasi-projective $\mathrm{G}_1$-variety, and $U$ a quasi-projective variety equipped with commuting actions of $\mathrm{G}_1$ and $\mathrm{G}_2$; in particular $U$ is a $\mathrm{G}_1 \times \mathrm{G}_2$-variety. Let $\sigma \colon U \to Y$ be a $\mathrm{G}_1$-equivariant morphism, which is also a good quotient of $U$ by $\mathrm{G}_2$. 

Suppose that the $\mathrm{G}_1$-action on $Y$ is linearised in a line bundle $\Lca$. Assume the $\mathrm{G}_1 \times \mathrm{G}_2$-action on $U$ is linearised in $\sigma^*\Lca$, in such a way that the $\mathrm{G}_1$-equivariant structures on $\sigma^*\Lca$ induced by $\sigma$, and by restriction along $\mathrm{G}_1 \simeq \mathrm{G}_1 \times \{1_{\mathrm{G}_2}\} \hookrightarrow \mathrm{G}_1 \times \mathrm{G}_2$, respectively, are isomorphic. Then 
\[
Y^{ss}(\Lca) \sslash \mathrm{G}_1 \simeq U^{ss}(\sigma^*\Lca) \sslash (\mathrm{G}_1 \times \mathrm{G}_2).
\]
\end{proposition}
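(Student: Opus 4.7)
The plan is to compare the two quotients by first identifying their semistable loci via a comparison of invariant sections, and then applying uniqueness of good quotients to conclude by taking invariants in stages.

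First, I would describe $\mathrm{G}_1 \times \mathrm{G}_2$-invariant sections of $\sigma^*\mathcal{L}^{\otimes n}$ explicitly. Since $\sigma$ is a good quotient by $\mathrm{G}_2$, the morphism $\sigma$ is affine, the natural map $\mathcal{O}_Y \to (\sigma_* \mathcal{O}_U)^{\mathrm{G}_2}$ is an isomorphism, and more generally $\mathcal{L}^{\otimes n} \isomto (\sigma_* \sigma^*\mathcal{L}^{\otimes n})^{\mathrm{G}_2}$ (using the projection formula and that $\sigma$ is flat over the $\mathrm{G}_2$-invariants locally, or directly via the good quotient definition). The compatibility hypothesis on the two $\mathrm{G}_1$-equivariant structures on $\sigma^*\mathcal{L}$ ensures that this isomorphism is $\mathrm{G}_1$-equivariant, so taking global sections and then $\mathrm{G}_1$-invariants yields a bijection
\[
\sigma^* \colon H^0(Y,\mathcal{L}^{\otimes n})^{\mathrm{G}_1} \isomto H^0(U,\sigma^*\mathcal{L}^{\otimes n})^{\mathrm{G}_1 \times \mathrm{G}_2},
\]
where on the right we use that $\mathrm{G}_1$- and $\mathrm{G}_2$-invariants commute because the actions commute.

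Second, I would check that this bijection identifies the semistable loci as $U^{ss}(\sigma^*\mathcal{L}) = \sigma^{-1}(Y^{ss}(\mathcal{L}))$. Given a $\mathrm{G}_1$-invariant section $s$ of $\mathcal{L}^{\otimes n}$, the non-vanishing locus $Y_s \subset Y$ is affine if and only if its preimage $\sigma^{-1}(Y_s) = U_{\sigma^*s}$ is affine: one direction follows because $\sigma$ is affine, the other because $\sigma$ restricted to $U_{\sigma^*s}$ is a good quotient (in particular affine) with target $Y_s$, and affineness descends along good quotients. This covers both Mumford's semistability conditions simultaneously.

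Third, I would conclude by taking good quotients in stages. On $U^{ss}(\sigma^*\mathcal{L}) = \sigma^{-1}(Y^{ss}(\mathcal{L}))$, the restriction of $\sigma$ is a $\mathrm{G}_1$-equivariant good quotient by $\mathrm{G}_2$ with target $Y^{ss}(\mathcal{L})$; composing with the GIT quotient $Y^{ss}(\mathcal{L}) \to Y^{ss}(\mathcal{L}) \sslash \mathrm{G}_1$ yields a $(\mathrm{G}_1 \times \mathrm{G}_2)$-invariant morphism from $U^{ss}(\sigma^*\mathcal{L})$ to $Y^{ss}(\mathcal{L}) \sslash \mathrm{G}_1$. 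One verifies the universal property of a good quotient (surjectivity, affineness, the invariant-functions condition) step by step, using that a composition of good quotients by the factors is a good quotient by the product when the actions commute. By uniqueness of good quotients, this identifies $Y^{ss}(\mathcal{L}) \sslash \mathrm{G}_1$ with $U^{ss}(\sigma^*\mathcal{L}) \sslash (\mathrm{G}_1 \times \mathrm{G}_2)$.

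The main technical obstacle is checking the \emph{compatibility} carefully: one needs the two $\mathrm{G}_1$-equivariant structures on $\sigma^*\mathcal{L}$ (pulled back from $Y$ versus restricted from the $\mathrm{G}_1 \times \mathrm{G}_2$-linearisation) to agree in order for the bijection of invariant sections to be $\mathrm{G}_1$-equivariant; this is exactly the hypothesis made, and without it the argument breaks. The remaining subtlety is the equality of semistable loci, where affineness of non-vanishing loci must be tracked in both directions; this is routine once one uses that $\sigma$ is affine and a good quotient, so it preserves and reflects affineness of saturated open subsets.
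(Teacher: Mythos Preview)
Your proposal is correct and follows essentially the same route as the paper: identify $\mathrm{G}_1$-invariant sections on $Y$ with $\mathrm{G}_1\times\mathrm{G}_2$-invariant sections on $U$ via the good-quotient property of $\sigma$, deduce $U^{ss}(\sigma^*\mathcal{L})=\sigma^{-1}(Y^{ss}(\mathcal{L}))$, and then take quotients in stages. The paper phrases the last step as a reduction to the affine case with trivial linearisation, while you verify the composed good-quotient property directly; your treatment of the affineness condition in the semistability criterion is in fact more explicit than the paper's.
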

\begin{proof}
If $U$ and $Y$ are affine, and the linearisations are trivial, the statement clearly holds true. In the general case, we can reason as follows.
Since $\sigma \colon U \to Y$ is a good quotient of $U$ by $\mathrm{G}_2$, the map $\sigma^* \colon \Ho^0(Y,\Lca) \xrightarrow{\sim} \Ho^0(U,\sigma^*\Lca)^{\mathrm{G}_2}$ is an isomorphism; in particular, it remains an isomorphism after taking $\mathrm{G}_1$-invariants. This implies that $\sigma^{-1}(Y^{ss}(\Lca))=U^{ss}(\sigma^*\Lca)$, and so we can reduce to the affine situation considered above.
\end{proof}

We introduce now some notation that will help us describe $\mathrm{\Omega}_Q$ and $\mathrm{\Sigma}_R^{ss}$ from the perspective of \cref{rem:triple description of the centres}. 
\subsection{Notation}\label{Notation} 
Recall from \cref{sec: notation projections and base change} that, for any scheme $Y$ over $\mathbb{C}$, we shall denote as $p_Y \colon Y \times X \to Y$ the projection onto the first component; moreover, the base change of any morphism $\sigma \colon Y' \to Y$ along $p_Y$ will be denoted $\sigma_X \colon Y' \times X \to Y \times X$.
 
\subsubsection{} Let $\pi_1$ and $\pi_2$ be the projections from $X^{[n]} \times X^{[n]}$ onto the first and the second factor, respectively. Let $\delta \colon X^{[n]} \rightarrow X^{[n]} \times X^{[n]}$ be the diagonal imbedding, and $b \colon \Xca^{[n]} \rightarrow X^{[n]} \times X^{[n]}$ the blow-up of $X^{[n]} \times X^{[n]}$ along the diagonal. The exceptional divisor $E$ is isomorphic to the projective bundle $\PP(\Tca)$ associated with the tangent bundle $\Tca$ of $X^{[n]}$. Denote by $\varepsilon \colon E \rightarrow \Xca^{[n]}$ the inclusion morphism, and by $\beta \colon E \rightarrow X^{[n]}$ the restriction of $b$ to $E$. 

This geometric situation can be visualised in a commutative diagram with cartesian squares:
\[
\begin{tikzcd}[scale cd=0.8, column sep=1.0em]
& E \times X \ar[ld, "p_E"'] \ar[dd, "\beta_X"{yshift=15pt}] \ar[rr,hook,"\varepsilon_X"] & & \Xca^{[n]} \times X \ar[ld,"p_{\Xca^{[n]}}"] \ar[dd,"b_X"] \\
\quad E \simeq \PP(\Tca) \quad \ar[dd, "\beta"'] \ar[rr,"\varepsilon"{xshift=15pt},hook, crossing over]  & & \Xca^{[n]} \\
& X^{[n]} \times X \ar[ld, "p_{X^{[n]}}"'] \ar[rr,"\delta_X"{xshift=-15pt},hook] & & X^{[n]} \times X^{[n]} \times X \ar[ld,"p_{X^{[n]} \times X^{[n]}}"] \ar[rd,"\pi_{2,X}"] \\
 X^{[n]} \ar[rr,"\delta",hook] & & X^{[n]} \times X^{[n]} \ar[ld, "\pi_1"'] \ar[rd,"\pi_2"] \ar[from=uu, "b"{yshift=15pt}, crossing over] & & X^{[n]} \times X \ar[ld,"p_{X^{[n]}}"] \\
& X^{[n]} & & X^{[n]}.
\end{tikzcd}
\]
\subsubsection{}\label{sec:involution action} The involution $\iota$ interchanging the factors of $X^{[n]} \times X^{[n]}$ preserves the diagonal; hence $\iota$ lifts to an involution $\widetilde{\iota}$ of $\Xca^{[n]}$. The quotient of $X^{[n]} \times X^{[n]}$ by $\iota$ is the singular locus $\mathrm{S}^2X^{[n]}$ of $M_n$. The quotient of $\Xca^{[n]}$ by $\widetilde{\iota}$ is the Hilbert square $(X^{[n]})^{[2]}$ of $X^{[n]}$. 
\subsubsection{}\label{sec:PGL(N)-bundles} Let $\Ica$ be the tautological ideal sheaf on $X^{[n]} \times X$. For the $k$ chosen in \cref{sec:construction of moduli space}, $p_{{X^{[n]}},*} [\Ica (k)]$ is a vector bundle on $X^{[n]}$ of rank $N/2$. Define on $X^{[n]} \times X^{[n]} \times X$ the sheaves 
\[
\Ica_1 \coloneqq \pi_{1,X}^* \Ica \quad \text{and} \quad \Ica_2 \coloneqq \pi_{2,X}^* \Ica.
\]
Then $p_{{X^{[n]}\times X^{[n]}},*} [(\Ica_1 \oplus \Ica_2) (k)]$ is a vector bundle of rank $N$ on $X^{[n]} \times X^{[n]}$. 
Consider the principal $\mathrm{PGL}(N)$-bundle
\[
\PP\sIsom(\Oca^{\oplus N}, p_{X^{[n]}\times X^{[n]},*} [(\Ica_1 \oplus \Ica_2) (k) ]) \to X^{[n]}\times X^{[n]}.
\]
Its base change along $\delta$, $b$ and $b \circ \varepsilon = \delta \circ \beta$ yields the respective principal $\mathrm{PGL}(N)$-bundles
\begin{align*}
Y \coloneqq \PP\sIsom(\Oca^{\oplus N}, p_{{X^{[n]}},*} [\Ica(k)^{\oplus 2}]) \xrightarrow{\upsilon} X^{[n]}, \\
U \coloneqq \PP\sIsom(\Oca^{\oplus N}, p_{\Xca^{[n]},*} b_X^*[(\Ica_1 \oplus \Ica_2) (k)]) \xrightarrow{u} \Xca^{[n]}, \\
D \coloneqq \PP\sIsom(\Oca^{\oplus N}, p_{E,*} \beta_X^* [\Ica(k)^ {\oplus 2}]) \rightarrow E.
\end{align*}
They fit into a commutative diagram with cartesian squares:
\begin{equation}\label{principal PGL(N)-bundles}
\begin{tikzcd}[scale cd=0.8]
& Y \times X \ar[ld, "p_Y"'] \ar[dd,"\upsilon_X"{yshift=15pt}] & & D \times X \ar[ld, "p_D"'] \ar[ll] \ar[dd]  \ar[rr,hook]& & U \times X \ar[ld,"p_U"'] \ar[dd,"u_X"] \\
Y \ar[dd, "\upsilon"] & & D \ar[ll, crossing over] \ar[rr,hook,crossing over] & & U \\
& X^{[n]} \times X \ar[ld,"p_{X^{[n]}}"] & & E \times X \ar[ld, "p_E"] \ar[ll,"\beta_X"'{xshift=-15pt}] \ar[rr,"\varepsilon_X"{xshift=-15pt},hook] & & \Xca^{[n]} \times X \ar[ld,"p_{\Xca^{[n]}}"] \\
X^{[n]} & & E \ar[from=uu, crossing over] \ar[ll, "\beta"'] \ar[rr,"\varepsilon",hook] & & \Xca^{[n]}. \ar[from=uu,"u"{yshift=15pt}, crossing over]
\end{tikzcd}
\end{equation}

\subsection{The centre of the first blow-up} O'Grady's resolution starts with the blow-up of $Q^{ss}$ along $\mathrm{\Omega}_Q$. Let us recall the relation between $\mathrm{\Omega}_Q$ and $Y$. Consider the algebraic groups $\mathrm{GL}(N)$ and $\mathrm{GL}(2)$. On $X^{[n]}$, they act trivially. The standard representation of $\mathrm{GL}(N)$ on $\mathbb{C}^N$ induces a $\mathrm{GL}(N)$-equivariant structure on $\Oca^{\oplus N}_{X^{[n]}}$, while $p_{{X^{[n]}},*} [\Ica(k)^{\oplus 2}]$ inherits from $\Ica^{\oplus 2}$ a $\mathrm{GL}(2)$-equivariant structure. We deduce on $Y$ commuting actions of $\mathrm{GL}(N)$ and $\mathrm{GL}(2)$, that descend to set-theoretically free actions of $\mathrm{PGL}(N)$ and $\mathrm{PGL}(2)$. By \cref{Rem:set-theoretic free implies free}, these actions are in fact free: indeed, good quotients of $Y$ by $\mathrm{PGL}(N)$ and $\mathrm{PGL}(2)$ exist, because the morphism $\upsilon \colon Y \to X^{[n]}$ is affine and invariant for $\mathrm{PGL}(N)$ and $\mathrm{PGL}(2)$.

\begin{lemma}\cite[Proposition 1.5.1]{O'GdmsK3}\cite[Lemma 4.1(1)]{ChoyKiem}\label{lemma-GlobalOmegaQ} The scheme $\mathrm{\Omega}_Q$ is smooth and isomorphic to $Y \sslash \mathrm{PGL}(2)$.
\end{lemma}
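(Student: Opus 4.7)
The plan is to construct a natural $\mathrm{PGL}(N)$-equivariant and $\mathrm{PGL}(2)$-invariant morphism $f \colon Y \to Q^{ss}$ with set-theoretic image $\mathrm{\Omega}_Q^\circ$, and then to argue that the induced morphism $\bar{f} \colon Y \sslash \mathrm{PGL}(2) \to \mathrm{\Omega}_Q$ is an isomorphism of smooth varieties.

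First I would build $f$ via the universal property of the Quot scheme. The $\PP\sIsom$ description of $Y$ furnishes on $Y \times X$ a tautological isomorphism $\Oca^{\oplus N} \simeq \upsilon^* p_{X^{[n]},*}[\Ica(k)^{\oplus 2}]$ defined up to an overall scalar, whose adjoint gives a surjection $\Oca(-k)^{\oplus N} \twoheadrightarrow \upsilon_X^*[\Ica^{\oplus 2}]$ on $Y \times X$. Its equivalence class in the Quot scheme is unaffected by the scalar ambiguity, yielding a morphism $f \colon Y \to \mathrm{Quot}_X^{\Hca, P_{2\mathbf{v}_0}}$. By construction, $f$ takes values in $\mathrm{\Omega}_Q^\circ \subset Q^{ss}$ (by polystability of $I_Z^{\oplus 2}$, and because the tautological map on global sections is tautologically an isomorphism), and it is $\mathrm{PGL}(N)$-equivariant for the precomposition action on the trivialisation. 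The $\mathrm{PGL}(2)$-action on $Y$, induced by the $\mathrm{GL}(2)$-equivariant structure on $\Ica(k)^{\oplus 2}$, modifies the surjection only by an automorphism of $I_Z^{\oplus 2}$ on the target, hence fixes the corresponding point of the Quot scheme; so $f$ descends to $\bar{f} \colon Y \sslash \mathrm{PGL}(2) \to \mathrm{\Omega}_Q$.

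Since the $\mathrm{PGL}(2)$-action on $Y$ is free (as recalled just before the statement), \cref{Rem:set-theoretic free implies free} ensures that $Y \sslash \mathrm{PGL}(2)$ is smooth. I would then check that $\bar{f}$ is bijective on closed points: over each $[Z] \in X^{[n]}$, the fibre of $\upsilon$ is a $\mathrm{PGL}(N)$-torsor mapping via $f$ onto the $\mathrm{PGL}(N)$-orbit of any lift $[\Hca \to I_Z^{\oplus 2}]$, and the induced $\mathrm{PGL}(2)$-action on this torsor matches precisely the stabiliser $\mathrm{PAut}(I_Z^{\oplus 2}) \simeq \mathrm{PGL}(2)$ computed in \eqref{stabilisers in PGL(N)}; surjectivity onto $\mathrm{\Omega}_Q$ relies on the equality $\mathrm{\Omega}_Q = \mathrm{\Omega}_Q^\circ$ from \eqref{closures of OmegaQ and SigmaQ}.

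The main obstacle is upgrading this set-theoretic bijection to an isomorphism of schemes. For this I would invoke the Luna slice description \eqref{OmegaQ local description}: at any $[q] \in \mathrm{\Omega}_Q^\circ$, an étale slice $\Vca$ meets $\mathrm{\Omega}_Q$ in the smooth analytic germ $(\{0\} \times (\mathrm{E}_Z \otimes \mathbb{C}\mathrm{Id}), o)$. Combined with the smoothness of the natural morphism $\Vca \times \mathrm{PGL}(N) \to Q^{ss}$ at $([q], e)$, this shows that $\mathrm{\Omega}_Q$ is smooth of dimension $2n + N^2 - 4$, which matches $\dim(Y \sslash \mathrm{PGL}(2))$. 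Hence $\bar{f}$ is a bijective morphism between smooth irreducible complex varieties of the same dimension. By generic smoothness it is birational, and then by Zariski's main theorem it is an isomorphism (alternatively, one can verify directly, via the Luna slice, that the tangent map of $\bar{f}$ is an isomorphism at every closed point).
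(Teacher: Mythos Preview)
The paper does not supply its own proof of this lemma; it is stated with citations to \cite[Proposition 1.5.1]{O'GdmsK3} and \cite[Lemma 4.1(1)]{ChoyKiem} and left there. Your proposal is correct and is essentially the natural argument: it parallels precisely what the paper does later, with proof, for $\mathrm{\Sigma}_R^{ss}$ in \cref{prop:global description of SigmaR} (construct the classifying morphism from the tautological quotient, descend along the free $\mathrm{PGL}(2)$-action, then compare via stabilisers and Zariski's main theorem). One small wording point: rather than describing the tautological identification on $Y$ as ``defined up to an overall scalar,'' it is cleaner to say, as the paper does for $U$, that the tautological inclusion $\Oca_\upsilon(-1) \hookrightarrow \upsilon^*\sHom(\Oca^{\oplus N}, p_{X^{[n]},*}[\Ica(k)^{\oplus 2}])$ yields a surjection $\Oca(-k)^{\oplus N} \twoheadrightarrow \upsilon_X^*\Ica^{\oplus 2} \otimes p_Y^*\Oca_\upsilon(1)$; the line-bundle twist on the target does not affect the Quot point.
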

\subsection{The first blow-up}\label{sec:the first blow-up}
Let 
\[
\pi_R \colon R \to Q^{ss}
\]
be the blow-up of $Q^{ss}$ along $\mathrm{\Omega}_Q$. Denote by $\mathrm{\Omega}_R$ the exceptional divisor, and by $\mathrm{\Sigma}_R$ the strict transform of $\mathrm{\Sigma}_Q$:
\[
\mathrm{\Omega}_R= \pi_R^{-1}(\mathrm{\Omega}_Q), \quad \quad \mathrm{\Sigma}_R \simeq \mathrm{Bl}_{\mathrm{\Omega}_Q} \mathrm{\Sigma}_Q.
\]
In this section we recall global geometric descriptions of $\mathrm{\Omega}_R$, $\mathrm{\Omega}_R \cap \mathrm{\Sigma}_R$ and $\mathrm{\Sigma}_R^{ss}$. 

\medskip

\subsubsection{} Let us start from $\mathrm{\Omega}_R$. The fibre of $\pi_R$ over a point $[q] \in \mathrm{\Omega}_Q$ is the projectivisation of $(C_{\mathrm{\Omega_Q}/Q^{ss}})_{[q]}$, whose expression was given in \cref{sec:Points in in OmegaQ}. We keep the notation introduced therein. Consider the fibre bundle $\PP \sHom^\omega(W,\Tca)$ (resp.\ $\PP \sHom_i^\omega(W,\Tca)$) on $X^{[n]}$ whose fibre over $[Z]$ is
\begin{gather*}
\{ [\varphi] \in \PP\mathrm{Hom}(W,\mathrm{E}_Z) \mid \im(\varphi) \ \text{is $\omega$-isotropic} \} \\
(\text{resp.} \ \{ [\varphi] \in \PP\mathrm{Hom}(W,\mathrm{E}_Z) \mid \im(\varphi) \ \text{is $\omega$-isotropic and} \ \mathrm{dim}(\im(\varphi)) \le i \});
\end{gather*}
note in particular that $\PP \sHom_1^\omega(W,\Tca) \simeq \PP(W^\vee) \times \PP(\Tca)$. The fibre bundle $\PP \sHom^\omega(W,\Tca)$ (resp.\ $\PP \sHom_i^\omega(W,\Tca)$) is endowed with an action of $\mathrm{SO}(W)$ by composition. Thus, its base change $\upsilon^*\PP\sHom^\omega(W,\Tca)$ (resp.\ $\upsilon^*\PP\sHom_i^\omega(W,\Tca)$) along $\upsilon \colon Y \to X^{[n]}$ is equipped with commuting actions of $\mathrm{SO}(W)\simeq \mathrm{PGL}(2)$ and $\mathrm{PGL}(N)$, which are both set-theoretically free.
\begin{lemma}\label{lem:exceptional divisor first blow-up} We have isomorphisms 
\begin{enumerate}
\item[(1)] $\mathrm{\Omega}_R \simeq [\upsilon^*\PP\sHom^\omega(W,\Tca)] \sslash \mathrm{SO}(W) $ and
\item[(2)] $\mathrm{\Omega}_R^{ss} \sslash \mathrm{PGL}(N) \simeq \PP\sHom^\omega(W,\Tca)^{ss} \sslash \mathrm{SO}(W)$.
\end{enumerate}
\end{lemma}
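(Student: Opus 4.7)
The plan is to prove part (1) by globalising the fibrewise description \eqref{normal cone OmegaQ fibres} of the normal cone $C_{\mathrm{\Omega}_Q/Q^{ss}}$, then to deduce part (2) by exchanging the order of the $\mathrm{PGL}(N)$- and $\mathrm{SO}(W)$-quotients via \cref{prop:commuting actions}.

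For part (1), I would begin with the identification $\mathrm{\Omega}_R = \PP(C_{\mathrm{\Omega}_Q/Q^{ss}})$ (see \cref{normal cones}), so the task becomes to describe this projectivised normal cone globally. I would lift the picture along the principal $\mathrm{PGL}(2)$-bundle $\psi \colon Y \to Y \sslash \mathrm{PGL}(2) = \mathrm{\Omega}_Q$ supplied by \cref{lemma-GlobalOmegaQ}. The $\mathrm{GL}(2)$-equivariant structure on $p_{X^{[n]},*}[\Ica(k)^{\oplus 2}]$ makes the pullback $\Fca_Y$ of the tautological quotient on $Y \times X$ isomorphic to $\upsilon_X^*\Ica$ tensored with a trivial rank-$2$ bundle; this yields $\mathrm{PGL}(N) \times \mathrm{PGL}(2)$-equivariant isomorphisms
\[
\sExt^1_Y(\Fca_Y, \Fca_Y) \simeq \upsilon^* \Tca \otimes \mathfrak{gl}(2), \qquad \sExt^2_Y(\Fca_Y, \Fca_Y)_0 \simeq W \otimes \Oca_Y,
\]
and the global Yoneda cup product furnishes a $\mathrm{PGL}(N) \times \mathrm{PGL}(2)$-equivariant morphism between these bundles whose fibrewise behaviour is the Yoneda square \eqref{Yoneda square} from \cref{sec:Points in in OmegaQ}. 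Its zero locus has $\upsilon^* \sHom^\omega(W, \Tca)$ as a direct factor, the complementary factor $\upsilon^*\Tca \otimes \mathbb{C}\mathrm{Id}$ being the $\mathrm{PGL}(2)$-invariant direction.

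To upgrade this fibrewise match to a scheme-theoretic identification of normal cones, I would combine \cref{prop:semiuniversal deformation} (which, via Luna's theorem, identifies the formal completion of $Q^{ss}$ along $\mathrm{\Omega}_Q$, pulled back to $Y$, with that of the zero locus above) and \cref{normal flatness of Q}(i) (which ensures that $C_{\mathrm{\Omega}_Q/Q^{ss}}$ is flat over the smooth variety $\mathrm{\Omega}_Q$). Together these give a $\mathrm{PGL}(N) \times \mathrm{PGL}(2)$-equivariant isomorphism $\psi^* C_{\mathrm{\Omega}_Q/Q^{ss}} \simeq \upsilon^* \sHom^\omega(W, \Tca)$; projectivising and applying $\mathrm{PGL}(2)$-descent (the action on $Y$ being free) yields (1). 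Part (2) then follows by applying \cref{prop:commuting actions}: the $\mathrm{PGL}(N)$-quotient of $\upsilon^* \PP \sHom^\omega(W, \Tca)$ equals $\PP \sHom^\omega(W, \Tca)$ (since $Y \to X^{[n]}$ is a principal $\mathrm{PGL}(N)$-bundle), and the $\mathrm{SO}(W)$-GIT quotient of its semistable locus gives the right-hand side of (2); compatibility of the linearisations required for \cref{prop:commuting actions} follows from the construction of the linearisation in \cref{prop:Kirwan blow-up}.

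The main obstacle lies in the passage from the fibrewise identification to a global, $\mathrm{PGL}(2)$-equivariant isomorphism of normal cones. This rests on two ingredients: a careful $\mathrm{PGL}(N) \times \mathrm{PGL}(2)$-equivariant globalisation of the Yoneda square on $Y$ (for which the trivial splitting of $\Fca_Y$ is essential), and the normal flatness of $Q^{ss}$ along $\mathrm{\Omega}_Q$, without which the fibrewise expression of the cone would not suffice to pin down its global structure as a scheme.
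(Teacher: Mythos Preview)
Your deduction of (2) from (1) via \cref{prop:commuting actions} is exactly the paper's argument; the paper itself does not prove (1) but cites \cite[Lemma~4.1(2)]{ChoyKiem}, so your proposal goes further by sketching that step.

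The sketch for (1) is in the right spirit, but the bridge from the fibrewise description to the global identification $\psi^* C_{\mathrm{\Omega}_Q/Q^{ss}} \simeq \upsilon^* \sHom^\omega(W,\Tca)$ is not quite supported by the ingredients you cite. \cref{prop:semiuniversal deformation} and \cref{cor:slice as analytic deformation space} are \emph{pointwise} statements: for each $[q] \in \mathrm{\Omega}_Q$ they identify the formal (or analytic) germ of a slice at $[q]$ with the germ of $\mathrm{\Upsilon}_F^{-1}(0)$ at the origin, but the isomorphism depends on non-canonical choices and does not glue to an identification of the formal completion of $Q^{ss}$ along all of $\mathrm{\Omega}_Q$. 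Normal flatness alone tells you the cone is flat with the expected fibres, but does not by itself pin down the global scheme structure.

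What makes the argument work is the pattern the paper spells out in \cref{sec:The exceptional divisor of the second blow-up} for the analogous (harder) computation of $C_{\mathrm{\Sigma}_R^{ss}/R^{ss}}$: first compute the conormal sheaf globally via the cotangent description \eqref{cotangent sheaf Quot scheme}, obtaining a global closed immersion of the normal cone into the total space of the candidate bundle; then use the global Yoneda cup product to produce a quadratic map whose image lands in the degree-2 piece $\Jca_2$ of the defining ideal (cf.\ \ref{normal cones}); finally invoke the equality of normal and Hessian cones, which is where normal flatness enters, reducing the verification to the fibrewise check you already have from \eqref{normal cone OmegaQ fibres}. Your global Yoneda map on $Y$ is the right object for the second step; what is missing is the first step (the conormal sheaf computation) and the explicit Hessian\,=\,normal argument.
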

\begin{proof}
The first isomorphism is proved in  \cite[Lemma 4.1(2)]{ChoyKiem}. After \cref{rem:triple description of the centres}, the second statement can be deduced from the first. More precisely, we apply twice \cref{prop:commuting actions} to the $\mathrm{SO}(W)$- and $\mathrm{PGL}(N)$-actions on $\upsilon^*\PP\sHom^\omega(W,\Tca)$. On the one hand, the $\mathrm{PGL}(N)$-equivariant morphism $\upsilon^*\PP\sHom^\omega(W,\Tca) \to \mathrm{\Omega}_R$ is a good quotient by $\mathrm{SO}(W)$; on the other, the $\mathrm{SO}(W)$-equivariant morphism $\upsilon^*\PP\sHom^\omega(W,\Tca) \to \PP\sHom^\omega(W,\Tca)$ is a good quotient by $\mathrm{PGL}(N)$, because $\upsilon \colon Y \to X^{[n]}$ is a (universal) geometric quotient by $\mathrm{PGL}(N)$. Therefore, 
\[
\mathrm{\Omega}_R^{ss} \sslash \mathrm{PGL}(N)\simeq [\upsilon^*\PP\sHom^\omega(W,\Tca)]^{ss} \sslash (\mathrm{PGL}(N) \times \mathrm{SO}(W)) \simeq \PP\sHom^\omega(W,\Tca)^{ss} \sslash \mathrm{SO}(W). \qedhere
\]
\end{proof}
\subsubsection{} Next, we consider the scheme-theoretic intersection of $\mathrm{\Omega}_R$ and $\mathrm{\Sigma}_R$.
\begin{lemma}[{\cf \cite[Proposition 1.7.1(2)]{O'GdmsK3} and \cite[Corollary 4.3]{ChoyKiem}}] \label{lemma:OmegaR-SigmaR}
The scheme-theoretic intersection of $\mathrm{\Omega}_R$ and $\mathrm{\Sigma}_R$ is smooth. Moreover,
\begin{enumerate}
\item[(1)] $\mathrm{\Omega}_R \cap \mathrm{\Sigma}_R \simeq [\upsilon^*\PP\sHom_1^\omega(W,\Tca)] \sslash \mathrm{SO}(W)$ and
\item[(2)] $(\mathrm{\Omega}_R \cap \mathrm{\Sigma}_R)^{ss} \sslash \mathrm{PGL}(N) \simeq  E$.
\end{enumerate}
\end{lemma}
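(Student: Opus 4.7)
The plan is to view $\mathrm{\Omega}_R \cap \mathrm{\Sigma}_R$ as the exceptional divisor of the blow-up $\mathrm{\Sigma}_R = \mathrm{Bl}_{\mathrm{\Omega}_Q}\mathrm{\Sigma}_Q \to \mathrm{\Sigma}_Q$, that is, as $\PP(C_{\mathrm{\Omega}_Q/\mathrm{\Sigma}_Q})$, and then to upgrade the fibrewise identification \eqref{fibres cone to OmegaQ in SigmaQ} into a global one. Normal flatness of $\mathrm{\Sigma}_Q$ along $\mathrm{\Omega}_Q$, proved in \cref{normal flatness of Q}(ii), ensures that the projection $\PP(C_{\mathrm{\Omega}_Q/\mathrm{\Sigma}_Q}) \to \mathrm{\Omega}_Q$ is flat with fibre $\PP\mathrm{Hom}_1(W,\mathrm{E}_Z) \simeq \PP(W^\vee) \times \PP(\mathrm{E}_Z)$ over each $[I_Z^{\oplus 2}]$; this pins down the intersection set-theoretically, the remaining work being its scheme-theoretic globalisation.

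For part (1), I would mirror \cref{lem:exceptional divisor first blow-up}: pull back along the principal $\mathrm{PGL}(2) \simeq \mathrm{SO}(W)$-bundle $Y \to \mathrm{\Omega}_Q$ of \cref{lemma-GlobalOmegaQ}, and combine normal flatness with the $\mathrm{PAut}(F)$-equivariance of \eqref{fibres cone to OmegaQ in SigmaQ} to glue the fibrewise data into a $\mathrm{PGL}(N) \times \mathrm{SO}(W)$-equivariant isomorphism
\[
Y \times_{\mathrm{\Omega}_Q} \PP(C_{\mathrm{\Omega}_Q/\mathrm{\Sigma}_Q}) \simeq \upsilon^*\PP\sHom_1^\omega(W,\Tca).
\]
Taking the $\mathrm{SO}(W)$-quotient then yields (1). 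Smoothness becomes transparent: $\upsilon^*\PP\sHom_1^\omega(W,\Tca) \simeq Y \times_{X^{[n]}} [\PP(W^\vee) \times_{X^{[n]}} \PP(\Tca)]$ is smooth (a fibre bundle over the smooth $Y$ with smooth fibre $\PP(W^\vee) \times \PP^{2n-1}$), and $\mathrm{SO}(W)$ acts freely on it through its free action on $Y$.

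For part (2), I would invoke \cref{prop:commuting actions} to swap the order in the double quotient:
\[
(\mathrm{\Omega}_R \cap \mathrm{\Sigma}_R)^{ss} \sslash \mathrm{PGL}(N) \simeq [\upsilon^*\PP\sHom_1^\omega(W,\Tca)]^{ss} \sslash (\mathrm{PGL}(N) \times \mathrm{SO}(W)) \simeq \PP\sHom_1^\omega(W,\Tca)^{ss} \sslash \mathrm{SO}(W).
\]
Under the identification $\PP\sHom_1^\omega(W,\Tca) \simeq \PP(W^\vee) \times_{X^{[n]}} \PP(\Tca)$, the group $\mathrm{SO}(W)$ acts fibrewise on the first factor through its three-dimensional adjoint representation and trivially on the second. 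Since the GIT quotient $\PP(W^\vee) \sslash \mathrm{SO}(W)$ is a single point---the ring of $\mathrm{SO}(W)$-invariants on $W$ being generated by the Killing quadric $\kappa$---the whole GIT quotient collapses to $\PP(\Tca) \simeq E$.

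The main obstacle is the globalisation step inside part (1), namely passing from the fibrewise description \eqref{fibres cone to OmegaQ in SigmaQ} to a genuine scheme-theoretic isomorphism of $\PP(C_{\mathrm{\Omega}_Q/\mathrm{\Sigma}_Q})$ with $\upsilon^*\PP\sHom_1^\omega(W,\Tca)$ after pullback to $Y$. This relies crucially on normal flatness paired with the principal bundle structure $Y \to \mathrm{\Omega}_Q$, exactly as in the proof of \cref{lem:exceptional divisor first blow-up}; once this is secured, the smoothness assertion and the computation in part (2) are routine consequences of the double-quotient machinery of \cref{sec:the first blow-up}.
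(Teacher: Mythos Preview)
Your proposal is essentially correct, and parts (2) and the smoothness argument match the paper closely. The one place where the paper does something different---and simpler---is in part (1), precisely at what you flag as the ``main obstacle''.

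You propose to globalise the fibrewise identification \eqref{fibres cone to OmegaQ in SigmaQ} after pulling back to $Y$, mimicking the proof of \cref{lem:exceptional divisor first blow-up}. That proof, however, is a citation to \cite{ChoyKiem}, so adapting it to $\PP\sHom_1^\omega(W,\Tca)$ would require reopening that argument. The paper instead sidesteps the globalisation entirely: it takes \cref{lem:exceptional divisor first blow-up}(1) as given, so that $\mathrm{\Omega}_R \simeq [\upsilon^*\PP\sHom^\omega(W,\Tca)] \sslash \mathrm{SO}(W)$, and then observes that inside this scheme the two closed subschemes $\mathrm{\Omega}_R \cap \mathrm{\Sigma}_R$ and $[\upsilon^*\PP\sHom_1^\omega(W,\Tca)] \sslash \mathrm{SO}(W)$ are both reduced and have the same support, hence coincide. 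Reducedness of the first follows from its smoothness (which the paper proves \emph{before} (1), directly from normal flatness and the fibre computation \eqref{fibres cone to OmegaQ in SigmaQ}, rather than deducing it from (1) as you do); reducedness of the second is clear; and the set-theoretic equality is exactly the fibrewise statement. So the ``reduced with same support'' trick converts the analytic-local identification into a scheme-theoretic one for free, without any gluing.
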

\begin{proof}
The scheme $\mathrm{\Omega}_R \cap \mathrm{\Sigma}_R$ is the exceptional divisor of the blow-up of $\mathrm{\Sigma}_Q$ along $\mathrm{\Omega}_Q$. Thus, it is isomorphic to $\PP(C_{\mathrm{\Omega}_Q/\mathrm{\Sigma}_Q})$, which is smooth: indeed, the projection morphism to the smooth $\mathrm{\Omega}_Q$ is flat with smooth fibres, as one sees from \cref{normal flatness of Q}(ii) and equality \eqref{fibres cone to OmegaQ in SigmaQ}.

To prove (1), consider in $\mathrm{\Omega}_R \simeq [\upsilon^*\PP\sHom^\omega(W,\Tca)] \sslash \mathrm{SO}(W) $ the closed subschemes $\mathrm{\Omega}_R \cap \mathrm{\Sigma}_R$ and $[\upsilon^*\PP\sHom_1^\omega(W,\Tca)] \sslash \mathrm{SO}(W)$: they are reduced and have the same support, so they are equal.

Let us show (2). Reasoning as in the proof of \cref{lem:exceptional divisor first blow-up}, we deduce from (1) the isomorphism
\[
(\mathrm{\Omega}_R \cap \mathrm{\Sigma}_R)^{ss} \sslash \mathrm{PGL}(N) \simeq \PP\sHom_1^\omega(W,\Tca)^{ss} \sslash \mathrm{SO}(W).
\]
Now, the GIT quotient of $\PP\sHom_1^\omega(W,\Tca)$ (identified to $\PP(W) \times \PP(\Tca)$ via the Killing form) by $\mathrm{SO}(W)$ can be computed as follows. Consider the tautological $\mathrm{SO}(W)$-action on $\PP(W)$. On its coordinate ring $\mathrm{S}^\bullet W^\vee$, we have then an induced $\mathrm{SO}(W)$-action; by the classical results on the invariant theory of the (special) orthogonal groups (see e.g. \cite[Section I.3]{LBtrg2x2m}), the ring of invariants is 
\[
(\mathrm{S}^\bullet W^\vee)^{\mathrm{SO}(W)} = \mathbb{C}[\kappa],
\] 
where $\kappa \in \mathrm{S}^2(W^\vee)$ is the Killing form of $W$. In particular, $\PP(W)^{ss}$ is the complement of the conic defined by $\kappa$, and $\PP(W)^{ss} \sslash \mathrm{SO}(W) =\mathrm{Proj}(\mathbb{C}[\kappa])$ equals $\mathrm{Spec}(\mathbb{C})$.
We conclude that $\PP\sHom_1^\omega(W,\Tca)^{ss} \sslash \mathrm{SO}(W) \simeq \PP(\Tca) = E$, as desired.
\end{proof}
\subsubsection{} For future reference, we consider in $\mathrm{\Omega}_R$ also the closed subscheme 
\[
\mathrm{\Delta}_R \coloneqq [\upsilon^*\PP\sHom^\omega_2(W,\Tca)] \sslash \mathrm{SO}(W).
\]
Its intersection with $\mathrm{\Sigma}_R$ coincides with $\mathrm{\Omega}_R \cap \mathrm{\Sigma}_R$, and $\mathrm{\Delta}_R^{ss} \sslash \mathrm{PGL}(N) \simeq \PP\sHom^\omega_2(W,\Tca)^{ss} \sslash \mathrm{SO}(W)$.

\medskip

\subsubsection{}\label{sec:O(2) linearisation} Let us turn to $\mathrm{\Sigma}_R$, and relate its semistable locus to $U$. 
Consider the algebraic group $\Orm(2) \simeq \mathbb{C}^\times \rtimes \bm{\mu}_2$, realised as the subgroup of $\mathrm{GL}(2)$ generated by 
\begin{equation} \label{O(2)}
\mathrm{SO}(2) = \left\{ \theta_\lambda = 
\begin{pmatrix}
\lambda & 0 \\
0 & \lambda^{-1}
\end{pmatrix}
\Bigm| \lambda \in \mathbb{C}^\times \right\} \ \ \ \text{and}
 \ \ \ \tau =
 \begin{pmatrix}
0 & 1 \\
1 & 0
\end{pmatrix}.
 \end{equation} 
On $\Xca^{[n]}$, the quotient $\Orm(2)/\mathrm{SO}(2) \simeq \bm{\mu}_2$ acts via $\widetilde{\iota}$ as in \ref{sec:involution action}, while $\mathrm{GL}(N)$ acts trivially. Give $\Oca^{\oplus N}_{\Xca^{[n]}}$ the $\mathrm{GL}(N)$-equivariant structure that comes from the standard representation of $\mathrm{GL}(N)$ on $\mathbb{C}^N$, and $p_{{\Xca^{[n]}},*}[b_X^*(\Ica_1\oplus \Ica_2)(k)]$ the $\Orm(2)$-equivariant structure induced from the following one on $\Ica_1 \oplus \Ica_2$:
$\theta_\lambda$ multiplies $\lambda$ (resp.\ $\lambda^{-1}$) to $\Ica_1$ (resp.\ to $\Ica_2$); $\tau$ can be regarded as an isomorphism between $\Ica_1 \oplus \Ica_2$ and $\iota^*(\Ica_1 \oplus \Ica_2) \simeq \Ica_2 \oplus \Ica_1$. We deduce on $U$ commuting actions of $\mathrm{GL}(N)$ and $\Orm(2)$; they descend to set-theoretically free actions of $\mathrm{PGL}(N)$ and $\mathrm{PO}(2) = \mathrm{O}(2)/\{ \pm \mathrm{Id} \}$\footnote{There is a little clash of notation with our references. The group $\mathrm{PO}(2)$ is abstractly isomorphic to $\mathrm{O}(2)$; the $\mathrm{PO}(2)$-action on $U$ that we consider here is written in \cite{O'GdmsK3} and \cite{ChoyKiem} as an action of $\mathrm{O}(2)$. Moreover, what we call here $U$ is  in \cite[Section 1.7.III]{O'GdmsK3} denoted as $\mathscr{\tilde{U}}$.}, which preserve the divisor $D$. 

\begin{proposition}\cite[Proposition~1.7.10]{O'GdmsK3}\label{prop:global description of SigmaR} The scheme $\mathrm{\Sigma}_R^{ss}$ is a good quotient of $U$ by $\mathrm{PO}(2)$; in particular, it is integral and smooth. Moreover, we have isomorphisms
\begin{enumerate} 
\item[(1)] $\mathrm{\Sigma}_R^{ss} \sslash \mathrm{PGL}(N) \simeq (X^{[n]})^{[2]}$,
\item[(2)] $(\mathrm{\Omega}_R \cap \mathrm{\Sigma}_R)^{ss} \simeq D \sslash \mathrm{PO}(2)$.
\end{enumerate}
\end{proposition}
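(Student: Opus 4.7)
The strategy is to build a $\mathrm{PGL}(N)$-equivariant, $\mathrm{PO}(2)$-invariant morphism $\phi\colon U \to \mathrm{\Sigma}_R^{ss}$ and show that it realises $\mathrm{\Sigma}_R^{ss}$ as a good quotient of $U$ by $\mathrm{PO}(2)$. Granting this, statements (1) and (2) follow from \cref{prop:commuting actions}, applied to the commuting $\mathrm{PGL}(N)$- and $\mathrm{PO}(2)$-actions on $U$, in exactly the spirit of the proofs of \cref{lem:exceptional divisor first blow-up} and \cref{lemma:OmegaR-SigmaR}.

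To construct $\phi$, I would first work on the open locus $\Xca^{[n]} \setminus E$, which $b$ identifies with $(X^{[n]}\times X^{[n]}) \setminus \delta(X^{[n]})$. Over this locus a point of $U$ encodes a pair $(Z_1,Z_2)$ of distinct length-$n$ subschemes together with a quotient $q\colon \Hca \twoheadrightarrow I_{Z_1} \oplus I_{Z_2}$; sending it to $[q] \in \mathrm{\Sigma}_Q^\circ \subset Q^{ss}$ and composing with the inverse of the isomorphism $\pi_R\colon R \setminus \mathrm{\Omega}_R \isomto Q^{ss} \setminus \mathrm{\Omega}_Q$ gives a morphism $U \setminus D \to R$. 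To extend it across $D$, one invokes the universal property of $\pi_R$: it is enough to verify that the rational map $U \dashrightarrow Q^{ss}$ sends $D$ into $\mathrm{\Omega}_Q$ and that the pullback of the ideal sheaf of $\mathrm{\Omega}_Q$ becomes invertible, equal to $\Oca(-D)$. This is an étale-local problem at closed points of $\mathrm{\Omega}_Q^\circ$, which I would settle via \cref{prop:semiuniversal deformation} together with the local descriptions \eqref{Q local description} and \eqref{OmegaQ local description}: on the slice, $\mathrm{\Omega}_Q$ corresponds to $\{0\} \times (\mathrm{E}_Z \otimes \mathbb{C}\mathrm{Id})$, and its ideal pulls back along the local model of $U$ (built from the blow-up $b$) precisely to the equation of $D$.

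Granting the existence of $\phi$, the good-quotient property is essentially formal: the $\mathrm{PO}(2)$-action on $U$ is set-theoretically free by \cref{sec:O(2) linearisation}, hence free, so by \cref{Rem:set-theoretic free implies free} any good quotient by $\mathrm{PO}(2)$ would be a principal bundle (whence $\mathrm{\Sigma}_R^{ss}$ is integral and smooth because $U$ is). It therefore remains to check that $\phi$ is surjective onto $\mathrm{\Sigma}_R^{ss}$ and that its geometric fibres are exactly $\mathrm{PO}(2)$-orbits. Surjectivity follows by density from $\phi(U \setminus D) = \mathrm{\Sigma}_Q^\circ$, which lies in $\mathrm{\Sigma}_R^{ss}$ by \eqref{stabilisers in PGL(N)}, and the fibrewise computation is again étale-local and can be read off the slice descriptions of \cref{Points in SigmaQ} together with \eqref{fibres normal cone SigmaQ}, whose projectivisation is the local model of the blow-up.

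For (1), \cref{prop:commuting actions} applied in both orders yields
\[
\mathrm{\Sigma}_R^{ss} \sslash \mathrm{PGL}(N) \simeq U \sslash (\mathrm{PGL}(N) \times \mathrm{PO}(2)) \simeq \Xca^{[n]} \sslash \mathrm{PO}(2),
\]
and since $\mathrm{PSO}(2)$ acts trivially on $\Xca^{[n]}$ while the quotient $\bm{\mu}_2$ acts via $\widetilde{\iota}$, the right-hand side equals $\Xca^{[n]}/\widetilde{\iota} = (X^{[n]})^{[2]}$. For (2), from \eqref{principal PGL(N)-bundles}, $D$ is the base change of $U$ along $\varepsilon$; since $\phi$ maps $D$ onto the exceptional divisor $(\mathrm{\Omega}_R \cap \mathrm{\Sigma}_R)^{ss}$ of $\mathrm{\Sigma}_R^{ss} \to \mathrm{\Sigma}_Q^{ss}$ and good $\mathrm{PO}(2)$-quotients commute with restriction to closed invariant subschemes in this principal-bundle setting, one obtains $(\mathrm{\Omega}_R \cap \mathrm{\Sigma}_R)^{ss} \simeq D \sslash \mathrm{PO}(2)$. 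The hardest step is the étale-local matching in paragraph two: one must align the normal geometry of $\mathrm{\Omega}_Q \subset \mathrm{\Sigma}_Q$ coming from \eqref{SigmaQ local description} with that coming from the blow-up $b$, which is precisely what motivates O'Grady's choice to work on $\Xca^{[n]}$ equipped with $\widetilde{\iota}$.
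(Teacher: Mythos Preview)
Your overall strategy is essentially the paper's, and your derivation of (1) via \cref{prop:commuting actions} and the two-step quotient $\Xca^{[n]}\sslash\mathrm{PO}(2)\simeq\Xca^{[n]}/\bm{\mu}_2\simeq(X^{[n]})^{[2]}$ matches it exactly. The difference lies in how the morphism $U\to\mathrm{\Sigma}_R^{ss}$ is produced. Rather than defining it on $U\setminus D$ and then extending, the paper builds $U\to Q^{ss}$ globally in one stroke via the universal property of the Quot scheme: the tautological inclusion $\Oca_u(-1)\hookrightarrow u^*\sHom(\Oca^{\oplus N},p_{\Xca^{[n]},*}b_X^*[(\Ica_1\oplus\Ica_2)(k)])$ yields, after adjunction and pullback by $p_U$, an epimorphism
\[
\Oca(-k)^{\oplus N}\twoheadrightarrow u_X^*b_X^*(\Ica_1\oplus\Ica_2)\otimes p_U^*\Oca_u(1)
\]
on $U\times X$, hence a morphism $U\to Q^{ss}$ with image $\mathrm{\Omega}_Q^\circ\sqcup\mathrm{\Sigma}_Q^\circ$. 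This sidesteps your extension step entirely; note also that your formulation slightly conflates two issues---extending the rational map $U\dashrightarrow Q^{ss}$ to a morphism, and lifting that morphism through $\pi_R$---since the universal property of the blow-up only applies once a genuine morphism to $Q^{ss}$ is in hand. For the lifting to $\mathrm{\Sigma}_R^{ss}$, the verification that it is a good $\mathrm{PO}(2)$-quotient, and the restriction to $D$ giving (2), the paper simply cites O'Grady's original proof rather than sketching the \'etale-local argument you outline.
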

\begin{proof}
Let us recall how to define a morphism from $U$ to $\mathrm{\Sigma}_R^{ss}$. The tautological inclusion 
\[
\Oca_u(-1) \rightarrow u^* \sHom(\Oca^{\oplus N}, p_{{\Xca^{[n]}},*} [b_X^*(\Ica_1\oplus \Ica_2)(k)])
\]
induces by adjunction an isomorphism $\Oca^{\oplus N} \simeq  u^*(p_{{\Xca^{[n]}},*}[b_X^*(\Ica_1\oplus \Ica_2)(k)]) \otimes \Oca_u(1)$ on $U$.
Its pullback by $p_U$ yields
\begin{multline*}
\Oca^{\oplus N} \simeq p_U^*\{ u^* ( p_{{\Xca^{[n]}},*}[b_X^*(\Ica_1\oplus \Ica_2)(k)]) \otimes \Oca_u(1) \} \\
\simeq u_X^* p_{\Xca^{[n]}}^* p_{{\Xca^{[n]}},*}[b_X^*(\Ica_1\oplus \Ica_2)(k)]\otimes p_U^* \Oca_u(1) 
\rightarrow u_X^*b_X^*(\Ica_1\oplus \Ica_2)(k) \otimes p_U^* \Oca_u(1).
\end{multline*}
Thus, we get an epimorphism of $\Oca_{U \times X}$-modules
\begin{equation}\label{quotient on U times X}
\Oca(-k)^{\oplus N} \rightarrow u_X^* b_X^* ( \Ica_1 \oplus \Ica_2 ) \otimes p_U^* \Oca_u(1).
\end{equation}
By the universal property of the Quot scheme, it induces a morphism $U \rightarrow Q^{ss}$, with image $\mathrm{\Omega}_Q^\circ \sqcup \mathrm{\Sigma}_Q^\circ$. As shown in the proof of \cite[Proposition~1.7.10]{O'GdmsK3}, this morphism lifts to a morphism $U \to \mathrm{\Sigma}_R^{ss}$, which is a good quotient by $\mathrm{PO}(2)$  and restricts to a good quotient $D \to (\mathrm{\Omega}_R \cap \mathrm{\Sigma}_R)^{ss}$ by $\mathrm{PO}(2)$. 

It remains to show Item 1. Reasoning as for \cref{lem:exceptional divisor first blow-up}, we obtain
\[
\mathrm{\Sigma}_R^{ss} \sslash \mathrm{PGL}(N) \simeq U \sslash (\mathrm{PGL}(N) \times \mathrm{PO}(2)) \simeq \Xca^{[n]} \sslash \mathrm{PO}(2).
\]
The last quotient can be computed in two steps: first by $\mathrm{PSO}(2) = \mathrm{SO}(2)/ \{ \pm \mathrm{Id} \}$, and then by the residual action of $\mathrm{PO}(2)  / \mathrm{PSO}(2) \simeq \bm{\mu}_2$. Since $\mathrm{PSO}(2)$ acts trivially on $\Xca^{[n]}$, we get $\Xca^{[n]} \sslash \mathrm{PO}(2) \simeq\Xca^{[n]} /\bm{\mu}_2 \simeq (X^{[n]})^{[2]}$, as desired.
\end{proof}
\begin{remark}\label{rem:GL2action on D}
On $D \simeq Y \times_{X^{[n]}}E$, there is also a free $\mathrm{PGL}(2)$-action coming from $Y$; it turns $D$ into a principal $\mathrm{PGL}(2)$-bundle over $\mathrm{\Omega}_Q \times_{X^{[n]}}E$.
\end{remark}

\subsubsection{The normal cone to $\mathrm{\Sigma}_R$ in $R$} The centre of the second blow-up is $\mathrm{\Sigma}_R$. Therefore, in order to determine the corresponding exceptional divisor, the following proposition is important.
\begin{proposition}\label{prop:normal cone SigmaR fibres} The scheme $R^{ss}$ is normally flat along $\mathrm{\Sigma}_R^{ss}$. Over any closed point of $\mathrm{\Sigma}_R^{ss}$, the fibre of the normal cone $C_{\mathrm{\Sigma}_R/R}$ to $\mathrm{\Sigma}_R$ in $R$ is the affine cone over a smooth quadric in $\PP^{4n-5}$.
\end{proposition}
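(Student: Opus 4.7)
The plan is to verify both assertions simultaneously at each closed point $r \in \mathrm{\Sigma}_R^{ss}$, by distinguishing two cases according to whether $\pi_R(r) \in \mathrm{\Sigma}_Q^\circ$ or $\pi_R(r) \in \mathrm{\Omega}_Q$. Normal flatness will be checked pointwise via \cref{lem:normal flatness}(ii), which applies because $\mathrm{\Sigma}_R^{ss}$ is smooth by \cref{prop:global description of SigmaR}.

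In the first case, since $\mathrm{\Sigma}_Q^\circ$ and $\mathrm{\Omega}_Q$ are disjoint, $\pi_R$ is a local isomorphism near $\pi_R(r)$, so $(C_{\mathrm{\Sigma}_R/R})_r \simeq (C_{\mathrm{\Sigma}_Q/Q^{ss}})_{\pi_R(r)}$, which by \eqref{fibres normal cone SigmaQ} is the zero locus, inside $V_1 \oplus V_2$ (where $V_i \coloneqq \mathrm{Ext}^1(I_{Z_{3-i}},I_{Z_i})$ has dimension $2n-2$), of the quadratic form $(e_{12},e_{21}) \mapsto e_{12} \cup e_{21}$. The key observation is that Serre duality on the K3 surface renders the Yoneda pairing $V_1 \otimes V_2 \to \mathrm{Ext}^2(I_{Z_1},I_{Z_1}) \simeq \mathbb{C}$ perfect, so the associated quadratic form is hyperbolic of maximal rank $4n-4$, and its vanishing locus is the affine cone over a smooth quadric in $\PP^{4n-5}$. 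Normal flatness at such $r$ follows directly from \cref{normal flatness of Q}(iii).

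In the second case, I will pass to an étale slice $\mathcal{V} \ni [q]$ and exploit the analytic model $(\mathcal{V},[q]) \simeq (H^\omega \times (\mathrm{E}_Z \otimes \mathbb{C}\mathrm{Id}),0)$ from \cref{sec:Points in in OmegaQ}, where $H^\omega \coloneqq \mathrm{Hom}^\omega(W,\mathrm{E}_Z)$ and $\mathrm{\Omega}_Q \cap \mathcal{V}$, $\mathrm{\Sigma}_Q \cap \mathcal{V}$ correspond respectively to $\{0\} \times (\mathrm{E}_Z \otimes \mathbb{C}\mathrm{Id})$ and $H_1 \times (\mathrm{E}_Z \otimes \mathbb{C}\mathrm{Id})$ (with $H_1 \coloneqq \mathrm{Hom}_1(W,\mathrm{E}_Z)$). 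Since normal cones commute with smooth factors and flat base change, and since the blow-up of a product along $\{0\} \times (\text{smooth})$ splits off the smooth factor, the problem reduces to computing $(C_{\widetilde{H_1}/\widetilde{H^\omega}})_{[\varphi_0]}$ at an exceptional point corresponding to a rank-one homomorphism $\varphi_0 = \psi \otimes v \in H_1$. In a chart on $\mathrm{Bl}_0 H$ around $[\varphi_0]$ with coordinates $(t,\tilde w) \in \mathbb{C} \times L$ (with $L$ a complement to $\mathbb{C}\varphi_0$), the strict-transform equations $f_i(\varphi_0 + \tilde w) = 0$ are independent of $t$, so $\widetilde{H^\omega}$ and $\widetilde{H_1}$ locally split off a common $\mathbb{A}^1_t$ factor; the fibre in question therefore coincides with $(C_{H_1/H^\omega})_{\varphi_0}$.

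The heart of the argument is the ensuing local calculation. Choosing an adapted basis $\{u_0, u_2, u_3\}$ of $W$ with $\psi(u_0)=1$ and $\psi(u_2)=\psi(u_3)=0$, I expect to verify that the three components of $\varphi \mapsto \varphi^*\omega$ split into two with linearly independent differentials at $\varphi_0$ (cutting out the conditions $\tilde w(u_2), \tilde w(u_3) \in v^{\perp_\omega}$) and a purely quadratic remainder $\tilde w \mapsto \omega(\tilde w(u_2), \tilde w(u_3))$. Restricted to $T_{\varphi_0} H^\omega$, this quadratic form factors through the $(2n-2)$-dimensional symplectic quotient $V \coloneqq v^{\perp_\omega}/\mathbb{C}v$ and realises the non-degenerate hyperbolic form $\bar\omega$ on $V \oplus V$. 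A direct inspection of the tangent vectors $a \otimes v + \psi \otimes b$ to $H_1$ at $\varphi_0$ shows that $H_1$ corresponds exactly to the singular locus of this quadric, whence $(C_{H_1/H^\omega})_{\varphi_0}$ is the zero locus of a non-degenerate quadratic form on a $(4n-4)$-dimensional space --- the affine cone over a smooth quadric in $\PP^{4n-5}$. Simultaneously, the tangent cone $C_{\varphi_0/H^\omega}$ visibly decomposes as $T_{\varphi_0} H_1 \times (C_{H_1/H^\omega})_{\varphi_0}$, so normal flatness at $r$ follows from \cref{lem:normal flatness}(ii).

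The main technical obstacle will be the bookkeeping of the Hessian of $\varphi \mapsto \varphi^*\omega$ at a rank-one point, and in particular the identification of the ``hidden'' non-degenerate quadric on $V \oplus V$ together with the coincidence of $H_1$ with the singular locus of the tangent cone; once the adapted basis is in place, however, everything reduces to an elementary calculation in linear symplectic algebra.
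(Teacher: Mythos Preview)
Your approach is correct and follows essentially the same route as the paper. Both arguments split into the easy case over $\mathrm{\Sigma}_Q^\circ$ (handled by \cref{normal flatness of Q}(iii) and \eqref{fibres normal cone SigmaQ}) and the interesting case over $\mathrm{\Omega}_Q$, which is reduced via Luna's slice and the cone structure of $\mathrm{Hom}^\omega(W,\mathrm{E}_Z)$ to a local computation at a nonzero rank-one point $\varphi_0$. Your affine-chart reduction ``strict-transform equations are $t$-independent'' is exactly the paper's observation that the blow-up of a cone at its vertex is the total space of $\Oca(-1)$ over its projectivisation; and the Hessian analysis you outline --- two of the three components of $\varphi\mapsto\varphi^*\omega$ having independent differentials, the third being the non-degenerate pairing $\omega$ on $(v^{\perp_\omega}/\mathbb{C}v)^{\oplus 2}$ --- is precisely the local computation on p.~77 of \cite{O'GdmsK3} that the paper cites rather than reproduces.

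One small point of presentation: your final step, invoking \cref{lem:normal flatness}(ii) after observing that $C_{\varphi_0/H^\omega}$ splits as $T_{\varphi_0}H_1\times Q$, is slightly telegraphic. The lemma requires knowing $(C_{H_1/H^\omega})_{\varphi_0}$ a priori to check that the natural map is an isomorphism, so as written the logic looks circular. In practice your computation yields more: after the further coordinate change $p_i' = p_i - \tfrac{a_i}{1+a_0}p_0$ straightening $H_1$, the equation of $H^\omega$ becomes \emph{exactly} $\omega(p_2',p_3')=0$, so locally $H^\omega \simeq H_1 \times Q$ as schemes, and both normal flatness and the fibre description are then immediate without appealing to \cref{lem:normal flatness}(ii). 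It would be cleaner to phrase the endgame that way.
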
 
\begin{proof}
We prove only that $R^{ss}$ is normally flat along $\mathrm{\Sigma}_R^{ss}$, the rest being \cite[Proposition 1.7.1(3)]{O'GdmsK3}. Let $y$ be a closed point of $\mathrm{\Sigma}_R^{ss}$. Since $\mathrm{\Sigma}_R^{ss}\setminus \mathrm{\Omega}_R \simeq \mathrm{\Sigma}_Q^\circ$, we may assume by \cref{normal flatness of Q}(iii) that $y \in \mathrm{\Omega}_R$. Write $\pi_R(y) \in \mathrm{\Omega}_Q$ as $[q \colon \Hca \to I_Z^{\oplus 2}]$, for $[Z] \in X^{[n]}$. Thanks to Luna's theorem (applied to $[q]$ as in the proof of \cref{normal flatness of Q}) and since blowing-up commutes with flat base change, it suffices to check the normal flatness of $\mathrm{Bl}_o\mathrm{Hom}^\omega(W,\mathrm{E}_Z)$ along $\mathrm{Bl}_o\mathrm{Hom}_1(W, \mathrm{E}_Z)$. As the blow-up of a cone $Y \subset \AA^m$ at the origin $o$ is the total space of the tautological line subbundle on $\PP(Y)$, such a normal flatness results from that of $\PP \mathrm{Hom}^\omega(W,\mathrm{E}_Z)$ along $\PP \mathrm{Hom}_1(W, \mathrm{E}_Z)$, which \cref{lem:normal flatness}(ii) reduces to the local computation done on page 77 of \cite{O'GdmsK3}.
\end{proof}
\cref{prop:normal cone SigmaR fibres} extends to $\mathrm{\Sigma}_R^{ss} \cap \mathrm{\Omega}_R$ the \emph{fibrewise} description of $C_{\mathrm{\Sigma}_R/R}$ over $\mathrm{\Sigma}_R^{ss}\setminus \mathrm{\Omega}_R \simeq \mathrm{\Sigma}_Q^\circ$, which was given in \cref{Points in SigmaQ}.
 A \emph{global} description of $C_{\mathrm{\Sigma}_R/R}$ over ${\mathrm{\Sigma}_R^{ss}\setminus \mathrm{\Omega}_R}$ can be found in \cite{ChoyKiem} after Lemma 4.4, or in \cite[Section 3]{MABBccc}. The main technical contribution of the present paper is a global description of $C_{\mathrm{\Sigma}_R/R}$ that holds on the whole $\mathrm{\Sigma}_R^{ss}$ (see \cref{sec:The exceptional divisor of the second blow-up}).

\subsection{The second blow-up}\label{section:second blow-up}
Let 
\[
\pi_S \colon S \to R
\]
be the blow-up of $R$ along $\mathrm{\Sigma}_R$. Let $\mathrm{\Sigma}_S$ be its exceptional divisor, and $\mathrm{\Omega}_S$ (resp.\ $\mathrm{\Delta}_S$) the strict transform of $\mathrm{\Omega}_R$ (resp.\ $\mathrm{\Delta}_R$):
\[
\mathrm{\Sigma}_S = \pi_S^{-1}(\mathrm{\Sigma}_R), \quad \quad \mathrm{\Omega}_S \simeq \mathrm{Bl}_{\mathrm{\Sigma}_R \cap \mathrm{\Omega}_R} \mathrm{\Omega}_R, \quad \quad \mathrm{\Delta}_S \simeq \mathrm{Bl}_{\mathrm{\Sigma}_R \cap \mathrm{\Delta}_R} \mathrm{\Delta}_R.
\]
Let us say in passing that, for the action of $\mathrm{PGL}(N)$ on $S$, the stable locus $S^s$ equals the semistable locus $S^{ss}$ (see \cite[Claim 1.8.10]{O'GdmsK3}). 

In this section, we recall global geometric descriptions of $\mathrm{\Omega}_S$,  $\mathrm{\Omega}_S \cap \mathrm{\Sigma}_S$, $\mathrm{\Delta}_S$ and $\mathrm{\Delta}_S \cap \mathrm{\Sigma}_S$.

\subsubsection{}\label{notation blow-ups} We need some notation, that we borrow from \cite[Section 4]{ChoyKiem}. Let $\mathrm{Bl}^\Tca$ (resp.\ $\mathrm{Bl}^\Tca_2$) be the blow-up of $\PP\sHom^\omega(W,\Tca)$ (resp.\ $\PP\sHom^\omega_2(W,\Tca)$) along $\PP\sHom_1^\omega(W,\Tca)$. We denote by $E^\Tca$ (resp.\ $E^\Tca_2$) its exceptional divisor. 

The scheme $\mathrm{Bl}^\Tca$ (resp.\ $\mathrm{Bl}_2^\Tca$) is acted on by $\mathrm{SO}(W)$. Hence, its base change $\upsilon^*\mathrm{Bl}^\Tca$ (resp.\ $\upsilon^*\mathrm{Bl}_2^\Tca$) along $\upsilon \colon Y \to X^{[n]}$ is equipped with commuting actions of $\mathrm{SO}(W)$ and $\mathrm{PGL}(N)$, which are both set-theoretically free.
\begin{proposition} \cite[Proposition 1.8.4]{O'GdmsK3} \cite[Proof of Proposition 3.2(1),(4)]{ChoyKiem} \label{prop:OmegaS;OmegaS-SigmaS}The scheme $\mathrm{\Omega}_S$ is smooth, and we have isomorphisms
\begin{enumerate}
\item[(1)] $\mathrm{\Omega}_S \simeq (\upsilon^* \mathrm{Bl}^\Tca) \sslash \mathrm{SO}(W)$ and 
\item[(2)] $\mathrm{\Omega}_S^s \sslash \mathrm{PGL}(N) \simeq (\mathrm{Bl}^\Tca)^s \sslash \mathrm{SO}(W)$.
\end{enumerate}
The scheme-theoretic intersection of $\mathrm{\Omega}_S$ and $\mathrm{\Sigma}_S$ is smooth, and we have isomorphisms
\begin{enumerate}
\item[(3)] $\mathrm{\Omega}_S \cap \mathrm{\Sigma}_S \simeq (\upsilon^* E^\Tca) \sslash \mathrm{SO}(W)$ and 
\item[(4)] $(\mathrm{\Omega}_S \cap \mathrm{\Sigma}_S)^s \sslash \mathrm{PGL}(N) \simeq (E^\Tca)^s \sslash \mathrm{SO}(W)$.
\end{enumerate}

\end{proposition}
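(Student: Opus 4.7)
The proposition naturally decomposes into two parallel statements (one for $\widehat{\mathrm{\Omega}}_S$, one for $\mathrm{\Omega}_S \cap \mathrm{\Sigma}_S$), each with a geometric description upstairs on the $\mathrm{SO}(W)$-level and a GIT description downstairs. My plan is to establish (1) and (3) first, and then deduce (2) and (4) as formal consequences of the double-quotient trick encoded in \cref{prop:commuting actions}.

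\textbf{Step 1 (strict transform as a blow-up).} The scheme $\mathrm{\Omega}_S$ is by definition the strict transform of $\mathrm{\Omega}_R$ along $\pi_S$. The universal property of blow-ups, applied to the scheme-theoretic intersection $\mathrm{\Omega}_R \cap \mathrm{\Sigma}_R$ inside $\mathrm{\Omega}_R$, yields a natural closed immersion $\mathrm{Bl}_{\mathrm{\Omega}_R \cap \mathrm{\Sigma}_R} \mathrm{\Omega}_R \hookrightarrow S$ whose image coincides with the closure of $\pi_S^{-1}(\mathrm{\Omega}_R \setminus \mathrm{\Sigma}_R)$, i.e.\ with $\mathrm{\Omega}_S$. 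Hence $\mathrm{\Omega}_S \simeq \mathrm{Bl}_{\mathrm{\Omega}_R \cap \mathrm{\Sigma}_R} \mathrm{\Omega}_R$. For the same reason, $\mathrm{\Omega}_S \cap \mathrm{\Sigma}_S = \mathrm{\Omega}_S \times_R \mathrm{\Sigma}_R = \mathrm{\Omega}_S \times_{\mathrm{\Omega}_R} (\mathrm{\Omega}_R \cap \mathrm{\Sigma}_R)$ is the exceptional divisor of this blow-up.

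\textbf{Step 2 (transfer upstairs via free quotients and flat base change).} By \cref{lemma-GlobalOmegaQ}, \cref{lem:exceptional divisor first blow-up} and \cref{lemma:OmegaR-SigmaR}, the quotient morphism
\[
\upsilon^*\PP\sHom^\omega(W,\Tca) \longrightarrow \mathrm{\Omega}_R
\]
is a good quotient for the \emph{set-theoretically free} $\mathrm{SO}(W)$-action, and the preimage of $\mathrm{\Omega}_R \cap \mathrm{\Sigma}_R$ equals $\upsilon^*\PP\sHom_1^\omega(W,\Tca)$ scheme-theoretically (since both sides are reduced with the same support inside the $\mathrm{SO}(W)$-bundle). \cref{prop:blow-up and principal G bundles} then identifies
\[
\mathrm{Bl}_{\mathrm{\Omega}_R \cap \mathrm{\Sigma}_R}\mathrm{\Omega}_R \simeq \bigl[\mathrm{Bl}_{\upsilon^*\PP\sHom_1^\omega(W,\Tca)}\, \upsilon^*\PP\sHom^\omega(W,\Tca)\bigr] \sslash \mathrm{SO}(W).
\]
Since $\upsilon\colon Y\to X^{[n]}$ is a principal $\mathrm{PGL}(N)$-bundle, hence flat, blowing-up commutes with pullback along $\upsilon$, so the bracketed scheme is $\upsilon^*\mathrm{Bl}^\Tca$, proving (1). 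The exceptional divisor of this blow-up is $\upsilon^*E^\Tca$, whose $\mathrm{SO}(W)$-quotient is $\mathrm{\Omega}_S \cap \mathrm{\Sigma}_S$ by Step 1, proving (3).

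\textbf{Step 3 (smoothness).} The scheme $\mathrm{\Omega}_R$ is smooth by \cref{lemma-GlobalOmegaQ} and \cref{lem:exceptional divisor first blow-up} (it is a Zariski-locally trivial quotient of a smooth scheme by a free reductive action, cf.\ \cref{Rem:set-theoretic free implies free}), and $\mathrm{\Omega}_R \cap \mathrm{\Sigma}_R$ is smooth by \cref{lemma:OmegaR-SigmaR}. Thus $\mathrm{\Omega}_S$, being the blow-up of a smooth variety along a smooth subvariety, is smooth, and so is its exceptional divisor $\mathrm{\Omega}_S \cap \mathrm{\Sigma}_S$.

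\textbf{Step 4 (deducing the GIT descriptions (2) and (4)).} The line bundle on $\upsilon^*\mathrm{Bl}^\Tca$ provided by \cref{prop:Kirwan blow-up}(i), twisting the pullback of the chosen ample linearisation by a suitable power of minus the exceptional divisor, carries commuting $\mathrm{PGL}(N)$- and $\mathrm{SO}(W)$-equivariant structures that are compatible in the sense of \cref{prop:commuting actions}. Taking the $\mathrm{PGL}(N)$-quotient first we obtain $\mathrm{\Omega}_S^s \sslash \mathrm{PGL}(N)$ (thanks to $S^s=S^{ss}$), while taking the $\mathrm{SO}(W)$-quotient first we obtain $(\mathrm{Bl}^\Tca)^s \sslash \mathrm{SO}(W)$, since the $\mathrm{PGL}(N)$-quotient of $\upsilon^*\mathrm{Bl}^\Tca$ is simply $\mathrm{Bl}^\Tca$. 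An identical argument applied to $\upsilon^*E^\Tca$ yields (4).

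\textbf{Expected difficulty.} The substantive work has already been done in the earlier lemmas — the local and global structure of $\mathrm{\Omega}_R$, $\mathrm{\Sigma}_R$ and their intersection — so the only genuinely delicate point is the scheme-theoretic (as opposed to merely set-theoretic) identification of the preimages of $\mathrm{\Omega}_R \cap \mathrm{\Sigma}_R$ upstairs, needed before invoking \cref{prop:blow-up and principal G bundles}. This reduces to the reducedness of the relevant loci, which follows by the same flatness/smoothness argument used in the footnote of \cref{sec:Points in in OmegaQ}.
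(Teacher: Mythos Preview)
Your proposal is correct and follows essentially the same approach as the paper: the remark immediately following the proposition makes precisely your Step~2 argument, deducing (1) and (3) from \cref{lem:exceptional divisor first blow-up} and \cref{lemma:OmegaR-SigmaR} via \cref{prop:blow-up and principal G bundles}, and (2) and (4) are then formal consequences of the double-quotient principle \cref{prop:commuting actions}, exactly as you argue in Step~4. One small redundancy: you need not separately verify that the preimage of $\mathrm{\Omega}_R \cap \mathrm{\Sigma}_R$ equals $\upsilon^*\PP\sHom_1^\omega(W,\Tca)$ scheme-theoretically, since this is built into the proof of \cref{prop:blow-up and principal G bundles} (the quotient morphism is a principal bundle, hence $\psi^{-1}(V\sslash G)\simeq V$ automatically).
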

\begin{remark}
Items (1) and (3) result from \cref{lem:exceptional divisor first blow-up} and \cref{lemma:OmegaR-SigmaR} by \cref{prop:blow-up and principal G bundles}:
\[
\begin{tikzcd}[column sep= 0.7em, row sep = 1.3em, scale cd=0.8]
& E^\Tca \ar[dd] \ar[ld] & & \upsilon^* E^\Tca \ar[ll] \ar[rr] \ar[dd] \ar[ld] & & \mathrm{\Omega}_S \cap \mathrm{\Sigma}_S \ar[dd] \ar[ld] \\
\mathrm{Bl}^\Tca \ar[dd]   \ar[from=rr, crossing over]  & & \upsilon^*\mathrm{Bl}^\Tca& & \mathrm{\Omega}_S \ar[from=ll, crossing over] & \\
& \PP\sHom_1^\omega(W,\Tca) \ar[ld] & & \upsilon^*\PP\sHom_1^\omega(W,\Tca) \ar[ll] \ar[rr] \ar[ld] & & \mathrm{\Omega}_R \cap \mathrm{\Sigma}_R \ar[ld] \\
\PP\sHom^\omega(W,\Tca) & & \upsilon^*\PP\sHom^\omega(W,\Tca) \ar[ll,"\mathrm{PGL}(N)-\text{bundle}","\text{principal}"'] \ar[rr,"\text{principal}", "\mathrm{SO}(W)-\text{bundle}"'] \ar[from=uu, crossing over] & & \mathrm{\Omega}_R. \ar[from=uu, crossing over] &
\end{tikzcd}
\]
\end{remark}
\begin{proposition}\cite[Proposition 1.8.11]{O'GdmsK3} \cite[Proof of Proposition 3.2(3)]{ChoyKiem}\label{prop:DeltaS;DeltaS-SigmaS}
The scheme $\mathrm{\Delta}_S$ is smooth, and we have isomorphisms
\begin{enumerate}
\item[(1)] $\mathrm{\Delta}_S \simeq (\upsilon^* \mathrm{Bl}_2^\Tca) \sslash \mathrm{SO}(W)$ and
\item[(2)]  $\mathrm{\Delta}_S^s \sslash \mathrm{PGL}(N) \simeq (\mathrm{Bl}_2^\Tca)^s \sslash \mathrm{SO}(W)$;

\smallskip

\item[(3)] $\mathrm{\Delta}_S \cap \mathrm{\Sigma}_S \simeq (\upsilon^* E_2^\Tca) \sslash \mathrm{SO}(W)$ and 
\item[(4)] $(\mathrm{\Delta}_S \cap \mathrm{\Sigma}_S)^s \sslash \mathrm{PGL}(N) \simeq (E_2^\Tca)^s \sslash \mathrm{SO}(W)$.
\end{enumerate}
\end{proposition}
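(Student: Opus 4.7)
My plan is to mirror the proof of \cref{prop:OmegaS;OmegaS-SigmaS} line by line, replacing $\PP\sHom^\omega(W,\Tca)$ by the rank-at-most-$2$ subvariety $\PP\sHom_2^\omega(W,\Tca)$ at every step. The starting point is the identification $\mathrm{\Delta}_S \simeq \mathrm{Bl}_{\mathrm{\Sigma}_R \cap \mathrm{\Delta}_R} \mathrm{\Delta}_R$ from \cref{section:second blow-up}, combined with the equality $\mathrm{\Sigma}_R \cap \mathrm{\Delta}_R = \mathrm{\Omega}_R \cap \mathrm{\Sigma}_R$ recorded in \cref{sec:the first blow-up}. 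Using in addition \cref{lemma:OmegaR-SigmaR}(1) and the global description of $\mathrm{\Delta}_R$ recalled there, the task reduces to identifying the blow-up of $[\upsilon^*\PP\sHom_2^\omega(W,\Tca)] \sslash \mathrm{SO}(W)$ along $[\upsilon^*\PP\sHom_1^\omega(W,\Tca)] \sslash \mathrm{SO}(W)$, together with its exceptional divisor.

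For items (1) and (3), I would invoke \cref{prop:blow-up and principal G bundles}. By the argument used in \cref{sec:the first blow-up}, the $\mathrm{SO}(W)$-action on $\upsilon^*\PP\sHom_2^\omega(W,\Tca)$ is set-theoretically free, and actually free by \cref{Rem:set-theoretic free implies free} since a good quotient is already available. The proposition then transports the blow-up downstairs to an $\mathrm{SO}(W)$-equivariant blow-up upstairs, which, by flat base change along the principal $\mathrm{PGL}(N)$-bundle $\upsilon$, is $\upsilon^*\mathrm{Bl}_2^\Tca$ with exceptional divisor $\upsilon^* E_2^\Tca$. Items (2) and (4) would then follow by the double-quotient argument of \cref{rem:triple description of the centres}: applying \cref{prop:commuting actions} twice to the commuting $\mathrm{SO}(W)$- and $\mathrm{PGL}(N)$-actions on $\upsilon^*\mathrm{Bl}_2^\Tca$ and $\upsilon^* E_2^\Tca$, exactly as in the proof of \cref{lem:exceptional divisor first blow-up}.

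The main obstacle is the smoothness assertion. Once it is known that $\PP\sHom_2^\omega(W,\Tca)$ and $\PP\sHom_1^\omega(W,\Tca)$ are both smooth, the blow-up $\mathrm{Bl}_2^\Tca$ is automatically smooth with smooth exceptional divisor $E_2^\Tca$, and the free reductive quotient by $\mathrm{SO}(W)$ preserves smoothness; the same applies after base change along $\upsilon$. The non-trivial point is therefore the smoothness of the relative rank stratum itself: the conditions \emph{image $\omega$-isotropic} and \emph{rank at most $2$} interact subtly, and I expect the proof to proceed by an explicit local computation around each point, distinguishing whether $\mathrm{rk}(\varphi)$ equals $1$ or $2$, exactly as carried out in \cite[Proposition 1.8.11]{O'GdmsK3}, on which the argument ultimately rests.
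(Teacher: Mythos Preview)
Your proposal is correct and follows exactly the approach the paper takes for the parallel \cref{prop:OmegaS;OmegaS-SigmaS} (see the Remark immediately following it); the paper itself does not spell out a proof of \cref{prop:DeltaS;DeltaS-SigmaS} but simply cites \cite{O'GdmsK3} and \cite{ChoyKiem}, so your sketch is precisely the intended argument.
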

\subsubsection{}\label{sec:Grass(2,T)} In proving Proposition 3.2(3) of \cite{ChoyKiem}, Choy and Kiem gave an alternative description of $\mathrm{\Delta}_S$, which we now recall. 

Let $\alpha \colon \Grass^\omega(2,\Tca) \to X^{[n]}$ be the relative symplectic Grassmannian. Denote by $\Aca \subset \alpha^*\Tca$ its tautological rank-two bundle, by $\vartheta \colon \PP(\Aca) \to \Grass^\omega(2,\Tca)$ the associated projective bundle, and by $\Tca_\vartheta$ the relative tangent bundle of $\vartheta$. 

Over $\Grass^\omega(2,\Tca)$, consider also the projective bundle $\PP\sHom(W,\Aca) \coloneqq \PP (\Aca \otimes W^\vee)$ and its subbundle $\PP\sHom_1(W,\Aca) \simeq \PP (W^\vee) \times \PP (\Aca)$ of rank-one homomorphisms. 

On $\PP\sHom(W,\Aca)$ (resp.\ $\PP\sHom_1(W,\Aca)$), the group $\mathrm{SO}(W)$ acts by precomposition. The corresponding GIT quotient is the space $\PP(\mathrm{S}^2\Aca)$ (resp.\ $\PP(\mathrm{S}_1^2\Aca) \simeq \PP(\Aca)$) parametrising quadrics (resp.\ rank-one quadrics) in the fibres of $\PP(\Aca^\vee)$ over $\Grass^\omega(2,\Tca)$; the quotient morphism is
\[
\PP\sHom(W,\Aca)^{ss} \to \PP(\mathrm{S}^2\Aca), \quad [\varphi] \mapsto [\varphi \circ \varphi^\vee].\footnote{Here, $\varphi^\vee \colon \Aca^\vee \to \Oca \otimes W^\vee$ denotes the transpose of $\varphi$, and we identify $W^\vee$ with $W$ via the Killing form.}
\]
Let $\mathrm{Bl}^\Aca$ be the blow-up of $\PP\sHom(W,\Aca)$ along $\PP\sHom_1(W,\Aca)$, and $E^\Aca$ its exceptional divisor. 

The $\mathrm{SO}(W)$-action lifts to $\mathrm{Bl}^\Aca$ and $E^\Aca$; the corresponding GIT quotients are $\mathrm{Bl}_{\PP (\Aca)}\PP (\mathrm{S}^2\Aca)$ and $\PP (\mathrm{S}^2 \Tca_\vartheta)$, respectively. Since $\PP(\Aca)$ is a Cartier divisor on $\PP(\mathrm{S}^2\Aca)$ and $\Tca_\vartheta$ is a line bundle, these quotients can be identified with $\PP(\mathrm{S}^2\Aca)$ and $\PP (\Aca)$, respectively.

We have a natural $\mathrm{SO}(W)$-equivariant forgetful morphism $\PP\sHom(W,\Aca) \to \PP\sHom_2^\omega(W,\Tca)$ over $X^{[n]}$. The schematic preimage of $\PP\sHom_1^\omega(W,\Tca)$ is $\PP\sHom_1(W,\Aca)$; thus, we obtain an $\mathrm{SO}(W)$-equivariant morphism from $\mathrm{Bl}^\Aca \to \mathrm{Bl}_2^\Tca$, which turns out to be an isomorphism. It identifies $E^\Aca$ with $E^\Tca_2$.

The above discussion translates immediately \cref{prop:DeltaS;DeltaS-SigmaS} into the following
\begin{proposition}\label{prop:DeltaS}  We have isomorphisms
\begin{enumerate}
\item[(1)] $\mathrm{\Delta}_S \simeq (\upsilon^* \mathrm{Bl}^\Aca) \sslash \mathrm{SO}(W)$ and
\item[(2)]  $\mathrm{\Delta}_S^s \sslash \mathrm{PGL}(N) \simeq \mathrm{Bl}_{\PP (\Aca)}\PP (\mathrm{S}^2\Aca) \simeq \PP (\mathrm{S}^2\Aca)$.
\end{enumerate}
The scheme-theoretic intersection of  $\mathrm{\Delta}_S$ and $\mathrm{\Sigma}_S$ is smooth, and we have isomorphisms
\begin{enumerate}
\item[(3)] $\mathrm{\Delta}_S \cap \mathrm{\Sigma}_S \simeq (\upsilon^* E^\Aca) \sslash \mathrm{SO}(W)$ and 
\item[(4)] $(\mathrm{\Delta}_S \cap \mathrm{\Sigma}_S)^s \sslash \mathrm{PGL}(N) \simeq (E^\Aca)^s \sslash \mathrm{SO}(W) \simeq \PP (\mathrm{S}^2 \mathcal{T}_\vartheta) \simeq \PP (\Aca)$.
\end{enumerate}
\end{proposition}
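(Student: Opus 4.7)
The plan is to obtain \cref{prop:DeltaS} as a direct rephrasing of \cref{prop:DeltaS;DeltaS-SigmaS} via the $\mathrm{SO}(W)$-equivariant identifications already prepared in \cref{sec:Grass(2,T)}.

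Items (1) and (3) will follow immediately: the forgetful morphism $\PP\sHom(W,\Aca) \to \PP\sHom_2^\omega(W,\Tca)$ yields the $\mathrm{SO}(W)$-equivariant isomorphism $\mathrm{Bl}^\Aca \simeq \mathrm{Bl}_2^\Tca$, which matches $E^\Aca$ with $E_2^\Tca$. Pulling back along $\upsilon \colon Y \to X^{[n]}$ and taking good quotients by $\mathrm{SO}(W)$ then transports the descriptions of $\mathrm{\Delta}_S$ and $\mathrm{\Delta}_S \cap \mathrm{\Sigma}_S$ from \cref{prop:DeltaS;DeltaS-SigmaS}(1),(3) into the statements asserted here. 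Smoothness of $\mathrm{\Delta}_S \cap \mathrm{\Sigma}_S$ is inherited from \cref{prop:DeltaS;DeltaS-SigmaS}, since the two schemes are isomorphic.

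For items (2) and (4), the same identifications convert \cref{prop:DeltaS;DeltaS-SigmaS}(2),(4) into
\[
\mathrm{\Delta}_S^s \sslash \mathrm{PGL}(N) \simeq (\mathrm{Bl}^\Aca)^s \sslash \mathrm{SO}(W), \qquad (\mathrm{\Delta}_S \cap \mathrm{\Sigma}_S)^s \sslash \mathrm{PGL}(N) \simeq (E^\Aca)^s \sslash \mathrm{SO}(W).
\]
To compute the right-hand sides I invoke the descent of these $\mathrm{SO}(W)$-quotients prepared in \cref{sec:Grass(2,T)}: the quotient morphism $[\varphi] \mapsto [\varphi \circ \varphi^\vee]$ identifies $\PP\sHom(W,\Aca)^s \sslash \mathrm{SO}(W)$ with $\PP(\mathrm{S}^2\Aca)$ and sends $\PP\sHom_1(W,\Aca)$ onto the Veronese-embedded $\PP(\Aca) \subset \PP(\mathrm{S}^2\Aca)$; by \cref{prop:Kirwan blow-up}(ii) combined with a commuting-actions argument along the lines of \cref{prop:commuting actions}, the blow-up descends, yielding $(\mathrm{Bl}^\Aca)^s \sslash \mathrm{SO}(W) \simeq \mathrm{Bl}_{\PP(\Aca)}\PP(\mathrm{S}^2\Aca)$ and $(E^\Aca)^s \sslash \mathrm{SO}(W) \simeq \PP(\mathrm{S}^2\Tca_\vartheta)$. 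The final simplifications come for free: fibrewise over $\Grass^\omega(2,\Tca)$, the Veronese image $\PP(\Aca) \hookrightarrow \PP(\mathrm{S}^2\Aca)$ is a smooth conic in a $\PP^2$, hence a Cartier divisor, so $\mathrm{Bl}_{\PP(\Aca)}\PP(\mathrm{S}^2\Aca) \simeq \PP(\mathrm{S}^2\Aca)$; and $\Tca_\vartheta$ is a line bundle, so $\mathrm{S}^2\Tca_\vartheta$ is too, whence $\PP(\mathrm{S}^2\Tca_\vartheta) \simeq \PP(\Aca)$.

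The one point that I expect to need extra care about is the scheme structure of the quotient centre $\PP(\Aca) \subset \PP(\mathrm{S}^2\Aca)$ in the descended blow-up: \cref{prop:Kirwan blow-up}(ii) supplies only the underlying support, whereas for the reduction $\mathrm{Bl}_{\PP(\Aca)}\PP(\mathrm{S}^2\Aca) \simeq \PP(\mathrm{S}^2\Aca)$ to go through one must know that it carries the reduced Veronese structure. This is pinned down by the explicit form of the quotient map $[\varphi] \mapsto [\varphi \circ \varphi^\vee]$ between the smooth schemes $\PP\sHom_1(W,\Aca)$ and $\PP(\Aca)$, whose push-down is exactly the reduced ideal cutting out the Veronese-embedded $\PP(\Aca)$ inside $\PP(\mathrm{S}^2\Aca)$.
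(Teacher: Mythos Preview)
Your proposal is correct and follows essentially the same route as the paper: the proposition is stated immediately after the discussion in \cref{sec:Grass(2,T)} with the remark that ``the above discussion translates immediately \cref{prop:DeltaS;DeltaS-SigmaS}'' into the present form, so items (1)--(4) are obtained exactly by transporting along the $\mathrm{SO}(W)$-equivariant isomorphism $\mathrm{Bl}^\Aca \xrightarrow{\sim} \mathrm{Bl}_2^\Tca$, $E^\Aca \xrightarrow{\sim} E_2^\Tca$.

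One small difference: you route the quotient computations $(\mathrm{Bl}^\Aca)^s \sslash \mathrm{SO}(W) \simeq \mathrm{Bl}_{\PP(\Aca)}\PP(\mathrm{S}^2\Aca)$ and $(E^\Aca)^s \sslash \mathrm{SO}(W) \simeq \PP(\mathrm{S}^2\Tca_\vartheta)$ through \cref{prop:Kirwan blow-up}(ii), which then forces you to worry about the scheme structure of the descended centre. The paper instead asserts these identifications directly in \cref{sec:Grass(2,T)} as classical facts about $\mathrm{SO}(W)$-invariants of quadrics (the fibrewise picture is just $\PP\mathrm{Hom}(W,\mathbb{C}^2)^{ss} \sslash \mathrm{SO}(W) \simeq \PP(\mathrm{S}^2\mathbb{C}^2)$ and its blow-up along the rank-1 locus), so the reduced centre is built in from the start and the Kirwan detour is unnecessary. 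Your final paragraph correctly resolves the issue anyway, so there is no gap.
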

\subsubsection{} Before moving on to the third blow-up, let us state a technical lemma.
\begin{lemma}
$\mathrm{\Sigma}_S$ is smooth at any point of $\mathrm{\Omega}_S \cap \mathrm{\Sigma}_S$, and $S$ at any point of $\mathrm{\Omega}_S$.
\end{lemma}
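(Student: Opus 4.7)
The plan is to reduce to an explicit local computation via Luna's slice theorem, and then analyse the resulting double blow-up model by hand.

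First, for any closed point $s \in \mathrm{\Omega}_S$ with image $[q \colon \Hca \to I_Z^{\oplus 2}] \in \mathrm{\Omega}_Q$, I would combine \cref{cor:slice as analytic deformation space} with \eqref{SigmaQ local description} and \eqref{OmegaQ local description}: an étale slice $\Vca \subset Q^{ss}$ at $[q]$ is, near $[q]$, $\mathrm{PAut}(F)$-equivariantly analytically isomorphic to $\mathrm{Hom}^\omega(W, \mathrm{E}_Z) \times (\mathrm{E}_Z \otimes \mathbb{C}\mathrm{Id})$, with the traces of $\mathrm{\Omega}_Q$ and $\mathrm{\Sigma}_Q$ corresponding respectively to $\{o\} \times (\mathrm{E}_Z \otimes \mathbb{C}\mathrm{Id})$ and $\mathrm{Hom}_1(W, \mathrm{E}_Z) \times (\mathrm{E}_Z \otimes \mathbb{C}\mathrm{Id})$. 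Since blow-ups commute with flat base change and the second factor is a smooth affine space contained in both centres, the desired smoothness statements reduce to showing that
\[
\widetilde{H} \coloneqq \mathrm{Bl}_{\widetilde{\mathrm{Hom}}_1}\, \mathrm{Bl}_o\, \mathrm{Hom}^\omega(W, \mathrm{E}_Z),
\]
where $\widetilde{\mathrm{Hom}}_1$ denotes the strict transform of $\mathrm{Hom}_1(W, \mathrm{E}_Z)$, is smooth at every point above $o \in \mathrm{Hom}^\omega$, and that its exceptional divisor is smooth at every point above $\PP\mathrm{Hom}_1(W, \mathrm{E}_Z) \subset \PP\mathrm{Hom}^\omega(W, \mathrm{E}_Z)$.

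Second, I would carry out an étale-local analysis of $\mathrm{Hom}^\omega(W, \mathrm{E}_Z)$ along $\mathrm{Hom}_1 \setminus \{o\}$. Fix $\varphi_0 = w_1^\vee \otimes e$ with $e \in \mathrm{E}_Z$ non-zero, and parametrise nearby $\varphi$ by $\varphi(w_1) = e_1$ and $\varphi(w_i) = \lambda_i e_1 + \delta_i$ for $i = 2, 3$, with $\delta_i$ in a fixed complement of $\mathbb{C} e \subset \mathrm{E}_Z$. The three equations $\varphi^*\omega = 0$ become $\omega(e_1, \delta_2) = \omega(e_1, \delta_3) = \omega(\delta_2, \delta_3) = 0$. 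The first two are linear in the $\delta_i$ with full-rank Jacobian at $e_1 = e$, and cut out a smooth codimension-two subvariety in which $\delta_i$ varies in the symplectic $(2n-2)$-dimensional quotient $e_1^{\perp_\omega}/\mathbb{C}\{e_1\}$. The remaining constraint descends to the non-degenerate quadratic equation $\omega(\delta_2, \delta_3) = 0$ on $(e^{\perp_\omega}/\mathbb{C}\{e\})^{\oplus 2} \simeq \AA^{4n-4}$, yielding an étale-local product decomposition $\mathrm{Hom}^\omega \simeq \AA^{2n+2} \times Q$ near $\varphi_0$, where $Q$ is a non-degenerate conical quadric whose projectivisation is the smooth quadric in $\PP^{4n-5}$ produced in \cref{prop:normal cone SigmaR fibres}. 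Under this decomposition, $\mathrm{Hom}_1$ corresponds to $\AA^{2n+2} \times \{o\}$.

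Finally, I would assemble the pieces. For $s \in \mathrm{\Omega}_S \setminus \mathrm{\Sigma}_S$, the associated $[\varphi] \in \PP\mathrm{Hom}^\omega$ has $\mathrm{rk}(\varphi) \geq 2$, where a Jacobian computation shows that $\mathrm{Bl}_o \mathrm{Hom}^\omega$ is already smooth, so $R$ is smooth at $\pi_S(s)$ and $\pi_S$ is a local isomorphism at $s$. For $s \in \mathrm{\Omega}_S \cap \mathrm{\Sigma}_S$, a standard affine chart of $\mathrm{Bl}_o \mathrm{Hom}^\omega$ near a point of its exceptional divisor above $[\varphi_0] \in \PP\mathrm{Hom}_1$ has the form $\AA^1_t \times (\text{chart of } \PP\mathrm{Hom}^\omega)$; combined with the second paragraph this gives an étale-local description $\AA^{2n+2} \times Q$ in which $\widetilde{\mathrm{Hom}}_1$ matches $\AA^{2n+2} \times \{o\}$. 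Blowing up the apex of the conical quadric yields $\AA^{2n+2} \times \mathrm{Bl}_o Q$, which is smooth, with exceptional divisor $\AA^{2n+2} \times \PP(Q)$ smooth as well. The hard part will be the second step: obtaining the étale-local product decomposition along $\mathrm{Hom}_1 \setminus \{o\}$ hinges on a careful choice of coordinates compatible with both the symplectic form on $\mathrm{E}_Z$ and the rank-one point $\varphi_0$.
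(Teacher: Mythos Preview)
Your proposal is correct and follows essentially the same route as the paper: reduce via Luna's slice theorem to the local model $\mathrm{Hom}^\omega(W,\mathrm{E}_Z)$, then analyse the double blow-up there. The paper does not spell out the local computation of your second paragraph, instead invoking the analogous calculation already carried out (with reference to \cite{O'GdmsK3}) in the proof of \cref{prop:normal cone SigmaR fibres}; your explicit coordinates reproduce exactly that analysis. One shortcut you miss: once $\mathrm{\Sigma}_S$ is known to be smooth at $y\in\mathrm{\Omega}_S\cap\mathrm{\Sigma}_S$, smoothness of $S$ at $y$ is immediate because $\mathrm{\Sigma}_S$ is a Cartier divisor; likewise, for $y\in\mathrm{\Omega}_S\setminus\mathrm{\Sigma}_S$ the paper simply notes that $\pi_S$ is an isomorphism there, $\mathrm{\Omega}_S$ is a smooth Cartier divisor (by \cref{prop:OmegaS;OmegaS-SigmaS}), and hence $S$ is smooth---bypassing your direct Jacobian check at rank $\ge 2$ points.
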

\begin{proof}
Let $y$ be a closed point of $\mathrm{\Omega}_S \cap \mathrm{\Sigma}_S$. Write $\pi_R \circ \pi_S(y) \in \mathrm{\Omega}_Q$ as $[q \colon \Hca \to I_Z^{\oplus 2}]$, for $[Z] \in X^{[n]}$. Using Luna's theorem as in the proof of \cref{prop:normal cone SigmaR fibres}, the smoothness of $\mathrm{\Sigma}_S$ at $y$ is deduced from the smoothness of the exceptional divisor of the blow-up of $\PP \mathrm{Hom}^\omega(W, \mathrm{E}_Z)$ along $\PP \mathrm{Hom}_1(W, \mathrm{E}_Z)$. As $\mathrm{\Sigma}_S$ is a Cartier divisor, $S$ is smooth at $y$, too. Away from $\mathrm{\Sigma}_S$, $\pi_S$ is an isomorphism and $\mathrm{\Omega}_S$ a Cartier divisor; the smoothness of $\mathrm{\Omega}_S$ implies that of $S$ at any point of $\mathrm{\Omega}_S \setminus \mathrm{\Sigma}_S$.
\end{proof}
\subsection{The third blow-up}\label{section:third blow-up}
Let 
\[
\pi_T \colon T \to S
\]
be the blow-up of $S$ along $\mathrm{\Delta}_S$. Let $\mathrm{\Delta}_T$ be the exceptional divisor, and $\mathrm{\Omega}_T$ (resp.\ $\mathrm{\Sigma}_T$) be the strict transform of $\mathrm{\Omega}_S$ (resp.\ $\mathrm{\Sigma}_S$): 
\[
\mathrm{\Delta}_T = \pi_T^{-1}(\mathrm{\Delta}_S), \quad \quad \mathrm{\Omega}_T \simeq \mathrm{Bl}_{\mathrm{\Delta}_S} \mathrm{\Omega}_S, \quad \quad \mathrm{\Sigma}_T \simeq \mathrm{Bl}_{\mathrm{\Delta}_S \cap \mathrm{\Sigma}_S} \mathrm{\Sigma}_S.
\]
For the action of $\mathrm{PGL}(N)$ on $T$,   the stable locus $T^s$ equals the semistable locus $T^{ss}$, and coincides with $\pi_T^{-1}(S^{ss}) = \pi_T^{-1}(S^s)$ (see \cite[(1.8.13)]{O'GdmsK3}). The GIT quotient of $T$ by $\mathrm{PGL}(N)$ is smooth, and provides a resolution of singularities
\begin{equation*}
\widehat{\pi}\colon \widehat{M}_n \coloneqq T^s \sslash \mathrm{PGL}(N) \to M_n
\end{equation*}
(see \cite[Proposition 1.8.3]{O'GdmsK3}). The irreducible components of the exceptional locus of $\widehat{\pi}$ are
\[
\widehat{\mathrm{\Delta}} \coloneqq \mathrm{\Delta}_T^s \sslash \mathrm{PGL}(N), \quad \widehat{\mathrm{\Sigma}} \coloneqq \mathrm{\Sigma}_T^s \sslash \mathrm{PGL}(N), \quad \widehat{\mathrm{\Omega}} \coloneqq \mathrm{\Omega}_T^s \sslash \mathrm{PGL}(N).  
\]
In this section, we shall recall global geometric descriptions of $\mathrm{\Omega}_T$, $\mathrm{\Omega}_T \cap \mathrm{\Delta}_T$, $\mathrm{\Omega}_T \cap \mathrm{\Sigma}_T$ and $\mathrm{\Omega}_T \cap \mathrm{\Delta}_T \cap \mathrm{\Sigma}_T$, and of their respective GIT quotients by $\mathrm{PGL}(N)$, which are $\widehat{\mathrm{\Omega}}$, $\widehat{\mathrm{\Omega}}\cap\widehat{\mathrm{\Delta}}$, $\widehat{\mathrm{\Omega}}\cap\widehat{\mathrm{\Sigma}}$ and $\widehat{\mathrm{\Omega}}\cap\widehat{\mathrm{\Delta}}\cap\widehat{\mathrm{\Sigma}}$.

\medskip

First, though, we state a technical lemma, which we will only need later, in \cref{sec:description of Sigma hat and Delta hat}.
\begin{lemma}\label{lem:OmegaT-DeltaT}
The scheme-theoretic intersection of $\mathrm{\Omega}_T$ and $\mathrm{\Sigma}_T$ is smooth, and 
\begin{equation*}
\mathrm{\Omega}_T \cap \mathrm{\Sigma}_T = \mathrm{Bl}_{\mathrm{\Delta}_S \cap \mathrm{\Sigma}_S}(\mathrm{\Omega}_S \cap \mathrm{\Sigma}_S).
\end{equation*}
\end{lemma}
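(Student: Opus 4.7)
My plan is to construct a natural candidate morphism into $\mathrm{\Omega}_T \cap \mathrm{\Sigma}_T$ via the universal property of blow-ups, and then verify it is an isomorphism (and that the target is smooth) by a local computation via Luna's slice theorem.

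A key preliminary observation is that $\mathrm{\Delta}_R \subset \mathrm{\Omega}_R$ as recorded in \cref{section:second blow-up}. Passing to strict transforms under $\pi_S$ preserves this inclusion, so $\mathrm{\Delta}_S \subset \mathrm{\Omega}_S$ scheme-theoretically, i.e.\ $\mathrm{\Omega}_S \cap \mathrm{\Delta}_S = \mathrm{\Delta}_S$. Consequently, the strict transform of $\mathrm{\Omega}_S$ under $\pi_T = \mathrm{Bl}_{\mathrm{\Delta}_S} S \to S$ is simply $\mathrm{\Omega}_T = \mathrm{Bl}_{\mathrm{\Delta}_S}\mathrm{\Omega}_S$. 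Combined with \cref{prop:OmegaS;OmegaS-SigmaS}, \cref{prop:DeltaS;DeltaS-SigmaS} and the technical lemma at the end of \cref{section:second blow-up}, this gives that all of $S$, $\mathrm{\Omega}_S$, $\mathrm{\Sigma}_S$, $\mathrm{\Delta}_S$, $\mathrm{\Omega}_S \cap \mathrm{\Sigma}_S$ and $\mathrm{\Delta}_S \cap \mathrm{\Sigma}_S$ are smooth at every point of $\mathrm{\Omega}_S \cap \mathrm{\Sigma}_S$.

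Now set $B \coloneqq \mathrm{Bl}_{\mathrm{\Delta}_S \cap \mathrm{\Sigma}_S}(\mathrm{\Omega}_S \cap \mathrm{\Sigma}_S)$. The scheme-theoretic preimage of $\mathrm{\Delta}_S$ in $B$ under the natural composition $B \to \mathrm{\Omega}_S \cap \mathrm{\Sigma}_S \hookrightarrow S$ coincides with the preimage of $\mathrm{\Delta}_S \cap (\mathrm{\Omega}_S \cap \mathrm{\Sigma}_S) = \mathrm{\Delta}_S \cap \mathrm{\Sigma}_S$, and this is a Cartier divisor in $B$ by construction. The universal property of $\pi_T$ therefore supplies a unique lift $B \to T$. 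Over the complement of $\mathrm{\Delta}_S$ this lift factors through $(\mathrm{\Omega}_S \cap \mathrm{\Sigma}_S) \setminus \mathrm{\Delta}_S$; by closedness of $\mathrm{\Omega}_T \cap \mathrm{\Sigma}_T$ in $T$, it factors through $\mathrm{\Omega}_T \cap \mathrm{\Sigma}_T$, yielding a morphism $f\colon B \to \mathrm{\Omega}_T \cap \mathrm{\Sigma}_T$ which is an isomorphism over $(\mathrm{\Omega}_S \cap \mathrm{\Sigma}_S) \setminus (\mathrm{\Delta}_S \cap \mathrm{\Sigma}_S)$.

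It remains to show that $f$ is an isomorphism over the exceptional Cartier divisor of $B$ and that $\mathrm{\Omega}_T \cap \mathrm{\Sigma}_T$ is smooth there. I would reduce via Luna's slice theorem (as in the proofs of \cref{normal flatness of Q} and \cref{prop:normal cone SigmaR fibres}) to a computation around a lift $[q\colon \Hca \to I_Z^{\oplus 2}] \in \mathrm{\Omega}_Q$ of an arbitrary closed point of $\mathrm{\Omega}_S \cap \mathrm{\Sigma}_S \cap \mathrm{\Delta}_S$, using the $\mathrm{PAut}(F)$-equivariant analytic isomorphism
\[
(\Vca, [q]) \simeq (\mathrm{Hom}^\omega(W, \mathrm{E}_Z) \times (\mathrm{E}_Z \otimes \mathbb{C}\mathrm{Id}), o)
\]
of \cref{cor:slice as analytic deformation space}. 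In this chart the three successive blow-ups producing $R$, $S$ and $T$ have explicit descriptions, and so do the strict transforms of the subvarieties corresponding to $\mathrm{\Omega}_Q$, $\mathrm{\Sigma}_Q$ and $\mathrm{\Delta}_R$; matching them directly identifies $B$ with $\mathrm{\Omega}_T \cap \mathrm{\Sigma}_T$ and yields smoothness simultaneously. The main obstacle is precisely this local bookkeeping: one must verify at each of the three blow-up stages that the relevant subvarieties meet their respective centres cleanly enough for strict transforms to coincide with blow-ups of scheme-theoretic intersections.
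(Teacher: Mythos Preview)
Your approach is correct in outline, but it differs from the paper's proof in a substantive way. Both begin with the universal property to obtain a morphism (or inclusion) $\mathrm{Bl}_{\mathrm{\Delta}_S \cap \mathrm{\Sigma}_S}(\mathrm{\Omega}_S \cap \mathrm{\Sigma}_S) \to \mathrm{\Omega}_T \cap \mathrm{\Sigma}_T$, but from there you propose to descend via Luna's slice theorem to the explicit chart $\mathrm{Hom}^\omega(W,\mathrm{E}_Z)$ and track the three blow-ups there, whereas the paper avoids any local chart computation entirely. Instead, the paper first establishes smoothness of $\mathrm{\Omega}_T \cap \mathrm{\Sigma}_T$ at points of $\mathrm{\Delta}_T$ by a direct transversality argument in $T$: since $\mathrm{\Sigma}_T$ is a divisor, it suffices to exhibit a tangent vector in $\mathrm{T}_{\mathrm{\Omega}_T,y} \setminus \mathrm{T}_{\mathrm{\Sigma}_T,y}$, and such a vector is produced by lifting (via smoothness of $\mathrm{\Omega}_T \cap \mathrm{\Delta}_T \to \mathrm{\Delta}_S$) a vector in $\mathrm{T}_{\mathrm{\Delta}_S,x} \setminus \mathrm{T}_{\mathrm{\Sigma}_S,x}$, which exists because $\mathrm{\Delta}_S$ and $\mathrm{\Sigma}_S$ meet transversely. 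Once both sides are smooth, the equality is checked set-theoretically: over a point $x \in \mathrm{\Delta}_S \cap \mathrm{\Sigma}_S$ the inclusion of normal spaces $(N_{\mathrm{\Delta}_S \cap \mathrm{\Sigma}_S/\mathrm{\Omega}_S \cap \mathrm{\Sigma}_S})_x \subset (N_{\mathrm{\Delta}_S/\mathrm{\Omega}_S})_x$ is an equality by a rank count, so the fibre of the blow-up already coincides with $\mathrm{\Omega}_T \cap \pi_T^{-1}(x)$.

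Your route would work, but the ``local bookkeeping'' you flag as the main obstacle is precisely what the paper sidesteps; the transversality and normal-bundle-rank argument is both shorter and more conceptual, and uses only the global smoothness statements already assembled in \cref{section:second blow-up}.
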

\begin{proof}
By \cref{prop:OmegaS;OmegaS-SigmaS}, the statement is clear away from $\mathrm{\Delta}_T$. Let us prove that $\mathrm{\Omega}_T \cap \mathrm{\Sigma}_T$ is smooth at any closed point $y$ of $\mathrm{\Omega}_T \cap \mathrm{\Sigma}_T \cap \mathrm{\Delta}_T$. By the results of \cref{section:second blow-up}, $\mathrm{\Sigma}_T$, $\mathrm{\Omega}_T$ and $T$ are smooth at $y$; hence, it is enough to show that $\mathrm{\Omega}_T$ and $\mathrm{\Sigma}_T$ intersect transversely at $y$, i.e.\
\begin{equation*}
\mathrm{T}_{\mathrm{\Sigma}_T,y} + \mathrm{T}_{\mathrm{\Omega}_T,y} = \mathrm{T}_{T,y}.
\end{equation*}
As $\mathrm{\Sigma}_T$ is a divisor, it suffices to check that $\mathrm{T}_{\mathrm{\Omega}_T,y} \setminus \mathrm{T}_{\mathrm{\Sigma}_T,y}$ is non-empty. The point $x = \pi_T(y)$ lies in $\mathrm{\Delta}_S \cap \mathrm{\Sigma}_S$; as $\mathrm{\Delta}_S$ and $\mathrm{\Sigma}_S$ intersect transversely, there exists $v \in \mathrm{T}_{\mathrm{\Delta}_S,x} \setminus \mathrm{T}_{\mathrm{\Sigma}_S,x}$. By the smoothness of $\mathrm{\Omega}_T \cap \mathrm{\Delta}_T \to \mathrm{\Delta}_S$, we can find $v' \in \mathrm{T}_{\mathrm{\Omega}_T \cap \mathrm{\Delta}_T,y}$ with $\pi_{T,*}(v') = v$. In particular, $v' \in \mathrm{T}_{\mathrm{\Omega}_T,y} \setminus \mathrm{T}_{\mathrm{\Sigma}_T,y}$.

Let us prove that $\mathrm{Bl}_{\mathrm{\Delta}_S \cap \mathrm{\Sigma}_S}(\mathrm{\Omega}_S \cap \mathrm{\Sigma}_S) $ equals $\mathrm{\Omega}_T \cap \mathrm{\Sigma}_T$. As both are smooth (hence reduced) subschemes of $T$, this can be checked set-theoretically. By the universal property of blow-ups,
\[
\mathrm{Bl}_{\mathrm{\Delta}_S \cap \mathrm{\Sigma}_S}(\mathrm{\Omega}_S \cap \mathrm{\Sigma}_S)  \subset \mathrm{\Omega}_T \cap \mathrm{\Sigma}_T.
\] 
This containment is an equality away from $\mathrm{\Delta}_T$; it remains to show that it is so over $\mathrm{\Delta}_S \cap \mathrm{\Sigma}_S$ as well. For any $x \in \mathrm{\Delta}_S \cap \mathrm{\Sigma}_S$, the natural inclusion $(N_{\mathrm{\Delta}_S\cap \mathrm{\Sigma}_S/\mathrm{\Omega}_S\cap \mathrm{\Sigma}_S})_x \subset (N_{\mathrm{\Delta}_S/\mathrm{\Omega}_S})_x$ is an equality: indeed, these are fibres of vector bundles having the same rank. Therefore,
\[
\mathrm{Bl}_{\mathrm{\Delta}_S \cap \mathrm{\Sigma}_S}(\mathrm{\Omega}_S \cap \mathrm{\Sigma}_S) \cap \pi_T^{-1}(x) = \mathrm{\Omega}_T \cap \pi_T^{-1}(x) \supset  \mathrm{\Omega}_T \cap \mathrm{\Sigma}_T \cap \pi_T^{-1}(x). \qedhere
\]
\end{proof}
\subsubsection{} Let us globally describe $\mathrm{\Omega}_T$ and $\widehat{\mathrm{\Omega}}$. Keep the notation as in \ref{notation blow-ups}.
\begin{lemma}\cite[Proof of Proposition 3.2(1)]{ChoyKiem} The scheme $\mathrm{\Omega}_T$ is smooth, and we have isomorphisms
\begin{enumerate}
\item[(1)] $\mathrm{\Omega}_T \simeq [\upsilon^* (\mathrm{Bl}_{\mathrm{Bl}_2^\Tca} \mathrm{Bl}^\Tca)] \sslash \mathrm{SO}(W)$ and 
\item[(2)] $\widehat{\mathrm{\Omega}}=\mathrm{\Omega}_T^s \sslash \mathrm{PGL}(N) \simeq (\mathrm{Bl}_{\mathrm{Bl}_2^\Tca} \mathrm{Bl}^\Tca)^s \sslash \mathrm{SO}(W)$.
\end{enumerate}
\end{lemma}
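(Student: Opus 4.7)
\smallskip

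\noindent\textbf{Proof plan.} The strategy is to combine three ingredients already available in the excerpt: the fact that $\mathrm{\Omega}_T$ is, by construction, the strict transform of $\mathrm{\Omega}_S$ under the blow-up $\pi_T$ centred on $\mathrm{\Delta}_S$; the $\mathrm{SO}(W)$-equivariant descriptions of $\mathrm{\Omega}_S$ and $\mathrm{\Delta}_S$ from \cref{prop:OmegaS;OmegaS-SigmaS} and \cref{prop:DeltaS}; and the commutation of blow-ups with principal bundle quotients (\cref{prop:blow-up and principal G bundles}) and with commuting actions (\cref{prop:commuting actions}). First, one checks that $\mathrm{\Delta}_S \subset \mathrm{\Omega}_S$ scheme-theoretically: indeed $\mathrm{\Delta}_R \subset \mathrm{\Omega}_R$ by definition, and passing to strict transforms under $\pi_S$ preserves this inclusion because both are smooth and the centre $\mathrm{\Sigma}_R$ meets them in smooth subschemes of the right codimension (as recorded in \cref{prop:OmegaS;OmegaS-SigmaS,prop:DeltaS}). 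From this, and the smoothness of $\mathrm{\Delta}_S$, $\mathrm{\Omega}_S$ and $T$ along $\mathrm{\Omega}_S \cap \mathrm{\Delta}_S$, one obtains the identification
\[
\mathrm{\Omega}_T \simeq \mathrm{Bl}_{\mathrm{\Delta}_S}\mathrm{\Omega}_S
\]
stated in \cref{section:third blow-up}.

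\smallskip

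\noindent Next, I want to transfer the blow-up of $\mathrm{\Omega}_S$ along $\mathrm{\Delta}_S$ to the level of $\upsilon^*\mathrm{Bl}^\Tca$. The morphism
\[
\upsilon^*\mathrm{Bl}^\Tca \longrightarrow \mathrm{\Omega}_S \simeq (\upsilon^*\mathrm{Bl}^\Tca) \sslash \mathrm{SO}(W)
\]
is a principal $\mathrm{SO}(W)$-bundle (the $\mathrm{SO}(W)\simeq \mathrm{PGL}(2)$-action on $Y$ is free, and this is inherited by the $\mathrm{SO}(W)$-equivariant base change). By \cref{prop:DeltaS}, the scheme-theoretic preimage of $\mathrm{\Delta}_S$ in $\upsilon^*\mathrm{Bl}^\Tca$ is $\upsilon^*\mathrm{Bl}_2^\Tca$. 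Applying \cref{prop:blow-up and principal G bundles} yields
\[
\mathrm{Bl}_{\mathrm{\Delta}_S}\mathrm{\Omega}_S \simeq \bigl[\,\mathrm{Bl}_{\upsilon^*\mathrm{Bl}_2^\Tca}(\upsilon^*\mathrm{Bl}^\Tca)\,\bigr] \sslash \mathrm{SO}(W),
\]
and flat base change along the smooth (in fact \'etale-locally trivial) morphism $\upsilon$ identifies this in turn with $[\upsilon^*\mathrm{Bl}_{\mathrm{Bl}_2^\Tca}\mathrm{Bl}^\Tca]\sslash \mathrm{SO}(W)$. This proves isomorphism (1).

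\smallskip

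\noindent For (2), I apply \cref{prop:commuting actions} to the scheme $\upsilon^*\mathrm{Bl}_{\mathrm{Bl}_2^\Tca}\mathrm{Bl}^\Tca$, which carries commuting free actions of $\mathrm{SO}(W)$ and $\mathrm{PGL}(N)$. The quotient by $\mathrm{PGL}(N)$ (already $\mathrm{SO}(W)$-equivariant, via the principal $\mathrm{PGL}(N)$-bundle structure inherited from $\upsilon\colon Y\to X^{[n]}$) is $\mathrm{Bl}_{\mathrm{Bl}_2^\Tca}\mathrm{Bl}^\Tca$, while the quotient by $\mathrm{SO}(W)$ is $\mathrm{\Omega}_T$. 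Exchanging the two quotients yields
\[
\mathrm{\Omega}_T^s \sslash \mathrm{PGL}(N) \simeq (\mathrm{Bl}_{\mathrm{Bl}_2^\Tca}\mathrm{Bl}^\Tca)^s \sslash \mathrm{SO}(W),
\]
once one checks that the $\mathrm{PGL}(N)$-semistable locus of $\upsilon^*\mathrm{Bl}_{\mathrm{Bl}_2^\Tca}\mathrm{Bl}^\Tca$ is the whole space (this is automatic since $\upsilon$ is a geometric quotient), so that the relevant semistable loci match on both sides.

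\smallskip

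\noindent The smoothness of $\mathrm{\Omega}_T$ then follows from (1): $\mathrm{Bl}^\Tca$ is smooth by \cref{prop:OmegaS;OmegaS-SigmaS} (via the principal bundle $\upsilon^*\mathrm{Bl}^\Tca \to \mathrm{\Omega}_S$, which \cref{Rem:set-theoretic free implies free} forces the total space to be smooth when the base is), $\mathrm{Bl}_2^\Tca$ is smooth and closed in $\mathrm{Bl}^\Tca$ by \cref{prop:DeltaS}, hence $\mathrm{Bl}_{\mathrm{Bl}_2^\Tca}\mathrm{Bl}^\Tca$ is smooth as the blow-up of a smooth variety along a smooth centre; the same then holds for $\upsilon^*\mathrm{Bl}_{\mathrm{Bl}_2^\Tca}\mathrm{Bl}^\Tca$ and, by principal bundle descent, for $\mathrm{\Omega}_T$. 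The main technical obstacle is the careful scheme-theoretic verification that the $\mathrm{SO}(W)$-equivariant preimage of $\mathrm{\Delta}_S$ in $\upsilon^*\mathrm{Bl}^\Tca$ is exactly $\upsilon^*\mathrm{Bl}_2^\Tca$ (and not merely so set-theoretically), since without this the descent of the blow-up through the $\mathrm{SO}(W)$-quotient could fail to produce the expected centre.
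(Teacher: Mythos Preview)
Your proposal is correct and follows exactly the approach the paper uses for the analogous statements: the paper itself gives no proof for this lemma (citing \cite{ChoyKiem}), but the Remark after \cref{prop:OmegaS;OmegaS-SigmaS} spells out precisely the same method---deduce (1) from the previous descriptions of $\mathrm{\Omega}_S$ and $\mathrm{\Delta}_S$ via \cref{prop:blow-up and principal G bundles}, then deduce (2) from (1) via \cref{prop:commuting actions} as in the proof of \cref{lem:exceptional divisor first blow-up}. Your ``main technical obstacle'' (that the scheme-theoretic preimage of $\mathrm{\Delta}_S$ is $\upsilon^*\mathrm{Bl}_2^\Tca$) is in fact handled automatically by the principal-bundle argument inside the proof of \cref{prop:blow-up and principal G bundles}, so it is not an additional verification.
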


\subsubsection{}\label{sec:complete conics} In \cite[Section~3.1]{O'GdmsK3I}, O'Grady provided an alternative description of $\mathrm{\Omega}_T$ (and of its intersections with $\mathrm{\Sigma}_T$ and $\mathrm{\Delta}_T$), which we now recall. 

Let $\gamma \colon \Grass^\omega(3, \Tca) \to X^{[n]}$ be the relative symplectic Grassmannian. Denote by $\Bca \subset \gamma^*\Tca$ its tautological rank-three bundle, by $\chi \colon \PP(\Bca) \to \Grass^\omega(3,\Tca)$ the associated projective bundle, and by $\Tca_\chi$ the relative tangent bundle of $\chi$. 

Over $\Grass^\omega(3,\Tca)$, consider also the projective bundle $\PP\sHom(W,\Bca) \coloneqq \PP (\Bca \otimes W^\vee)$ and the subbundles $\PP\sHom_i(W,\Bca)$ corresponding to the homomorphisms of rank at most $i$; in particular, $\PP\sHom_1(W,\Bca) \simeq \PP (W^\vee) \times \PP (\Bca)$. 

On $\PP\sHom(W,\Bca)$ (resp.\ $\PP\sHom_i(W,\Bca)$ for $i=1,2$) the group $\mathrm{SO}(W)$ acts by precomposition. The corresponding GIT quotient is the space $\PP(\mathrm{S}^2\Bca)$ (resp.\ $\PP(\mathrm{S}^2_i\Bca)$) parametrising conics (resp.\ conics of rank at most $i$) in the fibres of $\PP(\Bca^\vee)$ over $\Grass^\omega(3,\Tca)$; the quotient morphism is
\[
\PP\sHom(W,\Bca)^{ss} \to \PP(\mathrm{S}^2\Bca), \quad [\varphi] \mapsto [\varphi \circ \varphi^\vee].\footnote{Here, $\varphi^\vee \colon \Bca^\vee \to \Oca \otimes W^\vee$ denotes the transpose of $\varphi$, and we identify $W^\vee$ with $W$ via the Killing form.}
\]
Let $\mathrm{Bl}^\Bca$ be the blow-up of $\PP\sHom(W,\Bca)$ along $\PP\sHom_1(W,\Bca)$, and $E^\Bca$ its exceptional divisor. The strict transform of $\PP\sHom_2(W,\Bca)$ is the blow-up $\mathrm{Bl}_2^\Bca$ of $\PP\sHom_2(W,\Bca)$ along $\PP\sHom_1(W,\Bca)$; we denote its exceptional divisor as $E^\Bca_2$. 

The $\mathrm{SO}(W)$-action lifts to $\mathrm{Bl}^\Bca$ and $E^\Bca$; the corresponding GIT quotients are the space $\mathbf{CC}(\Bca) \coloneqq \mathrm{Bl}_{\PP (\mathrm{S}^2_1\Bca)}\PP (\mathrm{S}^2\Bca)$ parametrising complete conics in the fibres of $\PP (\Bca^\vee)$ over $\Grass^\omega(3,\Tca)$, and $\PP(\mathrm{S}^2\Tca_\chi)$, respectively. Moreover, $\mathrm{SO}(W)$ acts on $\mathrm{Bl}_2^\Bca$ and $E^\Bca_2$; the corresponding GIT quotients are $\mathrm{Bl}_{\PP (\mathrm{S}^2_1 \Bca)}\PP (\mathrm{S}^2_2\Bca)$, and the space $\PP(\mathrm{S}^2_1\Tca_\chi)$ parametrising rank-1 quadrics in the fibres of $\PP(\Tca_\chi^\vee)$ over $\PP(\Bca)$, respectively.

Consider the natural $\mathrm{SO}(W)$-equivariant forgetful morphism $\PP \sHom(W,\Bca) \to \PP\sHom^\omega(W,\Tca)$ over $X^{[n]}$. Since the schematic preimage of $\PP\sHom_1^\omega(W,\Tca)$ is $\PP \sHom_1(W,\Bca)$, we obtain an $\mathrm{SO}(W)$-equivariant morphism $\mathrm{Bl}^\Bca \to \mathrm{Bl}^\Tca$, and a morphism $E^\Bca \to E^\Tca$ between exceptional divisors. Under the morphism $\mathrm{Bl}^\Bca \to \mathrm{Bl}^\Tca$, the schematic preimage of $\mathrm{Bl}_2^\Tca$ is the Cartier divisor $\mathrm{Bl}_2^\Bca$; so, we obtain a morphism $\mathrm{Bl}^\Bca \to \mathrm{Bl}_{\mathrm{Bl}_2^\Tca}\mathrm{Bl}^\Tca$, which turns out to be an isomorphism. It identifies $\mathrm{Bl}^\Bca_2$ with the exceptional divisor of the blow-up of $\mathrm{Bl}^\Tca$ along $\mathrm{Bl}_2^\Tca$, and $E^\Bca$ with $\mathrm{Bl}_{E^\Tca_2}E^\Tca$.
\begin{proposition}\cite[(Proof of) Proposition~3.0.1]{O'GdmsK3I}\label{prop:OmegaT;OmegaT-SigmaT;DeltaT-SigmaT;OmegaT-SigmaT-DeltaT} We have isomorphisms
\begin{enumerate}
\item[(1)] $\mathrm{\Omega}_T \simeq (\upsilon^* \mathrm{Bl}^\Bca) \sslash \mathrm{SO}(W)$ and 
\item[(2)] $\widehat{\mathrm{\Omega}} = \mathrm{\Omega}_T^s \sslash \mathrm{PGL}(N) \simeq \mathbf{CC}(\Bca)$;

\medskip

\item[(3)] $\mathrm{\Omega}_T \cap \mathrm{\Delta}_T  \simeq (\upsilon^* \mathrm{Bl}^\Bca_2) \sslash \mathrm{SO}(W)$ and 
\item[(4)]$\widehat{\mathrm{\Omega}}\cap\widehat{\mathrm{\Delta}} = (\mathrm{\Omega}_T \cap \mathrm{\Delta}_T)^s \sslash \mathrm{PGL}(N) \simeq \mathrm{Bl}_{\PP(\mathrm{S}^2_1\Bca)}\PP(\mathrm{S}^2_2\Bca)$;

\medskip

\item[(5)] $\mathrm{\Omega}_T \cap \mathrm{\Sigma}_T \simeq (\upsilon^* E^\Bca) \sslash \mathrm{SO}(W)
$ and 
\item[(6)] $\widehat{\mathrm{\Omega}}\cap\widehat{\mathrm{\Sigma}} = (\mathrm{\Omega}_T \cap \mathrm{\Sigma}_T)^s \sslash \mathrm{PGL}(N) \simeq \PP(\mathrm{S}^2\Tca_\chi)$;

\medskip

\item[(7)] $\mathrm{\Omega}_T \cap \mathrm{\Sigma}_T \cap \mathrm{\Delta}_T \simeq (\upsilon^*E^\Bca_2) \sslash \mathrm{SO}(W)$ and 
\item[(8)] $\widehat{\mathrm{\Omega}}\cap\widehat{\mathrm{\Sigma}} \cap \widehat{\mathrm{\Delta}} = (\mathrm{\Omega}_T \cap \mathrm{\Sigma}_T \cap \mathrm{\Delta}_T)^s \sslash \mathrm{PGL}(N) \simeq \PP(\mathrm{S}^2_1\Tca_\chi)$.
\end{enumerate}
\end{proposition}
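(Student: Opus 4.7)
The plan is to propagate the blow-up sequence from $S$ to $T$ through the two levels of GIT quotients, mimicking and combining the reasonings of \cref{prop:OmegaS;OmegaS-SigmaS}, \cref{prop:DeltaS;DeltaS-SigmaS} and \cref{lem:exceptional divisor first blow-up}. First, observe that $\mathrm{\Delta}_R \subset \mathrm{\Omega}_R$ and $\mathrm{\Delta}_R \cap \mathrm{\Sigma}_R = \mathrm{\Omega}_R \cap \mathrm{\Sigma}_R$, so after the second blow-up one has $\mathrm{\Delta}_S \subset \mathrm{\Omega}_S$. Since $\pi_T$ is the blow-up of $S$ along $\mathrm{\Delta}_S$, the universal property of blow-ups together with \cref{lem:OmegaT-DeltaT} gives
\[
\mathrm{\Omega}_T = \mathrm{Bl}_{\mathrm{\Delta}_S}\mathrm{\Omega}_S, \qquad \mathrm{\Omega}_T \cap \mathrm{\Sigma}_T = \mathrm{Bl}_{\mathrm{\Delta}_S\cap \mathrm{\Sigma}_S}(\mathrm{\Omega}_S\cap \mathrm{\Sigma}_S),
\]
while $\mathrm{\Omega}_T\cap \mathrm{\Delta}_T$ and $\mathrm{\Omega}_T \cap \mathrm{\Sigma}_T \cap \mathrm{\Delta}_T$ are their respective exceptional divisors. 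All four centres are smooth, so these are well-behaved smooth blow-ups.

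Second, I would lift the four blow-ups to the principal $\mathrm{PGL}(N)$-bundle $\upsilon \colon Y \to X^{[n]}$. By \cref{prop:blow-up and principal G bundles} applied to the free $\mathrm{PGL}(N)$-actions, the descriptions $\mathrm{\Omega}_S \simeq (\upsilon^*\mathrm{Bl}^\Tca)\sslash \mathrm{SO}(W)$, $\mathrm{\Delta}_S\simeq (\upsilon^*\mathrm{Bl}_2^\Tca)\sslash \mathrm{SO}(W)$, $\mathrm{\Omega}_S \cap \mathrm{\Sigma}_S \simeq (\upsilon^*E^\Tca)\sslash \mathrm{SO}(W)$ and $\mathrm{\Delta}_S \cap \mathrm{\Sigma}_S \simeq (\upsilon^*E_2^\Tca)\sslash \mathrm{SO}(W)$ of \cref{prop:OmegaS;OmegaS-SigmaS} and \cref{prop:DeltaS;DeltaS-SigmaS} convert the four blow-ups into the $\mathrm{SO}(W)$-equivariant blow-ups of $\upsilon^*\mathrm{Bl}^\Tca$ along $\upsilon^*\mathrm{Bl}_2^\Tca$, and of $\upsilon^*E^\Tca$ along $\upsilon^*E_2^\Tca$, together with their exceptional divisors. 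The identifications recalled in \cref{sec:complete conics}, namely $\mathrm{Bl}^\Bca \simeq \mathrm{Bl}_{\mathrm{Bl}_2^\Tca}\mathrm{Bl}^\Tca$ with exceptional divisor $\mathrm{Bl}_2^\Bca$, and $E^\Bca \simeq \mathrm{Bl}_{E_2^\Tca}E^\Tca$ with exceptional divisor $E_2^\Bca$, then yield items (1), (3), (5) and (7).

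Third, for the quotients by $\mathrm{PGL}(N)$, I would repeat the argument used in the proof of \cref{lem:exceptional divisor first blow-up}: apply \cref{prop:commuting actions} to the commuting $\mathrm{SO}(W)$- and $\mathrm{PGL}(N)$-actions on each of $\upsilon^*\mathrm{Bl}^\Bca$, $\upsilon^*\mathrm{Bl}_2^\Bca$, $\upsilon^*E^\Bca$, $\upsilon^*E_2^\Bca$. Since $\upsilon$ is a $\mathrm{PGL}(N)$-bundle and $S^s = S^{ss}$, $T^s = T^{ss}$, swapping the order of the two quotients reduces (2), (4), (6), (8) to computing $(\mathrm{Bl}^\Bca)^s\sslash \mathrm{SO}(W)$, $(\mathrm{Bl}_2^\Bca)^s\sslash \mathrm{SO}(W)$, $(E^\Bca)^s\sslash \mathrm{SO}(W)$, $(E_2^\Bca)^s\sslash \mathrm{SO}(W)$, which are respectively $\mathbf{CC}(\Bca)$, $\mathrm{Bl}_{\PP(\mathrm{S}_1^2\Bca)}\PP(\mathrm{S}_2^2\Bca)$, $\PP(\mathrm{S}^2\Tca_\chi)$, $\PP(\mathrm{S}_1^2\Tca_\chi)$ by the content of \cref{sec:complete conics}.

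The main obstacle lies in this last step: one must verify that the identification of GIT quotients of the four $\mathrm{SO}(W)$-spaces with the complete-conic varieties works globally and is compatible with the blow-up structure. The core case is $(\mathrm{Bl}^\Bca)^s\sslash \mathrm{SO}(W)\simeq \mathbf{CC}(\Bca)$, which relies on the fact that the morphism $\PP\sHom(W,\Bca)^{ss}\to \PP(\mathrm{S}^2\Bca)$, $[\varphi]\mapsto [\varphi\circ \varphi^\vee]$ is a good quotient with schematic preimage of $\PP(\mathrm{S}_1^2\Bca)$ equal to $\PP\sHom_1(W,\Bca)^{ss}$; the flat base change property of blow-ups then produces the complete conics and identifies the exceptional divisor with $\PP(\mathrm{S}^2\Tca_\chi)$. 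Once that is set up, items (4), (6), (8) follow because strict transforms and exceptional divisors commute with the quotient by $\mathrm{SO}(W)$ under these flatness/freeness hypotheses.
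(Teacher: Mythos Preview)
Your strategy is correct and mirrors the paper's: the discussion in \cref{sec:complete conics} sets up exactly the identifications $\mathrm{Bl}^\Bca \simeq \mathrm{Bl}_{\mathrm{Bl}_2^\Tca}\mathrm{Bl}^\Tca$, $E^\Bca \simeq \mathrm{Bl}_{E_2^\Tca}E^\Tca$ and their $\mathrm{SO}(W)$-quotients, and the proposition is then recorded as a translation of these via \cref{prop:OmegaS;OmegaS-SigmaS} and \cref{prop:DeltaS;DeltaS-SigmaS}, just as you outline.

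Two small imprecisions are worth flagging. In your second step, it is the free $\mathrm{SO}(W)$-action on $\upsilon^*\mathrm{Bl}^\Tca$ (not the $\mathrm{PGL}(N)$-action) to which \cref{prop:blow-up and principal G bundles} applies, since $\upsilon^*\mathrm{Bl}^\Tca \to \mathrm{\Omega}_S$ is the relevant principal bundle; the $\mathrm{PGL}(N)$-direction is just flat base change along $\upsilon$. In your final paragraph, the good quotient $\PP\sHom(W,\Bca)^{ss} \to \PP(\mathrm{S}^2\Bca)$ is \emph{not} flat (stabilisers jump), so the identification $(\mathrm{Bl}^\Bca)^s \sslash \mathrm{SO}(W) \simeq \mathbf{CC}(\Bca)$ cannot be obtained by flat base change of blow-ups; it comes instead from the classical Kirwan-type analysis of the complete-conics space (cf.\ \cref{prop:Kirwan blow-up}), which the paper, like you, takes as input from O'Grady.
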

\subsubsection{}\label{sec:notation dual conics} Keep the notation as in \ref{sec:complete conics}. Let $\PP(\mathrm{S}^2\Bca^\vee)$ (resp.\ $\PP(\mathrm{S}^2_i\Bca^\vee)$) be the space parametrising conics (resp.\ conics of rank at most $i$) in the fibres of $\PP(\Bca)$ over $\Grass^\omega(3,\Tca)$. The schemes $\mathrm{Bl}_{\PP (\mathrm{S}^2_1\Bca^\vee)}\PP (\mathrm{S}^2\Bca^\vee)$ and $\mathbf{CC}(\Bca)$ are classically known to be isomorphic. From \cref{prop:OmegaT;OmegaT-SigmaT;DeltaT-SigmaT;OmegaT-SigmaT-DeltaT}, one can then deduce the following 
\begin{proposition}\label{prop:dual conics} The following hold true:
\begin{enumerate}
\item[(1)] $\widehat{\mathrm{\Omega}} \simeq \mathrm{Bl}_{\PP(\mathrm{S}^2_1\Bca^\vee)}\PP(\mathrm{S}^2\Bca^\vee)$;
\item[(2)] $\widehat{\mathrm{\Omega}}\cap\widehat{\mathrm{\Delta}}$ is the exceptional divisor of the blow-up of $\PP(\mathrm{S}^2\Bca^\vee)$ along $\PP(\mathrm{S}^2_1\Bca^\vee) \simeq \PP(\Bca^\vee)$;
\item[(3)] $\widehat{\mathrm{\Omega}}\cap\widehat{\mathrm{\Sigma}} \simeq \mathrm{Bl}_{\PP(\mathrm{S}^2_1 \Bca^\vee)} \PP(\mathrm{S}^2_2\Bca^\vee)$;
\item[(4)] $\widehat{\mathrm{\Omega}} \cap \widehat{\mathrm{\Sigma}}\cap \widehat{\mathrm{\Delta}}$ is the exceptional divisor of the blow-up of $\PP(\mathrm{S}^2_2\Bca^\vee)$ along $\PP(\mathrm{S}^2_1\Bca^\vee) \simeq \PP(\Bca^\vee)$.
\end{enumerate}
\end{proposition}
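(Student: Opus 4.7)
The plan is to establish a classical conic / dual-conic duality over $\Grass^\omega(3,\Tca)$, and to use it to reinterpret \cref{prop:OmegaT;OmegaT-SigmaT;DeltaT-SigmaT;OmegaT-SigmaT-DeltaT}. The key construction is a relative adjugate morphism. A symmetric form $s \in \mathrm{S}^2\Bca$ can be regarded as a symmetric map $s \colon \Bca^\vee \to \Bca$ (up to a twist by $\det\Bca$ that disappears projectively); since $\Bca$ has rank $3$, its adjugate $\mathrm{adj}(s)$ is a symmetric map $\Bca \to \Bca^\vee$, yielding a rational map
\[
\mathrm{adj}\colon \PP(\mathrm{S}^2\Bca) \dashrightarrow \PP(\mathrm{S}^2\Bca^\vee)
\]
over $\Grass^\omega(3,\Tca)$. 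Its scheme-theoretic indeterminacy locus is the Veronese subbundle $\PP(\mathrm{S}^2_1\Bca) \simeq \PP(\Bca)$, since the adjugate of a rank-$1$ symmetric $3\times 3$ matrix vanishes (a check with Fitting ideals of the universal adjugate). By the universal property of the blow-up, $\mathrm{adj}$ extends to a morphism $\widetilde{\mathrm{adj}}\colon \mathbf{CC}(\Bca) \to \PP(\mathrm{S}^2\Bca^\vee)$; performing the symmetric construction on the dual side gives $\widetilde{\mathrm{adj}^\vee}\colon \mathrm{Bl}_{\PP(\mathrm{S}^2_1\Bca^\vee)}\PP(\mathrm{S}^2\Bca^\vee) \to \PP(\mathrm{S}^2\Bca)$. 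The classical identity $\mathrm{adj}(\mathrm{adj}(s)) = \det(s)\cdot s$, valid in rank $3$, then implies that the two maps are mutually inverse once both are further factored through the respective blow-ups. This gives
\[
\widehat{\mathrm{\Omega}} \simeq \mathbf{CC}(\Bca) \simeq \mathrm{Bl}_{\PP(\mathrm{S}^2_1\Bca^\vee)}\PP(\mathrm{S}^2\Bca^\vee),
\]
which is (1), via \cref{prop:OmegaT;OmegaT-SigmaT;DeltaT-SigmaT;OmegaT-SigmaT-DeltaT}(2).

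To transport the divisor descriptions, I would use the observation that a rank-$2$ conic $L_1\cdot L_2$ has rank-$1$ adjugate, proportional to $(L_1 \cap L_2)^{\otimes 2}$; hence $\mathrm{adj}$ sends $\PP(\mathrm{S}^2_2\Bca)$ set-theoretically into the Veronese $\PP(\mathrm{S}^2_1\Bca^\vee)$. A local calculation in standard affine charts shows that the strict transform of $\PP(\mathrm{S}^2_2\Bca)$ in $\mathbf{CC}(\Bca)$ maps isomorphically onto the exceptional divisor of $\mathrm{Bl}_{\PP(\mathrm{S}^2_1\Bca^\vee)}\PP(\mathrm{S}^2\Bca^\vee)$; combined with \cref{prop:OmegaT;OmegaT-SigmaT;DeltaT-SigmaT;OmegaT-SigmaT-DeltaT}(4), this yields (2). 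By the symmetric argument (swap the roles of $\Bca$ and $\Bca^\vee$), the exceptional divisor of $\mathbf{CC}(\Bca) \to \PP(\mathrm{S}^2\Bca)$ is identified with the strict transform $\mathrm{Bl}_{\PP(\mathrm{S}^2_1\Bca^\vee)}\PP(\mathrm{S}^2_2\Bca^\vee)$, which together with \cref{prop:OmegaT;OmegaT-SigmaT;DeltaT-SigmaT;OmegaT-SigmaT-DeltaT}(6) gives (3). Finally, intersecting the two divisor identifications produces (4), in agreement with \cref{prop:OmegaT;OmegaT-SigmaT;DeltaT-SigmaT;OmegaT-SigmaT-DeltaT}(8).

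The main obstacle is the scheme-theoretic bookkeeping: one must verify that the scheme-theoretic indeterminacy of $\mathrm{adj}$ coincides with the Veronese (not just set-theoretically), and that under $\widetilde{\mathrm{adj}}$ the strict transform of the rank-$\le 2$ locus maps \emph{isomorphically} onto the exceptional divisor of the dual blow-up, rather than merely surjectively. Both are essentially local questions that reduce to an explicit computation in affine charts trivialising $\Bca$; the global structure over $\Grass^\omega(3,\Tca)$ enters only formally, since the adjugate is natural in the bundle $\Bca$.
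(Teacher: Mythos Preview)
Your proposal is correct and is precisely the classical argument the paper is invoking. The paper itself gives no proof beyond stating that ``$\mathrm{Bl}_{\PP (\mathrm{S}^2_1\Bca^\vee)}\PP (\mathrm{S}^2\Bca^\vee)$ and $\mathbf{CC}(\Bca)$ are classically known to be isomorphic'' and then claiming that the four items follow from \cref{prop:OmegaT;OmegaT-SigmaT;DeltaT-SigmaT;OmegaT-SigmaT-DeltaT}; your adjugate construction and the tracking of rank strata through it is exactly what underlies that classical fact, so you have supplied the details the paper omits.
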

\subsubsection{} We conclude this section with a look at $\widehat{\mathrm{\Delta}}$. We keep the notation of \ref{sec:Grass(2,T)}; moreover, we denote as $\Aca^{\perp_\omega}$ the orthogonal complement of $\Aca \subset \alpha^*\Tca$ with respect to $\alpha^*\omega$.
\begin{proposition}\cite[Proposition 3.5.1]{O'GdmsK3I} \label{prop:Delta hat}
The morphism
\begin{equation}\label{fibration from Delta hat}
\widehat{\mathrm{\Delta}} =\mathrm{\Delta}_T^s \sslash \mathrm{PGL}(N) \to \mathrm{\Delta}_S^s \sslash \mathrm{PGL}(N) \simeq \PP (\mathrm{S}^2\Aca)
\end{equation}
is the projective bundle associated to a locally free sheaf on $\PP(\mathrm{S}^2\Aca)$. Moreover, 
\begin{equation}\label{Delta hat Omega hat}
\widehat{\mathrm{\Omega}}\cap \widehat{\mathrm{\Delta}} \simeq \PP(\mathrm{S}^2\Aca) \times_{\Grass^\omega(2, \Tca)} \PP (\Aca^{\perp_\omega}/\Aca).
\end{equation}
\end{proposition}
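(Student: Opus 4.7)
The plan is twofold: first, identify $\widehat{\mathrm{\Delta}}$ as a projective bundle over $\PP(\mathrm{S}^2\Aca)$ by computing the $\mathrm{PGL}(N)$-equivariant normal bundle $\Nca_{\mathrm{\Delta}_S/S}$ and descending it through the GIT quotient; second, match $\widehat{\mathrm{\Omega}} \cap \widehat{\mathrm{\Delta}}$ with the claimed fiber product using the complete-conics description of \cref{prop:OmegaT;OmegaT-SigmaT;DeltaT-SigmaT;OmegaT-SigmaT-DeltaT}(4).

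For the first assertion, I first observe that by the lemma at the end of \cref{section:second blow-up}, $S$ is smooth along $\mathrm{\Omega}_S \supset \mathrm{\Delta}_S$, while $\mathrm{\Delta}_S$ is smooth by \cref{prop:DeltaS}; thus $\mathrm{\Delta}_S$ is regularly embedded in $S$, and $\mathrm{\Delta}_T \simeq \PP(\Nca_{\mathrm{\Delta}_S/S})$ as a $\mathrm{PGL}(N)$-equivariant projective bundle. To compute $\Nca_{\mathrm{\Delta}_S/S}$ together with its equivariant structure, I would apply Luna's étale slice theorem at a point $[q]$ of $\mathrm{\Omega}_Q$ with $F \simeq I_Z^{\oplus 2}$, where the slice is analytically identified as an $\mathrm{SO}(W)$-variety with $\mathrm{Hom}^\omega(W,\mathrm{E}_Z) \times (\mathrm{E}_Z \otimes \mathbb{C}\mathrm{Id})$. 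Iterating the three blow-ups on this explicit local model and reading off the normal direction to the strict transform of $\mathrm{Hom}^\omega_2(W,\mathrm{E}_Z) \times (\mathrm{E}_Z \otimes \mathbb{C}\mathrm{Id})$ yields $\Nca_{\mathrm{\Delta}_S/S}$ and the induced stabilizer action at that fiber. Using the principal $\mathrm{PGL}(N)$-bundle $\upsilon^*\mathrm{Bl}^\Aca \to \mathrm{Bl}^\Aca$ of \cref{prop:DeltaS} together with the double-quotient formalism of \cref{prop:commuting actions}, I would then descend the equivariant bundle to a locally free sheaf on $\PP(\mathrm{S}^2\Aca)$, obtaining the projective bundle structure on $\widehat{\mathrm{\Delta}} \to \PP(\mathrm{S}^2\Aca)$.

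For the intersection statement, the starting point is \cref{prop:OmegaT;OmegaT-SigmaT;DeltaT-SigmaT;OmegaT-SigmaT-DeltaT}(4), giving $\widehat{\mathrm{\Omega}} \cap \widehat{\mathrm{\Delta}} \simeq \mathrm{Bl}_{\PP(\mathrm{S}^2_1\Bca)}\PP(\mathrm{S}^2_2\Bca)$ over $\Grass^\omega(3,\Tca)$. The geometric input is that a rank-$2$ symmetric tensor $\sigma \in \mathrm{S}^2 B$ on an isotropic $3$-dimensional $B$ determines uniquely a $2$-dimensional subspace $A \coloneqq \im(\sigma \colon B^\vee \to B) \subset B$, which is automatically isotropic, and $\sigma$ lies in $\mathrm{S}^2 A \subset \mathrm{S}^2 B$ as a non-degenerate form; conversely the quotient line $B/A$ corresponds to a line in $A^{\perp_\omega}/A$. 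This yields a natural morphism
\[
\PP(\mathrm{S}^2\Aca) \times_{\Grass^\omega(2,\Tca)} \PP(\Aca^{\perp_\omega}/\Aca) \to \mathrm{Bl}_{\PP(\mathrm{S}^2_1\Bca)}\PP(\mathrm{S}^2_2\Bca),
\]
sending $(A, [\sigma], [\ell])$ to the rank-$\le 2$ conic $[\sigma]$ on $B \coloneqq A + \tilde\ell$, with the blow-up resolving the rank-$1$ ambiguity via the choice of $A$ containing the singular line. I would verify this is an isomorphism by a dimension count, smoothness on both sides, and bijectivity on closed points using the reconstruction of $A$ from rank-$2$ tensors, together with the description of the blow-up fiber over the rank-$1$ locus as the $\PP^1$ parametrising $2$-planes in $B$ through the singular line.

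The main obstacle is the first part: tracking $\Nca_{\mathrm{\Delta}_S/S}$ through the three blow-ups at the level of the Luna slice while keeping the $\mathrm{SO}(W)$-action explicit is technical, and one must further verify that finite stabilizers at stable points of $\mathrm{\Delta}_S$ act trivially on the fibers so that the descent to $\PP(\mathrm{S}^2\Aca)$ yields a genuinely locally free sheaf.
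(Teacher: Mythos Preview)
The paper does not give a self-contained proof of this proposition; it cites O'Grady and then, in \cref{rem:Delta hat and Omega hat}, explains how the stated refinement follows from his results. The comparison with your proposal is instructive on both parts.

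For the second assertion, your approach and the paper's are the same geometric idea viewed from dual sides. You start from \cref{prop:OmegaT;OmegaT-SigmaT;DeltaT-SigmaT;OmegaT-SigmaT-DeltaT}(4), which gives $\widehat{\mathrm{\Omega}}\cap\widehat{\mathrm{\Delta}}\simeq \mathrm{Bl}_{\PP(\mathrm{S}^2_1\Bca)}\PP(\mathrm{S}^2_2\Bca)$, and recover the $2$-plane $A$ as the image of a rank-$2$ tensor. The paper instead invokes \cref{prop:dual conics}(2), which identifies $\widehat{\mathrm{\Omega}}\cap\widehat{\mathrm{\Delta}}$ with the exceptional divisor of $\mathrm{Bl}_{\PP(\Bca^\vee)}\PP(\mathrm{S}^2\Bca^\vee)$; this comes with a built-in morphism to $\PP(\Bca^\vee)\simeq \Grass(2,\Bca)$, which is precisely the variety of partial isotropic flags of type $(2,3)$, i.e.\ $\PP(\Aca^{\perp_\omega}/\Aca)$ over $\Grass^\omega(2,\Tca)$. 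Combined with the morphism to $\PP(\mathrm{S}^2\Aca)$ from \cref{prop:DeltaS}(2), one gets the global map to the fibre product, and O'Grady's fibrewise statement upgrades it to an isomorphism. Either route works; the dual-conics one packages the ``find the image $2$-plane'' step as the blow-up projection.

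For the first assertion, your route is considerably heavier than what is actually needed, and the obstacle you flag is real. The paper never computes $\Nca_{\mathrm{\Delta}_S/S}$ and never descends an equivariant vector bundle through the GIT quotient. O'Grady already proves that \eqref{fibration from Delta hat} is a $\PP^{2n-4}$-fibration; the only additional input (from \cite[Lemma 3.5.4]{O'GdmsK3I}) is that the divisor $\widehat{\mathrm{\Delta}}\cap\widehat{\mathrm{\Omega}}$ of $\widehat{\mathrm{\Delta}}$ meets each fibre in a linear hyperplane. A relative hyperplane on a $\PP^r$-fibration furnishes the relatively ample $\Oca(1)$, so the fibration is the projectivisation of its push-forward. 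This sidesteps entirely the question of whether finite stabilisers on $\mathrm{\Delta}_S^s$ act trivially on the fibres of a vector bundle: one never needs to descend a vector bundle from $\mathrm{\Delta}_S^s$, only to exhibit a line bundle on $\widehat{\mathrm{\Delta}}$ itself that is $\Oca(1)$ on each fibre.
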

\begin{remark}\label{rem:Delta hat and Omega hat}
Our formulation of \cref{prop:Delta hat} seems a refinement of O'Grady's, but everything can be deduced from his work. The morphism \eqref{fibration from Delta hat} is not only a  $\PP^{2n-4}$-fibration, because the divisor $\widehat{\mathrm{\Delta}} \cap \widehat{\mathrm{\Omega}}$ of $\widehat{\mathrm{\Delta}}$ intersects each fibre in a linear hyperplane (see the proof of \cite[Lemma 3.5.4]{O'GdmsK3I}). The isomorphism \eqref{Delta hat Omega hat} does not hold only over any point $[Z] \in X^{[n]}$, because we have a global morphism $\widehat{\mathrm{\Omega}}\cap \widehat{\mathrm{\Delta}} \to \PP(\mathrm{S}^2\Aca) \times_{\Grass^\omega(2, \Tca)} \PP (\Aca^{\perp_\omega}/\Aca)$: by \cref{prop:dual conics}(2), there is indeed a morphism $\widehat{\mathrm{\Omega}}\cap \widehat{\mathrm{\Delta}} \to \PP (\Bca^\vee)\simeq \Grass(2,\Bca)$; its target is the variety parametrising partial $\omega$-isotropic flags of type $(2,3)$ in the fibres of $\Tca$ over $X^{[n]}$, and such a variety can be described also as the projective bundle $\PP(\Aca^{\perp_\omega}/\Aca)$ over $\Grass^\omega(2,\Tca)$. 
\end{remark}

\section{A construction by Markman}\label{sec:Markman}
Let us keep the notation introduced in \cref{Notation}. Consider on $X^{[n]} \times X^{[n]}$ the complexes of sheaves 
\begin{equation*}
\Eca_{ij} \coloneqq \Rder p_{X^{[n]} \times X^{[n]},*} \Rder \sHom (\Ica_i, \Ica_j)[1] \quad \text{for} \ 1 \le i,j \le 2.
\end{equation*}

The goal of this section is to study the homological properties of the $\Eca_{ij}$'s and of their derived pullbacks along $\delta$, $\delta \circ \beta$ and $b$.

\smallskip

Let us start from the easy case of $\Eca_{ii}$ for $i=1,2$.
\begin{proposition}\label{prop:E_ii} The following hold for $i=1,2$ and any $m \in \mathbb{Z}$.
\begin{enumerate}
\item[(1)] On $X^{[n]} \times X^{[n]}$ we have $\Hca^m(\Eca_{ii}) \simeq \pi_i^* \sExt_{X^{[n]}}^{m+1}(\Ica, \Ica) $. In particular, $\iota^*\Hca^{m}(\Eca_{11}) \simeq \Hca^{m}(\Eca_{22})$.
\item[(2)] On $X^{[n]}$ we have $\Hca^m( \Lder \delta^* \Eca_{ii} ) \simeq \sExt_{X^{[n]}}^{m+1}(\Ica, \Ica)$.
\item[(3)] On $E$ we have $\Hca^m (\Lder \beta^* \Lder \delta^* \Eca_{ii}) \simeq \beta^*\sExt_{X^{[n]}}^{m+1}(\Ica, \Ica)$.
\item[(4)] On $\Xca^{[n]}$ we have $\Hca^m (\Lder b^* \Eca_{ii}) \simeq (\pi_i \circ b)^* \sExt_{X^{[n]}}^{m+1}(\Ica, \Ica)$.
\end{enumerate}
\end{proposition}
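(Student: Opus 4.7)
The plan is to reduce all four statements to the single identification
\[
\Eca_{ii} \simeq \pi_i^* \Rder p_{X^{[n]},*} \Rder \sHom(\Ica, \Ica)[1],
\]
and then pull this back along the relevant map, checking flatness in each case so that cohomology commutes with pullback. For the starting identification, first note that $\Ica_i = \pi_{i,X}^* \Ica$ and that $\pi_{i,X}$ is flat, so
\[
\Rder \sHom(\Ica_i, \Ica_i) \simeq \pi_{i,X}^* \Rder \sHom(\Ica, \Ica).
\]
Applied to the cartesian square with $\pi_i$ smooth and vertical arrow $p_{X^{[n]}}$, the flat base change isomorphism recalled at the end of \cref{sec: notation projections and base change} gives
\[
\Eca_{ii} \simeq \Rder p_{X^{[n]} \times X^{[n]},*} \pi_{i,X}^* \Rder \sHom(\Ica,\Ica)[1] \simeq \pi_i^* \Rder p_{X^{[n]},*} \Rder \sHom(\Ica,\Ica)[1].
\]
Taking $\Hca^m$ and using that $\pi_i^*$ is exact (flatness) yields item (1); the identity $\iota \circ \pi_1 = \pi_2$ immediately gives $\iota^*\Hca^m(\Eca_{11}) \simeq \Hca^m(\Eca_{22})$.

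For item (2), since $\pi_i$ is flat we have $\Lder \pi_i^* = \pi_i^*$, so
\[
\Lder \delta^* \Eca_{ii} \simeq \Lder \delta^* \Lder \pi_i^* \Rder p_{X^{[n]},*} \Rder \sHom(\Ica,\Ica)[1] \simeq \Lder(\pi_i \circ \delta)^* \Rder p_{X^{[n]},*} \Rder \sHom(\Ica,\Ica)[1],
\]
and $\pi_i \circ \delta = \mathrm{id}_{X^{[n]}}$ gives the claim. Item (3) follows at once from item (2) because $\beta \colon E \simeq \PP(\Tca) \to X^{[n]}$ is smooth, hence flat, so $\Lder \beta^* = \beta^*$ is exact and commutes with cohomology sheaves.

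For item (4), the same manipulation gives
\[
\Lder b^* \Eca_{ii} \simeq \Lder(\pi_i \circ b)^* \Rder p_{X^{[n]},*} \Rder \sHom(\Ica,\Ica)[1],
\]
so it suffices to know that $\pi_i \circ b \colon \Xca^{[n]} \to X^{[n]}$ is flat. This is the main technical point, and it follows by \emph{miracle flatness}: both source and target are smooth, and every fibre of $\pi_i \circ b$ is isomorphic to $\mathrm{Bl}_Z X^{[n]}$ for some $[Z] \in X^{[n]}$, hence of constant dimension $2n$. Then $\Lder(\pi_i \circ b)^* = (\pi_i \circ b)^*$ commutes with cohomology sheaves, yielding $\Hca^m(\Lder b^*\Eca_{ii}) \simeq (\pi_i \circ b)^*\sExt^{m+1}_{X^{[n]}}(\Ica,\Ica)$. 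The whole argument is formal modulo this flatness check, which is the one place a non-trivial geometric input is required.
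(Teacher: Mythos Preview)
Your proof is correct and follows essentially the same approach as the paper's own proof: both establish the identification $\Eca_{ii} \simeq \pi_i^* \Rder p_{X^{[n]},*} \Rder \sHom(\Ica,\Ica)[1]$ via flat base change, deduce (2) from $\pi_i \circ \delta = \mathrm{id}$, (3) from (2) by flatness of $\beta$, and (4) from flatness of $\pi_i \circ b$. The paper justifies this last flatness by citing Hartshorne, Ex.~III.10.9, which is exactly the miracle flatness criterion you invoke.
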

\begin{proof}
The flatness of $\pi_i$ allows us to write
\begin{align*}
\Eca_{ii} &= \Rder p_{X^{[n]} \times X^{[n]},*} \Rder \sHom (\Lder \pi_{i,X}^* \Ica, \Lder \pi_{i,X}^* \Ica)[1] \\
&\simeq \Rder p_{X^{[n]} \times X^{[n]},*} \Lder \pi_{i,X}^* \Rder \sHom ( \Ica, \Ica)[1] \\
&\simeq \Lder \pi_{i}^* \Rder p_{X^{[n]},*} \Rder \sHom ( \Ica, \Ica) [1],
\end{align*}
and thence to deduce part 1. Part 2 results from the equality $\pi_i \circ \delta=\mathrm{id}_{X^{[n]}}$, and implies part 3 thanks to the flatness of $\beta$. The last claim is a consequence of the flatness of $\pi_i \circ b \colon \Xca^{[n]} \to X^{[n]}$, which holds by \cite[Ex.\ III.10.9]{Rag}.
\end{proof}

Let us move on to $\Eca_{12}$ and $\Eca_{21}$. Their study, due to Markman, relies on the following
\begin{proposition}\cite[Section 2]{Mgcrmssss}\label{prop:Markman complex}
There exists a complex 
\[
\begin{tikzcd}
V_{-1} \ar[r, "g"] & V_0 \ar[r,"f"] &V_1
\end{tikzcd}
\]
of locally free sheaves on $X^{[n]} \times X^{[n]}$ that is quasi-isomorphic to $\Eca_{12}$.
\end{proposition}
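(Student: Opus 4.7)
The plan is to verify that $\Eca_{12}$ is a perfect complex of Tor-amplitude $[-1, 1]$, and then invoke the standard structure theorem for such complexes on a smooth variety. The construction of the three-term complex then becomes essentially automatic, although globalising it (as Markman must do explicitly) requires some care.

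First, I would verify that $\Eca_{12}$ is perfect on $X^{[n]} \times X^{[n]}$. The sheaves $\Ica_1$ and $\Ica_2$ are coherent on the smooth variety $X^{[n]} \times X^{[n]} \times X$, hence perfect; consequently $\Rder\sHom(\Ica_1, \Ica_2)$ is perfect. Since $p_{X^{[n]} \times X^{[n]}}$ is smooth and proper, derived pushforward preserves perfection, so $\Eca_{12}$ is a perfect complex on a smooth variety.

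Second, I would check that the Tor-amplitude is contained in $[-1, 1]$. This is a pointwise check: for every closed point $([Z_1], [Z_2]) \in X^{[n]} \times X^{[n]}$, cohomology and base change (via the flatness of $p$) identify the derived fibre with
\[
\Eca_{12} \otimes^{\mathbf{L}} k([Z_1],[Z_2]) \simeq \Rder\Gamma(X, \Rder\sHom(I_{Z_1}, I_{Z_2}))[1],
\]
whose cohomology groups are the $\mathrm{Ext}^m_X(I_{Z_1}, I_{Z_2})$ in degrees $m-1$. These vanish for $m \notin \{0, 1, 2\}$ since $X$ is a smooth surface, so the derived fibre lives in degrees $-1, 0, 1$, confirming the Tor-amplitude bound.

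Third, I would invoke the standard fact that a perfect complex on a (Noetherian, regular) scheme having Tor-amplitude in $[a, b]$ is Zariski-locally quasi-isomorphic to a complex of finite-rank free modules placed in degrees $[a, b]$. Applied to $\Eca_{12}$ on the smooth base $X^{[n]} \times X^{[n]}$, this yields locally the desired three-term presentation. The main obstacle I expect is the passage from this local to a global presentation: one must patch the local models coherently. A clean way to do so (and presumably the route Markman follows in \cite{Mgcrmssss}) is to build a concrete candidate globally by first choosing a two-term locally free resolution of $\Ica_2$ on $X^{[n]} \times X^{[n]} \times X$, applying $\Rder p_{*}\Rder\sHom(\Ica_1, -)$ term by term, and using the Tor-amplitude bound together with relative Grothendieck--Verdier duality to smartly truncate the resulting complex into exactly three terms $V_{-1} \to V_0 \to V_1$ whose cohomology recovers $\sHom_{X^{[n]}\times X^{[n]}}(\Ica_1, \Ica_2)$, $\sExt^1_{X^{[n]}\times X^{[n]}}(\Ica_1, \Ica_2)$ and $\sExt^2_{X^{[n]}\times X^{[n]}}(\Ica_1, \Ica_2)$ in degrees $-1$, $0$, $1$ respectively.
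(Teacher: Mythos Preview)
The paper does not supply its own proof of this proposition; it simply records the statement and cites Markman's paper \cite[Section 2]{Mgcrmssss}. Your sketch is a correct justification of why such a three-term locally free resolution exists. The perfectness and Tor-amplitude computations are exactly right, and the globalisation step you flag is not a genuine obstacle: on a smooth quasi-projective variety one first builds, using an ample line bundle, a complex of locally frees $\cdots \to V_{-1} \to V_0 \to V_1$ quasi-isomorphic to $\Eca_{12}$ and bounded above in degree $1$; the Tor-amplitude bound then forces the brutal truncation $\sigma_{\le -2}$ of this complex to be acyclic, and since the base is regular the kernel of $V_{-1} \to V_0$ is itself locally free, so one may replace $V_{-1}$ by that kernel and obtain the global three-term model.

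Markman's original argument is more explicit---he writes down the $V_i$ concretely from locally free resolutions of the ideal sheaves and relative Serre duality---which has the advantage that the resulting complex is visibly compatible with the duality $\mathbb{D}(\Eca_{12}) \simeq \Eca_{21}$ used later (Lemma~\ref{lem:dual F_ij} and \S\ref{sec:construction by Markman}). Your abstract route gives existence just as well, but for the manipulations in \S\ref{sec:construction by Markman} it is convenient to know that the dual complex $V_1^\vee \to V_0^\vee \to V_{-1}^\vee$ represents $\Eca_{21}$, which is automatic from the explicit construction and only slightly less immediate from the abstract one.
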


\begin{proposition}\label{prop:E:ij}
On $X^{[n]} \times X^{[n]}$ we have
\begin{equation*}
\Hca^m(\Eca_{12}) \simeq \begin{cases} \sExt_{X^{[n]} \times X^{[n]}}^1(\Ica_1, \Ica_2) &\text{if}\ m = 0 \\ \delta_* \sExt_{X^{[n]}}^2(\Ica, \Ica) \simeq \delta_*\Oca &\text{if}\ m = 1 \\
0 &\text{otherwise}.
    \end{cases}
\end{equation*}
Mutatis mutandis, the same holds for $\Eca_{21}$.
\end{proposition}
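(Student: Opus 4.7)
The strategy is to use Markman's complex from \cref{prop:Markman complex} to bound the cohomological amplitude of $\Eca_{12}$, and then pin down each cohomology sheaf separately by combining derived base change along $p \coloneqq p_{X^{[n]} \times X^{[n]}}$ with Serre duality on the K3 fibres. The identification $\Hca^0(\Eca_{12}) \simeq \sExt^1_{X^{[n]} \times X^{[n]}}(\Ica_1,\Ica_2)$ is tautological from the shifted definition of $\Eca_{12}$, so the actual task is to show $\Hca^{-1}(\Eca_{12})=0$ and $\Hca^1(\Eca_{12}) \simeq \delta_*\Oca$.

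Writing Markman's complex as $[V_{-1} \xrightarrow{g} V_0 \xrightarrow{f} V_1]$ in degrees $-1,0,1$ immediately gives $\Hca^{-1}(\Eca_{12}) = \ker g$ and $\Hca^1(\Eca_{12}) = \coker f$. For any closed point $y = ([Z_1],[Z_2])$, derived flat base change along $p$ identifies $\Lder\iota_y^*\Eca_{12}$ with $\Rder\Hom(I_{Z_1},I_{Z_2})[1]$; a Mukai-vector computation together with Serre duality on $X$ shows this complex is concentrated in degree $0$ with dimension $2n-2$ when $Z_1 \neq Z_2$, and gains an extra copy of $\mathbb{C}$ in each of degrees $-1$ and $1$ on the diagonal. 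From this I deduce two things. First, $\ker g$ is a torsion-free subsheaf of $V_{-1}$ whose fibrewise vanishing off the diagonal (combined with upper semicontinuity of the rank of $g$) forces zero generic stalk, hence $\Hca^{-1}(\Eca_{12})=0$. Second, $\coker f \otimes k(y) = \mathrm{Ext}^2(I_{Z_1},I_{Z_2})$ vanishes for every $y$ off the diagonal, so by Nakayama $\mathrm{supp}\,\Hca^1(\Eca_{12}) \subseteq \delta(X^{[n]})$.

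To pin down $\Hca^1(\Eca_{12})$, I pull back along $\delta$: since $\Ica_1,\Ica_2$ are perfect and flat over the base, derived base change gives $\Lder\delta^*\Eca_{12} \simeq \Rder p_{X^{[n]},*}\Rder\sHom(\Ica,\Ica)[1]$, whose degree-$1$ cohomology is $\sExt^2_{X^{[n]}}(\Ica,\Ica) \simeq \Oca$ by \cref{prop:E_ii}(2). Applying $\Lder\delta^*$ to the truncation triangle $\Hca^0(\Eca_{12}) \to \Eca_{12} \to \Hca^1(\Eca_{12})[-1] \to +$, and using that $\Lder\delta^*$ sends sheaves into non-positive degrees, the long exact sequence yields an isomorphism $\Oca \simeq \Hca^1(\Lder\delta^*\Eca_{12}) \simeq \Hca^1(\Lder\delta^*\Hca^1(\Eca_{12})[-1])$; the Koszul formula for $\Lder\delta^*\delta_*$ then identifies the underlying sheaf on the diagonal. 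The delicate step, which I anticipate to be the main obstacle, is upgrading this identification at the diagonal to the full statement $\Hca^1(\Eca_{12}) \simeq \delta_*\Oca$: one must show that the scheme-theoretic support of $\Hca^1(\Eca_{12})$ is the reduced diagonal and not some thickening, and I expect this to require either a local analysis of the Markman differentials near $\delta(X^{[n]})$ or an explicit trace-type map $\Hca^1(\Eca_{12}) \to \delta_*\Oca$ checked to be an isomorphism fibrewise.
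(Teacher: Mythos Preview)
Your approach is essentially the same as the paper's, and you have correctly isolated the one genuinely non-trivial step. For $\Hca^{-1}$ the paper argues exactly as you do: $\ker g$ is torsion-free and, by Cohomology and Base Change, set-theoretically supported on the diagonal, hence zero. For $\Hca^1$, the paper likewise observes the set-theoretic support lies in the diagonal, then pulls back along $\delta$ and uses base change to identify the underlying line bundle as $\sExt^2_{X^{[n]}}(\Ica,\Ica) \simeq \Oca$.

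The step you flag as the main obstacle---ruling out that $\Hca^1(\Eca_{12})$ is supported on a thickening of the diagonal---is precisely the step the paper does \emph{not} prove. It instead invokes \cite[Remark~3, Item~2]{Mgcrmssss} together with a personal communication from Markman for the fact that $\Hca^1(\Eca_{12}) \simeq \delta_*\Lca$ for some line bundle $\Lca$ on $X^{[n]}$. Once this is granted, the identification $\Lca \simeq \delta^*\delta_*\Lca \simeq \sExt^2_{X^{[n]}}(\Ica,\Ica)$ via base change is exactly your computation. So your anticipated obstacle is real, and the paper outsources it rather than resolving it internally.
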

\begin{proof}
The only non-trivial statements concern the sheaves
\[
\Hca^{-1}(\Eca_{12}) = \sHom_{X^{[n]} \times X^{[n]}}(\Ica_1, \Ica_2) \quad \text{and} \quad  \Hca^{1}(\Eca_{12}) = \sExt_{X^{[n]} \times X^{[n]}}^2(\Ica_1, \Ica_2).
\]
By Cohomology and Base Change, they are both \emph{set-theoretically} supported on the diagonal of $X^{[n]} \times X^{[n]}$. By \cref{prop:Markman complex}, the former is isomorphic to the torsion-free sheaf $\ker(g)$, so it is zero. The latter is isomorphic to $\delta_*\Lca$, for $\Lca$ a line bundle on $X^{[n]}$: this fact, stated in \cite[Remark 3, Item 2]{Mgcrmssss}, was explained to me by Markman in a personal communication. Therefore, 
\[
\Lca \simeq \delta^*\delta_*\Lca \simeq \delta^*\sExt_{X^{[n]} \times X^{[n]}}^2(\Ica_1, \Ica_2) \simeq \sExt_{X^{[n]} \times X^{[n]}}^2(\delta_X^*\pi_{1,X}^*\Ica, \delta_X^*\pi_{2,X}^*\Ica) \simeq \sExt_{X^{[n]}}^2(\Ica, \Ica),
\]
where the third isomorphism comes once more from Cohomology and Base Change. We conclude that $ \Hca^{1}(\Eca_{12}) \simeq \delta_*\sExt_{X^{[n]}}^2(\Ica, \Ica)$. Since $\sExt_{X^{[n]}}^2(\Ica, \Ica)$ is isomorphic to $\Oca_{X^{[n]}} \otimes \Ho^2(X,\Oca_X)$ (see \cite[Theorem 10.2.1]{HLgmss}), we also get $ \Hca^{1}(\Eca_{12}) \simeq \delta_*\Oca$.
\end{proof}
Let us set 
\begin{align*}
\Eca & \coloneqq \Hca^0(\Eca_{12}) = \sExt_{X^{[n]} \times X^{[n]}}^1(\Ica_1, \Ica_2), \\
\Eca' &\coloneqq \Hca^0(\Eca_{21}) = \sExt_{X^{[n]} \times X^{[n]}}^1(\Ica_2, \Ica_1).
\end{align*}
\begin{lemma}\label{lem:involution and F_ij}
On $X^{[n]} \times X^{[n]}$, $\iota^*\Eca$ is isomorphic to $\Eca'$.
\end{lemma}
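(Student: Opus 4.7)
The plan is to show that, at the level of full complexes, $\iota^* \Eca_{12} \simeq \Eca_{21}$, and then take $\Hca^0$ on both sides. The whole argument is a diagram chase exploiting only the fact that $\iota$ is an isomorphism (hence flat) and the definition of the sheaves $\Ica_1, \Ica_2$.

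First, I observe that $\pi_i \circ \iota = \pi_{3-i}$ for $i=1,2$, so that, passing to the base change along $p_{X^{[n]}\times X^{[n]}}$,
\[
\iota_X^* \Ica_1 = \iota_X^* \pi_{1,X}^*\Ica = (\pi_1\circ\iota)_X^*\Ica = \pi_{2,X}^*\Ica = \Ica_2,
\]
and similarly $\iota_X^*\Ica_2 \simeq \Ica_1$. Also, by definition of $\iota_X$ we have $p_{X^{[n]}\times X^{[n]}} \circ \iota_X = \iota \circ p_{X^{[n]}\times X^{[n]}}$.

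Next, I combine these facts. Since $p_{X^{[n]}\times X^{[n]}}$ is flat, the isomorphism of functors $\Lder\iota^* \circ \Rder p_{X^{[n]}\times X^{[n]},*} \simeq \Rder p_{X^{[n]}\times X^{[n]},*} \circ \Lder\iota_X^*$ recalled in \cref{sec: notation projections and base change} applies; moreover, since $\iota_X$ is an isomorphism, $\Lder\iota_X^*$ commutes with $\Rder\sHom$. Therefore
\begin{align*}
\iota^* \Eca_{12}
&= \Lder\iota^* \, \Rder p_{X^{[n]}\times X^{[n]},*} \, \Rder\sHom(\Ica_1,\Ica_2)[1] \\
&\simeq \Rder p_{X^{[n]}\times X^{[n]},*} \, \Lder\iota_X^* \, \Rder\sHom(\Ica_1,\Ica_2)[1] \\
&\simeq \Rder p_{X^{[n]}\times X^{[n]},*} \, \Rder\sHom(\iota_X^*\Ica_1, \iota_X^*\Ica_2)[1] \\
&\simeq \Rder p_{X^{[n]}\times X^{[n]},*} \, \Rder\sHom(\Ica_2,\Ica_1)[1] \;=\; \Eca_{21}.
\end{align*}
Taking $\Hca^0$ and using that $\iota^*$ is exact (again because $\iota$ is an isomorphism) yields $\iota^*\Eca \simeq \Eca'$, as desired.

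There is no real obstacle here: the argument is formal, and the only thing to be careful about is keeping track of the relation $p_{X^{[n]}\times X^{[n]}}\circ\iota_X = \iota\circ p_{X^{[n]}\times X^{[n]}}$ so as to apply the flat base change statement in the correct direction.
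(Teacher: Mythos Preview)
Your proof is correct and follows exactly the same approach as the paper's own proof: establish $\Lder\iota^*\Eca_{12}\simeq\Eca_{21}$ at the level of complexes, then pass to $\Hca^0$ using that $\iota$ is flat. The paper condenses all of your steps into the single sentence ``Since $\Lder \iota^*\Eca_{12} \simeq \Eca_{21}$, the statement results from the flatness of $\iota$,'' leaving the base-change and $\iota_X^*\Ica_i\simeq\Ica_{3-i}$ verifications implicit; you have simply spelled these out.
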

\begin{proof}
Since $\Lder \iota^*\Eca_{12} \simeq \Eca_{21}$, the statement results from the flatness of $\iota$.
\end{proof}
\begin{lemma}\label{lem:dual F_ij}
We have an isomorphism $\mathbb{D}(\Eca_{12}) \simeq \Eca_{21}$. Moreover, $\Eca^\vee  \simeq \Eca'$.
\end{lemma}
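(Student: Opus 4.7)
The plan is to establish the first isomorphism via relative Grothendieck--Verdier duality applied to the smooth projection $p\coloneqq p_{X^{[n]}\times X^{[n]}}\colon X^{[n]}\times X^{[n]}\times X\to X^{[n]}\times X^{[n]}$, and then to deduce the second by taking the zeroth cohomology sheaf of an appropriate dualized triangle. The key observation for the first part is that $p$ is the base change to $X^{[n]}\times X^{[n]}$ of the structure morphism $X\to\mathrm{Spec}(\mathbb{C})$. Since $X$ is a K3 surface, $p$ is smooth of relative dimension~$2$ with trivial relative canonical, so its relative dualizing complex is $\Oca[2]$. Moreover, \cref{prop:Markman complex} provides a three-term locally free resolution for $\Eca_{12}$, so $\Rder\sHom(\Ica_1,\Ica_2)$ is perfect, and Grothendieck--Verdier duality gives
\[
\mathbb{D}\bigl(\Rder p_*\Rder\sHom(\Ica_1,\Ica_2)\bigr)\simeq \Rder p_*\bigl(\Rder\sHom(\Ica_1,\Ica_2)^\vee\bigr)[2].
\]
Combined with the standard identity $\Rder\sHom(\Ica_1,\Ica_2)^\vee\simeq \Rder\sHom(\Ica_2,\Ica_1)$ for perfect complexes, and a shift by the $[1]$ built into the definition of the $\Eca_{ij}$, this yields $\mathbb{D}(\Eca_{12})\simeq \Eca_{21}$.

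For the second claim, \cref{prop:E:ij} implies that $\Eca_{12}$ has non-zero cohomology only in degrees $0$ and $1$, namely $\Eca$ and $\delta_*\Oca$. The canonical truncation therefore produces a distinguished triangle
\[
\Eca\to \Eca_{12}\to \delta_*\Oca[-1]\xrightarrow{+1},
\]
which I would dualize and then combine with the first part to obtain
\[
\mathbb{D}(\delta_*\Oca[-1])\to \Eca_{21}\to \mathbb{D}(\Eca)\xrightarrow{+1}.
\]
The diagonal $\delta\colon X^{[n]}\hookrightarrow X^{[n]}\times X^{[n]}$ is a regular closed immersion of codimension $2n$ whose normal bundle $\Tca$ has trivial determinant (since $X^{[n]}$ is holomorphic symplectic). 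Consequently $\delta^!\Oca\simeq \Oca[-2n]$, so that $\mathbb{D}(\delta_*\Oca[-1])\simeq \delta_*\Oca[1-2n]$ is concentrated in cohomological degree $2n-1\ge 5$ (using $n\ge 3$). Taking $\Hca^0$ of the dualized triangle, this last term contributes to neither $\Hca^0$ nor $\Hca^1$, so the boundary maps vanish and
\[
\Eca^\vee=\Hca^0(\mathbb{D}(\Eca))\simeq \Hca^0(\Eca_{21})=\Eca'.
\]

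The only delicate point I expect is the bookkeeping of shifts: the $[1]$ in the definition of the $\Eca_{ij}$, the $[2]$ coming from the relative dualizing complex of $p$, the $[-2n]$ coming from $\delta^!$, and the conventions for $\Rder\sHom(-,-)[k]$. Once these are tracked correctly, and the hypothesis $n\ge 3$ is invoked to move the diagonal contribution out of the range where it could interfere with the truncation at $\Hca^0$, the argument reduces to standard applications of Grothendieck--Verdier duality and the perfect-complex identity $\Rder\sHom(A,B)^\vee\simeq\Rder\sHom(B,A)$.
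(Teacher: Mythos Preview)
Your proof is correct and follows essentially the same approach as the paper. The first part is identical (Grothendieck--Verdier duality for the smooth projection $p$ with trivial relative dualizing complex $\Oca[2]$); for the second part, the paper uses the spectral sequence $E_2^{p,q}=\Rder^p\sHom(\Hca^{-q}(\Eca_{12}),\Oca)\Rightarrow\Hca^{p+q}(\mathbb{D}(\Eca_{12}))$ together with the vanishing $\Rder^p\sHom(\delta_*\Oca,\Oca)=0$ for $p<2n$, which is exactly your triangle argument rephrased---since $\Eca_{12}$ has only two nonzero cohomology sheaves, the truncation triangle and the spectral sequence encode the same information.

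One small remark: your appeal to \cref{prop:Markman complex} to justify perfectness of $\Rder\sHom(\Ica_1,\Ica_2)$ is unnecessary (and that proposition actually resolves $\Eca_{12}$, not $\Rder\sHom(\Ica_1,\Ica_2)$); perfectness is automatic because $\Ica_1,\Ica_2$ are coherent on the smooth variety $X^{[n]}\times X^{[n]}\times X$.
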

\begin{proof}
The first statement comes from Grothendieck-Verdier duality:
\begin{align*}
\mathbb{D}(\Eca_{12}) &= \mathbb{D}( \Rder p_{X^{[n]} \times X^{[n]},*} \Rder \sHom ( \Ica_1, \Ica_2)[1]) \\
&\simeq \Rder p_{X^{[n]} \times X^{[n]},*} \Rder \sHom ( \Rder \sHom (\Ica_1, \Ica_2)[1], \Oca [2] ) \\
&\simeq \Rder p_{X^{[n]} \times X^{[n]},*} \Rder \sHom ( \Ica_2, \Ica_1 )[1] = \Eca_{21}.
\end{align*}
For the second claim, use the spectral sequence $E^{p,q}_2 = \Rder ^p \sHom(\Hca^{-q}(\Eca_{12}),\Oca) \Rightarrow \Hca^{p+q}(\mathbb{D}(\Eca_{12}))$: again by Grothendieck-Verdier duality, $\Rder^{p} \sHom (\delta_*\Oca,\Oca)$ vanishes for every $p < \mathrm{dim} \ X^{[n]}$. 
\end{proof}

\begin{remark}
By \cref{prop:Markman complex},  $\Eca_{21}$ is quasi-isomorphic to the complex $V_1^\vee \xrightarrow{f^\vee} V_0^\vee \xrightarrow{g^\vee} V_{-1}^\vee$.
\end{remark}

\begin{proposition}\label{prop:F_ij pullback under delta}
On $X^{[n]}$ we have $\Hca^m(\Lder \delta^* \Eca_{ij}) \simeq \sExt_{X^{[n]}}^{m+1}(\Ica, \Ica)$ for any $m \in \mathbb{Z}$.
\end{proposition}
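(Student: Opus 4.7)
The case $i = j$ is handled by \cref{prop:E_ii}(2), so I may assume $i \ne j$. Moreover, $\iota \circ \delta = \delta$ and $\Lder \iota^* \Eca_{12} \simeq \Eca_{21}$ by \cref{lem:involution and F_ij} (together with the flatness of $\iota$), hence $\Lder \delta^* \Eca_{21} \simeq \Lder \delta^* \Eca_{12}$, and it suffices to treat $(i,j) = (1,2)$. The plan is to identify $\Lder \delta^* \Eca_{12}$ with $\Rder p_{X^{[n]},*} \Rder \sHom(\Ica, \Ica)[1]$, whose degree-$m$ cohomology sheaf is by definition $\sExt^{m+1}_{X^{[n]}}(\Ica, \Ica)$.

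The argument would proceed in two steps. First, since $p_{X^{[n]} \times X^{[n]}}$ is flat, the base-change isomorphism recalled in \cref{sec: notation projections and base change} yields
\[
\Lder \delta^* \Eca_{12} \simeq \Rder p_{X^{[n]},*} \Lder \delta_X^* \Rder \sHom(\Ica_1, \Ica_2)[1].
\]
Second, I would push $\Lder \delta_X^*$ inside the internal $\Rder \sHom$. Since $\pi_i \circ \delta = \mathrm{id}_{X^{[n]}}$ and $\pi_{i,X}$ is flat, one has $\Lder \delta_X^* \Ica_i \simeq \Ica$ for $i = 1, 2$. Combined with the natural base-change morphism
\[
\Lder \delta_X^* \Rder \sHom(\Ica_1, \Ica_2) \to \Rder \sHom(\Lder \delta_X^* \Ica_1, \Lder \delta_X^* \Ica_2),
\]
this would give the desired $\Lder \delta_X^* \Rder \sHom(\Ica_1, \Ica_2) \simeq \Rder \sHom(\Ica, \Ica)$, provided the above morphism is in fact an isomorphism.

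The main technical point is therefore precisely this last commutation, which rests on the perfectness of the first argument $\Ica_1$. Now, $\Ica$ is a coherent sheaf on the smooth variety $X^{[n]} \times X$, hence admits a finite locally free resolution and is perfect; pullback by the flat morphism $\pi_{1,X}$ preserves perfectness, so $\Ica_1$ is perfect as well, and the standard fact that $\Rder \sHom(F, -)$ commutes with $\Lder f^*$ for $F$ perfect applies. Stringing the identifications together gives $\Lder \delta^* \Eca_{12} \simeq \Rder p_{X^{[n]},*} \Rder \sHom(\Ica, \Ica)[1]$, and passing to cohomology sheaves yields the claim.
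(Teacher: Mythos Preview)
Your argument is correct and follows essentially the same route as the paper: base change along the flat $p_{X^{[n]} \times X^{[n]}}$, commutation of $\Lder \delta_X^*$ with $\Rder \sHom$, and the identity $\pi_i \circ \delta = \mathrm{id}$. The paper treats all pairs $(i,j)$ uniformly in a single chain of isomorphisms, so your preliminary reduction to $(i,j)=(1,2)$ via the involution is unnecessary (though harmless); on the other hand, you spell out the perfectness justification for the $\Rder\sHom$ step, which the paper leaves implicit.
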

\begin{proof} By the flatness of $p_{X^{[n]} \times X^{[n]}}$, we have
\begin{align*}
\Lder \delta^* \Eca_{ij} &= \Lder \delta^* \Rder p_{X^{[n]} \times X^{[n]},*} \Rder \sHom (\Ica_i, \Ica_j)[1] \\
&\simeq \Rder p_{X^{[n]},*} \Lder \delta^*_X \Rder \sHom (\Lder \pi_{i, X}^* \Ica, \Lder \pi_{j, X}^* \Ica)[1] \\
&\simeq \Rder p_{X^{[n]},*} \Rder \sHom (\Lder \delta^*_X \Lder \pi_{i, X}^* \Ica, \Lder \delta^*_X \Lder \pi_{j, X}^* \Ica)[1] \\
&= \Rder p_{X^{[n]},*} \Rder \sHom ( \Ica, \Ica )[1]. \qedhere
\end{align*}
\end{proof}

\begin{corollary}\label{cor:F_ij pullback under delta circ beta}
On $E$ we have $\Hca^m (\Lder \beta^* \Lder \delta^* \Eca_{ij}) \simeq \beta^*\sExt_{X^{[n]}}^{m+1}(\Ica, \Ica)$  for any $m \in \mathbb{Z}$. In particular, $\Hca^0 (\Lder \varepsilon^* \Lder b^* \Eca_{ij})$ is the pullback along $\beta$ of the tangent bundle $\Tca$ of $X^{[n]}$.
\end{corollary}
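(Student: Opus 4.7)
The plan is to derive this corollary almost formally from \cref{prop:F_ij pullback under delta} together with the commutativity of the diagram in \cref{Notation}; no substantial new computation is required.

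For the general statement, I would pull back the isomorphisms of \cref{prop:F_ij pullback under delta} along $\beta \colon E \to X^{[n]}$. Since $E \simeq \PP(\Tca)$ is a projective bundle over $X^{[n]}$, the morphism $\beta$ is flat; hence $\Lder \beta^* = \beta^*$, and $\beta^*$ is exact and thus commutes with taking cohomology sheaves of a complex. Applying $\beta^*$ to \cref{prop:F_ij pullback under delta} therefore yields
\[
\Hca^m(\Lder \beta^* \Lder \delta^* \Eca_{ij}) \simeq \beta^* \Hca^m(\Lder \delta^* \Eca_{ij}) \simeq \beta^* \sExt^{m+1}_{X^{[n]}}(\Ica,\Ica),
\]
which is the first assertion.

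For the ``in particular'' part, I would invoke the identity $b \circ \varepsilon = \delta \circ \beta$, which can be read off the commutative diagram recalled in \cref{Notation}. This yields a natural isomorphism of derived functors $\Lder \varepsilon^* \circ \Lder b^* \simeq \Lder \beta^* \circ \Lder \delta^*$, and specialising the previous step to $m = 0$ then gives $\Hca^0(\Lder \varepsilon^* \Lder b^* \Eca_{ij}) \simeq \beta^* \sExt^1_{X^{[n]}}(\Ica,\Ica)$. To finish, I would recall the identification $\Tca \simeq \sExt^1_{X^{[n]}}(\Ica,\Ica)$ stated at the beginning of the discussion of O'Grady's examples in \cref{sec:moduli spaces}.

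No step is genuinely delicate: the whole argument combines flat base change along $\beta$, the earlier computation of $\Hca^m(\Lder \delta^* \Eca_{ij})$ in \cref{prop:F_ij pullback under delta}, and the geometric factorisation $b \circ \varepsilon = \delta \circ \beta$. The only point to verify with any care is the flatness of $\beta$, which is automatic from its being a projective bundle.
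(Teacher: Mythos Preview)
Your proposal is correct and follows essentially the same approach as the paper: the paper's proof is the single sentence ``The claim follows immediately from \cref{prop:F_ij pullback under delta} thanks to the flatness of $\beta$,'' and your argument simply unpacks this, together with the factorisation $b \circ \varepsilon = \delta \circ \beta$ and the identification $\Tca \simeq \sExt^1_{X^{[n]}}(\Ica,\Ica)$ for the ``in particular'' part.
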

\begin{proof}
The claim follows immediately from \cref{prop:F_ij pullback under delta} thanks to the flatness of $\beta$.
\end{proof}

\begin{proposition}\label{prop:Fij pullback beta}
On $\Xca^{[n]}$ we have 
\[
\Hca^m ( \Lder b^* \Eca_{12}) \simeq \begin{cases} \ker (b^* f)/\im (b^*g) &\text{if}\ m = 0 \\
b^*\delta_* \sExt_{X^{[n]}}^2(\Ica, \Ica) \simeq \varepsilon_*\Oca_E &\text{if}\ m = 1 \\
0 &\text{otherwise}.
\end{cases}
\]
Mutatis mutandis, the same holds for $\Hca^m ( \Lder b^* \Eca_{21})$.
\end{proposition}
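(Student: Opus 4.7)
The plan is to represent $\Eca_{12}$ by the three-term complex $[V_{-1}\xrightarrow{g} V_0\xrightarrow{f} V_1]$ from \cref{prop:Markman complex}, placed in degrees $-1,0,1$. Because the $V_i$ are locally free, $\Lder b^*\Eca_{12}$ is computed by the ordinary pullback complex $[b^*V_{-1}\xrightarrow{b^*g} b^*V_0\xrightarrow{b^*f} b^*V_1]$, so its cohomology sheaves are $\ker(b^*g)$, $\ker(b^*f)/\im(b^*g)$ and $\coker(b^*f)$ in degrees $-1$, $0$, $1$ respectively. The degree-$0$ identification is then tautological.

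For $\Hca^1$, I will apply the right-exact functor $b^*$ to the surjective presentation $V_0\xrightarrow{f} V_1\twoheadrightarrow \Hca^1(\Eca_{12})=\delta_*\sExt_{X^{[n]}}^2(\Ica,\Ica)$ furnished by \cref{prop:E:ij}. This immediately yields $\coker(b^*f)\simeq b^*\delta_*\sExt^2(\Ica,\Ica)$. Using the isomorphism $\sExt^2(\Ica,\Ica)\simeq\Oca_{X^{[n]}}$ recalled in the discussion preceding \cref{prop:E_ii}, this reduces to $\Oca_{\Xca^{[n]}}/b^*\Ica_\Delta$; the defining property of the blow-up $b^*\Ica_\Delta=\Oca_{\Xca^{[n]}}(-E)$ then identifies the quotient with $\varepsilon_*\Oca_E$.

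The step I regard as the only substantive one is the vanishing $\Hca^{-1}=\ker(b^*g)=0$. A naive Tor spectral sequence would be complicated by the higher $\Tor$-sheaves $\Lder^{-p}b^*\delta_*\Oca_{X^{[n]}}$, but working directly with Markman's complex sidesteps this. Since $\Hca^{-1}(\Eca_{12})=0$ by \cref{prop:E:ij}, the map $g$ is already injective on $X^{[n]}\times X^{[n]}$. Since $b$ restricts to an isomorphism on the complement of $E$, which is a dense open of the integral smooth scheme $\Xca^{[n]}$, $b^*g$ is injective over this dense open. Hence $\ker(b^*g)$ is a coherent subsheaf of the locally free (and therefore torsion-free) sheaf $b^*V_{-1}$ whose restriction to a dense open vanishes, and thus it must itself be zero.

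For $\Eca_{21}$, the argument is verbatim the same after replacing Markman's complex by its dual $[V_1^\vee\xrightarrow{f^\vee} V_0^\vee\xrightarrow{g^\vee} V_{-1}^\vee]$, which represents $\Eca_{21}\simeq\mathbb{D}(\Eca_{12})$ according to the remark following \cref{lem:dual F_ij}; the vanishing $\Hca^{-1}(\Eca_{21})=0$ again forces $f^\vee$ to be injective, and the same torsion-freeness argument applies.
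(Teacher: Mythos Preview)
Your proof is correct and follows essentially the same approach as the paper: represent $\Eca_{12}$ by Markman's locally free complex, compute $\Lder b^*$ termwise, use right-exactness of $b^*$ for $\Hca^1$, and deduce $\Hca^{-1}=0$ from the generic injectivity of $b^*g$ and the torsion-freeness of $b^*V_{-1}$. The only difference is that you additionally spell out the identification $b^*\delta_*\Oca_{X^{[n]}}\simeq\varepsilon_*\Oca_E$ via the blow-up ideal, which the paper simply records in the statement without justifying in the proof.
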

\begin{proof}
By \cref{prop:Markman complex}, $\Lder b^* \Eca_{12}$ is given by $b^* V_{-1} \xrightarrow{b^*g} b^* V_0 \xrightarrow{b^*f} b^* V_1$.
Since $b^* g$ is a generically injective homomorphism between locally free sheaves, it is injective, hence $\Hca^{-1} ( \Lder b^* \Eca_{12})$ vanishes.
For $m=1$, we have by \cref{prop:E:ij}
\[
\Hca^{1} ( \Lder b^* \Eca_{12}) = \coker (b^* f) \simeq b^* \coker (f) \simeq b^* \delta_* \sExt_{X^{[n]}}^2(\Ica, \Ica). \qedhere
\]
\end{proof}

\subsection{A vector bundle on $\Xca^{[n]}$}\label{sec:construction by Markman} 
In the complex of locally free sheaves 
\begin{equation}\label{pullback Markman complex to blow up}
\begin{tikzcd}
b^* V_{-1} \ar[r,"b^*g"] & b^* V_0 \ar[r, "b^*f"] & b^* V_1,
\end{tikzcd}
\end{equation}
the morphisms $b^*g$ and $b^*f$ are of constant rank on $\Xca^{[n]} \setminus E$, but not on the whole of the reduced scheme $\Xca^{[n]}$. As explained in \cite[Section 4]{MBBccc}, we can consider instead of \eqref{pullback Markman complex to blow up} a complex of locally free sheaves 
\[
\begin{tikzcd}
U_{-1} \ar[r,"\widetilde{g}"] & b^* V_0 \ar[r,"\widetilde{f}"] & U_1,
\end{tikzcd}
\]
whose morphisms $\widetilde{g}$ and $\widetilde{f}$ are of constant rank on $\Xca^{[n]}$ and agree on $\Xca^{[n]} \setminus E$ with $b^*g$ and $b^*f$, respectively. The construction, whose outcome will be important for this paper, goes as follows.

\smallskip

Define $U_1$ as the image of $b^*f$, and denote by $\widetilde{f} \colon b^*V_0 \to U_1$ the surjective homomorphism induced by $b^*f$. We have two short exact sequences
\begin{equation}
\begin{tikzcd}[row sep=0.3em]
0 \ar[r] & U_1 \ar[r] & b^*V_1 \ar[r] & \varepsilon_*\Oca_E \ar[r] & 0, \label{definition U_1} \\
0 \ar[r] & \ker(b^*f) \ar[r] & b^*V_0 \ar[r, "\widetilde{f}"] & U_1 \ar[r] & 0.
\end{tikzcd}
\end{equation}
From the first (which is given by \cref{prop:Fij pullback beta}), we read the local freeness of $U_1$, because $E$ is a Cartier divisor. The second implies that $\ker(\widetilde{f})$ equals $\ker(b^*f)$ and that it is a subbundle of $b^*V_0$. 

Set $U'_{-1} \coloneqq U_1^\vee$ and $\widetilde{f^\vee} \coloneqq (\widetilde{f})^\vee$. The functor $\mathbb{D}$ applied to \eqref{definition U_1} yields the short exact sequences
\[
\begin{tikzcd}[row sep=0.3em]
0 \ar[r] & b^*V_1^\vee \ar[r] & U'_{-1} \ar[r] & \varepsilon_*(\varepsilon^*\Oca(E)) \ar[r] & 0, \\
0 \ar[r] & U'_{-1}  \ar[r,"\widetilde{f^\vee}"] & b^*V_0^\vee \ar[r] & \ker(b^*f)^\vee \ar[r] & 0
\end{tikzcd}
\]
(recall that $\mathbb{D} \circ \varepsilon_* (-) \simeq \varepsilon_* \Rder \sHom(-,\varepsilon^*\Oca(E))[-1]$ by Grothendieck-Verdier duality). In particular, $U'_{-1}$ is a subbundle of $b^*V^\vee_0$ via $\widetilde{f^\vee}$.

\smallskip

Now, replace \eqref{pullback Markman complex to blow up} with its dual $b^*V_1^\vee \xrightarrow{b^*f^\vee} b^*V_0^\vee \xrightarrow{b^*g^\vee} b^*V_{-1}^\vee$ and repeat this construction. We get a locally free sheaf $U_1' \coloneqq \im(b^*g^\vee)$ with a surjective morphism $\widetilde{g^\vee} \colon b^*V^\vee_0 \to U'_1$. Set $U_{-1} \coloneqq (U'_1)^\vee$ and $\widetilde{g} \coloneqq (\widetilde{g^\vee})^\vee \colon U_{-1} \to b^*V_0$. Then, $U_{-1}$ is a subbundle of $b^*V_0$ via $\widetilde{g}$ and sits into a short exact sequence
\begin{equation}\label{b^*V_{-1} vs U_{-1}}
\begin{tikzcd}
0 \ar[r] & b^*V_{-1} \ar[r] & U_{-1} \ar[r] & \varepsilon_*(\varepsilon^*\Oca(E)) \ar[r] & 0.
\end{tikzcd}
\end{equation}
The morphism $\widetilde{f} \circ \widetilde{g}$ between the locally free sheaves $U_{-1}$ and $U_1$ vanishes: indeed, it vanishes on $\Xca^{[n]} \setminus E$, where $\widetilde{g}$ and $\widetilde{f}$ agree with $b^*g$ and $b^*f$, respectively. Analogously, $\widetilde{g^\vee} \circ \widetilde{f^\vee}=0$. Thus, we have obtained two complexes of vector bundles
\[
\begin{tikzcd}
U_{-1} \ar[r,"\widetilde{g}"] & b^*V_0 \ar[r,"\widetilde{f}"] & U_1 &\text{and}& U_{-1}' \ar[r,"\widetilde{f^\vee}"] & b^*V_0^\vee \ar[r,"\widetilde{g^\vee}"] & U_1'.
\end{tikzcd}
\]
By construction, they are dual to each other, and quasi-isomorphic to their cohomology sheaves
\[
V \coloneqq \ker(\widetilde{f})/\im(\widetilde{g}) \quad \text{and} \quad  V' \coloneqq \ker(\widetilde{g^\vee})/\im(\widetilde{f^\vee}),
\]
respectively. Here we collect the main properties of $V$.
\begin{theorem}[{\cf \cite[Proposition 4.1]{MBBccc}}]\label{thm:properties of V} The following hold true.
\begin{enumerate}
\item[(1)] The sheaves $V$ and $V'$ are locally free, and $V^\vee \simeq V'$.
\item[(2)] We have $V \simeq [\ker(b^*f)/\im(b^*g)]_\tf$ and $V' \simeq [\ker(b^*f^\vee)/\im(b^*g^\vee)]_\tf$.
\item[(3)] Let $\Lca$ be the tautological line subbundle of $ \beta^*\Tca$ on $E = \PP(\Tca)$ and $\Lca^{\perp_\omega}$ its orthogonal complement with respect to the symplectic form $\beta^*\omega$. Then $V|_E \simeq \Lca^{\perp_\omega }/\Lca$, $V'|_E\simeq \Lca^{\perp_\omega }/\Lca$.
\item[(4)] We have $(b^*\Eca)_\tf \simeq V(-E)$ and $[b^*(\Eca')]_\tf \simeq V'(-E)$.
\item[(5)] $\widetilde{\iota}^*V \simeq V'$ and $\widetilde{\iota}^*(V') \simeq V$.
\end{enumerate}
\end{theorem}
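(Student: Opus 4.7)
The plan is to establish the five assertions in the order (1), (2), (5), (3), (4), exploiting at every turn the duality between the two three-term complexes and the fact that their cohomology is concentrated in middle degree.

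For (1), the key observation is that $\widetilde{f}\colon b^*V_0\twoheadrightarrow U_1$ is a surjection onto a locally free sheaf, so $\ker(\widetilde{f})$ is a subbundle of $b^*V_0$; dually, $\widetilde{g}\colon U_{-1}\hookrightarrow b^*V_0$ is a subbundle inclusion, because it is the dual of the surjection $\widetilde{g^\vee}\colon b^*V_0^\vee\twoheadrightarrow U_1'$ onto the locally free $U_1'$. Since $\widetilde{f}\widetilde{g}=0$, we have an inclusion $\im(\widetilde{g})\subset \ker(\widetilde{f})$ of subbundles of $b^*V_0$, whence the quotient $V$ is locally free; the same argument gives the local freeness of $V'$. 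For $V^\vee\simeq V'$, the complexes $[U_{-1}\to b^*V_0\to U_1]$ and $[U_{-1}'\to b^*V_0^\vee\to U_1']$ are dual to each other and each is quasi-isomorphic to its middle cohomology, so applying $\Rder\sHom(-,\Oca)$ to the first and using the local freeness of $V$ yields $V^\vee\simeq V'$.

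For (2), I compare $[U_{-1}\to b^*V_0\to U_1]$ with $[b^*V_{-1}\to b^*V_0\to b^*V_1]$. The inclusion $U_1\hookrightarrow b^*V_1$ gives $\ker(\widetilde{f})=\ker(b^*f)$, while the sequence $0\to b^*V_{-1}\to U_{-1}\to \varepsilon_*\varepsilon^*\Oca(E)\to 0$ combined with the injectivity of $\widetilde{g}$ yields $\im(\widetilde{g})/\im(b^*g)\simeq \varepsilon_*\varepsilon^*\Oca(E)$. Splicing these together produces a short exact sequence
\[
0\to \varepsilon_*\varepsilon^*\Oca(E)\to \ker(b^*f)/\im(b^*g)\to V\to 0,
\]
in which the left-hand term is pure torsion and the right-hand term is torsion-free by (1); this identifies $V$ as the torsion-free quotient. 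The statement for $V'$ is entirely parallel. Assertion (5) is then formal: by \cref{lem:involution and F_ij}, $\iota^*\Eca_{12}\simeq \Eca_{21}$, and the whole construction of $V$ from $V_\bullet$ is natural with respect to pullback, so $\widetilde{\iota}^*V\simeq V'$ and vice versa.

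Claim (3) is the most technical. Restricting the complex $[U_{-1}\to b^*V_0\to U_1]$ to $E$ is exact term by term, so $V|_E$ is still a quotient of subbundles. Using $\varepsilon^*b^*V_i\simeq \beta^*\delta^*V_i$ together with \cref{prop:F_ij pullback under delta}, the middle cohomology of $\varepsilon^*b^*V_\bullet$, after removing the contributions coming from the short exact sequences relating $U_{\pm 1}$ to $b^*V_{\pm 1}$ (which on $E$ become $\varepsilon^*\Oca(E)\simeq \Oca_E(E)|_E$), is identified with $\beta^*\Tca$. The tautological line subbundle $\Lca\subset \beta^*\Tca$ on $E=\PP(\Tca)$ appears precisely as this exceptional contribution; the orthogonal $\Lca^{\perp_\omega}$ arises because the symplectic form $\omega$ on $\Tca$ is, by construction, the one induced by the Grothendieck-Verdier autoduality $\mathbb{D}(\Lder\delta^*\Eca_{12})[2]\simeq \Lder\delta^*\Eca_{21}$, which on $E$ pairs the contributions of $U_{-1}$ and $U_1$. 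The second isomorphism in (3) follows from $V^\vee\simeq V'$ and the self-duality of $\Lca^{\perp_\omega}/\Lca$ under the symplectic form.

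Finally, (4) follows from (2) by a twist computation: using \cref{prop:Fij pullback beta} to rewrite $\ker(b^*f)/\im(b^*g)$ as $\Hca^0(\Lder b^*\Eca_{12})$, and the Koszul-type resolution $0\to \Oca(-E)\to \Oca\to \varepsilon_*\Oca_E\to 0$ to compute $\Lder b^*(\delta_*\Oca)$, the passage to torsion-free parts of $b^*\Eca$ absorbs the contribution $\varepsilon_*\varepsilon^*\Oca(E)$ from (2) but introduces a twist by $\Oca(-E)$, yielding $(b^*\Eca)_\tf\simeq V(-E)$. The main obstacle is Part (3): tracking simultaneously the symplectic form arising from Grothendieck-Verdier duality and the blow-up line subbundle, and verifying that they interact precisely so that the middle cohomology of the restricted complex realises the symplectic flag $\Lca\subset \Lca^{\perp_\omega}\subset \beta^*\Tca$.
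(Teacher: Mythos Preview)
Your treatment of (1) and (2) is correct and matches the paper's. Your route to (5) directly from (2) (using that $V\simeq[\Hca^0(\Lder b^*\Eca_{12})]_\tf$ is intrinsic and that $\widetilde{\iota}^*$ commutes with $(-)_\tf$) is valid and in fact slightly cleaner than the paper, which deduces (5) from (4) instead.

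The gaps are in (3) and (4), where your sketch does not supply the actual mechanisms.

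For (3), the paper restricts the short exact sequence from (2) to $E$, then uses the spectral sequence $\Lder^p\varepsilon^*(\Hca^q(\Lder b^*\Eca_{12}))\Rightarrow \Hca^{p+q}(\Lder\varepsilon^*\Lder b^*\Eca_{12})$ together with \cref{cor:F_ij pullback under delta circ beta} and $\Lder^1\varepsilon^*(\varepsilon_*\Oca_E)\simeq\Lca^\vee$ to produce a short exact sequence $0\to\varepsilon^*[\ker(b^*f)/\im(b^*g)]\to\beta^*\Tca\to\Lca^\vee\to 0$. The key step you are missing is the identification of this kernel with $\Lca^{\perp_\omega}$: the paper argues that $\mathrm{Hom}(\beta^*\Tca,\Lca^\vee)\simeq\mathrm{Hom}(\Tca,\Tca^\vee)\simeq\mathbb{C}$ by the \emph{stability} of $\Tca$, so the nonzero map must be (a scalar multiple of) the one induced by $\omega$, forcing the kernel to be $\Lca^{\perp_\omega}$. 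Your appeal to Grothendieck--Verdier duality ``pairing the contributions of $U_{-1}$ and $U_1$'' does not by itself pin down which corank-one subbundle of $\beta^*\Tca$ you obtain; without the stability input (or an equivalent uniqueness statement) the identification with $\Lca^{\perp_\omega}$ is unjustified.

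For (4), the passage from $\Hca^0(\Lder b^*\Eca_{12})$ to $b^*\Eca=b^*\Hca^0(\Eca_{12})$ is governed by the spectral sequence $\Lder^p b^*(\Hca^q(\Eca_{12}))\Rightarrow\Hca^{p+q}(\Lder b^*\Eca_{12})$, whose $q=1$ row contributes $\varepsilon_*(\Kahler^{-p}_\beta\otimes\Lca^{\otimes p})$. This yields a four-term exact sequence
\[
0\to\varepsilon_*[\Kahler^2_\beta\otimes(\Lca^\vee)^{\otimes 2}]\to b^*\Eca\to\ker(b^*f)/\im(b^*g)\xrightarrow{\mu}\varepsilon_*(\Kahler^1_\beta\otimes\Lca^\vee)\to 0,
\]
and one must show $\ker(\mu)$ coincides with $\ker(\nu)\simeq V(-E)$, where $\nu$ is the adjunction map to $\varepsilon_*\varepsilon^*$. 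The paper does this by proving $\varepsilon^*\mu$ is an isomorphism, which relies on the identification $\varepsilon^*[\ker(b^*f)/\im(b^*g)]\simeq\Lca^{\perp_\omega}\simeq\Kahler^1_\beta\otimes\Lca^\vee$ established in (3). Your one-sentence sketch (``absorbs the contribution\ldots but introduces a twist by $\Oca(-E)$'') names the answer but not the argument: nothing you wrote explains why the torsion of $b^*\Eca$ is $\varepsilon_*[\Kahler^2_\beta\otimes(\Lca^\vee)^{\otimes 2}]$ rather than, say, the $\varepsilon_*\Lca$ appearing in (2), nor why the resulting twist is exactly $\Oca(-E)$. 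In particular, your ordering (3) after (5) but (4) after (3) is consistent with the paper's logical dependence, but you need the concrete content of (3) to carry out (4).
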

\begin{proof}
We prove only the statements concerning $V$: for $V'$ one reasons identically.

Part 1 holds true, because both  $U_{-1} \simeq \im(\widetilde{g})$ and $\ker(\widetilde{f})$ are vector subbundles of $b^*V_0$.

Let us prove part 2. Consider the commutative diagram with exact rows
\[
\begin{tikzcd}
0 \ar[r] & b^*V_{-1} \ar[r] \ar[d] & \ker(b^*f) \ar[r] \ar[d,equal] & \ker(b^*f)/\im(b^*g) \ar[d] \ar[r] & 0 \\
0 \ar[r] & U_{-1} \ar[r] & \ker(\widetilde{f}) \ar[r] & V \ar[r] & 0.
\end{tikzcd}
\]
By the snake lemma, we deduce from \eqref{b^*V_{-1} vs U_{-1}} the short exact sequence
\begin{equation}\label{V=ker(b*f)/im(b*g)/ torsion}
\begin{tikzcd}
0 \ar[r] & \varepsilon_*\Lca \ar[r] & \ker(b^*f)/\im(b^*g) \ar[r] & V \ar[r] & 0.
\end{tikzcd}
\end{equation}
As $\varepsilon_*\Lca $ is a torsion sheaf, while $V$ is torsion-free, we get the desired isomorphism.

Let us show part 3. Since $V$ is locally free, the restriction of \eqref{V=ker(b*f)/im(b*g)/ torsion} to $E$ remains exact:
\[
\begin{tikzcd}
0 \ar[r] & \Lca \ar[r] & \varepsilon^*\left[ \ker(b^*f)/\im(b^*g) \right]\ar[r] & \varepsilon^*V \ar[r] & 0.
\end{tikzcd}
\]
To compute the middle term, we make use of the spectral sequence $E^{p,q}_2 = \Lder ^p \varepsilon^* (\Hca^q(\Lder b^*\Eca_{12})) \Rightarrow \Lder^{p+q} \varepsilon^*(\Lder b^* \Eca_{12})$; by \cref{prop:Fij pullback beta} and \cite[Corollary 11.4(i)]{huybrechts2006fourier} it degenerates at the second page. In particular, we deduce from $\Lder^1\varepsilon^*(\varepsilon_*\Oca_E) \simeq \Lca^\vee$ and \cref{cor:F_ij pullback under delta circ beta} a short exact sequence
\[
\begin{tikzcd}
0 \ar[r] & \varepsilon^*\left[ \ker(b^*f)/\im(b^*g) \right] \ar[r] & \beta^*\Tca \ar[r] & \Lca^\vee \ar[r] & 0.
\end{tikzcd}
\]
Since $\mathrm{Hom}(\beta^* \Tca,\Lca^\vee) \simeq \mathrm{Hom}(\Tca,\Tca^\vee) \simeq \mathbb{C}$ by the stability of $\Tca$, $\varepsilon^*\left[ \ker(b^*f)/\im(b^*g) \right]$ must be isomorphic to $\Lca^{\perp_\omega}$. We conclude that $V|_E \simeq \Lca^{\perp_\omega}/\Lca$.

Let us move on to part 4. The natural transformation $\mathrm{id} \to \varepsilon_* \varepsilon^*$ applied to \eqref{V=ker(b*f)/im(b*g)/ torsion} gives the commutative diagram with exact rows
\[
\begin{tikzcd}
0 \ar[r] & \varepsilon_*\Lca \ar[r] \isoarrow{d} & \ker(b^*f)/\im(b^*g) \ar[r] \ar[d,"\nu"] & V \ar[r] \ar[d] & 0 \\
0 \ar[r] & \varepsilon_* \varepsilon^*(\varepsilon_*\Lca) \ar[r] & \varepsilon_* \varepsilon^*\left[ \ker(b^*f)/\im(b^*g) \right] \ar[r] & \varepsilon_* \varepsilon^*V \ar[r] & 0. 
\end{tikzcd}
\]
By the snake lemma, $\ker(\nu) \simeq V(-E)$. We compute now $\ker(\nu)$ in a different way. Consider the spectral sequence $E^{p,q}_2 = \Lder ^p b^* (\Hca^q(\Eca_{12})) \Rightarrow \Lder^{p+q} b^* \Eca_{12}$, which degenerates at the third page. Thanks to \cref{prop:Fij pullback beta} and the isomorphisms $E^{p,1}_2 \simeq \varepsilon_*(\Kahler^{-p}_\beta \otimes \Lca^{\otimes p})$ (see \cite[Proposition 11.12]{huybrechts2006fourier}), we get an exact sequence
\[
\begin{tikzcd}
0 \ar[r] & \varepsilon_*{[ \Kahler^2_\beta \otimes (\Lca^\vee)^{\otimes 2}]} \ar[r] & b^* \Eca \ar[r] & \ker(b^* f)/ \im (b^* g) \ar[r,"\mu"] & \varepsilon_* (\Kahler^1_\beta \otimes \Lca^\vee) \ar[r] & 0.
\end{tikzcd}
\]
If we prove that $\ker(\mu) = \ker(\nu)$, we are done: indeed, this would identify the torsion-free sheaf $\ker(\nu) \simeq V(-E)$ with the quotient of $b^* \Eca$ by the torsion sheaf $ \varepsilon_*[ \mathrm{\Omega}^2_\beta \otimes (\Lca^\vee)^{\otimes 2}]$.
The natural transformation $\mathrm{id} \to \varepsilon_* \varepsilon^*$ applied to the epimorphism $\mu$ yields the commutative square
\[
\begin{tikzcd}
\ker(b^* f)/ \im (b^* g) \ar[r, two heads,"\mu"] \ar[d,"\nu"] & \varepsilon_* (\Kahler^1_\beta \otimes \Lca^\vee) \ar[d,] \\
\varepsilon_* \varepsilon^*{[\ker(b^* f)/ \im (b^* g)]} \ar[r,"\varepsilon_* \varepsilon^*\mu"] & \varepsilon_* \varepsilon^*\varepsilon_* (\Kahler^1_\beta \otimes \Lca^\vee).
\end{tikzcd}
\]
As the vertical arrow on the right is an isomorphism, it suffices to show that $\varepsilon^*\mu$ (and so $\varepsilon_* \varepsilon^*\mu$) is an isomorphism. Now, $\varepsilon^*[\ker(b^* f)/ \im (b^* g)]$ and $ \varepsilon^*\varepsilon_* (\Kahler^1_\beta \otimes \Lca^\vee) \simeq \Kahler^1_\beta \otimes \Lca^\vee$ are both isomorphic to $\Lca^{\perp_\omega}$. Thus, $\varepsilon^* \mu$ is a surjective endomorphism of the locally free sheaf $\Lca^{\perp_\omega}$, so it is an isomorphism.

It remains to prove part 5. As $\Xca^{[n]}$ is integral and $\widetilde{\iota}$ is flat, $\widetilde{\iota}^*$ commutes with $(-)_\tf$; moreover $\widetilde{\iota}^*\Oca(E) \simeq \Oca(E)$. Using part 4 and \cref{lem:involution and F_ij}, we get
\begin{equation*}
\widetilde{\iota}^* V \simeq \widetilde{\iota}^*[(b^* \Eca(E))_\tf] \simeq [\widetilde{\iota}^* (b^* \Eca(E))]_\tf \simeq [(b^* \iota^* \Eca)(E)]_\tf \simeq [(b^*\Eca')(E)]_\tf \simeq V'. \qedhere
\end{equation*}
\end{proof}

\section{The exceptional divisor of the blow-up of $R^{ss}$ along $\mathrm{\Sigma}_R^{ss}$}\label{sec:The exceptional divisor of the second blow-up}

In this section, technical core of the paper, we compute the normal cone $C_{\mathrm{\Sigma}_R^{ss}/R^{ss}}$ to $\mathrm{\Sigma}_R^{ss}$ in $R^{ss}$. 
Inspired by the computation of the normal cone to $\mathrm{\Omega}_Q$ in $Q^{ss}$ done in \cite[Lemma 4.1(2)]{ChoyKiem}, we achieve our goal in two steps. First, we determine the conormal sheaf to $\mathrm{\Sigma}_R^{ss}$ in $R^{ss}$. Here, the crucial ingredient is the global description \eqref{cotangent sheaf Quot scheme} of the cotangent sheaf of $Q^{ss}$ as a relative Ext-sheaf: this enables a comparison between a long exact sequence of relative Ext-sheaves on the one hand, and exact sequences of differentials and conormal sheaves on the other. Second, we compute $C_{\mathrm{\Sigma}_R^{ss}/R^{ss}}$.

Building on these computations, we globally describe the exceptional divisor of the blow-up of $R^{ss}$ along $\mathrm{\Sigma}_R^{ss}$, and its GIT quotient by $\mathrm{PGL}(N)$.

\medskip

Recall that we denoted by $\pi_R \colon R \to Q^{ss}$ the blow-up of $Q^{ss}$ along $\mathrm{\Omega}_Q$, by $\mathrm{\Omega}_R$ its exceptional divisor and by $\mathrm{\Sigma}_R$ the strict transform of $\mathrm{\Sigma}_Q$. The restriction $p \colon \mathrm{\Sigma}_R \to \mathrm{\Sigma}_Q$ of $\pi_R$ is the blow-up of $\mathrm{\Sigma}_Q$ along $\mathrm{\Omega}_Q$, and $\mathrm{\Sigma}_R \cap \mathrm{\Omega}_R$ is its exceptional divisor. Let $\jmath \colon \mathrm{\Omega}_R \to R$ and $\imath \colon \mathrm{\Sigma}_R \cap \mathrm{\Omega}_R \to \mathrm{\Sigma}_R$ be the inclusion morphisms; we shall abuse the notation, and keep the same letters when passing to the respective semistable loci.

We stick to the notation introduced in \cref{Notation}, and we enrich it with the divisorial imbedding $\zeta \colon D \to U$. We denote as $h \colon U \to \mathrm{\Sigma}_R^{ss}$ the morphism constructed in the proof of \cref{prop:global description of SigmaR}, and as $\eta \colon D \to \mathrm{\Sigma}_R^{ss} \cap \mathrm{\Omega}_R$ its restriction to $D$; they are the quotient morphisms for the free action of $\mathrm{PO}(2)$ on the smooth $U$ and $D$, respectively; in particular, $h$ and $\eta$ are flat. 

This geometric setting is summarised in the following commutative diagram:
\[
\begin{tikzcd}[row sep=1.4em, column sep = 1.6em, scale cd=0.8]
& & D \ar[dd, "\zeta", hook] \ar[lldd] \ar[rrdd, "\eta"] & & &  \\[3 ex]
& & & & & \\
E \ar[rd, "\varepsilon", hook] \ar[dd,"\beta"] & & U\ar[ld, "u"'] \ar[rd, "h"] & & \mathrm{\Sigma}_R^{ss} \cap \mathrm{\Omega}_R \ar[ld, hook', "\imath"] \ar[rd, hook] &\\
 & \Xca^{[n]} \ar[dd, "b"] \ar[dr] & & \mathrm{\Sigma}_R^{ss} \ar[ld] \ar[dd] \ar[rd, hook] & & \mathrm{\Omega}_R \ar[ld, hook', "\jmath"] \ar[dd] \\
X^{[n]} \ar[rd,"\delta", hook] & & (X^{[n]})^{[2]} \ar[dd] & & R &\\
& X^{[n]} \times X^{[n]} \ar[rd] & & \mathrm{\Sigma}_Q \ar[ld] \ar[rd, hook] & & \mathrm{\Omega}_Q \ar[ld, hook'] \ar[ll, hook'] \\
& & S^2X^{[n]} & &  Q^{ss}. \ar[from=uu, crossing over, "\pi_R" {yshift=15pt}] &
\end{tikzcd}
\]
As we have seen in the proof of \cref{prop:global description of SigmaR}, the composite morphism
\begin{equation}\label{morphism from U to Quot}
\begin{tikzcd}
U \ar[r,"h"] & \mathrm{\Sigma}_R^{ss} \ar[r,hook] &  R \ar[r,"\pi_R"] & Q^{ss}
\end{tikzcd}
\end{equation}
is determined by the surjection \eqref{quotient on U times X}; thus, up to an isomorphism of $\Oca_{U \times X}$-modules
\begin{equation}\label{isomorphism SigmaR FU}
\begin{tikzcd}
u_X^* b_X^* (\Ica_1 \oplus \Ica_2) \otimes p_U^* \Oca_u(1) \ar[r,"\sim"] & \Fca_U,
\end{tikzcd}
\end{equation}
the surjection \eqref{quotient on U times X} coincides with the pullback to $U \times X$ of the tautological epimorphism on $Q^{ss} \times X$ considered in \cref{sec:cotangent sheaf}. This pullback sits into a short exact sequence
\[
\begin{tikzcd}
0 \ar[r] & \Kca_U \ar[r] & \Oca(-k)^{\oplus N} \ar[r] & \Fca_U \ar[r] & 0.
\end{tikzcd}
\]
Applying the functor $p_{U,*}\sHom(\Fca_U , -)$, we get a long exact sequence in cohomology
\[
\begin{tikzcd}[row sep=0.4em]
& \sExt_U^1(\Fca_U,\Oca(-k)^{\oplus N}) \ar[r] & \sExt_U^1(\Fca_U,\Fca_U) \ar[r] & {\color{white} 0} \\
\sExt_U^2(\Fca_U,\Kca_U) \ar[r] & \sExt_U^2(\Fca_U,\Oca(-k)^{\oplus N}) \ar[r] & \sExt_U^2(\Fca_U,\Fca_U) \ar[r] & 0.
\end{tikzcd}
\]
Lemmata \ref{lem:SigmaR ExtEE} and \ref{SigmaR-Ext(E,O(-k))} describe the sheaves involved in this sequence. We keep the notation and use results of \cref{sec:Markman} about the complex \eqref{pullback Markman complex to blow up} and its dual.
\begin{lemma}\label{lem:SigmaR ExtEE} 
We have 
isomorphisms
\begin{equation*}
    \sExt_U^m(\Fca_U,\Fca_U) \simeq \begin{cases} u^* \left[(\ker (b^* g^\vee)/\im (b^* f^\vee)) \oplus (\ker (b^*f)/\im (b^*g))\right] \oplus u^*b^*\Kahler^1_{X^{[n]} \times X^{[n]} } &\text{if}\ m = 1 \\[0.5em]
    (b \circ u)^*[ \oplus_{i=1}^2 \pi_i^* \sExt_{X^{[n]}}^2(\Ica,\Ica)] \oplus (b \circ u)^* [ \delta_* \sExt_{X^{[n]}}^2(\Ica,\Ica)^{\oplus 2}] &\text{if}\ m=2.
    \end{cases}
\end{equation*}
\end{lemma}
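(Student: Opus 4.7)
The plan is to express the target relative Ext-sheaves as derived pushforwards of derived $\sHom$'s on $X^{[n]} \times X^{[n]}$, where \cref{sec:Markman} has already carried out the essential computations. By the isomorphism \eqref{isomorphism SigmaR FU}, $\Fca_U$ is $u_X^*b_X^*(\Ica_1 \oplus \Ica_2)$ twisted by the line bundle $p_U^*\Oca_u(1)$; since the twist cancels in the internal $\Rder\sHom$, we get
\[
\Rder\sHom(\Fca_U, \Fca_U) \simeq \Lder u_X^*\Lder b_X^*\bigoplus_{1 \le i,j \le 2} \Rder\sHom(\Ica_i, \Ica_j).
\]
Applying $\Rder p_{U,*}$ and invoking the base-change isomorphism of \cref{sec: notation projections and base change} twice (available since both projections $p_{\Xca^{[n]}}$ and $p_{X^{[n]} \times X^{[n]}}$ are flat), we reach
\[
\Rder p_{U,*}\Rder\sHom(\Fca_U, \Fca_U) \simeq \bigoplus_{1 \le i,j \le 2} \Lder u^*\Lder b^*\Eca_{ij}[-1],
\]
so that $\sExt_U^m(\Fca_U,\Fca_U) \simeq \bigoplus_{i,j} \Hca^{m-1}(\Lder u^*\Lder b^*\Eca_{ij})$.

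I would then read off each summand from the results of \cref{sec:Markman}. Since $u$ is flat, $\Lder u^* = u^*$ is exact and commutes with cohomology, so it is enough to compute $\Hca^{m-1}(\Lder b^*\Eca_{ij})$ and pull back by $u$. For $i = j$, \cref{prop:E_ii}(4) gives $(\pi_i b)^*\sExt^m_{X^{[n]}}(\Ica, \Ica)$, which for $m=1$ is the tangent bundle $\Tca$ and for $m=2$ is the trivialised $\sExt^2_{X^{[n]}}(\Ica, \Ica) \simeq \Oca$. For $i \ne j$, \cref{prop:Fij pullback beta} (together with its analog for $\Eca_{21}$, obtained by dualising via \cref{lem:dual F_ij}) provides the kernel/image quotients $\ker(b^*f)/\im(b^*g)$ and $\ker(b^*g^\vee)/\im(b^*f^\vee)$ in $\Hca^0$, and $b^*\delta_*\sExt^2_{X^{[n]}}(\Ica,\Ica)$ in $\Hca^1$.

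Finally, assembling the pieces: in degree $m=1$, the two diagonal contributions sum to $u^*b^*(\pi_1^*\Tca \oplus \pi_2^*\Tca)$, which under the symplectic identification $\Tca \simeq \Kahler^1_{X^{[n]}}$ and the decomposition $\Kahler^1_{X^{[n]} \times X^{[n]}} \simeq \pi_1^*\Kahler^1_{X^{[n]}} \oplus \pi_2^*\Kahler^1_{X^{[n]}}$ is identified with $u^*b^*\Kahler^1_{X^{[n]} \times X^{[n]}}$; the off-diagonal contributions yield the remaining summands in the claim. In degree $m=2$, the diagonal contributes $(bu)^*[\oplus_{i=1}^2 \pi_i^*\sExt^2_{X^{[n]}}(\Ica,\Ica)]$ and the off-diagonal contributes $(bu)^*[\delta_*\sExt^2_{X^{[n]}}(\Ica, \Ica)^{\oplus 2}]$, giving the stated formula. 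The one point requiring care is that the pullback along the non-flat blow-up $b$ must be taken derivedly, but all the necessary hypercohomology computations have been done in \cref{sec:Markman}, so the proof reduces to a careful bookkeeping and a direct identification of terms; I do not anticipate a genuine obstacle.
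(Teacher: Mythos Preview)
Your proposal is correct and follows essentially the same route as the paper: use \eqref{isomorphism SigmaR FU} to strip the line-bundle twist, decompose $\Rder\sHom(\Fca_U,\Fca_U)$ into the four summands indexed by $1\le i,j\le 2$, commute derived pullback past $\Rder p_{*}$ via flat base change, and then read off cohomology using flatness of $u$ together with \cref{prop:E_ii}(4) and \cref{prop:Fij pullback beta}. The only extra step you spell out is the identification $\pi_1^*\Tca\oplus\pi_2^*\Tca\simeq\Kahler^1_{X^{[n]}\times X^{[n]}}$ via the symplectic self-duality $\Tca\simeq\Kahler^1_{X^{[n]}}$, which the paper leaves implicit.
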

\begin{proof} 
We have
\begin{align*}
\Rder & p_{U,*} \Rder \sHom (\Fca_U, \Fca_U)  && \\
&\simeq \Rder p_{U,*} \Rder \sHom ((b \circ u)_X^* ( \Ica_1 \oplus \Ica_2 ), (b \circ u)_X^* ( \Ica_1 \oplus \Ica_2 ))  && \text{(isomorphism \eqref{isomorphism SigmaR FU})} \\
&\simeq \Rder p_{U,*} \Rder \sHom ( \Lder (b \circ u)_X^* ( \Ica_1 \oplus \Ica_2 ), \Lder (b \circ u)_X^* ( \Ica_1 \oplus \Ica_2 ))  && \text{(flatness of $u$, $\pi_i \circ b$ and $\pi_i$)} \\
&\simeq \bigoplus _{1 \le i,j \le 2} \Rder p_{U,*} \Lder (b \circ u)_X^* \Rder \sHom (\Ica_i, \Ica_j)  && \\
&\simeq \bigoplus _{1 \le i,j \le 2} \Lder (b \circ u)^* \Rder p_{X^{[n]} \times X^{[n]},*}  \Rder \sHom (\Ica_i, \Ica_j)  && \text{(flatness of $p_{X^{[n]} \times X^{[n]}}$)} \\
& \simeq \bigoplus _{1 \le i,j \le 2} \Lder u^* \Lder b^* \Rder p_{X^{[n]} \times X^{[n]},*}  \Rder \sHom (\Ica_i, \Ica_j).  &&
\end{align*}
By the flatness of $u$, the conclusion now follows from \cref{prop:E_ii}(4) and \cref{prop:Fij pullback beta}.
\end{proof}
\begin{lemma}\label{lem:SigmaR-ExtFF}
We have an isomorphism $\sExt_U^1(\Fca_U,\Fca_U)_\tf \simeq u^*(V' \oplus V) \oplus u^*b^* \Kahler^1_{X^{[n]} \times X^{[n]}}$.
\end{lemma}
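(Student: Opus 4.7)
The strategy is to start from the explicit decomposition given in Lemma~\ref{lem:SigmaR ExtEE}, apply the operation $(-)_\tf$ to both sides, and identify each summand via Theorem~\ref{thm:properties of V}(2).

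More concretely, Lemma~\ref{lem:SigmaR ExtEE} decomposes $\sExt_U^1(\Fca_U,\Fca_U)$ as a direct sum of three sheaves:
\[
u^* [\ker (b^* g^\vee)/\im (b^* f^\vee)], \quad u^* [\ker (b^*f)/\im (b^*g)], \quad u^*b^*\Kahler^1_{X^{[n]} \times X^{[n]}}.
\]
The last of these is already locally free, hence torsion-free, because $X^{[n]} \times X^{[n]}$ is smooth and $u$, $b$ are morphisms. The other two summands are pullbacks along $u$ of sheaves whose torsion-free quotients are $V'$ and $V$, respectively, by Theorem~\ref{thm:properties of V}(2). Since $(-)_\tf$ clearly commutes with finite direct sums, the only point that requires a small verification is that $u^*$ commutes with the formation of the torsion-free quotient.

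The plan for this verification is standard: the morphism $u \colon U \to \Xca^{[n]}$ is a principal $\mathrm{PGL}(N)$-bundle over the integral scheme $\Xca^{[n]}$; in particular, $u$ is flat and surjective, and $U$ is itself integral (being a smooth connected bundle over an integral base). Consequently, given any coherent sheaf $\Fca$ on $\Xca^{[n]}$ with torsion subsheaf $T(\Fca)$, applying $u^*$ to the exact sequence $0 \to T(\Fca) \to \Fca \to \Fca_\tf \to 0$ produces an exact sequence in which $u^*T(\Fca)$ is torsion (its support is the preimage of a proper closed subset of $\Xca^{[n]}$, which is a proper closed subset of $U$ by surjectivity of $u$) and $u^*\Fca_\tf$ is torsion-free (the flat pullback of a torsion-free sheaf from integral $\Xca^{[n]}$ to integral $U$); this forces $u^*(\Fca_\tf) \simeq (u^*\Fca)_\tf$.

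Combining these ingredients yields
\[
\sExt_U^1(\Fca_U,\Fca_U)_\tf \simeq u^* [\ker (b^* g^\vee)/\im (b^* f^\vee)]_\tf \oplus u^* [\ker (b^*f)/\im (b^*g)]_\tf \oplus u^*b^*\Kahler^1_{X^{[n]} \times X^{[n]}},
\]
which by Theorem~\ref{thm:properties of V}(2) equals $u^*(V' \oplus V) \oplus u^*b^*\Kahler^1_{X^{[n]} \times X^{[n]}}$, as required. There is no serious obstacle here; the only delicate point is the behaviour of $(-)_\tf$ under flat pullback, which is resolved by the integrality and surjectivity observations above.
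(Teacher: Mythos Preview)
Your proposal is correct and follows essentially the same approach as the paper: use the decomposition of Lemma~\ref{lem:SigmaR ExtEE}, invoke that $u^*$ commutes with $(-)_\tf$ because $u$ is a flat morphism of integral schemes, and apply Theorem~\ref{thm:properties of V}(2). The paper's proof is just a more compressed version of exactly this argument.
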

\begin{proof} As $u \colon U \to \Xca^{[n]}$ is a flat morphism of integral schemes, $u^*$ commutes with $(-)_\tf$. The statement now results from \cref{lem:SigmaR ExtEE} by \cref{thm:properties of V}(2).
\end{proof}
\begin{lemma}\label{SigmaR-Ext(E,O(-k))}
We have
\begin{equation*}
    \sExt_U^m(\Fca_U,\Oca(-k)^{\oplus N}) \simeq \begin{cases} u^*\sHom (\Oca^{\oplus N}, p_{\Xca^{[n]},*} b_X^*[(\Ica_1 \oplus \Ica_2)(k)])^\vee \otimes \Oca_u(-1) &\text{if}\ m = 2 \\[0.1em]
    0 &\text{otherwise}.
    \end{cases}
\end{equation*}
\end{lemma}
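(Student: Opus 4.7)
The plan is to invoke relative Serre duality for the smooth proper morphism $p_U\colon U\times X\to U$: since its fibres are K3 surfaces, $\omega_{p_U}\simeq\Oca$, so Grothendieck--Verdier duality (applied to the locally free sheaf $\Oca(-k)^{\oplus N}$) yields an isomorphism
\[
\Rder p_{U,*}\Rder\sHom(\Fca_U,\Oca(-k)^{\oplus N}) \simeq \bigl(\Rder p_{U,*}\Rder\sHom(\Oca(-k)^{\oplus N},\Fca_U)\bigr)^\vee[-2],
\]
where $(-)^\vee$ stands for $\Rder\sHom(-,\Oca_U)$. It will then suffice to show that the inner object on the right-hand side is the locally free sheaf $u^*\sHom(\Oca^{\oplus N},\Gca)\otimes\Oca_u(1)$ placed in degree zero, where $\Gca\coloneqq p_{\Xca^{[n]},*}\bigl[b_X^*(\Ica_1\oplus\Ica_2)(k)\bigr]$; dualising and shifting by $[-2]$ will then give exactly the claim, both for $m=2$ and for the vanishing in other degrees.

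First I would note that, since $\Oca(-k)^{\oplus N}$ is locally free, $\Rder\sHom(\Oca(-k)^{\oplus N},\Fca_U)\simeq\Fca_U(k)^{\oplus N}$, which already lives in degree zero. By \eqref{isomorphism SigmaR FU}, the restriction of $\Fca_U(k)$ to any fibre $\{u\}\times X$ of $p_U$ is of the form $(I_{Z_1}\oplus I_{Z_2})(k)$ for some length-$n$ subschemes $Z_1,Z_2\subset X$; by the choice of $k$ recalled in \cref{sec:construction of moduli space}, this sheaf has vanishing higher cohomology on $X$. Cohomology and base change therefore forces $R^i p_{U,*}\Fca_U(k)=0$ for $i\ge1$ and $p_{U,*}\Fca_U(k)$ to be locally free.

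Second, I would identify $p_{U,*}\Fca_U(k)$ globally. Combining the flat base change $u^*\circ p_{\Xca^{[n]},*}\simeq p_{U,*}\circ u_X^*$ along the cartesian square in \eqref{principal PGL(N)-bundles} (applicable since $u$ is flat) with the projection formula for the twist $p_U^*\Oca_u(1)$, I would obtain
\[
p_{U,*}\Fca_U(k)\simeq u^*\Gca\otimes\Oca_u(1),
\]
whence $\Rder p_{U,*}\bigl[\Fca_U(k)^{\oplus N}\bigr]\simeq u^*\sHom(\Oca^{\oplus N},\Gca)\otimes\Oca_u(1)$, as needed.

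The only delicate point I anticipate is the flat base change: although the blow-up $b\colon\Xca^{[n]}\to X^{[n]}\times X^{[n]}$ is not flat, $u$ is (being the structure morphism of a principal $\mathrm{PGL}(N)$-bundle), and only the flatness of $u$ is required along the square relating $U$ and $\Xca^{[n]}$. Beyond this, everything reduces to fibrewise Serre duality on the K3 surface $X$, which accounts for both the vanishing of $\sExt^m_U$ for $m\ne2$ (equivalently, of $R^ip_{U,*}\Fca_U(k)$ for $i\ge1$) and the appearance of $\Oca_u(-1)$ in place of $\Oca_u(+1)$ after dualisation.
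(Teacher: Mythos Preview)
Your proposal is correct and follows essentially the same approach as the paper: apply Grothendieck--Verdier duality for $p_U$ (using $\omega_{p_U}\simeq\Oca$), then identify $\Rder p_{U,*}\Fca_U(k)^{\oplus N}$ as a locally free sheaf in degree $0$ via the isomorphism \eqref{isomorphism SigmaR FU}, flat base change along $u$, and the projection formula. The paper's proof is organised as a single chain of isomorphisms rather than separating the duality and the base-change steps, but the content is the same.
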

\begin{proof}
We have
\begin{align*}
\Rder &p_{U,*} \Rder \sHom ( \Fca_U, \Oca(-k)^{\oplus N}) &&\\
&\simeq \Rder p_{U,*} \Rder \sHom ( \Rder \sHom (\Oca(-k)^{\oplus N}, \Fca_U), \Oca ) && \\
&\simeq \mathbb{D}[\Rder p_{U,*} \Rder \sHom (\Oca(-k)^{\oplus N}, \Fca_U)][-2] && \text{(Grothendieck-Verdier duality)} \\
&\simeq \mathbb{D}[\Rder p_{U,*} \Rder \sHom (\Oca(-k)^{\oplus N}, u_X^* b_X^* (\Ica_1 \oplus \Ica_2) \otimes p_U^* \Oca_u(1))][-2] && \text{(isomorphism \eqref{isomorphism SigmaR FU})}\\
&\simeq \mathbb{D}[\Rder p_{U,*} \Rder \sHom (\Oca^{\oplus N}, u_X^* b_X^* [(\Ica_1 \oplus \Ica_2)(k)])\otimes \Oca_u(1)][-2] && \\
&\simeq \mathbb{D}[\Rder p_{U,*} u_X^* \Rder \sHom(\Oca^{\oplus N}, b_X^* [(\Ica_1 \oplus \Ica_2)(k)])] \otimes \Oca_u(-1)[-2] && \\
&\simeq u^* \mathbb{D}[\Rder p_{\Xca^{[n]},*} \Rder \sHom ( \Oca^{\oplus N}, b_X^*[(\Ica_1 \oplus \Ica_2)(k)])] \otimes \Oca_u(-1)[-2]&& \text{(flat base change)}\\
&\simeq u^* \mathbb{D}[\Rder \sHom ( \Oca^{\oplus N}, \Rder p_{\Xca^{[n]},*} b_X^*[(\Ica_1 \oplus \Ica_2)(k)])] \otimes \Oca_u(-1)[-2] && \\
&\simeq u^* [\sHom ( \Oca^{\oplus N},p_{\Xca^{[n]},*} b_X^*[(\Ica_1 \oplus \Ica_2)(k)])]^\vee \otimes \Oca_u(-1)[-2]. && 
\end{align*}
The last equality holds because $\Rder p_{\Xca^{[n]},*} b_X^*[(\Ica_1 \oplus \Ica_2)(k)] \simeq p_{\Xca^{[n]},*} b_X^*[(\Ica_1 \oplus \Ica_2)(k)]$ is a locally free sheaf. We deduce that $\sExt_U^m(\Fca_U,\Oca(-k)^{\oplus N})$ vanishes for every $m \neq 2$. For $m=2$, we get
\[
\sExt_U^2(\Fca_U,\Oca(-k)^{\oplus N}) \simeq u^* \sHom (\Oca^{\oplus N}, p_{\Xca^{[n]},*} b_X^*[(\Ica_1 \oplus \Ica_2)(k)])^\vee \otimes \Oca_u(-1). \qedhere
\]
\end{proof}
In particular, we have obtained the exact complex
\begin{equation*}
\begin{tikzcd}[column sep=0.7em]
0 \ar[r] &\sExt_U^1(\Fca_U,\Fca_U) \ar[r] &\sExt_U^2(\Fca_U,\Kca_U) \ar[r] &\sExt_U^2(\Fca_U,\Oca(-k)^{\oplus N}) \ar[r] &\sExt_U^2(\Fca_U,\Fca_U) \ar[r] &0.
\end{tikzcd}
\end{equation*}
Since $\sExt_U^2(\Fca_U,\Oca(-k)^{\oplus N})$ is locally free, we have also the exact complex
\begin{equation}\label{SigmaR-Ext complex}
\begin{tikzcd}[column sep=0.55em]
0 \ar[r] &\sExt_U^1(\Fca_U,\Fca_U)_\tf \ar[r] &\sExt_U^2(\Fca_U,\Kca_U)_\tf \ar[r] &\sExt_U^2(\Fca_U,\Oca(-k)^{\oplus N}) \ar[r] &\sExt_U^2(\Fca_U,\Fca_U) \ar[r] &0.
\end{tikzcd}
\end{equation}

\subsection{Computation of the conormal sheaf}
In this section, we compute the conormal sheaf to $\mathrm{\Sigma}_R^{ss}$ in $R^{ss}$, or rather its pullback along $h$. By \cref{prop:normal cone SigmaR fibres}, $R^{ss}$ is normally flat along $\mathrm{\Sigma}_R^{ss}$; in particular, the conormal sheaf to $\mathrm{\Sigma}_R^{ss}$ in $R^{ss}$ is a locally free $\Oca_{\mathrm{\Sigma}_R^{ss}}$-module, hence isomorphic to the dual of the normal bundle $\Nca_{\mathrm{\Sigma}_R^{ss}/R^{ss}}$ to $\mathrm{\Sigma}_R^{ss}$ in $R^{ss}$.

\smallskip

As a first step, we relate $\Nca^\vee_{\mathrm{\Sigma}_R^{ss}/R^{ss}}$ to the cotangent sheaf of $Q^{ss}$. This requires the following
\begin{lemma}\label{lem:ses of differentials for blow up}
We have exact sequences
\begin{gather}
\begin{tikzcd}[ampersand replacement=\&]
\pi_R^* \Kahler^1_{Q^{ss}} \ar[r] \& \Kahler^1_R \ar[r] \& \jmath_*\Kahler^1_{\mathrm{\Omega}_R/\mathrm{\Omega}_Q} \ar[r] \& 0,
\end{tikzcd} \label{ses-blow-up:eq1}\\
\begin{tikzcd}[ampersand replacement=\&]
p^* \Kahler^1_{\mathrm{\Sigma}_Q} \ar[r] \& \Kahler^1_{\mathrm{\Sigma}_R} \ar[r] \& \imath_*\Kahler^1_{\mathrm{\Omega}_R \cap \mathrm{\Sigma}_R/\mathrm{\Omega}_Q} \ar[r] \& 0.
\end{tikzcd}\label{ses-blow-up:eq2}
\end{gather}
\end{lemma}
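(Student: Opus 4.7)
The plan is to deduce both sequences from the right-exact relative cotangent sequence applied to the blow-up morphisms. For $\pi_R \colon R \to Q^{ss}$ this gives
\[
\pi_R^* \Kahler^1_{Q^{ss}} \to \Kahler^1_R \to \Kahler^1_{R/Q^{ss}} \to 0,
\]
and for $p \colon \mathrm{\Sigma}_R \to \mathrm{\Sigma}_Q$ one similarly obtains $p^* \Kahler^1_{\mathrm{\Sigma}_Q} \to \Kahler^1_{\mathrm{\Sigma}_R} \to \Kahler^1_{\mathrm{\Sigma}_R/\mathrm{\Sigma}_Q} \to 0$. Thus the content of the lemma reduces to identifying the two cokernels: $\Kahler^1_{R/Q^{ss}} \simeq \jmath_* \Kahler^1_{\mathrm{\Omega}_R/\mathrm{\Omega}_Q}$ and $\Kahler^1_{\mathrm{\Sigma}_R/\mathrm{\Sigma}_Q} \simeq \imath_* \Kahler^1_{\mathrm{\Omega}_R \cap \mathrm{\Sigma}_R/\mathrm{\Omega}_Q}$.

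To produce the comparison maps, I would use the commutative square
\[
\begin{tikzcd}
\mathrm{\Omega}_R \ar[r,"\jmath",hook] \ar[d,"\pi_R|_{\mathrm{\Omega}_R}"'] & R \ar[d,"\pi_R"] \\
\mathrm{\Omega}_Q \ar[r,hook] & Q^{ss}
\end{tikzcd}
\]
and its analogue for $p$, which by functoriality of $\Kahler^1$ induce natural morphisms $\Kahler^1_{R/Q^{ss}} \to \jmath_* \Kahler^1_{\mathrm{\Omega}_R/\mathrm{\Omega}_Q}$ and $\Kahler^1_{\mathrm{\Sigma}_R/\mathrm{\Sigma}_Q} \to \imath_*\Kahler^1_{\mathrm{\Omega}_R \cap \mathrm{\Sigma}_R/\mathrm{\Omega}_Q}$. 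Since $\pi_R$ and $p$ restrict to isomorphisms away from $\mathrm{\Omega}_Q$, both source and target are supported on the exceptional divisor, so checking the claim is a local question around points of $\mathrm{\Omega}_Q$. At this stage I would invoke Luna's étale slice theorem together with \cref{cor:slice as analytic deformation space}; since blow-ups commute with flat base change and everything in sight is coherent, the statement reduces to the local models provided by \eqref{SigmaQ local description} and \eqref{OmegaQ local description}, namely to $\mathrm{Hom}^\omega(W,\mathrm{E}_Z) \times (\mathrm{E}_Z \otimes \mathbb{C}\mathrm{Id})$ and $\mathrm{Hom}_1(W,\mathrm{E}_Z) \times (\mathrm{E}_Z \otimes \mathbb{C}\mathrm{Id})$, each with centre $\{0\} \times (\mathrm{E}_Z \otimes \mathbb{C}\mathrm{Id})$.

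In these local models the smooth factor $\mathrm{E}_Z \otimes \mathbb{C}\mathrm{Id}$ splits off as a direct product of both the total space and the centre, so the blow-up is again a product and the relative differentials $\Kahler^1_{\widetilde Y/Y}$ and $\Kahler^1_{\widetilde V/V}$ are pulled back from the first factor. The check thus reduces to the following toy statement: if $A$ is an affine cone with vertex $o$, $\pi \colon \widetilde A \to A$ the blow-up at $o$ and $E_A = \PP(A)$ the exceptional divisor, then $\Kahler^1_{\widetilde A/A} \simeq \jmath_* \Kahler^1_{E_A}$. Because $\widetilde A$ is the total space of the tautological line subbundle on $E_A$, this can be verified by a direct computation in affine charts: the defining equation of $E_A$ annihilates $\Kahler^1_{\widetilde A/A}$, and restriction to $E_A$ recovers $\Kahler^1_{E_A}$.

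The main obstacle is precisely this last identification: $Q^{ss}$ and $\mathrm{\Sigma}_Q$ are genuinely singular, so the classical smooth-blow-up cotangent formula cannot be quoted directly. The obstacle is neutralised by combining the normal flatness of $Q^{ss}$ and $\mathrm{\Sigma}_Q$ along $\mathrm{\Omega}_Q$ (\cref{normal flatness of Q}) with Luna's slicing, which together cleanly reduce the global identification to the manageable cone computation sketched above.
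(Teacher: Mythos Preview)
Your proposal is correct and follows essentially the same route as the paper: reduce via the relative cotangent sequence to identifying $\Kahler^1_{R/Q^{ss}}$ with $\jmath_*\Kahler^1_{\mathrm{\Omega}_R/\mathrm{\Omega}_Q}$, localise using Luna's \'etale slice and the explicit analytic models \eqref{Q local description}--\eqref{OmegaQ local description}, split off the smooth factor, and finish with the affine-cone computation (which the paper isolates as \cref{lemma:relative differentials blow up} in the Appendix). One small remark: the paper does not actually invoke normal flatness (\cref{normal flatness of Q}) in this argument---the explicit local models already furnish everything needed for the cone reduction---so your closing sentence slightly overstates its role here.
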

\begin{proof}
Let us prove that the first sequence is exact. It suffices to show that it is exact at every closed point $y \in R$. Clearly, this holds true if $y \notin \mathrm{\Omega}_R$. For $y \in \mathrm{\Omega}_R$, let $[q] =  \pi_R(y) \in \mathrm{\Omega}_Q$ and pick $\Vca \subset Q^{ss}$ an étale slice in $[q]$. Set $\Wca \coloneqq \Vca \cap \mathrm{\Omega}_Q$. Denote by $\sigma \colon \widetilde{\Vca} \to \Vca$ the blow-up of $\Vca$ along $\Wca$ and by $i \colon \widetilde{\Wca} \to \widetilde{\Vca}$ the imbedding of the exceptional divisor. Since $(\mathrm{PGL}(N) \times \Vca) \sslash \mathrm{St}([q]) \to Q^{ss}$ is étale and the fibration $(\mathrm{PGL}(N) \times \Vca) \sslash \mathrm{St}([q]) \to \mathrm{PGL}(N)/\mathrm{St}([q])$ with fibre $\Vca$ is \'etale-locally trivial, the exactness of \eqref{ses-blow-up:eq1} at $y$ can be deduced from the exactness of the sequence
\[
\begin{tikzcd}
\sigma^* \Kahler^1_{\Vca} \ar[r] & \Kahler^1_{\widetilde{\Vca}} \ar[r] & i_*\Kahler^1_{\widetilde{\Wca}/\Wca} \ar[r] & 0
\end{tikzcd}
\]
at any closed point of $\widetilde{\Wca}$ that lies over $[q]$. Using the isomorphisms \eqref{Q local description} and \eqref{OmegaQ local description}, we can further reduce to the case of an intersection of quadrics in an affine space, and its blow-up at the origin; this case is handled in \cref{lemma:relative differentials blow up}, which we leave in the Appendix not to interrupt the flow of the exposition.

The exactness of \eqref{ses-blow-up:eq2} can be proved along the same lines: replace $\Vca \subset Q^{ss}$ with $\Vca \cap \mathrm{\Sigma}_Q \subset \mathrm{\Sigma}_Q$ and use the identifications \eqref{SigmaQ local description} and \eqref{OmegaQ local description}.
\end{proof}

\begin{lemma}\label{lemma:SigmaR-ses conormal sheaf}
There is a short exact sequence of sheaves on $\mathrm{\Sigma}_R^{ss}$ 
\[
\begin{tikzcd}
0 \ar[r] & \Nca^\vee_{\mathrm{\Sigma}_R^{ss}/R^{ss}}(-(\mathrm{\Omega}_R \cap \mathrm{\Sigma}_R^{ss})) \ar[r] & \left[ (\pi_R^* \Kahler^1_{Q^{ss}})|_{\mathrm{\Sigma}_R^{ss}} \right]_\tf \ar[r] & \left[ (p^* \Kahler^1_{\mathrm{\Sigma}_Q})|_{\mathrm{\Sigma}_R^{ss}} \right]_\tf \ar[r] & 0.
\end{tikzcd}
\]
\end{lemma}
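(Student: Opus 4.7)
My approach is to set up a commutative diagram comparing the two right-exact sequences of \cref{lem:ses of differentials for blow up}, then extract the statement by a snake-lemma chase and a torsion-free cleanup. Let $j \colon \mathrm{\Sigma}_R^{ss} \hookrightarrow R^{ss}$ denote the inclusion. Restricting \eqref{ses-blow-up:eq1} to $\mathrm{\Sigma}_R^{ss}$ and mapping it to \eqref{ses-blow-up:eq2} yields
\[
\begin{tikzcd}[column sep=1em, row sep=1.2em]
j^* \pi_R^* \Kahler^1_{Q^{ss}} \ar[r] \ar[d, two heads, "\alpha_1"'] & j^* \Kahler^1_R \ar[r] \ar[d, two heads, "\alpha_2"'] & j^* \jmath_* \Kahler^1_{\mathrm{\Omega}_R/\mathrm{\Omega}_Q} \ar[r] \ar[d, two heads, "\alpha_3"'] & 0 \\
p^* \Kahler^1_{\mathrm{\Sigma}_Q} \ar[r] & \Kahler^1_{\mathrm{\Sigma}_R^{ss}} \ar[r] & \imath_* \Kahler^1_{\mathrm{\Omega}_R \cap \mathrm{\Sigma}_R/\mathrm{\Omega}_Q} \ar[r] & 0.
\end{tikzcd}
\]
Here $\alpha_1$ is the pullback by $p$ of the standard surjection $i^*\Kahler^1_{Q^{ss}} \twoheadrightarrow \Kahler^1_{\mathrm{\Sigma}_Q}$; $\alpha_2$ is the canonical surjection from the right-exact conormal sequence for $\mathrm{\Sigma}_R^{ss} \subset R^{ss}$, with $\ker \alpha_2 = \Nca^\vee_{\mathrm{\Sigma}_R^{ss}/R^{ss}}$ (locally free thanks to the normal flatness in \cref{prop:normal cone SigmaR fibres}); and $\alpha_3$ is induced by the inclusion $\mathrm{\Omega}_R \cap \mathrm{\Sigma}_R \hookrightarrow \mathrm{\Omega}_R$ viewed as a morphism over $\mathrm{\Omega}_Q$.

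The next step is to compute $\ker \alpha_3$. Since $\mathrm{\Omega}_R$ and $\mathrm{\Sigma}_R$ are Cartier divisors of $R^{ss}$ meeting along the smooth locus $\mathrm{\Omega}_R \cap \mathrm{\Sigma}_R$ of \cref{lemma:OmegaR-SigmaR}, the intersection is transverse, so $\mathrm{\Omega}_R \cap \mathrm{\Sigma}_R$ is a Cartier divisor of $\mathrm{\Omega}_R$ with ideal $\Oca_{\mathrm{\Omega}_R}(-\mathrm{\Sigma}_R)$. The relative conormal sequence of $\mathrm{\Omega}_R \cap \mathrm{\Sigma}_R \subset \mathrm{\Omega}_R$ over $\mathrm{\Omega}_Q$ is then short exact and yields $\ker \alpha_3 \simeq \imath_* \Oca_{\mathrm{\Omega}_R \cap \mathrm{\Sigma}_R^{ss}}(-(\mathrm{\Omega}_R \cap \mathrm{\Sigma}_R^{ss}))$. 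Splitting each row into a short exact sequence $0 \to \mathrm{im}(\cdot) \to (\cdot)_2 \to (\cdot)_3 \to 0$ and applying the snake lemma to the resulting $3 \times 2$ block of short exact rows produces a four-term sequence relating $\Nca^\vee_{\mathrm{\Sigma}_R^{ss}/R^{ss}}$, $\ker \alpha_3$, and the map $\mathrm{im}(\alpha_1\text{-source} \to \alpha_1\text{-target}) \to \mathrm{im}(\alpha_1\text{-pushforward})$. Passing to torsion-free quotients on the left and middle columns (noting the right column is pure torsion, supported on $\mathrm{\Omega}_R \cap \mathrm{\Sigma}_R^{ss}$) then collapses this into the desired short exact sequence: the line-bundle kernel of $\alpha_3$ surfaces precisely as the twist by $\Oca(-(\mathrm{\Omega}_R \cap \mathrm{\Sigma}_R^{ss}))$ on the conormal bundle $\Nca^\vee_{\mathrm{\Sigma}_R^{ss}/R^{ss}}$.

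The main obstacle is the careful bookkeeping of torsion throughout the chase: both rows are only right-exact, and neither $\alpha_1$ nor its kernel is a priori torsion-free, so one must track exactly which torsion contributions survive into the kernel of the middle vertical map after $[-]_\tf$ is applied. The saving technical input is that $\Nca^\vee_{\mathrm{\Sigma}_R^{ss}/R^{ss}}$ is locally free (by \cref{prop:normal cone SigmaR fibres}) and that $\ker \alpha_3$ is a line bundle supported on the Cartier divisor $\mathrm{\Omega}_R \cap \mathrm{\Sigma}_R^{ss}$, so the twist produced in the final statement is unambiguous. A safer alternative is to pull the whole diagram back via the flat principal $\mathrm{PO}(2)$-bundle $h \colon U \to \mathrm{\Sigma}_R^{ss}$ of \cref{prop:global description of SigmaR} and run the chase on $U$, where the relevant sheaves admit the explicit descriptions furnished by \cref{lem:SigmaR ExtEE}, \cref{lem:SigmaR-ExtFF}, \cref{SigmaR-Ext(E,O(-k))} together with the Ext-sheaf sequence \eqref{SigmaR-Ext complex} and the isomorphism \eqref{cotangent sheaf Quot scheme}; faithfully flat descent then returns the statement on $\mathrm{\Sigma}_R^{ss}$.
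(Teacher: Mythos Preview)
Your diagram and overall strategy are in the right spirit, but the computation of $\ker\alpha_3$ contains a genuine error that breaks the argument. You claim that ``$\mathrm{\Omega}_R$ and $\mathrm{\Sigma}_R$ are Cartier divisors of $R^{ss}$'', hence $\mathrm{\Omega}_R\cap\mathrm{\Sigma}_R$ is a Cartier divisor on $\mathrm{\Omega}_R$ and $\ker\alpha_3$ is a line bundle. This is false: $\mathrm{\Sigma}_R$ has codimension $4n-5$ in $R^{ss}$ (cf.\ the local models \eqref{Q local description}, \eqref{SigmaQ local description}), and in fact $R^{ss}$ is singular along $\mathrm{\Sigma}_R^{ss}$. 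The conormal sheaf $\Jca/\Jca^2$ of $\mathrm{\Omega}_R\cap\mathrm{\Sigma}_R^{ss}$ in $\mathrm{\Omega}_R^{ss}$ therefore has rank $4n-4$, not $1$, so your identification of $\ker\alpha_3$ with a pushed-forward line bundle, and the resulting mechanism producing the twist, both collapse.

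The twist by $-(\mathrm{\Omega}_R\cap\mathrm{\Sigma}_R^{ss})$ in the statement comes from the \emph{other} direction: $\mathrm{\Omega}_R\cap\mathrm{\Sigma}_R^{ss}$ is a Cartier divisor on $\mathrm{\Sigma}_R^{ss}$ (it is the restriction of the Cartier divisor $\mathrm{\Omega}_R\subset R$). The paper's proof exploits this by starting from the conormal short exact sequence $0\to\Nca^\vee_{\mathrm{\Sigma}_R^{ss}/R^{ss}}\to\Kahler^1_R|_{\mathrm{\Sigma}_R^{ss}}\to\Kahler^1_{\mathrm{\Sigma}_R^{ss}}\to 0$ (all three terms locally free), applying the unit $\mathrm{id}\to\imath_*\imath^*$, and identifying the target row with the relative conormal sequence of $\mathrm{\Omega}_R\cap\mathrm{\Sigma}_R^{ss}$ in $\mathrm{\Omega}_R^{ss}$ over $\mathrm{\Omega}_Q$; a Tor-vanishing shows $\imath^*\Nca^\vee_{\mathrm{\Sigma}_R^{ss}/R^{ss}}\simeq\Jca/\Jca^2$. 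For any locally free $\Eca$ on $\mathrm{\Sigma}_R^{ss}$ one has $\ker(\Eca\to\imath_*\imath^*\Eca)=\Eca(-(\mathrm{\Omega}_R\cap\mathrm{\Sigma}_R^{ss}))$, which is where the twist appears. The remaining two kernels are then read off from \eqref{ses-blow-up:eq1} and \eqref{ses-blow-up:eq2} (using that $\Kahler^1_R|_{\mathrm{\Sigma}_R^{ss}}$ and $\Kahler^1_{\mathrm{\Sigma}_R^{ss}}$ are torsion-free), and the snake lemma concludes. Your ``safer alternative'' of pulling back to $U$ is what the paper does \emph{after} this lemma to obtain \eqref{SigmaR-ses conormal sheaf}; the lemma itself is proved intrinsically on $\mathrm{\Sigma}_R^{ss}$.
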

\begin{proof}
Let us start from the conormal sequence
\[
\begin{tikzcd}
0 \ar[r] & \Nca^\vee_{\mathrm{\Sigma}_R^{ss}/R^{ss}} \ar[r] & \Kahler^1_R|_{\mathrm{\Sigma}_R^{ss}} \ar[r] & \Kahler^1_{\mathrm{\Sigma}_R^{ss}} \ar[r] & 0.
\end{tikzcd}
\]
It is left exact because $\mathrm{\Sigma}_R^{ss}$ is smooth.
Since $\Nca^\vee_{\mathrm{\Sigma}_R^{ss}/R^{ss}}$ and $\Kahler^1_{\mathrm{\Sigma}_R^{ss}}$ are locally free, so is $\Kahler^1_R|_{\mathrm{\Sigma}_R^{ss}}$. When we apply the functor $\imath^*$, this sequence remains exact and sits into the commutative diagram
\begin{equation}\label{SigmaR-6 diagram}
\begin{tikzcd}
0 \ar[r] & \imath^*\Nca^\vee_{\mathrm{\Sigma}_R^{ss}/R^{ss}} \ar[r] \ar[d, dashed] & \imath^*(\Kahler^1_R|_{\mathrm{\Sigma}_R^{ss}}) \ar[r] \ar[d] & \imath^* \Kahler^1_{\mathrm{\Sigma}_R^{ss}} \ar[r] \ar[d] & 0 \\
0 \ar[r] & \Jca/\Jca^{2}  \ar[r] & \Kahler^1_{\mathrm{\Omega}_R/\mathrm{\Omega}_Q}|_{\mathrm{\Omega}_R \cap \mathrm{\Sigma}_R^{ss}} \ar[r] & \Kahler^1_{\mathrm{\Omega}_R \cap \mathrm{\Sigma}_R^{ss}/\mathrm{\Omega}_Q} \ar[r] & 0,
\end{tikzcd}
\end{equation}
where $\Jca$ is the ideal sheaf of $\mathrm{\Omega}_R \cap \mathrm{\Sigma}_R^{ss}$ in $\mathrm{\Omega}_R^{ss}$. The second row is exact as $\mathrm{\Omega}_R \cap \mathrm{\Sigma}_R^{ss} \to \mathrm{\Omega}_Q$ is a smooth morphism. The dotted arrow is an isomorphism. To prove this, it suffices to check that the ideal sheaf of $\mathrm{\Sigma}_R^{ss}$ in $R^{ss}$ pullbacks on $\mathrm{\Omega}_R^{ss}$ to $\Jca$. One can show that $\mathrm{Tor}_1(\Oca_{\mathrm{\Omega}_R^{ss}}, \Oca_{\mathrm{\Sigma}_R^{ss}})$ vanishes: indeed, the Cartier divisor $\mathrm{\Omega}_R$ remains a divisor when restricted to the integral scheme $\mathrm{\Sigma}_R^{ss}$.

Now, the natural transformation $\mathrm{id} \to \imath_* \circ \imath^*$ and the pushforward by $\imath$ of the diagram \eqref{SigmaR-6 diagram} yield the following morphism of short exact sequences
\begin{equation*}
\begin{tikzcd}
0 \ar[r] & \Nca^\vee_{\mathrm{\Sigma}_R^{ss}/R^{ss}} \ar[r] \ar[d, "\lambda'"] & \Kahler^1_R|_{\mathrm{\Sigma}_R^{ss}} \ar[r] \ar[d, "\lambda"] &  \Kahler^1_{\mathrm{\Sigma}_R^{ss}} \ar[r] \ar[d, "\lambda''"] & 0 \\
0 \ar[r] & \imath_*\imath^*\Nca^\vee_{\mathrm{\Sigma}_R^{ss}/R^{ss}} \ar[r] & \imath_*(\Kahler^1_{\mathrm{\Omega}_R/\mathrm{\Omega}_Q}|_{\mathrm{\Omega}_R \cap \mathrm{\Sigma}_R^{ss}}) \ar[r] & \imath_*\Kahler^1_{\mathrm{\Omega}_R \cap \mathrm{\Sigma}_R^{ss}/\mathrm{\Omega}_Q} \ar[r] & 0.
\end{tikzcd}
\end{equation*}
Clearly, $\lambda'$ is surjective and has kernel $\Nca^\vee_{\mathrm{\Sigma}_R^{ss}/R^{ss}}(-(\mathrm{\Omega}_R \cap \mathrm{\Sigma}_R^{ss}))$. As for $\lambda$, by \cref{lem:ses of differentials for blow up} we have the exact sequence 
\[
\begin{tikzcd}
(\pi_R^* \Kahler^1_{Q^{ss}}) |_{\mathrm{\Sigma}_R^{ss}} \ar[r] & \Kahler^1_R|_{\mathrm{\Sigma}_R^{ss}} \ar[r,"\lambda"] & \imath_*(\Kahler^1_{\mathrm{\Omega}_R/\mathrm{\Omega}_Q}|_{\mathrm{\Omega}_R \cap \mathrm{\Sigma}_R^{ss}}) \ar[r] & 0.
\end{tikzcd}
\]
Since $(\pi_R^* \Kahler^1_{Q^{ss}}) |_{\mathrm{\Sigma}_R^{ss}}$ coincides with the torsion-free sheaf $\Kahler^1_R|_{\mathrm{\Sigma}_R^{ss}}$ away from $\mathrm{\Omega}_R \cap \mathrm{\Sigma}_R^{ss}$, the kernel of $\lambda$ equals $[(\pi_R^* \Kahler^1_{Q^{ss}} )|_{\mathrm{\Sigma}_R^{ss}}]_\tf$. The same argument applied to $\lambda''$ yields $\ker(\lambda'') \simeq [(p ^* \Kahler^1_{\mathrm{\Sigma}_Q})|_{\mathrm{\Sigma}_R^{ss}}]_\tf$.

The snake lemma allows us to conclude.
\end{proof}
Since $h \colon U \to \mathrm{\Sigma}_R^{ss}$ is a flat morphism of integral schemes, the functor $h^*$ is exact and commutes with $(-)_\tf$. We deduce from Lemma \ref{lemma:SigmaR-ses conormal sheaf} an exact sequence of sheaves on $U$
\begin{equation}\label{SigmaR-ses conormal sheaf}
\begin{tikzcd}[column sep = 1.2em]
0 \ar[r] & (h^*\Nca^\vee_{\mathrm{\Sigma}_R^{ss}/R^{ss}})(-D) \ar[r] & \left[ h^*(\pi_R^* \Kahler^1_{Q^{ss}})|_{\mathrm{\Sigma}_R^{ss}}\right]_\tf \ar[r] & \left[h^* (p^* \Kahler^1_{\mathrm{\Sigma}_Q})|_{\mathrm{\Sigma}_R^{ss}}\right]_\tf \ar[r] & 0.
\end{tikzcd}
\end{equation}
On $U$ we can also consider the exact sequence of sheaves of differentials
\[
\begin{tikzcd}
h^* \left[(p^* \Kahler^1_{\mathrm{\Sigma}_Q})|_{\mathrm{\Sigma}_R^{ss}}\right] \ar[r] & \Kahler^1_U \ar[r] & \Kahler^1_{U/\mathrm{\Sigma}_Q} \ar[r] & 0.
\end{tikzcd}
\]
Off $D$, the first arrow is injective, because $(p \circ h)|_{U \setminus D}$ is a smooth morphism. Since $\Kahler^1_U$ is torsion-free, we get the short exact sequence
\begin{equation}\label{U-ses}
\begin{tikzcd}
0 \ar[r] & \left[h^* (p^* \Kahler^1_{\mathrm{\Sigma}_Q})|_{\mathrm{\Sigma}_R^{ss}}\right]_\tf \ar[r] & \Kahler^1_U \ar[r] & \Kahler^1_{U/\mathrm{\Sigma}_Q} \ar[r] & 0.
\end{tikzcd}
\end{equation}
Glueing \eqref{SigmaR-ses conormal sheaf} with \eqref{U-ses} we obtain the exact complex
\[
\begin{tikzcd}
0 \ar[r] & (h^*\Nca^\vee_{\mathrm{\Sigma}_R^{ss}/R^{ss}})(-D) \ar[r] & \left[ h^*(\pi_R^* \Kahler^1_{Q^{ss}})|_{\mathrm{\Sigma}_R^{ss}}\right]_\tf \ar[r]& \Kahler^1_U \ar[r] & \Kahler^1_{U/\mathrm{\Sigma}_Q} \ar[r] & 0.
\end{tikzcd}
\]
We shall compare it with \eqref{SigmaR-Ext complex}.
\begin{lemma}\label{lemma:SigmaR-commutative square}
There is a commutative diagram
\begin{equation}\label{commutative diagram-SigmaR}
\begin{tikzcd}
h^*\left[(\pi_R^* \Kahler^1_{Q^{ss}}) |_{\mathrm{\Sigma}_R^{ss}}\right] \ar[r] \isoarrow{d} & \Kahler^1_U \ar[d] \\
\sExt_U^2(\Fca_U,\Kca_U) \ar[r] & \sExt_U^2(\Fca_U,\Oca(-k)^{\oplus N}).
\end{tikzcd}
\end{equation}
\end{lemma}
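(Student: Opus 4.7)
The plan is to construct the right vertical arrow via the relative Euler sequence for $u\colon U \to \Xca^{[n]}$, and then to verify commutativity by recognising both compositions as incarnations of the Kodaira--Spencer/Atiyah-class construction for the family $\Fca_U$ on $U \times X$.

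First, I would build the right vertical map. By \cref{SigmaR-Ext(E,O(-k))}, the target $\sExt^2_U(\Fca_U, \Oca(-k)^{\oplus N})$ is canonically isomorphic to $u^*\sHom(\Oca^{\oplus N}, p_{\Xca^{[n]},*}b_X^*[(\Ica_1 \oplus \Ica_2)(k)])^\vee \otimes \Oca_u(-1)$. Since $U$ sits as an open subscheme of the projective bundle $\PP\sHom(\Oca^{\oplus N}, p_{\Xca^{[n]},*}b_X^*[(\Ica_1 \oplus \Ica_2)(k)])$ over $\Xca^{[n]}$, the dual of the relative Euler sequence identifies $\Kahler^1_u$ with a subsheaf of that target. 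Composing the quotient $\Kahler^1_U \twoheadrightarrow \Kahler^1_u$ from the relative cotangent sequence for the smooth morphism $u$ with this inclusion produces the desired arrow.

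Next, I would verify commutativity. The left vertical isomorphism is the pullback along $\pi_R \circ \imath \circ h\colon U \to Q^{ss}$ of the identification \eqref{cotangent sheaf Quot scheme}. Unrolling the construction of this isomorphism---which goes through the Atiyah class of the universal quotient sheaf and the connecting homomorphism of the long exact sequence of Ext-sheaves---one finds that the composition $h^*[(\pi_R^*\Kahler^1_{Q^{ss}})|_{\mathrm{\Sigma}_R^{ss}}] \xrightarrow{\sim} \sExt^2_U(\Fca_U, \Kca_U) \to \sExt^2_U(\Fca_U, \Oca(-k)^{\oplus N})$ coincides with the natural map that factors through the cotangent map of $\pi_R \circ \imath \circ h$ followed by the (dual) Atiyah-class map for $\Fca_U$ with target $\sExt^2_U(\Fca_U, \Oca(-k)^{\oplus N})$. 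Because $\Oca(-k)^{\oplus N}$ is pulled back from $X$, its $U$-directional Atiyah class vanishes, so this Atiyah-class map factors through $\Kahler^1_u$; the explicit expression $\Fca_U \simeq u_X^*b_X^*(\Ica_1 \oplus \Ica_2) \otimes p_U^*\Oca_u(1)$ then identifies the resulting inclusion $\Kahler^1_u \hookrightarrow \sExt^2_U(\Fca_U, \Oca(-k)^{\oplus N})$ with the Euler-sequence map built in the previous paragraph, giving commutativity.

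The hard part is the bookkeeping in the second step: relating the intrinsic Lehn construction underlying \eqref{cotangent sheaf Quot scheme} with the Euler-sequence description of the target sheaf requires a careful chase through Atiyah classes. If a direct verification proves too intricate, one can instead check commutativity \'etale-locally over $\Xca^{[n]}$ after trivialising the $\mathrm{PGL}(N)$-bundle $u$, where the family $\Fca_U$ takes a product form and the compatibilities become elementary.
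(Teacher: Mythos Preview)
Your construction of the right vertical arrow is exactly what the paper does: use the identification of \cref{SigmaR-Ext(E,O(-k))} together with the relative Euler sequence for $u$ to produce $\Kahler^1_U \twoheadrightarrow \Kahler^1_u \hookrightarrow \sExt^2_U(\Fca_U,\Oca(-k)^{\oplus N})$.

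Where you diverge is in the verification of commutativity. You propose to chase the Atiyah class directly, unwinding Lehn's construction and arguing that the $U$-directional Atiyah class of $\Oca(-k)^{\oplus N}$ vanishes so that everything factors through $\Kahler^1_u$ and then matches the Euler map. This is correct in principle, but as you yourself note, the bookkeeping is delicate. The paper instead exploits the $\mathrm{GL}(N)$-equivariance of the morphism $U\to Q^{ss}$: equivariance immediately gives a commutative square whose vertical arrows are the infinitesimal action maps into $\mathfrak{gl}(N)^\vee\otimes\Oca_U$. One then identifies these action maps explicitly --- on the $Q^{ss}$-side as the composite $\sExt^2_U(\Fca_U,\Kca_U)\to\sExt^2_U(\Fca_U,\Oca(-k)^{\oplus N})\to\sExt^2_U(\Oca(-k)^{\oplus N},\Oca(-k)^{\oplus N})\simeq\mathfrak{gl}(N)^\vee\otimes\Oca_U$, and on the $U$-side via the Euler sequence --- and closes the argument by the functoriality of Grothendieck--Verdier duality. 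The group-action shortcut replaces your Atiyah-class chase with a single structural observation, and the pointwise verification you propose as a fallback is precisely how the paper checks the explicit descriptions of the infinitesimal action maps.
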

\begin{proof}
The top horizontal arrow of the diagram \eqref{commutative diagram-SigmaR} is the differential of the morphism \eqref{morphism from U to Quot} from $U$ to $Q^{ss}$. Since this morphism is $\mathrm{GL}(N)$-equivariant, it gives a commutative diagram 
\[
\begin{tikzcd}
h^*\left[(\pi_R^* \Kahler^1_{Q^{ss}}) |_{\mathrm{\Sigma}_R^{ss}}\right] \ar[r] \ar[d] & \Kahler^1_U \ar[d] \\
\mathfrak{gl}(N)^\vee \otimes \Oca_U \ar[r, equal] & \mathfrak{gl}(N)^\vee \otimes \Oca_U,
\end{tikzcd}
\]
whose vertical arrows describe the infinitesimal action of $\mathrm{GL}(N)$ (\cf \cite[Proof of Theorem 4.2]{LcsQs}). Let us provide more explicit expressions for these arrows.

The homomorphism $h^*\left[(\pi_R^* \Kahler^1_{Q^{ss}}) |_{\mathrm{\Sigma}_R^{ss}}\right] \rightarrow \mathfrak{gl}(N)^\vee \otimes \Oca_U$ is the composite map
\begin{multline*}
\quad \quad h^*\left[(\pi_R^* \Kahler^1_{Q^{ss}}) |_{\mathrm{\Sigma}_R^{ss}}\right] \simeq \sExt_U^2(\Fca_U,\Kca_U) \rightarrow \sExt_U^2(\Fca_U,\Oca(-k)^{\oplus N}) \rightarrow \\
\rightarrow \sExt_U^2(\Oca(-k)^{\oplus N},\Oca(-k)^{\oplus N}) \simeq u^* \sHom(\Oca^{\oplus N}, \Oca^{\oplus N})^\vee = \mathfrak{gl}(N)^\vee \otimes \Oca_U,
\end{multline*}
where the arrows between the relative Ext-sheaves are given by composition, and the second isomorphism by Grothendieck-Verdier duality.

The homomorphism $\Kahler^1_U \rightarrow \mathfrak{gl}(N)^\vee \otimes \Oca_U$ is the composition
\[
\quad \quad  \Kahler^1_U \rightarrow \Kahler^1_u
\rightarrow u^*\sHom (\Oca^{\oplus N}, p_{\Xca^{[n]},*} b_X^*[(\Ica_1 \oplus \Ica_2)(k)])^\vee \otimes \Oca_u(-1)  \simeq \mathfrak{gl}(N)^\vee \otimes \Oca_U,
\]
where the second arrow comes from the relative Euler sequence for $u$.

Both these expressions can be checked at the closed points of the reduced scheme $U$, because $\mathfrak{gl}(N)^\vee \otimes \Oca_U$ is locally free\footnote{Actually, the first expression holds true, possibly up to a sign coming from the deformation-theoretic description of the tangent space of the Quot scheme at a closed point (see \cite[Proposition 2.2.7]{HLgmss}). If necessary, multiply the isomorphism \eqref{cotangent sheaf Quot scheme} with $-1$ to fix the problem.}. These two homomorphisms fit into the diagram
\[
\begin{tikzcd}[column sep=0.7 em, scale cd=0.9]
h^*\left[(\pi_R^* \Kahler^1_{Q^{ss}}) |_{\mathrm{\Sigma}_R^{ss}}\right]  \ar[rr] \isoarrow{d} & & \Kahler^1_U \ar[d] \\
\sExt_U^2(\Fca_U,\Kca_U) \ar[r] &\sExt_U^2(\Fca_U,\Oca(-k)^{\oplus N}) \ar[r, "\sim", "GV"'] \ar[d] & u^*\sHom (\Oca^{\oplus N}, p_{\Xca^{[n]},*} b_X^*[(\Ica_1 \oplus \Ica_2)(k)])^\vee \otimes \Oca_u(-1) \isoarrow{d} \\
& \sExt_U^2(\Oca(-k)^{\oplus N},\Oca(-k)^{\oplus N}) \ar[r, "\sim", "GV"'] & u^* \sHom(\Oca^{\oplus N}, \Oca^{\oplus N})^\vee = \mathfrak{gl}(N)^\vee \otimes \Oca_U.
\end{tikzcd}
\]
By functoriality of Grothendieck-Verdier duality, the bottom-right square commutes. We deduce the commutativity of the upper rectangle, which gives \eqref{commutative diagram-SigmaR}.
\end{proof}
Since $\Kahler^1_U$ is torsion-free, from Lemma \ref{lemma:SigmaR-commutative square} we deduce a commutative diagram
\[
\begin{tikzcd}
\left[h^*(\pi_R^* \Kahler^1_{Q^{ss}}) |_{\mathrm{\Sigma}_R^{ss}}\right]_\tf \ar[r] \isoarrow{d} & \Kahler^1_U \ar[d] \\
\sExt_U^2(\Fca_U,\Kca_U)_\tf \ar[r] & \sExt_U^2(\Fca_U,\Oca(-k)^{\oplus N}),
\end{tikzcd}
\]
and thus a morphism of complexes
\begin{equation}\label{SigmaR-4 terms complex morphism}
\begin{tikzcd}[column sep=0.7em ]
0 \ar[r] & (h^* \Nca^\vee_{\mathrm{\Sigma}_R^{ss}/R^{ss}})(-D) \ar[r] \ar[d] & \left[h^*(\pi_R^* \Kahler^1_{Q^{ss}}) |_{\mathrm{\Sigma}_R^{ss}}\right]_\tf \ar[r] \isoarrow{d} &\Kahler^1_U \ar[r] \ar[d] &\Kahler^1_{U/\mathrm{\Sigma}_Q} \ar[r] \ar[d] &0 \\
0 \ar[r] &\sExt_U^1(\Fca_U,\Fca_U)_\tf \ar[r] 
&\sExt_U^2(\Fca_U,\Kca_U)_\tf \ar[r] &\sExt_U^2(\Fca_U,\Oca(-k)^{\oplus N}) \ar[r] &\sExt_U^2(\Fca_U,\Fca_U) \ar[r] &0.
\end{tikzcd}
\end{equation}
\begin{proposition}\label{prop:conormal sheaf SigmaR in R}
The pullback along $h$ of the conormal sheaf to $\mathrm{\Sigma}_R^{ss}$ in $R^{ss}$ is given by
\begin{equation}\label{description conormal sheaf SigmaR in R}
h^* \Nca^\vee_{\mathrm{\Sigma}_R^{ss}/R^{ss}} \simeq [u^*(V' \oplus V)](D).
\end{equation}
Geometrically, this corresponds to an isomorphism of $U$-schemes
\begin{equation}\label{geometric normal bundle SigmaR in R}
h^* N_{\mathrm{\Sigma}_R^{ss}/R^{ss}} \simeq u^*\mathbf{Spec}(\mathrm{S}^\bullet[(V' \oplus V)(E)]).
\end{equation}
\end{proposition}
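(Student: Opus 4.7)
The approach is to snake-lemma chase the commutative diagram \eqref{SigmaR-4 terms complex morphism} of four-term exact complexes, whose middle vertical arrow is already known to be an isomorphism, in order to extract a formula for the first vertical arrow. Split each row at its midpoint into two short exact sequences meeting at
\[
K \coloneqq \mathrm{im}\bigl([h^*(\pi_R^* \Kahler^1_{Q^{ss}})|_{\mathrm{\Sigma}_R^{ss}}]_\tf \to \Kahler^1_U\bigr) \quad \text{and} \quad K' \coloneqq \mathrm{im}\bigl(\sExt_U^2(\Fca_U,\Kca_U)_\tf \to \sExt_U^2(\Fca_U,\Oca(-k)^{\oplus N})\bigr),
\]
so that the middle isomorphism induces a morphism $K \to K'$.

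The snake lemma applied to the two left short exact sequences (with the middle-column isomorphism) immediately shows that the first vertical arrow $(h^*\Nca^\vee_{\mathrm{\Sigma}_R^{ss}/R^{ss}})(-D) \to \sExt_U^1(\Fca_U,\Fca_U)_\tf$ is injective with cokernel canonically isomorphic to $\ker(K \to K')$. By \cref{lem:SigmaR-ExtFF}, the target decomposes as $u^*(V' \oplus V) \oplus u^*b^* \Kahler^1_{X^{[n]} \times X^{[n]}}$, so it suffices to exhibit an isomorphism $\ker(K \to K') \simeq u^*b^*\Kahler^1_{X^{[n]}\times X^{[n]}}$ matching the second summand; twisting by $\Oca(D)$ then gives \eqref{description conormal sheaf SigmaR in R}.

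To compute $\ker(K \to K')$, apply the snake lemma to the two right short exact sequences. The third vertical arrow $\Kahler^1_U \to \sExt_U^2(\Fca_U, \Oca(-k)^{\oplus N})$ factors, by the proof of \cref{lemma:SigmaR-commutative square}, as $\Kahler^1_U \twoheadrightarrow \Kahler^1_u \hookrightarrow \sExt_U^2(\Fca_U, \Oca(-k)^{\oplus N})$, the second arrow being the injection from the relative Euler sequence for the open embedding of $U$ in the projective bundle $\PP\sHom(\Oca^{\oplus N}, p_{\Xca^{[n]},*}b_X^*[(\Ica_1\oplus\Ica_2)(k)])$. Hence its kernel equals $u^*\Kahler^1_{\Xca^{[n]}}$, via the relative cotangent sequence of $u$. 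A parallel analysis of the fourth vertical arrow $\Kahler^1_{U/\mathrm{\Sigma}_Q} \to \sExt_U^2(\Fca_U, \Fca_U)$, using the cotangent sequence of $b\circ u \colon U \to X^{[n]}\times X^{[n]}$ together with \cref{lem:SigmaR ExtEE} and the identification $[\ker(b^*f)/\im(b^*g)]_\tf \simeq V$ from \cref{thm:properties of V}(2), then pins down $\ker(K \to K')$ as $u^*b^*\Kahler^1_{X^{[n]} \times X^{[n]}}$, as required.

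Finally, the geometric reformulation \eqref{geometric normal bundle SigmaR in R} is immediate by applying $\mathbf{Spec}\circ\mathrm{S}^\bullet$ to the dual of \eqref{description conormal sheaf SigmaR in R}, using $\Oca_U(D) \simeq u^*\Oca_{\Xca^{[n]}}(E)$, the duality $V^\vee \simeq V'$ of \cref{thm:properties of V}(1), and the compatibility of $\mathbf{Spec}$ with pullback. The main obstacle is the precise identification of the fourth vertical arrow $\Kahler^1_{U/\mathrm{\Sigma}_Q} \to \sExt_U^2(\Fca_U,\Fca_U)$: this requires tracking carefully the interaction between the $\mathrm{PO}(2)$-quotient $U \to \mathrm{\Sigma}_R^{ss}$, the blow-up $\mathrm{\Sigma}_R^{ss} \to \mathrm{\Sigma}_Q$, and the factorisation $U \to \Xca^{[n]} \to X^{[n]}\times X^{[n]}$ at the level of cotangent sequences, so that the final snake chase produces the clean summand $u^*b^*\Kahler^1_{X^{[n]}\times X^{[n]}}$ of \cref{lem:SigmaR-ExtFF}.
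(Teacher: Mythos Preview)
Your overall plan coincides with the paper's: split the four-term morphism \eqref{SigmaR-4 terms complex morphism} into two short exact sequence morphisms, apply the snake lemma to the left half to see that $(h^*\Nca^\vee_{\mathrm{\Sigma}_R^{ss}/R^{ss}})(-D) \hookrightarrow \sExt_U^1(\Fca_U,\Fca_U)_\tf$ with cokernel $\ker(K\to K')$, identify the kernel of the third vertical arrow as $u^*\Kahler^1_{\Xca^{[n]}}$ via the cotangent sequence of $u$ and the relative Euler sequence, and then chase the right half. The geometric reformulation \eqref{geometric normal bundle SigmaR in R} is indeed immediate once \eqref{description conormal sheaf SigmaR in R} is established.

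Where your sketch diverges from what is actually needed is the treatment of the fourth vertical arrow $\nu\colon \Kahler^1_{U/\mathrm{\Sigma}_Q} \to \sExt_U^2(\Fca_U,\Fca_U)$. The tools you name --- the cotangent sequence of $b\circ u$ and the identification $[\ker(b^*f)/\im(b^*g)]_\tf\simeq V$ --- are not the right ones here: the latter concerns $\sExt_U^1$, not $\sExt_U^2$, and the former does not interact cleanly with $\Kahler^1_{U/\mathrm{\Sigma}_Q}$. What the paper does instead is split $\nu$ once more, using on the source side the exact sequence $0\to h^*\imath_*\Kahler^1_{\mathrm{\Omega}_R\cap\mathrm{\Sigma}_R^{ss}/\mathrm{\Omega}_Q}\to\Kahler^1_{U/\mathrm{\Sigma}_Q}\to\Kahler^1_{U/\mathrm{\Sigma}_R^{ss}}\to 0$ (from smoothness of $h$ and \eqref{ses-blow-up:eq2}) and on the target side the decomposition of \cref{lem:SigmaR ExtEE} into its locally free and $D$-supported summands. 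The resulting $\nu''$ is handled by a rank count; the crucial point is $\nu'$, which is analysed by restricting to $D$ and recognising $\zeta^*\nu$ as the infinitesimal $\mathrm{GL}(2)$-action on $D/\mathrm{\Omega}_Q$ coming from \cref{rem:GL2action on D}. This yields $\ker(\nu)\simeq u^*\varepsilon_*\Kahler^1_{E/X^{[n]}}$, and then the blow-up cotangent sequence for $b$ (pulled back by $u$) identifies $\ker(K\to K')\simeq u^*b^*\Kahler^1_{X^{[n]}\times X^{[n]}}$. So your outline is correct, but the ingredient you are missing for the step you flag as the ``main obstacle'' is precisely this use of the $\mathrm{GL}(2)$-action on $D$.
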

\begin{proof}
Let us split \eqref{SigmaR-4 terms complex morphism} into two morphisms of short exact sequences. The first is given by
\[
\begin{tikzcd}
0 \ar[r] & (h^* \Nca^\vee_{\mathrm{\Sigma}_R^{ss}/R^{ss}})(-D) \ar[d,"\mu'"] \ar[r] & \left[h^*(\pi_R^* \Kahler^1_{Q^{ss}}) |_{\mathrm{\Sigma}_R^{ss}}\right]_\tf \isoarrow{d} \ar[r] & \left[h^* (p^* \Kahler^1_{\mathrm{\Sigma}_Q})|_{\mathrm{\Sigma}_R^{ss}}\right]_\tf \ar[d,"\mu''"] \ar[r] & 0 \\
0 \ar[r] & \sExt_U^1(\Fca_U,\Fca_U)_\tf \ar[r] & \sExt_U^2(\Fca_U,\Kca_U)_\tf \ar[r] & \Cca \ar[r] & 0.
\end{tikzcd}
\]
By the snake lemma, $\mu'$ is injective, $\mu''$ is surjective and $\coker (\mu') \simeq \ker(\mu'')$. The second corresponds to
\begin{equation*}
\begin{tikzcd}
0 \ar[r] & \left[h^* (p^* \Kahler^1_{\mathrm{\Sigma}_Q})|_{\mathrm{\Sigma}_R^{ss}}\right]_\tf \ar[d, "\mu''"] \ar[r] & \Kahler^1_U \ar[d] \ar[r] & \Kahler^1_{U/\mathrm{\Sigma}_Q} \ar[d, "\nu"] \ar[r] & 0 \\
0 \ar[r] & \Cca \ar[r] & \sExt_U^2(\Fca_U,\Oca(-k)^{\oplus N}) \ar[r] & \sExt_U^2(\Fca_U,\Fca_U) \ar[r] & 0.
\end{tikzcd}
\end{equation*}
Its central vertical arrow is obtained glueing the short exact sequence
\[
\begin{tikzcd}
0 \ar[r] & u^*\Kahler^1_{\Xca^{[n]}} \ar[r] & \Kahler^1_U \ar[r] & \Kahler^1_u \ar[r] &0
\end{tikzcd}
\]
with the relative Euler sequence
\[
\begin{tikzcd}
0 \ar[r] &  \Kahler^1_u \ar[r] & u^*\sHom (\Oca^{\oplus N}, p_{\Xca^{[n]},*} b_X^*[(\Ica_1 \oplus \Ica_2)(k)])^\vee \otimes \Oca_u(-1) \ar[r] & \Oca \ar[r] & 0;
\end{tikzcd}
\]
in particular, its kernel and cokernel are $u^*\Kahler^1_{\Xca^{[n]}}$ and $\Oca$, respectively. By the snake lemma, from the surjectivity of $\mu''$ we deduce the isomorphism $\coker(\nu) \simeq \Oca$ and the short exact sequence
\begin{equation}\label{blowup conormal sequence for Xca^{[n]}}
\begin{tikzcd}
0 \ar[r] & \ker(\mu'') \ar[r] & u^*\Kahler^1_{\Xca^{[n]}} \ar[r] & \ker(\nu) \ar[r] & 0.
\end{tikzcd}
\end{equation}
Let us compute $\ker(\nu)$. Consider the solid diagram
\begin{equation}\label{diagram-diagonal and antidiagonal}
\begin{tikzcd}[column sep=1em ]
0 \ar[r] & h^* \imath_* \Kahler^1_{\mathrm{\Omega}_R \cap \mathrm{\Sigma}_R^{ss}/\mathrm{\Omega}_Q} \ar[d, "\nu'", dashed] \ar[r] & \Kahler^1_{U/\mathrm{\Sigma}_Q} \ar[d, "\nu"] \ar[r] & \Kahler^1_{U/\mathrm{\Sigma}_R^{ss}} \ar[d, "\nu''", dashed] \ar[r] & 0 \\
0 \ar[r] & (b \circ u)^*  \delta_*  \sExt_{X^{[n]}}^2(\Ica,\Ica)^{\oplus 2} \ar[r] & \sExt_U^2(\Fca_U,\Fca_U) \ar[r] & (b \circ u)^* \bigoplus_{i=1}^2 \pi_i^* \sExt_{X^{[n]}}^2(\Ica,\Ica) \ar[r] & 0.
\end{tikzcd}
\end{equation}
The two rows are exact: the first by the smoothness of $h$ together with \eqref{ses-blow-up:eq2}, the second by \cref{lem:SigmaR ExtEE}. The diagram \eqref{diagram-diagonal and antidiagonal} can be completed to a morphism of short exact sequences: indeed, from the torsion sheaf $h^* \imath_* \Kahler^1_{\mathrm{\Omega}_R \cap \mathrm{\Sigma}_R^{ss}/\mathrm{\Omega}_Q}$ to the locally free sheaf $(b \circ u)^* \bigoplus_{i=1}^2 \pi_i^* \sExt_{X^{[n]}}^2(\Ica,\Ica)$ there are no non-zero morphisms.

The source and the target of $\nu''$ are locally free sheaves of rank one and two, respectively; moreover, its cokernel is a quotient of $\coker (\nu) \simeq \Oca$. We easily deduce that $\nu''$ is injective and that its cokernel is isomorphic to $\Oca$. By the snake lemma, $\nu'$ is surjective, and $\ker (\nu') \simeq \ker (\nu)$.
Now, recall that $\zeta$ denotes the imbedding of $D$ in $U$. The equality $\nu' = \zeta_* \zeta^* \nu'$ suggests to consider the pullback of \eqref{diagram-diagonal and antidiagonal} by $\zeta$; it is still a morphism of short exact sequences. Since $\nu''$ has locally free cokernel, its pullback by $\zeta$ remains injective, so that $\ker (\zeta^*\nu') \simeq \ker (\zeta^*\nu)$. Hence,
\[
\ker(\nu) \simeq \ker(\nu') \simeq \ker(\zeta_*\zeta^*\nu') \simeq \zeta_* \ker(\zeta^*\nu')\simeq \zeta_* \ker(\zeta^*\nu).
\]
Let us determine $\ker(\zeta^*\nu)$. Recall from \cref{rem:GL2action on D} that we have on $D$ a $\mathrm{GL}(2)$-action. Consider the induced action of the group scheme $(\mathrm{GL}(2) \times \mathrm{\Omega}_Q)/\mathrm{\Omega}_Q$ on $D/\mathrm{\Omega}_Q$; infinitesimally, it is described by $\zeta^*\nu$. In particular, $\zeta^*\nu$ factors as the composition
\[
\zeta^*\Kahler^1_{U/\mathrm{\Sigma}_Q} \xrightarrow{\sim} \Kahler^1_{D/\mathrm{\Omega}_Q} \twoheadrightarrow \Kahler^1_{D/\mathrm{\Omega}_Q \times_{X^{[n]}}E} \hookrightarrow \mathfrak{gl}(2)^\vee \otimes \Oca_D \simeq \sExt_D^2(\Fca_D,\Fca_D);
\]
the third map is injective, because it identifies with $\Kahler^1_{D/\mathrm{\Omega}_Q \times_{X^{[n]}}E} \simeq \mathfrak{pgl}(2)^\vee \otimes \Oca_D \hookrightarrow  \mathfrak{gl}(2)^\vee \otimes \Oca_D$. Therefore, $\ker(\zeta^* \nu)$ is the pullback of $\Kahler^1_{E/X^{[n]}}$ along $u|_D \colon D \to E$. We conclude that 
\[
\ker(\nu) \simeq \zeta_* \ker(\zeta^* \nu) \simeq u^*\varepsilon_*\Kahler^1_{E/X^{[n]}}.
\]
This last isomorphism used in \eqref{blowup conormal sequence for Xca^{[n]}} shows that $\ker(\mu'')$ (and hence $\coker(\mu')$) is isomorphic to $u^* b^* \Kahler^1_{X^{[n]}\times X^{[n]}}$. Thus, we have obtained a short exact sequence
\[
\begin{tikzcd}
0 \ar[r] & (h^* \Nca^\vee_{\mathrm{\Sigma}_R^{ss}/R^{ss}})(-D) \ar[r, "\mu'"] & \sExt_U^1(\Fca_U,\Fca_U)_\tf \ar[r] & u^* b^* \Kahler^1_{X^{[n]}\times X^{[n]}} \ar[r] & 0;
\end{tikzcd}
\]
by \cref{lem:SigmaR-ExtFF}, we get $(h^* \Nca^\vee_{\mathrm{\Sigma}_R^{ss}/R^{ss}})(-D) \simeq u^*(V' \oplus V)$, as desired.
\end{proof}

\subsection{Computation of the normal cone} Consider on $X^{[n]} \times X^{[n]}$ the quadratic map 
\[
\mathrm{\Upsilon} \colon \sExt_{X^{[n]} \times X^{[n]}}^1(\Ica_1,\Ica_2) \oplus \sExt_{X^{[n]} \times X^{[n]}}^1(\Ica_2,\Ica_1) \to \Oca_{X^{[n]} \times X^{[n]}}
\] 
given as follows: choose a 2-form $\omega_X \in \Ho^0(X,\Kahler^2_X)$, and compose the Yoneda square map
\begin{equation*}\label{relative Yoneda}
\sExt_{X^{[n]} \times X^{[n]}}^1(\Ica_1,\Ica_2) \oplus \sExt_{X^{[n]} \times X^{[n]}}^1(\Ica_2,\Ica_1) \to \sExt_{X^{[n]} \times X^{[n]}}^2(\Ica_2,\Ica_2), \quad (e_{12}, e_{21}) \mapsto e_{12} \cup e_{21}
\end{equation*}
with the isomorphism
\[
\sExt_{X^{[n]} \times X^{[n]}}^2(\Ica_2,\Ica_2) \xrightarrow{\mathrm{Tr}} \Oca \otimes \Ho^2(X,\Oca) \xrightarrow{\cup \omega_X} \Oca \otimes \Ho^2(X,\Kahler^2_X) \xrightarrow{\int_X} \Oca.
\]
In this section, we describe the normal cone $C_{\mathrm{\Sigma}_R^{ss}/R^{ss}}$ to $\mathrm{\Sigma}_R^{ss}$ in $R^{ss}$ in terms of $\mathrm{\Upsilon}$. Note that $\mathrm{\Upsilon}$ is completely determined by the $\Oca_{X^{[n]} \times X^{[n]}}$-linear map
\begin{equation}\label{Morphism Phi}
\mathrm{\Psi} \colon \sExt_{X^{[n]} \times X^{[n]}}^1(\Ica_1, \Ica_2) \otimes \sExt_{X^{[n]} \times X^{[n]}}^1(\Ica_2, \Ica_1) \rightarrow \Oca, \quad e_{12}\otimes e_{21} \mapsto \int_X  \mathrm{Tr}(e_{12} \cup e_{21})  \cup \omega_X.
\end{equation}

\medskip

We begin by studying the pullback of $\mathrm{\Psi}$ along $b \colon \Xca^{[n]} \to X^{[n]} \times X^{[n]}$. To this aim, we shall need from \cref{sec:construction by Markman} the vector bundles $V$ and $V'$ on $\Xca^{[n]}$ constructed by Markman and the tautological line subbundle $\Lca$ of $\beta^*\Tca$ on $E$. Recall from \cref{thm:properties of V} that
\begin{equation}\label{V(-E) and V'(-E)}
V(-E) = [b^*\sExt_{X^{[n]} \times X^{[n]}}^1(\Ica_1, \Ica_2)]_\tf \quad \text{and} \quad V'(-E) = [b^*\sExt_{X^{[n]} \times X^{[n]}}^1(\Ica_2, \Ica_1)]_\tf,
\end{equation}
as well as $V|_E \simeq \Lca^{\perp_\omega}/\Lca \simeq V'|_E$. We denote $\omega_\Lca$ the symplectic form induced by $\beta^*\omega$ on $\Lca^{\perp_\omega}/\Lca$.
\begin{proposition}\label{proposition:Yoneda restricted} The $\Oca_{\Xca^{[n]}}$-linear homomorphism 
\[
b^* \mathrm{\Psi} \colon b^* \sExt_{X^{[n]} \times X^{[n]}}^1(\Ica_1, \Ica_2) \otimes b^* \sExt_{X^{[n]} \times X^{[n]}}^1(\Ica_2, \Ica_1) \rightarrow \Oca
\]
factors through 
\[
\overline{b^* \mathrm{\Psi}} \colon (V \otimes V')(-2E) \rightarrow \Oca(-2E).
\]
After tensorisation with $\Oca(2E)$ and via the identification $V' \simeq V^\vee$ of \cref{thm:properties of V}(1), $\overline{b^* \mathrm{\Psi}}$ corresponds to the canonical evaluation morphism 
\begin{equation*}
ev \colon V \otimes V^\vee \to \Oca, \quad v \otimes \xi \mapsto \xi(v).
\end{equation*}
Moreover, the restriction of $\overline{b^* \mathrm{\Psi}}$ to $E$ coincides, up to a non-zero multiplicative factor, with 
\[
\omega_\Lca \otimes \mathrm{id} \colon (\Lca^{\perp_\omega}/\Lca)^{\otimes 2} \otimes (\Lca^\vee)^{\otimes 2} \to (\Lca^\vee)^{\otimes 2}.
\] 
\end{proposition}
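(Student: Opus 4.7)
The plan is to split the assertion into three steps: factorisation, identification with evaluation, and restriction to the exceptional divisor.

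\emph{Step 1 (Factorisation).} The map $b^*\mathrm{\Psi}$ is valued in the torsion-free sheaf $\Oca$, so any torsion in its source is annihilated. By \cref{thm:properties of V}(4), the torsion subsheaves of $b^*\sExt^1_{X^{[n]} \times X^{[n]}}(\Ica_1, \Ica_2)$ and $b^*\sExt^1_{X^{[n]} \times X^{[n]}}(\Ica_2, \Ica_1)$ are supported on $E$. Hence the kernel of the canonical surjection
\[
b^* \sExt^1_{X^{[n]} \times X^{[n]}}(\Ica_1, \Ica_2) \otimes b^* \sExt^1_{X^{[n]} \times X^{[n]}}(\Ica_2, \Ica_1) \twoheadrightarrow V(-E) \otimes V'(-E) = (V \otimes V')(-2E)
\]
is itself supported on $E$, so any morphism from this kernel to the torsion-free sheaf $\Oca$ vanishes. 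Thus $b^* \mathrm{\Psi}$ descends to a morphism $\overline{b^* \mathrm{\Psi}} \colon (V \otimes V')(-2E) \to \Oca$.

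\emph{Step 2 (Identification with evaluation).} Via the isomorphism $V' \simeq V^\vee$ from \cref{thm:properties of V}(1), the canonical evaluation $ev \colon V \otimes V^\vee \to \Oca$ twisted by $\Oca(-2E)$ yields a morphism
\[
ev(-2E) \colon (V \otimes V')(-2E) \to \Oca(-2E) \hookrightarrow \Oca.
\]
I claim that $ev(-2E)$ agrees with $\overline{b^*\mathrm{\Psi}}$ up to a nonzero scalar. Since both morphisms have torsion-free target $\Oca$, it suffices to check the equality on the dense open $\Xca^{[n]} \setminus E$, where $b$ restricts to an isomorphism onto $X^{[n]} \times X^{[n]} \setminus \delta(X^{[n]})$ and the twists by $\Oca(\pm 2E)$ are trivial. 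On this open set, $\mathrm{\Psi}$ is, by construction, the Serre-duality pairing between $\sExt^1_{X^{[n]} \times X^{[n]}}(\Ica_1, \Ica_2)$ and $\sExt^1_{X^{[n]} \times X^{[n]}}(\Ica_2, \Ica_1)$: its non-degeneracy produces the very isomorphism between these two sheaves that, after passage to $\Xca^{[n]}$ and to the torsion-free quotients, becomes $V' \simeq V^\vee$. Under this identification, $\mathrm{\Psi}$ is precisely evaluation (up to the scalar depending on the auxiliary choice of $\omega_X$). In particular, $\overline{b^*\mathrm{\Psi}}$ takes values in $\Oca(-2E) \subset \Oca$, and after tensorisation with $\Oca(2E)$ it coincides, up to a scalar, with $ev$.

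\emph{Step 3 (Restriction to $E$).} Using Step 2 and the identifications $V|_E \simeq \Lca^{\perp_\omega}/\Lca \simeq V'|_E$ from \cref{thm:properties of V}(3), the restriction $\overline{b^*\mathrm{\Psi}}|_E$ factors as $\Phi \otimes \mathrm{id}_{(\Lca^\vee)^{\otimes 2}}$, where $\Phi \colon (\Lca^{\perp_\omega}/\Lca)^{\otimes 2} \to \Oca_E$ is the bilinear pairing obtained by restricting $ev$ to $E$ via these identifications. I would identify $\Phi$ with a nonzero multiple of $\omega_\Lca$ by a fibrewise limit argument: at a closed point $e = ([Z], \ell) \in E$, consider a smooth arc in $\Xca^{[n]}$ through $e$ and transverse to $E$, parametrising for $t \neq 0$ a family of distinct pairs $([Z_1(t)], [Z_2(t)])$ with $Z_i(0) = Z$ approaching the diagonal along the tangent direction $\ell$. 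For each $t \neq 0$, the pairing is the Serre-duality pairing $\mathrm{Ext}^1(I_{Z_1(t)}, I_{Z_2(t)}) \otimes \mathrm{Ext}^1(I_{Z_2(t)}, I_{Z_1(t)}) \to \mathbb{C}$; its limit at $t = 0$, after the identifications of \cref{thm:properties of V}(3) (which, by construction, place $V|_E$ and $V'|_E$ inside $\beta^*\Tca$ as $\Lca^{\perp_\omega}/\Lca$), is the restriction of the symplectic form $\omega$ to $\ell^{\perp_\omega}$, descended to $\ell^{\perp_\omega}/\ell$. This yields $\Phi_e = c \cdot \omega_\ell$ for a nonzero constant $c$ independent of $e$, whence $\overline{b^*\mathrm{\Psi}}|_E = c \cdot (\omega_\Lca \otimes \mathrm{id})$.

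The hard part is Step 3: making rigorous the sense in which Serre duality between $\mathrm{Ext}^1(I_{Z_1}, I_{Z_2})$ and $\mathrm{Ext}^1(I_{Z_2}, I_{Z_1})$ degenerates, as the two subschemes collide along a given tangent direction $\ell$, into the symplectic form $\omega_\ell$ on $\ell^{\perp_\omega}/\ell$. Concretely, one must verify the compatibility between the explicit description of $V|_E$ and $V'|_E$ obtained in the proof of \cref{thm:properties of V}(3) (which pins both sheaves inside $\beta^*\Tca$) and the derived-level duality that underlies $V' \simeq V^\vee$, since this compatibility is what forces the limit to be symplectic rather than merely some nonzero bilinear form.
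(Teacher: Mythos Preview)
Your Steps 1 and 2 are correct and in fact slightly slicker than the paper's route. The paper first proves that the image of $\overline{b^*\Psi}$ lands in $\Oca(-2E)$ by a two-step vanishing argument using \cref{lem:cohomologies of Lperp/L} (restrict to $E$, observe there are no nonzero maps $(\Lca^{\perp_\omega}/\Lca)^{\otimes 2}\otimes(\Lca^\vee)^{\otimes 2}\to\Oca_E$, so the image lies in $\Oca(-E)$; repeat). Only afterwards does it compare with $ev$ on $\Xca^{[n]}\setminus E$. Your observation that one can compare directly with the composite $(V\otimes V')(-2E)\xrightarrow{ev(-2E)}\Oca(-2E)\hookrightarrow\Oca$ on the dense open, and then conclude global equality by torsion-freeness of $\Oca$, yields the factorisation through $\Oca(-2E)$ for free.

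Step 3, however, is the wrong strategy, and you correctly sense this. The limit argument you sketch would require tracking, along a transverse arc, how the Serre-duality pairing interacts with the spectral-sequence identifications $V|_E\simeq\Lca^{\perp_\omega}/\Lca\simeq V'|_E$ built in the proof of \cref{thm:properties of V}(3); unwinding those identifications explicitly enough to compute a limit is exactly the compatibility you flag as unresolved, and it is genuinely unpleasant. The paper avoids this entirely with a rigidity argument: \cref{lem:cohomologies of Lperp/L} computes $\mathrm{Hom}\bigl((\Lca^{\perp_\omega}/\Lca)^{\otimes 2},\Oca_E\bigr)=\mathbb{C}\,\omega_\Lca$ (a one-line consequence of Markman's vanishing $\Ho^0(E,(\Lca^{\perp_\omega})^{\otimes 2\,\vee})\simeq\mathbb{C}$). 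Since $ev$ is surjective, its restriction to $E$ is nonzero and hence automatically a nonzero multiple of $\omega_\Lca$ --- no limit, no explicit computation. So the missing idea is not analytic but cohomological: the target Hom space is one-dimensional, which reduces the identification to a nonvanishing check you already have from Step 2.
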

To prove \cref{proposition:Yoneda restricted}, we shall need the following
\begin{lemma}\label{lem:cohomologies of Lperp/L}
We have the isomorphisms
\[
\mathrm{Hom}((\Lca^{\perp_\omega}/\Lca)^{\otimes 2}, \Lca^{\otimes l}) \simeq \begin{cases} \mathbb{C} \omega_\Lca &\text{if} \ l = 0 \\
0 &\text{if} \ l =1,2.
\end{cases}
\]
\end{lemma}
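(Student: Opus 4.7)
The plan is to compute the three Hom-spaces as global sections of sheaves on $E=\PP(\Tca)$ and push them down to $X^{[n]}$ along $\beta$. Throughout, let $\Qca \coloneqq \beta^*\Tca/\Lca$, and recall the standard vanishing $\beta_*\Lca^{\otimes m}=0$ for $m>0$, $\beta_*\Lca^{\otimes 0}=\Oca$, $\beta_*\Lca^{\otimes m}=\mathrm{S}^{-m}\Tca^\vee$ for $m<0$, with $R^i\beta_*=0$ in the relevant range since $n\ge 3$.

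First, I would record the two short exact sequences that drive the computation. The relative Euler sequence reads $0\to\Lca\to\beta^*\Tca\to\Qca\to 0$. Since $\omega$ gives an isomorphism $\beta^*\Tca\simeq\beta^*\Tca^\vee$ identifying $\Lca^{\perp_\omega}$ with $\Qca^\vee$, dualizing $0\to\Lca\to\Lca^{\perp_\omega}\to\Lca^{\perp_\omega}/\Lca\to 0$ yields
\[
0\to(\Lca^{\perp_\omega}/\Lca)^\vee\to\Qca\to\Lca^\vee\to 0.
\]
Since $\Lca^\vee$ is a line bundle, this induces the short exact sequence
\[
0\to\wedge^2(\Lca^{\perp_\omega}/\Lca)^\vee\to\wedge^2\Qca\to(\Lca^{\perp_\omega}/\Lca)^\vee\otimes\Lca^\vee\to 0,
\]
and a two-step filtration of $\mathrm{S}^2\Qca$ with graded pieces $\mathrm{S}^2(\Lca^{\perp_\omega}/\Lca)^\vee$, $(\Lca^{\perp_\omega}/\Lca)^\vee\otimes\Lca^\vee$, $(\Lca^\vee)^{\otimes 2}$. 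Analogous filtrations of $\wedge^2\beta^*\Tca$ and $\mathrm{S}^2\beta^*\Tca$ come out of the Euler sequence.

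With these sequences, the proof is a direct chase. Decompose $(\Lca^{\perp_\omega}/\Lca)^{\vee\otimes 2}=\mathrm{S}^2(\Lca^{\perp_\omega}/\Lca)^\vee\oplus\wedge^2(\Lca^{\perp_\omega}/\Lca)^\vee$ and tensor everything by $\Lca^{\otimes l}$. Pushforward along $\beta$, using projection formula for the $\beta^*(-)$ summands and the vanishing above. For $l=1,2$, a straightforward chase shows that all pushforwards vanish, so the Hom-spaces are zero. For $l=0$, the $\mathrm{S}^2$-part gives $\beta_*\mathrm{S}^2\Qca\simeq \mathrm{S}^2\Tca$ and $\beta_*((\Lca^{\perp_\omega}/\Lca)^\vee\otimes\Lca^\vee)$ fits into a short exact sequence whose connecting map out of $\mathrm{S}^2\Tca$ is the isomorphism $\mathrm{S}^2\Tca\xrightarrow{\omega}\mathrm{S}^2\Tca^\vee$, yielding $\beta_*\mathrm{S}^2(\Lca^{\perp_\omega}/\Lca)^\vee=0$. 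On the $\wedge^2$-side, one gets $\beta_*\wedge^2\Qca\simeq\wedge^2\Tca$ and $\beta_*((\Lca^{\perp_\omega}/\Lca)^\vee\otimes\Lca^\vee)\simeq\wedge^2\Tca^\vee/\Oca\cdot\omega$; the connecting map factors as $\omega\colon\wedge^2\Tca\xrightarrow{\sim}\wedge^2\Tca^\vee$ followed by the canonical quotient, so its kernel is the trivial line bundle, and $\beta_*\wedge^2(\Lca^{\perp_\omega}/\Lca)^\vee\simeq\Oca$.

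Taking $\Ho^0$ on $X^{[n]}$ gives the three identifications. The last step also identifies the generator: the trivial subbundle $\Oca\cdot\omega\subset\wedge^2\Tca^\vee$ pulls back to $\Oca\cdot\beta^*\omega\subset\wedge^2\beta^*\Tca^\vee$, whose image under the chain of restrictions and quotients through $\Lca^{\perp_\omega}\subset\beta^*\Tca$ is precisely $\omega_\Lca$, so $\Ho^0(\wedge^2(\Lca^{\perp_\omega}/\Lca)^\vee)=\mathbb{C}\omega_\Lca$. The main obstacle is bookkeeping: one must verify that the connecting maps between pushforwards are indeed the ones induced by $\omega$, which is what makes both the vanishing of the $\mathrm{S}^2$-part and the one-dimensionality of the $\wedge^2$-part come out correctly.
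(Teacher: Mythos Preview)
Your approach is correct in outline and genuinely different from the paper's. You compute $\beta_*\bigl(((\Lca^{\perp_\omega}/\Lca)^\vee)^{\otimes 2}\otimes\Lca^{\otimes l}\bigr)$ directly, by filtering through $\Qca=\beta^*\Tca/\Lca$ and chasing connecting maps. The paper instead takes a shortcut: it applies $\mathrm{Hom}(-,\Lca^{\otimes l})$ to the surjection $(\Lca^{\perp_\omega})^{\otimes 2}\twoheadrightarrow(\Lca^{\perp_\omega}/\Lca)^{\otimes 2}$, obtaining an injection $\mathrm{Hom}((\Lca^{\perp_\omega}/\Lca)^{\otimes 2},\Lca^{\otimes l})\hookrightarrow\mathrm{Hom}((\Lca^{\perp_\omega})^{\otimes 2},\Lca^{\otimes l})$, and then invokes \cite[Lemma~4.3]{MBBccc} (and its proof) for the right-hand side. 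Since $\Lca^{\perp_\omega}\simeq\Qca^\vee$ sits one step closer to $\beta^*\Tca$ than $\Lca^{\perp_\omega}/\Lca$ does, Markman's computation is shorter, and the paper need only observe that $\omega_\Lca$ is a nonzero element to pin down the $l=0$ case.

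What your route buys is self-containment: you do not need to cite or reproduce Markman's lemma, and you actually compute the pushforward sheaves (e.g.\ $\beta_*\wedge^2(\Lca^{\perp_\omega}/\Lca)^\vee\simeq\Oca$) rather than just their $\Ho^0$. The cost is the bookkeeping you flag at the end: for $l=0$ the answer depends on the connecting maps $\beta_*\wedge^2\Qca\to\beta_*((\Lca^{\perp_\omega}/\Lca)^\vee\otimes\Lca^\vee)$ and its $\mathrm{S}^2$-analogue being precisely those induced by $\omega$, and this identification is the substance of the argument, not merely bookkeeping. The paper sidesteps this entirely by bounding the dimension from above via the injection and from below via the explicit element $\omega_\Lca$. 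For $l=1,2$, by contrast, your chase is genuinely straightforward, since every pushforward in sight vanishes by $\beta_*\Lca^{\otimes m}=0$ for $m>0$ together with $R^{>0}\beta_*\Lca^{\otimes m}=0$ in the relevant range.
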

\begin{proof}
By \cite[Lemma~4.3]{MBBccc}, $\Ho^0(E,(\Lca^{\perp_\omega} \otimes \Lca^{\perp_\omega})^\vee) = \mathrm{Hom}((\Lca^{\perp_\omega})^{\otimes 2}, \Oca) \simeq \mathbb{C}$.
The functor $\mathrm{Hom}(-,\Oca)$ applied to the epimorphism $(\Lca^{\perp_\omega})^
{\otimes 2} \rightarrow (\Lca^{\perp_\omega}/\Lca)^{\otimes 2}$ yields the injective map
\[
\mathrm{Hom}((\Lca^{\perp_\omega}/\Lca)^{\otimes 2}, \Oca) \rightarrow \mathrm{Hom}((\Lca^{\perp_\omega})^
{\otimes 2}, \Oca).
\]
Since $\Lca^{\perp_\omega}/\Lca$ carries the symplectic form $\omega_\Lca$, we deduce $\mathrm{Hom}((\Lca^{\perp_\omega}/\Lca)^{\otimes 2}, \Oca) = \mathbb{C}\omega_\Lca$.

The other statements result in the same way, once we know that $\Ho^0(E,(\Lca^{\perp_\omega} \otimes \Lca^{\perp_\omega})^\vee \otimes \Lca^{\otimes l})=0$ for $l = 1,2$. The argument proving \cite[Lemma~4.3]{MBBccc} carries over: from the vanishing of $\Rder^m \beta_*( \Lca^{\otimes l})$ for any $0 \le m \le 2n-2$ and $l \ge 1$, we can even show that $\beta_*[(\Lca^{\perp_\omega} \otimes \Lca^{\perp_\omega})^\vee \otimes \Lca^{\otimes l}] \simeq 0$ for all $l \ge 1$.
\end{proof}
\begin{proof}[Proof (of \cref{proposition:Yoneda restricted}).]
Since $\Oca_{\Xca^{[n]}}$ is a torsion-free sheaf, $b^* \mathrm{\Psi}$ factors through the morphism $\overline{b^* \mathrm{\Psi} } \colon (V \otimes V')(-2E) \rightarrow \Oca_{\Xca^{[n]}}$. To prove that the image of $\overline{b^* \mathrm{\Psi} }$ lies in $\Oca(-2E)$, consider the commutative diagram with exact rows
\begin{equation}\label{factorisation V Vv}
\begin{tikzcd}
& & (V \otimes V')(-2E) \ar[d, "\overline{b^* \mathrm{\Psi}}"] \ar[r] & \varepsilon_*\varepsilon^* [(V \otimes V')(-2E)] \ar[d, "\varepsilon_*\varepsilon^*(\overline{b^* \mathrm{\Psi}})"] \ar[r] & 0 \\
0 \ar[r] & \Oca(-E) \ar[r] & \Oca_{\Xca^{[n]}}\ar[r] & \varepsilon_* \Oca_E \ar[r] & 0.
\end{tikzcd}
\end{equation}
By \cref{lem:cohomologies of Lperp/L}, $\varepsilon^*( \overline{b^* \mathrm{\Psi}}) \colon [(\Lca^{\perp_\omega}/\Lca) \otimes \Lca^\vee]^{\otimes 2} \to \Oca_E$ vanishes, so the image of $\overline{b^* \mathrm{\Psi}}$ lies in $\Oca(-E)$. Thus, we can repeat the argument, with the second row of \eqref{factorisation V Vv} twisted by $\Oca(-E)$; we conclude that $\im(\overline{b^* \mathrm{\Psi}}) \subset \Oca(-2E)$. 

This gives the desired factorisation $\overline{b^* \mathrm{\Psi}} \colon V \otimes V' \otimes \Oca(-2E) \rightarrow \Oca(-2E)$. Now, tensor it with $\Oca(2E)$ and identify $V' \simeq V^\vee$ as in \cref{thm:properties of V}(1) (i.e.\ using Grothendieck-Verdier duality). What we obtain is precisely $ev$: indeed, these two maps coincide on $\Xca^{[n]} \setminus E$, and in fact on the whole of $\Xca^{[n]}$ because $\Oca$ is torsion-free.

We deduce in particular that $\overline{b^* \mathrm{\Psi}}$ is surjective; therefore, its restriction to $E$ is a non-zero element of $\mathrm{Hom}((\Lca^{\perp_\omega}/\Lca )^{\otimes 2} \otimes (\Lca^\vee)^{\otimes 2}, (\Lca^\vee)^{\otimes 2}) \simeq \mathrm{Hom}((\Lca^{\perp_\omega}/\Lca )^{\otimes 2}, \Oca) = \mathbb{C}\omega_\Lca$.
\end{proof}

\cref{proposition:Yoneda restricted} shows that the pullback of $\mathrm{\Upsilon}$ along $b$ factors through a quadratic map of locally free $\Oca_{\Xca^{[n]}}$-modules
\[
\overline{b^* \mathrm{\Upsilon}} \colon (V \oplus V')(-E) \to \Oca(-2E).
\]
This map coincides with $b^*\mathrm{\Upsilon}$ on $\Xca^{[n]} \setminus E$, and its restriction to $E$ gives a quadratic map 
\[
\overline{b^* \mathrm{\Upsilon}}|_E \colon (\Lca^{\perp_\omega} / \Lca)^{\oplus 2} \otimes \Lca^\vee \to (\Lca^\vee)^{\otimes 2}.
\]
We abuse the notation, and write with the same letters the corresponding morphisms between the total spaces of the involved locally free sheaves.
\begin{proposition}\label{proposition:SigmaR-normal cone}
The isomorphism \eqref{geometric normal bundle SigmaR in R} restricts to an isomorphism of $U$-schemes
\begin{equation}\label{description normal cone SigmaR in R}
h^* C_{\mathrm{\Sigma}_R^{ss}/R^{ss}} \simeq u^*[(\overline{b^*\mathrm{\Upsilon}})^{-1} (0)].
\end{equation}
\end{proposition}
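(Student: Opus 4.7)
The plan is to identify $h^* C_{\mathrm{\Sigma}_R^{ss}/R^{ss}}$ with $u^*[(\overline{b^*\mathrm{\Upsilon}})^{-1}(0)]$ as Cartier divisors inside the normal bundle $h^* N_{\mathrm{\Sigma}_R^{ss}/R^{ss}} \simeq u^*\mathbf{Spec}(\mathrm{S}^\bullet[(V'\oplus V)(E)])$, by showing that they are defined by the same quadratic section of the same line bundle.

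First, I would show that $h^*C_{\mathrm{\Sigma}_R^{ss}/R^{ss}}$ is a Cartier divisor cut out by a single quadratic section. By \cref{prop:normal cone SigmaR fibres}, $R^{ss}$ is normally flat along $\mathrm{\Sigma}_R^{ss}$ and each fibre of $C_{\mathrm{\Sigma}_R/R}$ is an affine cone over a smooth quadric in $\PP^{4n-5}$. Normal flatness implies that the graded ideal sheaf $\oplus_{m \ge 0} \Jca_m \subset \oplus_{m \ge 0} \mathrm{S}^m \Nca^\vee_{\mathrm{\Sigma}_R^{ss}/R^{ss}}$ of \ref{normal cones} is generated by the line subbundle $\Jca_2 \subset \mathrm{S}^2 \Nca^\vee_{\mathrm{\Sigma}_R^{ss}/R^{ss}}$ corresponding fibrewise to the defining quadric -- equivalently, the normal cone coincides with the Hessian cone $H_{\mathrm{\Sigma}_R^{ss}/R^{ss}}$. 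Pulling back by the flat $h$ and using \cref{prop:conormal sheaf SigmaR in R}, $h^*C_{\mathrm{\Sigma}_R^{ss}/R^{ss}}$ is then cut out inside $h^*N_{\mathrm{\Sigma}_R^{ss}/R^{ss}}$ by a quadratic section $s$, viewed (after the conormal identification) as a morphism $u^*\mathrm{S}^2[(V \oplus V')(-E)] \to u^*\Oca(-2E)$.

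Second, I would identify $s$ with $u^* \overline{b^* \mathrm{\Upsilon}}$, following the strategy of \cref{lemma:SigmaR-commutative square}. The inclusion $\Jca_2 \hookrightarrow \mathrm{S}^2 \Nca^\vee_{\mathrm{\Sigma}_R^{ss}/R^{ss}}$ corresponds, via \eqref{cotangent sheaf Quot scheme} and the obstruction theory of $\mathrm{Def}_F$ described in \cref{sec:deformation theory}, to the Yoneda cup product pairing composed with the trace and the pairing against $\omega_X$. Under the isomorphism \eqref{isomorphism SigmaR FU}, the pullback of this pairing by $h$, restricted to the $u^*(V \oplus V')$-summand of $\sExt^1_U(\Fca_U,\Fca_U)_\tf$ afforded by \cref{lem:SigmaR-ExtFF}, coincides with the pullback along $b \circ u$ of the relative Yoneda product $\sExt^1_{X^{[n]} \times X^{[n]}}(\Ica_1, \Ica_2) \otimes \sExt^1_{X^{[n]} \times X^{[n]}}(\Ica_2, \Ica_1) \to \sExt^2_{X^{[n]}\times X^{[n]}}(\Ica_2,\Ica_2)$, composed with trace and cup product with $\omega_X$, that is, $u^*b^*\mathrm{\Psi}$. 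By \cref{proposition:Yoneda restricted}, this factors as $u^* \overline{b^* \mathrm{\Psi}}$ after the $\Oca(-2E)$-twist, yielding the sought-after identification.

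Third, since both $s$ and $u^* \overline{b^* \mathrm{\Upsilon}}$ are global sections of the same line bundle on the integral scheme $U$, and by Luna's slice theorem together with the fibre calculation \eqref{fibres normal cone SigmaQ} they agree on the dense open $U \setminus D$ (where the defining quadric at $[q] \in \mathrm{\Sigma}_Q^\circ$ is precisely $e_{12} \cup e_{21} = 0$), they coincide everywhere, and hence so do the Cartier divisors they define. The main obstacle is the second step: lifting the \emph{local} (pointwise, via Luna) identification of the quadratic part of the defining ideal with the Yoneda square to the \emph{global} identification $s = u^*\overline{b^*\mathrm{\Upsilon}}$ requires combining the conormal description of \cref{prop:conormal sheaf SigmaR in R}, Markman's isomorphism $V(-E)\simeq[b^*\Eca]_\tf$ of \cref{thm:properties of V}(4), and the factorisation of \cref{proposition:Yoneda restricted}, all while keeping track of the $\Oca(-E)$-twists on both sides.
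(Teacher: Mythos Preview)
Your outline is on the right track and converges to the paper's argument, but you have misdiagnosed where the difficulty lies. What you call the ``main obstacle'' (Step 2 --- lifting the pointwise Yoneda-square identification to a global one via the deformation theory of $\mathrm{Def}_F$ and the cotangent description \eqref{cotangent sheaf Quot scheme}) is in fact \emph{bypassed} entirely in the paper, and your Step 3 already contains the essential idea.

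The paper proceeds as follows. Both ideals are generated in degree $2$: for $(\overline{b^*\mathrm{\Upsilon}})^{-1}(0)$ this is by construction (via the fibrewise injective $\nu \colon \Oca(2E) \hookrightarrow \mathrm{S}^2[(V'\oplus V)(E)]$ dual to $\overline{b^*\mathrm{\Psi}}$); for $C_{\mathrm{\Sigma}_R^{ss}/R^{ss}}$ this is the equality of normal and Hessian cones, reduced by normal flatness to O'Grady's fibrewise check. One then considers the composite
\[
\Oca_U(2D) \xrightarrow{u^*\nu} u^*\mathrm{S}^2[(V'\oplus V)(E)] \simeq h^*\mathrm{S}^2(\Ica_{\mathrm{\Sigma}_R^{ss}}/\Ica_{\mathrm{\Sigma}_R^{ss}}^2) \twoheadrightarrow h^*(\Ica_{\mathrm{\Sigma}_R^{ss}}^2/\Ica_{\mathrm{\Sigma}_R^{ss}}^3)
\]
and shows it vanishes: since the target is locally free (normal flatness again) and $U$ is reduced, this can be checked at closed points of the dense open $U\setminus D$, where it holds by \eqref{fibres normal cone SigmaQ}. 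This produces a map $\mu \colon \Oca_U(2D) \to h^*\Jca_2$; since $u^*\nu$ is fibrewise injective and $h^*\Jca_2$ has rank $1$, $\mu$ is an isomorphism of invertible sheaves. The two degree-$2$ generating subbundles therefore coincide, and so do the ideals they generate.

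Note also a small circularity in your formulation: you write $s$ as a morphism with target $u^*\Oca(-2E)$, but identifying the target line bundle $h^*\Jca_2^\vee$ with $u^*\Oca(-2E)$ is precisely the content of $\mu$ being an isomorphism. Framing the comparison as one between two rank-$1$ \emph{subbundles} of $h^*\mathrm{S}^2(\Ica/\Ica^2)$ (rather than two sections of a common line bundle) avoids this, and is what the paper does.
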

\begin{proof}
We need to prove that, under the isomorphism $h^*\mathrm{S}^\bullet (\Ica_{\mathrm{\Sigma}_R^{ss}} / \Ica_{\mathrm{\Sigma}_R^{ss}}^2) \simeq u^*\mathrm{S}^\bullet ([V' \oplus V](E))$ of $\Oca_U$-algebras induced by \eqref{description conormal sheaf SigmaR in R}, the ideal sheaves defining $h^*C_{\mathrm{\Sigma}_R^{ss}/R^{ss}}$ and $u^*[(\overline{b^*\mathrm{\Upsilon}})^{-1} (0)]$ coincide. These ideal sheaves are obtained by pulling back along the flat morphisms $h$ and $u$ the ideal sheaf of $\mathrm{S}^\bullet (\Ica_{\mathrm{\Sigma}_R^{ss}} / \Ica_{\mathrm{\Sigma}_R^{ss}}^2)$ defining $C_{\mathrm{\Sigma}_R^{ss}/R^{ss}}$ and the ideal sheaf of $\mathrm{S}^\bullet ([V' \oplus V](E))$ defining $(\overline{b^*\mathrm{\Upsilon}})^{-1} (0)$, respectively.

\smallskip

Consider first the closed immersion $C_{\mathrm{\Sigma}_R^{ss}/R^{ss}} \hookrightarrow N_{\mathrm{\Sigma}_R^{ss}/R^{ss}}$ and its graded ideal sheaf
\[
\Jca \coloneqq \ker \left[ \oplus_{m \ge 0} \mathrm{S}^m(\Ica_{\mathrm{\Sigma}_R^{ss}} / \Ica_{\mathrm{\Sigma}_R^{ss}}^2) \twoheadrightarrow \oplus_{m \ge 0} (\Ica_{\mathrm{\Sigma}_R^{ss}}^m / \Ica_{\mathrm{\Sigma}_R^{ss}}^{m+1}) \right].
\]
By the normal flatness of $R^{ss}$ along $\mathrm{\Sigma}_R^{ss}$, $\Jca$ is a flat $\Oca_{\mathrm{\Sigma}_R^{ss}}$-module; in particular, its degree-$m$ part $\Jca_m$ is locally free for any $m$. Moreover, $\Jca$ is generated by $\Jca_2$ (so that $h^* \Jca$ is generated by $h^*(\Jca_2)$): this results from the equality of the Hessian and normal cones to $\mathrm{\Sigma}_R^{ss}$ in $R^{ss}$ (see \ref{normal cones}), which the aforementioned normal flatness reduces to the fibrewise verification done on pages 76--77 of \cite{O'GdmsK3}.

\smallskip

Consider now the ideal sheaf of $\mathrm{S}^\bullet ([V' \oplus V](E))$ defining $(\overline{b^*\mathrm{\Upsilon}})^{-1}(0)$; it is generated by the image of the $\Oca_{\Xca^{[n]}}$-linear composition
\begin{equation}\label{Yoneda dual}
\nu \colon \Oca(2E) \to (V \otimes V')^\vee(2E) \simeq (V' \otimes V)(2E) \hookrightarrow \mathrm{S}^2 ([V' \oplus V](E)),
\end{equation}
which is fibrewise injective by \cref{proposition:Yoneda restricted}. We claim that there is a commutative diagram
\[
\begin{tikzcd}
& \Oca_U(2D) \ar[r, "u^*\nu"] \ar[d, dashed, "\mu"] & \mathrm{S}^2 (u^*[V' \oplus V](D)) \isoarrow{d} & &  \\
0 \ar [r] & h^* (\Jca_2) \ar[r] & \mathrm{S}^2 [h^* ( \Ica_{\mathrm{\Sigma}_R^{ss}} / \Ica_{\mathrm{\Sigma}_R^{ss}}^2 )] \ar[r] & h^* (\Ica_{\mathrm{\Sigma}_R^{ss}}^2 / \Ica_{\mathrm{\Sigma}_R^{ss}}^3) \ar[r] & 0.
\end{tikzcd}
\]
To obtain $\mu$, it suffices to show that the morphism $\Oca_U(2D) \to h^* (\Ica_{\mathrm{\Sigma}_R^{ss}}^2 / \Ica_{\mathrm{\Sigma}_R^{ss}}^3)$ given by composition of solid arrows vanishes. As $h^* ( \Ica_{\mathrm{\Sigma}_R^{ss}}^2 / \Ica_{\mathrm{\Sigma}_R^{ss}}^3 )$ is locally free and the Yoneda map is compatible with base change, such a vanishing can be checked on $U \setminus D$, and even at the closed points of this reduced scheme; there, it holds by \eqref{fibres normal cone SigmaQ}, which shows also the rank of $h^* (\Jca_2)$ to be 1. Being a fibrewise injective homomorphism of invertible sheaves, $\mu$ is an isomorphism. 

We conclude that the ideal sheaves defining $h^* C_{\mathrm{\Sigma}_R^{ss}/R^{ss}}$ and $u^*[(\overline{b^*\mathrm{\Upsilon}})^{-1} (0)]$ coincide: indeed, they are generated by their degree-2 parts, which are isomorphic via $\mu$.
\end{proof}

\subsection{Group actions and quotients}
Recall from \cref{section:second blow-up} that $\pi_S \colon S \to R$ denotes the blow-up of $R$ along $\mathrm{\Sigma}_R$, $\mathrm{\Sigma}_S$ its exceptional divisor and $\mathrm{\Omega}_S$ (resp.\ $\mathrm{\Delta}_S$) the strict transform of $\mathrm{\Omega}_R$ (resp.\ $\mathrm{\Delta}_R$). In this section, we globally describe the GIT quotient of $\mathrm{\Sigma}_S$ by $\mathrm{PGL}(N)$; in the next section, we do the same for $\mathrm{\Sigma}_S \cap \mathrm{\Omega}_S$ and $\mathrm{\Sigma}_S \cap \mathrm{\Delta}_S$. The main ingredient is a natural $\mathrm{PO}(2)$-action on $(\overline{b^*\mathrm{\Upsilon}})^{-1} (0)$, which we now introduce.

\smallskip

Recall from \ref{sec:O(2) linearisation} the natural $\Orm(2)$-equivariant structure on $\Ica_1 \oplus \Ica_2$. It induces an $\mathrm{O}(2)$-equivariant structure on $\sExt_{X^{[n]} \times X^{[n]}}^1(\Ica_1, \Ica_2) \otimes \sExt_{X^{[n]} \times X^{[n]}}^1(\Ica_2, \Ica_1)$, which descends to a $\mathrm{PO}(2)$-equivariant structure. The homomorphism $\mathrm{\Psi}$ defined by \eqref{Morphism Phi} becomes $\mathrm{PO}(2)$-equivariant, if we give $\Oca$ the $\mathrm{PO}(2)$-equivariant structure that descends to the following $\bm{\mu}_2 \simeq \mathrm{PO}(2)/\mathrm{PSO}(2)$-equivariant structure: we pick the natural $\bm{\mu}_2$-equivariant structure on $\Oca$ coming from the action of $\iota$ on $X^{[n]} \times X^{[n]}$ and we twist it by the non-trivial character of $\bm{\mu}_2$ (mind that $\mathrm{Tr} \circ \cup$ is anticommutative, \cf \cite[Section 10.1.7]{HLgmss}). 

As a consequence, the homomorphism $b^*\mathrm{\Psi}$ is also $\mathrm{PO}(2)$-equivariant. Now, the action of $\widetilde{\iota}$ on $\Xca^{[n]}$ endows the sheaf $\Oca(-E)$ with a natural $\bm{\mu}_2$-equivariant structure, hence with an induced $\mathrm{PO}(2)$-equivariant structure; in particular, the sheaves $\Oca(-2E)$ and, because of \eqref{V(-E) and V'(-E)}, $V \otimes V'$ are $\mathrm{PO}(2)$-equivariant as well. The homomorphism $\overline{b^*\mathrm{\Psi}}$ in \cref{proposition:Yoneda restricted} is $\mathrm{PO}(2)$-equivariant.

This provides a natural $\mathrm{PO}(2)$-action on the scheme $(\overline{b^*\mathrm{\Upsilon}})^{-1} (0)$, thence commuting actions of $\mathrm{PO}(2)$ and $\mathrm{PGL}(N)$ on $U \times_{\Xca^{[n]}} (\overline{b^*\mathrm{\Upsilon}})^{-1} (0)$.
 \begin{theorem}\label{thm:SigmaS}
\begin{enumerate}
\item[(1)] The exceptional divisor of the blow-up of $R^{ss}$ along $\mathrm{\Sigma}_R^{ss}$ is $\PP(C_{\mathrm{\Sigma}_R^{ss}/R^{ss}}) \simeq \PP(u^*[(\overline{b^* \mathrm{\Upsilon}})^{-1} (0)])\sslash \mathrm{PO}(2)$.
\item[(2)] The GIT quotient of $\mathrm{\Sigma}_S$ by $\mathrm{PGL}(N)$ is isomorphic to $\PP((\overline{b^* \mathrm{\Upsilon}})^{-1} (0))^s \sslash \mathrm{PO}(2)$.
\end{enumerate}
\end{theorem}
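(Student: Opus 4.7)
The strategy is to descend the normal-cone computation of \cref{proposition:SigmaR-normal cone} along the principal $\mathrm{PO}(2)$-bundle $h\colon U \to \mathrm{\Sigma}_R^{ss}$ (see \cref{prop:global description of SigmaR}), and then, for part (2), to swap the order of the $\mathrm{PGL}(N)$- and $\mathrm{PO}(2)$-quotients via the commuting-actions formalism of \cref{prop:commuting actions}, in the spirit of \cref{lem:exceptional divisor first blow-up}.

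The first step is to upgrade the isomorphism $h^* C_{\mathrm{\Sigma}_R^{ss}/R^{ss}} \simeq u^*[(\overline{b^*\mathrm{\Upsilon}})^{-1}(0)]$ of \cref{proposition:SigmaR-normal cone} to a $\mathrm{PO}(2)$-equivariant isomorphism. All of the intermediate constructions entering its proof---the identification \eqref{cotangent sheaf Quot scheme}, the composition maps between relative Ext-sheaves, Grothendieck-Verdier duality, and the tautological identification \eqref{isomorphism SigmaR FU}---are natural with respect to the $\mathrm{O}(2)$-equivariant structure on $\Ica_1 \oplus \Ica_2$ introduced in \cref{sec:O(2) linearisation}, so equivariance follows by tracking the $\mathrm{O}(2)$-actions through the chain of isomorphisms. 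The sign-twist on the $\mathrm{PO}(2)$-equivariant structure of the target $\Oca$ (coming from the anticommutativity of $\mathrm{Tr}\circ\cup$) is absorbed, on the blow-up side, by the twist $\Oca(-2E)$ appearing in the codomain of $\overline{b^*\mathrm{\Psi}}$.

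Granted this, part (1) follows by faithfully-flat descent: since $h$ is a principal (hence universally geometric) $\mathrm{PO}(2)$-quotient, $C_{\mathrm{\Sigma}_R^{ss}/R^{ss}}$ is recovered as the good quotient of $u^*[(\overline{b^*\mathrm{\Upsilon}})^{-1}(0)]$ by $\mathrm{PO}(2)$, and projectivizing (which commutes with the free $\mathrm{PO}(2)$-quotient) yields $\PP(C_{\mathrm{\Sigma}_R^{ss}/R^{ss}}) \simeq \PP(u^*[(\overline{b^*\mathrm{\Upsilon}})^{-1}(0)]) \sslash \mathrm{PO}(2)$. For part (2), the exceptional divisor of the blow-up of $R^{ss}$ along $\mathrm{\Sigma}_R^{ss}$ is $\pi_S^{-1}(\mathrm{\Sigma}_R^{ss})$, and since $S^s = S^{ss} \subset \pi_S^{-1}(R^{ss})$, part (1) identifies $\mathrm{\Sigma}_S^s$ with the $\mathrm{PGL}(N)$-stable locus of $\PP(u^*[(\overline{b^*\mathrm{\Upsilon}})^{-1}(0)])\sslash \mathrm{PO}(2)$. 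Reversing the order of the quotients via two applications of \cref{prop:commuting actions}---first by $\mathrm{PGL}(N)$, which, since $u\colon U \to \Xca^{[n]}$ is a principal $\mathrm{PGL}(N)$-bundle, produces $\PP((\overline{b^*\mathrm{\Upsilon}})^{-1}(0))$, and then by $\mathrm{PO}(2)$---gives $\mathrm{\Sigma}_S^s \sslash \mathrm{PGL}(N) \simeq \PP((\overline{b^*\mathrm{\Upsilon}})^{-1}(0))^s \sslash \mathrm{PO}(2)$.

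The principal obstacle is the equivariance check carried out in the first step: the proof of \cref{proposition:SigmaR-normal cone} (resp.\ of \cref{prop:conormal sheaf SigmaR in R}) passes through torsion-free quotients, derived duals, and the auxiliary diagram \eqref{diagram-diagonal and antidiagonal}, and one has to verify that each of these operations is $\mathrm{O}(2)$-equivariant and yields the correct twist by the sign character of $\bm{\mu}_2$; once this is settled, the rest of the argument is essentially formal.
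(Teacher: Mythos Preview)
Your proposal is correct and follows essentially the same route as the paper: establish that the isomorphism \eqref{description normal cone SigmaR in R} of \cref{proposition:SigmaR-normal cone} is $\mathrm{PO}(2)\times\mathrm{PGL}(N)$-equivariant by tracing the $\mathrm{O}(2)\times\mathrm{GL}(N)$-equivariant structures through \eqref{isomorphism SigmaR FU}, \cref{lem:SigmaR ExtEE}, \cref{lem:SigmaR-ExtFF} and \eqref{Yoneda dual}, then descend along the universal geometric quotient $h$ for part (1) and swap the order of the two quotients via \cref{prop:commuting actions} for part (2). The paper stresses the joint $\mathrm{PO}(2)\times\mathrm{PGL}(N)$-equivariance from the outset (rather than treating $\mathrm{PO}(2)$ first and $\mathrm{PGL}(N)$ later as you do), but this is only a difference of emphasis.
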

\begin{proof}
Claim (2) follows from (1) by the argument of \cref{rem:triple description of the centres}.

Let us prove (1). Since $h \colon U \to \mathrm{\Sigma}_R^{ss}$ is a (universal) geometric quotient by $\mathrm{PO}(2)$, we have an isomorphism $C_{\mathrm{\Sigma}_R^{ss}/R^{ss}} \simeq (h^*C_{\mathrm{\Sigma}_R^{ss}/R^{ss}}) \sslash \mathrm{PO}(2)$. We are done if we prove that the isomorphism \eqref{description normal cone SigmaR in R} is $\mathrm{PO}(2) \times \mathrm{PGL}(N)$-equivariant.

The commuting actions of $\mathrm{O}(2)$ and $\mathrm{GL}(N)$ on $U=\PP\sIsom(\Oca^{\oplus N}, p_{\Xca^{[n]},*} b_X^*[\Ica_1 (k) \oplus \Ica_2 (k)])$ were deduced from the natural $\Orm(2)$-equivariant structure on $\Ica_1 \oplus \Ica_2$ and $\mathrm{GL}(N)$-equivariant structure on $\Oca^{\oplus N}$. The tautological line bundle $\Oca_u(1)$ on $U$ is in particular an $\mathrm{O}(2) \times \mathrm{GL}(N)$-equivariant sheaf.

Hence, the sheaf $u_X^* b_X^* (\Ica_1 \oplus \Ica_2) \otimes p_U^* \Oca_u(1)$ on $U \times X$ is naturally $\mathrm{O}(2) \times \mathrm{GL}(N)$-equivariant, as is also $\Fca_U$: indeed, the tautological sheaf $\Fca$ on $Q^{ss} \times X$ is $\mathrm{GL}(N)$-equivariant, and the morphism $U \to Q^{ss}$ constructed in the proof of \cref{prop:global description of SigmaR} is $\mathrm{GL}(N)$-equivariant and $\mathrm{O}(2)$-invariant. The isomorphism \eqref{isomorphism SigmaR FU} relating these two $\Orm(2) \times \mathrm{GL}(N)$-equivariant sheaves is equivariant. 

As a consequence, the isomorphisms of \cref{lem:SigmaR ExtEE} and \cref{lem:SigmaR-ExtFF} are $\mathrm{PO}(2) \times \mathrm{PGL}(N)$-equivariant, and so is the induced isomorphism \eqref{description conormal sheaf SigmaR in R}. Since the map \eqref{Yoneda dual} is $\mathrm{PO}(2)$-equivariant, the isomorphism \eqref{description normal cone SigmaR in R} is $\mathrm{PO}(2) \times \mathrm{PGL}(N)$-equivariant, as desired.
\end{proof}
Let us express $\PP((\overline{b^* \mathrm{\Upsilon}})^{-1} (0))^s \sslash \mathrm{PO}(2)$ -- and hence $\mathrm{\Sigma}_S^s \sslash \mathrm{PGL}(N)$ -- more explicitly. Under the $\mathrm{PO}(2)$-equivariant\footnote{\cref{thm:properties of V}(1) yields an isomorphism $V^\vee \simeq V'$. The $\mathrm{PO}(2)$-equivariant structure of $V \oplus V^\vee$ is then deduced from that of $V \oplus V'$. In particular, we get an isomorphism $\widetilde{\iota}^*(V \oplus V^\vee) \xrightarrow{\sim} V\oplus V^\vee$, whose non-trivial components are $\varphi \colon \widetilde{\iota}^*V \xrightarrow{\sim} V' \xrightarrow{\sim} V^\vee$ and $(\widetilde{\iota}^*\varphi)^{-1} \colon \widetilde{\iota}^*(V^\vee) \xrightarrow{\sim} \widetilde{\iota}^*V' \xrightarrow{\sim} V$. A word of caution: since in our situation the natural action of $\widetilde{\iota}^*$ on $\Oca$ is twisted by the non-trivial character of $\bm{\mu}_2$, $\widetilde{\iota}^*\varphi$ differs from $\varphi^\vee$ (in fact, $\widetilde{\iota}^*\varphi = -\varphi^\vee$).} isomorphisms
\begin{equation*}
\PP(V \oplus V^\vee) \xrightarrow{\sim} \PP(V \oplus V') \xrightarrow{\sim} \PP((V \oplus V')(-E)),
\end{equation*}
the inverse image of $\PP((\overline{b^* \mathrm{\Upsilon}})^{-1} (0))$ is $\PP(ev^{-1}(0))$, where $ev$ is the canonical evaluation map
\[
ev \colon V \oplus V^\vee \to \Oca, \quad (v, \xi) \mapsto \xi(v).
\]
Now, the quotient of $\PP(ev^{-1}(0))$ by $\mathrm{PO}(2)$ can be computed in two steps: first by $\mathrm{PSO}(2)$, then by $\mathrm{PO}(2)/\mathrm{PSO}(2) \simeq \bm{\mu}_2$. The former gives the incidence divisor over $\Xca^{[n]}$
\begin{equation}\label{definition I}
\mathbb{I} \hookrightarrow \PP (V) \times_{\Xca^{[n]}} \PP (V^\vee) \simeq \PP(V) \times_{\Xca^{[n]}} \PP(V') \simeq \PP(V(-E)) \times _{\Xca^{[n]}} \PP(V'(-E)).
\end{equation}
The generator of $\bm{\mu}_2$ acts on $\mathbb{I}$ via the isomorphism $\widetilde{\iota}^*\left[\PP (V) \times_{\Xca^{[n]}} \PP (V^\vee)\right] \simeq \PP (V^\vee) \times_{\Xca^{[n]}} \PP (V)$.
\begin{corollary}\label{cor:SigmaS modulo PGL(N)}
The GIT quotient of $\mathrm{\Sigma}_S$ by $\mathrm{PGL}(N)$ is isomorphic to $\mathbb{I}/\bm{\mu}_2$.
\end{corollary}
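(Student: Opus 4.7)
The plan is to directly unpack the two-step GIT quotient sketched in the paragraphs preceding the statement, building on \cref{thm:SigmaS}(2). The computation factors into three essentially independent moves.

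First, I would invoke \cref{thm:SigmaS}(2) to replace $\mathrm{\Sigma}_S^s \sslash \mathrm{PGL}(N)$ with the GIT quotient $\PP((\overline{b^* \mathrm{\Upsilon}})^{-1}(0))^s \sslash \mathrm{PO}(2)$. Then I would use the $\mathrm{PO}(2)$-equivariant chain of isomorphisms $\PP(V \oplus V^\vee) \simeq \PP(V \oplus V') \simeq \PP((V \oplus V')(-E))$: the first comes from Grothendieck-Verdier duality as in \cref{thm:properties of V}(1) (which is $\mathrm{PO}(2)$-equivariant because the $\mathrm{PO}(2)$-equivariant structure on $V \oplus V'$ and its dual are compatible, up to the sign twist noted in the footnote), and the second results from projectivisation eating the twist by $\Oca(-E)$ (which is itself $\mathrm{PO}(2)$-equivariant via $\widetilde{\iota}$). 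Under this identification, \cref{proposition:Yoneda restricted} transports $\PP((\overline{b^* \mathrm{\Upsilon}})^{-1}(0))$ onto $\PP(ev^{-1}(0))$, so it suffices to compute $\PP(ev^{-1}(0))^s \sslash \mathrm{PO}(2)$.

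Second, I would factor the $\mathrm{PO}(2)$-quotient through the short exact sequence $1 \to \mathrm{PSO}(2) \to \mathrm{PO}(2) \to \bm{\mu}_2 \to 1$. The subgroup $\mathrm{PSO}(2) \simeq \mathbb{C}^\times$ acts fibrewise on $V \oplus V^\vee$ with opposite weights $(+1,-1)$. Standard GIT for a torus action on a direct sum of two line-weight components shows that, on $\PP(V \oplus V^\vee)$, the semistable locus is the open subscheme where both components $v \in V$ and $\xi \in V^\vee$ are non-zero, and the quotient morphism is the obvious map $[(v,\xi)] \mapsto ([v],[\xi])$ onto $\PP(V) \times_{\Xca^{[n]}}\PP(V^\vee)$, realised as a principal $\mathbb{C}^\times$-bundle. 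Intersecting with $\PP(ev^{-1}(0))^s$ recovers precisely the incidence divisor $\mathbb{I}$ of \eqref{definition I}.

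Finally, the residual $\bm{\mu}_2 \simeq \mathrm{PO}(2)/\mathrm{PSO}(2)$-action descends to $\mathbb{I}$: its generator is represented by $\tau \in \mathrm{O}(2)$, which swaps the two $\mathrm{GL}(N)$-weight spaces in the $\mathrm{O}(2)$-structure on $\Ica_1 \oplus \Ica_2$ and therefore exchanges the two projective factors while simultaneously inducing the involution $\widetilde{\iota}$ on the base $\Xca^{[n]}$, precisely as in the description following \eqref{definition I}. Combining these three steps gives $\mathrm{\Sigma}_S^s \sslash \mathrm{PGL}(N) \simeq \mathbb{I}/\bm{\mu}_2$.

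The only potential obstacle is verifying that the semistable and stable loci agree at each stage of the iterated quotient, so that the two-step GIT computation really reproduces $\PP(ev^{-1}(0))^s \sslash \mathrm{PO}(2)$. This should follow from the fact (recorded in \cref{section:second blow-up}) that $S^{ss} = S^s$ for the $\mathrm{PGL}(N)$-action, together with the observation that both factors $[v],[\xi]$ are non-zero on $\mathbb{I}$, so no $\mathrm{PO}(2)$-strictly-semistable points appear; an application of \cref{prop:commuting actions} (to the $\mathrm{PSO}(2) \times \bm{\mu}_2 = \mathrm{PO}(2)$-action) will make this rigorous.
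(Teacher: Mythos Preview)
Your proposal is correct and follows essentially the same approach as the paper. The paper's ``proof'' of this corollary is precisely the discussion in the paragraphs immediately preceding it: invoke \cref{thm:SigmaS}(2), pass to $\PP(ev^{-1}(0))$ via the $\mathrm{PO}(2)$-equivariant identifications, and compute the $\mathrm{PO}(2)$-quotient in two steps (first $\mathrm{PSO}(2)$, then the residual $\bm{\mu}_2$). You have simply spelled out in more detail the GIT computation for the $\mathrm{PSO}(2)\simeq\mathbb{C}^\times$-action and the semistable/stable coincidence, both of which the paper leaves implicit.
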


\subsection{Consequences}\label{sec:SigmaS-OmegaS;SigmaS-DeltaS} We provide now a global description of $(\mathrm{\Omega}_S \cap \mathrm{\Sigma}_S)^s \sslash \mathrm{PGL}(N)$. Comparing it with the results of \cref{section:second blow-up}, we deduce a global description of $(\mathrm{\Delta}_S \cap \mathrm{\Sigma}_S)^s \sslash \mathrm{PGL}(N)$.

\subsubsection{Description of $(\mathrm{\Omega}_S \cap \mathrm{\Sigma}_S)^s \sslash \mathrm{PGL}(N)$} Since $\mathrm{\Omega}_R^{ss}$ (resp.\ $\mathrm{\Omega}_R \cap \mathrm{\Sigma}_R^{ss}$) is a Cartier divisor on $R^{ss}$ (resp.\ $\mathrm{\Sigma}_R^{ss}$) and $R^{ss}$ is normally flat along $\mathrm{\Sigma}_R^{ss}$, we have an isomorphism $C_{\mathrm{\Sigma}_R^{ss} \cap \mathrm{\Omega}_R/\mathrm{\Omega}_R^{ss}} \simeq \imath^*C_{\mathrm{\Sigma}_R^{ss}/R^{ss}}$ (see \cite[Theorem 21.10]{HOIebu}), which is $\mathrm{PGL}(N)$-equivariant; hence, we have a $\mathrm{PO}(2) \times \mathrm{PGL}(N)$-equivariant isomorphism of $D$-schemes
\[
\eta^*(C_{\mathrm{\Sigma}_R^{ss} \cap \mathrm{\Omega}_R/\mathrm{\Omega}_R^{ss}}) \simeq \zeta^*u^*[(\overline{b^*\mathrm{\Upsilon}})^{-1} (0)].
\]
As $D$ is the base change of the $X^{[n]}$-scheme $E$ along $\upsilon \colon Y \to X^{[n]}$ (see \eqref{principal PGL(N)-bundles}), the right-hand term can be written as $\upsilon^*[(\overline{b^*\mathrm{\Upsilon}}|_E)^{-1} (0)]$. Reasoning exactly as in the previous section, we get the isomorphisms
\begin{gather*}
\mathrm{\Omega}_S \cap \pi_S^{-1}(\mathrm{\Sigma}_R^{ss})  \simeq \PP( \upsilon^*[(\overline{b^*\mathrm{\Upsilon}}|_E)^{-1} (0)]) \sslash \mathrm{PO}(2), \\ 
(\mathrm{\Omega}_S \cap \mathrm{\Sigma}_S)^s \sslash \mathrm{PGL}(N)  \simeq \PP((\overline{b^* \mathrm{\Upsilon}}|_E)^{-1} (0))^s \sslash \mathrm{PO}(2).
\end{gather*}
Again, we can perform the latter quotient in two steps: first, we get the incidence divisor over $E$
\begin{multline}\label{definition J}
\mathbb{J} \hookrightarrow \PP (\Lca^{\perp_\omega}/\Lca) \times_E \PP ((\Lca^{\perp_\omega}/\Lca)^\vee)\\
 \simeq \PP (\Lca^{\perp_\omega}/\Lca) \times_E \PP (\Lca^{\perp_\omega}/\Lca) \simeq \PP((\Lca^{\perp_\omega} / \Lca) \otimes \Lca^\vee) \times_E \PP((\Lca^{\perp_\omega} / \Lca) \otimes \Lca^\vee);
\end{multline}
second, the quotient of $\mathbb{J}$ by the involution interchanging the factors of $\PP (\Lca^{\perp_\omega}/\Lca) \times_E \PP (\Lca^{\perp_\omega}/\Lca)$. 
\begin{corollary}\label{cor:SigmaS-OmegaS modulo PGL(N)} We have isomorphisms
\begin{enumerate}
\item[(1)] $\mathrm{\Omega}_S \cap \pi_S^{-1}(\mathrm{\Sigma}_R^{ss}) \simeq \PP(\upsilon^*[(\overline{b^*\mathrm{\Upsilon}}|_E)^{-1} (0)]) \sslash \mathrm{PO}(2)$ and
\item[(2)] $(\mathrm{\Omega}_S \cap \mathrm{\Sigma}_S)^s \sslash \mathrm{PGL}(N) \simeq \mathbb{J}/\bm{\mu}_2$.
\end{enumerate}
\end{corollary}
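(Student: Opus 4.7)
The strategy is to mimic the argument of \cref{thm:SigmaS} and \cref{cor:SigmaS modulo PGL(N)} one stratum deeper, replacing $R^{ss}$ with $\mathrm{\Omega}_R^{ss}$ and $\mathrm{\Sigma}_R^{ss}$ with the smooth divisor $\mathrm{\Sigma}_R^{ss} \cap \mathrm{\Omega}_R$. First I would exploit that $\mathrm{\Omega}_R^{ss}$ is a Cartier divisor on $R^{ss}$ meeting $\mathrm{\Sigma}_R^{ss}$ in the divisor $\mathrm{\Sigma}_R^{ss} \cap \mathrm{\Omega}_R$, together with the normal flatness of $R^{ss}$ along $\mathrm{\Sigma}_R^{ss}$ (\cref{prop:normal cone SigmaR fibres}), to invoke \cite[Theorem 21.10]{HOIebu} and obtain a $\mathrm{PGL}(N)$-equivariant isomorphism
\[
C_{\mathrm{\Sigma}_R^{ss} \cap \mathrm{\Omega}_R/\mathrm{\Omega}_R^{ss}} \simeq \imath^*C_{\mathrm{\Sigma}_R^{ss}/R^{ss}}.
\]
Pulling back along the flat quotient morphism $\eta\colon D \to \mathrm{\Sigma}_R^{ss} \cap \mathrm{\Omega}_R$ and combining with \cref{proposition:SigmaR-normal cone} yields a $\mathrm{PO}(2)\times \mathrm{PGL}(N)$-equivariant identification $\eta^*(C_{\mathrm{\Sigma}_R^{ss} \cap \mathrm{\Omega}_R/\mathrm{\Omega}_R^{ss}}) \simeq \zeta^*u^*[(\overline{b^*\mathrm{\Upsilon}})^{-1}(0)]$; using the cartesian square $D = U\times_{\Xca^{[n]}} E$ from \eqref{principal PGL(N)-bundles}, the right-hand side rewrites as $\upsilon^*[(\overline{b^*\mathrm{\Upsilon}}|_E)^{-1}(0)]$.

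Next I would projectivise. Since $\mathrm{\Omega}_S \cap \pi_S^{-1}(\mathrm{\Sigma}_R^{ss})$ is the exceptional divisor of the blow-up of $\mathrm{\Omega}_R^{ss}$ along $\mathrm{\Sigma}_R^{ss} \cap \mathrm{\Omega}_R$, it equals $\PP(C_{\mathrm{\Sigma}_R^{ss} \cap \mathrm{\Omega}_R/\mathrm{\Omega}_R^{ss}})$. Passing to the $\mathrm{PO}(2)$-quotient (which is a good quotient because $\eta$ is a principal $\mathrm{PO}(2)$-bundle, cf.\ \cref{prop:global description of SigmaR}) proves (1).

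For (2), I would then apply \cref{prop:commuting actions} twice to the $\mathrm{PO}(2)$- and $\mathrm{PGL}(N)$-actions on $\PP(\upsilon^*[(\overline{b^*\mathrm{\Upsilon}}|_E)^{-1}(0)])$, just as in the proof of \cref{lem:exceptional divisor first blow-up}: one direction yields $\mathrm{\Omega}_S \cap \pi_S^{-1}(\mathrm{\Sigma}_R^{ss})$ modulo $\mathrm{PGL}(N)$, i.e.\ $(\mathrm{\Omega}_S \cap \mathrm{\Sigma}_S)^s \sslash \mathrm{PGL}(N)$, while the other gives $\PP((\overline{b^*\mathrm{\Upsilon}}|_E)^{-1}(0))^s\sslash \mathrm{PO}(2)$. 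Finally, I would unwind this last quotient in two stages, first dividing by $\mathrm{PSO}(2)$ and then by the residual $\mathrm{PO}(2)/\mathrm{PSO}(2) \simeq \bm{\mu}_2$: using \cref{thm:properties of V}(3) the restriction $V|_E\simeq V'|_E \simeq \Lca^{\perp_\omega}/\Lca$ identifies the projectivised $ev$-locus with the incidence divisor $\mathbb{J}$ of \eqref{definition J}, and the residual involution is precisely the swap of factors that defines the $\bm{\mu}_2$-action on $\mathbb{J}$, yielding $\mathbb{J}/\bm{\mu}_2$.

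The one point requiring care, rather than true difficulty, is the equivariance of the chain of identifications under $\mathrm{PO}(2)\times \mathrm{PGL}(N)$: the $\mathrm{PO}(2)$-equivariant structure on $V\oplus V'$ and on $\Oca(-2E)$ must be tracked through the isomorphisms $V|_E\simeq V'|_E \simeq \Lca^{\perp_\omega}/\Lca$ and through the twisting by the non-trivial character of $\bm{\mu}_2$ inherited from the anticommutativity of $\mathrm{Tr}\circ \cup$ (noted in \cref{thm:SigmaS}). Once this bookkeeping is done correctly, the identification of the $\bm{\mu}_2$-action with the swap on $\PP(\Lca^{\perp_\omega}/\Lca)\times_E \PP(\Lca^{\perp_\omega}/\Lca)$ is automatic, and the corollary follows.
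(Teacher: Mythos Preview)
Your proposal is correct and follows essentially the same route as the paper: restrict the normal cone via \cite[Theorem 21.10]{HOIebu} using normal flatness, pull back along $\eta$, rewrite over $D \simeq \upsilon^*E$, projectivise to identify the exceptional divisor, then take the $\mathrm{PO}(2)$-quotient and compute it in two stages $\mathrm{PSO}(2)$ then $\bm{\mu}_2$ to arrive at $\mathbb{J}/\bm{\mu}_2$. The equivariance bookkeeping you flag is exactly the only point the paper also handles with care.
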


Let us explain how \cref{cor:SigmaS-OmegaS modulo PGL(N)} compares with the global descriptions, given in \cref{prop:OmegaS;OmegaS-SigmaS}, of $\mathrm{\Omega}_S \cap \mathrm{\Sigma}_S$  as $\upsilon^* E^\Tca \sslash \mathrm{SO}(W)$, and of $(\mathrm{\Omega}_S \cap \mathrm{\Sigma}_S)^s \sslash \mathrm{PGL}(N)$ as $(E^\Tca)^s \sslash \mathrm{SO}(W)$ (recall from \ref{notation blow-ups} that $E^\Tca$ denotes the exceptional divisor of the blow-up of $\PP\sHom^\omega(W,\Tca)$ along $\PP\sHom_1(W,\Tca)$; the stable points of $E^\Tca$ are fibrewise computed in \cite[Lemma 1.8.6(1)]{O'GdmsK3}). 

\smallskip

 Identifying $\PP(W)$ with $\PP(W^\vee)$ via the Killing form $\kappa$ of $W$, we have a natural $\mathrm{SO}(W)$-equivariant morphism
\begin{equation}\label{exceptional divisor ET}
E^\Tca \to \PP\sHom_1(W,\Tca) \simeq \PP(W) \times \PP(\Tca);
\end{equation}
by \cref{prop:Kirwan blow-up}(i), it sends $(E^\Tca)^s=(E^\Tca)^{ss}$ to $\PP\sHom_1(W,\Tca)^{ss} \simeq \PP(W)^{ss} \times \PP(\Tca)$.
As shown in the proof of \cref{lemma:OmegaR-SigmaR}, the semistable locus for the tautological action of $\mathrm{SO}(W)$ on $\PP(W)$ is the complement of the smooth conic defined by $\kappa$. Pick the point $[w] \in \PP(W)^{ss}$ given by
 \begin{equation}\label{distinguished point}
w = 
\begin{pmatrix}
1 & 0 \\
0 & -1
\end{pmatrix} \in W.
\end{equation}
The stabiliser of $[w]$ is $\mathrm{PO}(2)$, with imbedding in $\mathrm{SO}(W) \simeq \mathrm{PGL}(2)$ induced by \eqref{O(2)}.
If we base change \eqref{exceptional divisor ET} along $\PP(\Tca) \simeq [w] \times \PP(\Tca) \hookrightarrow \PP(W) \times \PP(\Tca)$, we get a morphism 
\[
E_{[w]}^\Tca \to \PP(\Tca),
\]
together with a $\mathrm{PO}(2)$-action on $E_{[w]}^\Tca$. Since $\mathrm{SO}(W)$ acts transitively on $\PP(W)^{ss}$, and $(E^\Tca)^s$ equals $(E^\Tca)^{ss}$, we obtain
\begin{gather*}
(\mathrm{\Omega}_S \cap \mathrm{\Sigma}_S)^s \sslash \mathrm{PGL}(N) \simeq (E^\Tca)^{s} \sslash \mathrm{SO}(W)  \simeq (E^\Tca_{[w]})^{s} \sslash \mathrm{PO}(2),\\
(\mathrm{\Omega}_S \cap \mathrm{\Sigma}_S)^{s}  \simeq \upsilon^*[(E^\Tca)^{s}] \sslash \mathrm{SO}(W)  \simeq \upsilon^*[(E^\Tca_{[w]})^{s}] \sslash \mathrm{PO}(2).
\end{gather*}
The desired comparison follows then from the $\mathrm{PO}(2)$-equivariant isomorphism over $E=\PP(\Tca)$
\[
E_{[w]}^\Tca \simeq \PP((\overline{b^* \mathrm{\Upsilon}}|_E)^{-1} (0)).
\]
\begin{remark}\label{rem:tangent space of P(W) at distinguished point}
Taking $[w] \in \PP(W)^{ss}$ as distinguished point for the transitive action of $\mathrm{SO}(W) \simeq \mathrm{PGL}(2)$, we can write $ \PP(W)^{ss} \simeq \mathrm{PGL}(2)/\mathrm{PO}(2)$. The inclusion $\mathrm{PO}(2) \subset \mathrm{PGL}(2)$ gives an inclusion $\mathbb{C}w \subset W$ of their respective Lie algebras, and yields a natural $\mathrm{PO}(2)$-equivariant isomorphism $\mathrm{T}_{\PP(W)^{ss},[w]} \simeq W/\mathbb{C}w$.

In particular, the $\mathrm{SO}(W)$-equivariant imbedding 
\[
E^\Tca \hookrightarrow  \PP(N_{\PP(W) \times \PP(\Tca) / \PP\sHom^\omega(W,\Tca)}) = \PP(\Tca_{\PP (W)} \boxtimes [(\Lca^{\perp_\omega} / \Lca) \otimes \Lca^\vee])
\]
restricts to the $\mathrm{PO}(2)$-equivariant imbedding 
\[
E_{[w]}^\Tca \hookrightarrow \PP((W/\mathbb{C}w) \otimes [(\Lca^{\perp_\omega} / \Lca) \otimes \Lca^\vee]).
\]
\end{remark}

\subsubsection{Description of $(\mathrm{\Delta}_S \cap \mathrm{\Sigma}_S)^s \sslash \mathrm{PGL}(N)$} Consider now the exceptional divisor $E_2^\Tca$ of the blow-up of $\PP\sHom_2^\omega(W,\Tca)$ along $\PP\sHom_1(W,\Tca)$. We reason as above: pick $w \in W$ as in \eqref{distinguished point}; the restriction of 
\[
E_2^\Tca \to \PP\sHom_1(W,\Tca) \simeq \PP(W) \times \PP(\Tca)
\]
along $\PP(\Tca) \simeq [w] \times \PP(\Tca) \hookrightarrow \PP(W) \times \PP(\Tca)$ yields a $\mathrm{PO}(2)$-invariant morphism 
\[
E_{2,[w]}^\Tca \to \PP(\Tca);
\]
from \cref{prop:DeltaS;DeltaS-SigmaS}, we obtain
\begin{gather*}
(\mathrm{\Delta}_S \cap \mathrm{\Sigma}_S)^s \sslash \mathrm{PGL}(N)  \simeq  (E_2^\Tca)^{s} \sslash \mathrm{SO}(W)  \simeq (E^\Tca_{2,[w]})^{s} \sslash \mathrm{PO}(2),\\
(\mathrm{\Delta}_S \cap \mathrm{\Sigma}_S)^{s}  \simeq \upsilon^*[(E_2^\Tca)^{s}] \sslash \mathrm{SO}(W)  \simeq \upsilon^*[(E^\Tca_{2,[w]})^{s}] \sslash \mathrm{PO}(2).
\end{gather*}
The quotient of $(E^\Tca_{2,[w]})^{s}$ by $\mathrm{PSO}(2)$ is isomorphic to $\PP((\Lca^{\perp_\omega}/\Lca) \otimes\Lca^\vee) \simeq \PP(\Lca^{\perp_\omega}/\Lca)$, imbedded in $\mathbb{J}$ diagonally. The residual $\bm{\mu}_2$-action is trivial. In the end, we get the following
\begin{corollary}\label{cor:DeltaS-SigmaS modulo PGLN}
The GIT quotient of $\mathrm{\Delta}_S \cap \mathrm{\Sigma}_S$ by $\mathrm{PGL}(N)$ is isomorphic to $\PP(\Lca^{\perp_\omega}/\Lca)$.
\end{corollary}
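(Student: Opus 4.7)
The plan is to carry out the programme already sketched in the text just preceding the corollary, by explicitly identifying the geometry of $E_{2,[w]}^\Tca$ and computing its $\mathrm{PO}(2)$-quotient in two stages. The main work is the identification of $E_{2,[w]}^\Tca$ as a Segre bundle; once this is in hand, the quotient computation is essentially a weight calculation in $\mathfrak{sl}(2)$.

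First I would start from the isomorphism
\[
(\mathrm{\Delta}_S \cap \mathrm{\Sigma}_S)^s \sslash \mathrm{PGL}(N) \simeq (E_{2,[w]}^\Tca)^s \sslash \mathrm{PO}(2)
\]
obtained by combining \cref{prop:DeltaS;DeltaS-SigmaS}(4) with the transitivity argument used in \cref{sec:SigmaS-OmegaS;SigmaS-DeltaS} to reduce $\mathrm{SO}(W)$-quotients over $E_2^\Tca$ to $\mathrm{PO}(2)$-quotients over the fibre at the distinguished $[w] \in \PP(W)^{ss}$. Next I would identify $E_{2,[w]}^\Tca$ inside the projective bundle $\PP((W/\mathbb{C}w) \otimes (\Lca^{\perp_\omega}/\Lca) \otimes \Lca^\vee)$ over $E = \PP(\Tca)$, in which $E_{[w]}^\Tca$ already sits by \cref{rem:tangent space of P(W) at distinguished point}. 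The subscheme $E_{2,[w]}^\Tca$ corresponds to those normal directions whose associated first-order perturbation of $w^* \otimes v$ has image contained in an $\omega$-isotropic $2$-plane through $v$, i.e.\ to the rank-one locus of the tensor product; this gives a Segre identification
\[
E_{2,[w]}^\Tca \simeq \PP((W/\mathbb{C}w)^\vee) \times_E \PP((\Lca^{\perp_\omega}/\Lca) \otimes \Lca^\vee).
\]
A rigorous verification of this identification uses the alternative description $\mathrm{Bl}_2^\Tca \simeq \mathrm{Bl}^\Aca$ from \cref{sec:Grass(2,T)}, so that one is computing a normal cone on a smooth projective bundle along its rank-one locus.

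Once the Segre structure is in place, I would compute the $\mathrm{PSO}(2)$-quotient. Writing $W = \mathfrak{sl}(2) = \mathbb{C}h \oplus \mathbb{C}e \oplus \mathbb{C}f$ with $w = h$, the group $\mathrm{PSO}(2)$ acts on $W/\mathbb{C}w = \mathbb{C}e \oplus \mathbb{C}f$ with opposite non-zero weights and trivially on $(\Lca^{\perp_\omega}/\Lca) \otimes \Lca^\vee$. The semistable locus of $\PP((W/\mathbb{C}w)^\vee) \simeq \PP^1$ is the complement of the two fixed points, and its GIT quotient is a single point, so that
\[
(E_{2,[w]}^\Tca)^s \sslash \mathrm{PSO}(2) \simeq \PP((\Lca^{\perp_\omega}/\Lca) \otimes \Lca^\vee) \simeq \PP(\Lca^{\perp_\omega}/\Lca),
\]
the last isomorphism being a line-bundle twist; this quotient sits in $\mathbb{J}$ as the diagonal (in the notation of \eqref{definition J}).

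Finally, I would observe that the residual involution $\tau \in \mathrm{PO}(2)/\mathrm{PSO}(2) \simeq \bm{\mu}_2$ exchanges the weight spaces $\mathbb{C}e$ and $\mathbb{C}f$, hence swaps the two $\mathrm{PSO}(2)$-fixed points on $\PP((W/\mathbb{C}w)^\vee)$ and acts trivially on their common image in the one-point quotient; as $\tau$ acts trivially on the second factor as well, the induced $\bm{\mu}_2$-action on $\PP(\Lca^{\perp_\omega}/\Lca)$ is trivial, and the corollary follows. The hardest step, as anticipated, is the Segre identification of $E_{2,[w]}^\Tca$, since it requires a careful match between the two blow-up descriptions in \cref{section:second blow-up}; everything else is a representation-theoretic computation in $\mathfrak{sl}(2)$.
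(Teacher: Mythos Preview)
Your proposal is correct and follows essentially the same route as the paper: reduce via \cref{prop:DeltaS;DeltaS-SigmaS}(4) and the transitivity argument to $(E_{2,[w]}^\Tca)^s \sslash \mathrm{PO}(2)$, then compute this quotient in two steps, first by $\mathrm{PSO}(2)$ to get $\PP((\Lca^{\perp_\omega}/\Lca)\otimes\Lca^\vee)\simeq\PP(\Lca^{\perp_\omega}/\Lca)$ sitting diagonally in $\mathbb{J}$, and then observe that the residual $\bm{\mu}_2$-action is trivial. The only difference is one of emphasis: you spell out the Segre identification of $E_{2,[w]}^\Tca$ and the $\mathfrak{sl}(2)$-weight computation explicitly, whereas the paper states the outcome directly here and only unpacks the comparison with $E^\Aca_{[w]}=\PP((W/\mathbb{C}w)\otimes\Tca_\vartheta)$ later in \cref{sec:Delta hat}; your justification that $\tau$ acts trivially on the second factor is exactly the observation that the $\mathrm{PO}(2)$-action, being inherited from the $\mathrm{SO}(W)$-action on the $W$-slot alone, does nothing to $\PP(\Lca^{\perp_\omega}/\Lca)$.
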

\begin{remark}\label{rem:principal bundles for Sigma S}
The good quotients considered in this section fit into a commutative diagram
\begin{equation}\label{SigmaS-OmegaS-DeltaS Quot and moduli}
\begin{tikzcd}[scale cd=0.9]
\PP (\Lca^{\perp_\omega}/\Lca) \ar[d,hook] & (E_{2,[w]}^\Tca)^{s} \ar[d,hook] \ar[l] & \upsilon^*[(E_{2,[w]}^\Tca)^{s}] \ar[d,hook] \ar[l] \ar[r] &  (\mathrm{\Delta}_S \cap \mathrm{\Sigma}_S)^{s} \ar[d, hook]\\
\mathbb{J} \ar[d,hook] & (E_{[w]}^\Tca)^{s} \ar[d,hook] \ar[l] & \upsilon^*[(E_{[w]}^\Tca)^{s}] \ar[d,hook] \ar[l] \ar[r] &  (\mathrm{\Omega}_S \cap \mathrm{\Sigma}_S)^{s} \ar[d,hook] \\
\mathbb{I} & \PP(\overline{b^* \mathrm{\Upsilon}}^{-1} (0))^{s} \ar[l] & u^*[\PP(\overline{b^* \mathrm{\Upsilon}}^{-1} (0))^{s}] \ar[l] \ar[r] &  \mathrm{\Sigma}_S^{s}.
\end{tikzcd}
\end{equation}
The actions of $\mathrm{PSO}(2)$ on $\PP(\overline{b^* \mathrm{\Upsilon}}^{-1} (0))^{s}$, and of $\mathrm{PGL}(N)$ and $\mathrm{PO}(2)$ on $u^*[\PP(\overline{b^* \mathrm{\Upsilon}}^{-1} (0))^{s}]$ are all set-theoretically free. In particular, by \cref{Rem:set-theoretic free implies free}, the horizontal arrows on the left, in the middle and on the right of \eqref{SigmaS-OmegaS-DeltaS Quot and moduli} are principal $\mathrm{PSO}(2)$-, $\mathrm{PGL}(N)$- and $\mathrm{PO}(2)$-bundles, respectively.
\end{remark}

\subsection{Summary}
Let us denote by $r \colon \mathbb{I} \to \Xca^{[n]}$ the natural morphism, and by $\rho \colon \mathbb{J} \to E$ and $\varrho \colon \PP (\Lca^{\perp_\omega}/\Lca) \to E$ its restrictions to $\mathbb{J}$ and to the diagonal $\PP (\Lca^{\perp_\omega}/\Lca)$, respectively. At the level of the moduli space, the results of this long section can be summed up in the following diagram:
\[
\begin{tikzcd}[column sep= 1.0em, row sep = 1.0 em, scale cd=0.9]
\PP (\Lca^{\perp_\omega}/\Lca) \ar[r, hook] \ar[rdd,"\varrho"'] & \mathbb{J} \ar[rr, hook] \ar[rd] \ar[dd,"\rho"] & & \mathbb{I}\ar[dd,"r"{yshift=12pt}] \ar[rd] &\\
& & \mathbb{J}/\bm{\mu}_2 \ar[rr, hook, crossing over]  & & \mathbb{I}/\bm{\mu}_2 \ar[dd] \ar[r,hook]& S^s \sslash \mathrm{PGL}(N) \ar[dd] \\
& E \ar[rd,equal] \ar[rr, hook] \ar[dd,"\beta"]  & & \Xca^{[n]} \ar[rd] \ar[dd,"b" {yshift=12pt}]& \\
& & E \ar[rr, hook, crossing over]  \ar[from=uu, crossing over] & & (X^{[n]})^{[2]} \ar[dd] \ar[r,hook] & R^{ss} \sslash \mathrm{PGL}(N) \ar[dd] & \\
& X^{[n]} \ar[rd,equal] \ar[rr, hook]  & & X^{[n]} \times X^{[n]} \ar[rd] \\
& & X^{[n]} \ar[rr, hook] \ar[from=uu, crossing over] & & \mathrm{S}^2 X^{[n]} \ar[r,hook] & M_n. &
\end{tikzcd}
\]

\section{Global descriptions of $\widehat{\mathrm{\Sigma}}$ and $\widehat{\mathrm{\Delta}}$}\label{sec:description of Sigma hat and Delta hat}
Recall from \cref{section:third blow-up} that $\pi_T \colon T \to S$ denotes the blow-up of $S$ along $\mathrm{\Delta}_S$, $\mathrm{\Delta}_T$ its exceptional divisor, and $\mathrm{\Sigma}_T\simeq \mathrm{Bl}_{\mathrm{\Delta}_S \cap \mathrm{\Sigma}_S} \mathrm{\Sigma}_S$ (resp.\ $\mathrm{\Omega}_T \simeq \mathrm{Bl}_{\mathrm{\Delta}_S} \mathrm{\Omega}_S$) the strict transform of $\mathrm{\Sigma}_S$ (resp.\ $\mathrm{\Omega}_S$). In this section, we provide global descriptions of 
\[
\widehat{\mathrm{\Sigma}} \coloneqq \mathrm{\Sigma}_T^s \sslash \mathrm{PGL}(N) \ \ \ \text{and} \ \ \ \widehat{\mathrm{\Delta}} \coloneqq \mathrm{\Delta}_T^s \sslash \mathrm{PGL}(N).
\]
These are, together with $\widehat{\mathrm{\Omega}} \coloneqq \mathrm{\Omega}_T^s \sslash \mathrm{PGL}(N)$, the irreducible components of the exceptional locus of the resolution of singularities $\widehat{\pi}\colon \widehat{M}_n \to M_n$.
\subsection{Global description of $\widehat{\mathrm{\Sigma}}$}\label{section:Sigma hat} 
Let $\mathbb{I}$ be as in \eqref{definition I}, and $\mathbb{J}$ as in \eqref{definition J}. Let $\widehat{\mathbb{I}}$ be the blow-up of $\mathbb{I}$ along $\mathbb{I}^{\bm{\mu}_2} = \PP(\Lca^{\perp_\omega}/\Lca)$, and $\widehat{\mathbb{E}}$ be its exceptional divisor. Let $\widehat{\mathbb{J}}$ be the strict transform of $\mathbb{J}$: it is the blow-up of $\mathbb{J}$ along $\mathbb{I}^{\bm{\mu}_2}$, and its exceptional divisor is $\widehat{\mathbb{J}} \cap \widehat{\mathbb{E}}$. The $\bm{\mu}_2$-action on $\mathbb{I}$ and $\mathbb{J}$ leaves $\PP(\Lca^{\perp_\omega} / \Lca)$ invariant, and hence lifts to $\widehat{\mathbb{I}}$ and $\widehat{\mathbb{J}}$, respectively.
\begin{theorem}\label{thm:Sigma hat} We have isomorphisms of closed subschemes of $\widehat{M}_n$
\begin{enumerate}
\item[(1)] $\widehat{\mathrm{\Sigma}} \simeq \widehat{\mathbb{I}}/ \bm{\mu}_2$,
\item[(2)] $\widehat{\mathrm{\Sigma}} \cap \widehat{\mathrm{\Delta}} \simeq \widehat{\mathbb{E}}/\bm{\mu}_2$,
\item[(3)] $\widehat{\mathrm{\Sigma}} \cap \widehat{\mathrm{\Omega}} \simeq \widehat{\mathbb{J}}/\bm{\mu}_2$,
\item[(4)] $\widehat{\mathrm{\Sigma}} \cap \widehat{\mathrm{\Delta}} \cap \widehat{\mathrm{\Omega}} \simeq (\widehat{\mathbb{E}}\cap \widehat{\mathbb{J}})/\bm{\mu}_2$.
\end{enumerate}
\end{theorem}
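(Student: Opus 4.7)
The plan is to prove (1) first and deduce (2)--(4) by identifying exceptional divisors and strict transforms. The target isomorphism of (1) factors as
\[
\widehat{\mathrm{\Sigma}} \;\simeq\; \mathrm{Bl}_{\mathbb{I}^{\bm{\mu}_2}}(\mathbb{I}/\bm{\mu}_2) \;\simeq\; \widehat{\mathbb{I}}/\bm{\mu}_2,
\]
where the first step commutes the blow-up defining $\mathrm{\Sigma}_T$ with the GIT quotient by $\mathrm{PGL}(N)$, and the second commutes the blow-up $\widehat{\mathbb{I}} \to \mathbb{I}$ with the finite quotient by $\bm{\mu}_2$. The backbone of the argument consists of the identifications $\mathrm{\Sigma}_S^s \sslash \mathrm{PGL}(N) \simeq \mathbb{I}/\bm{\mu}_2$ (\cref{cor:SigmaS modulo PGL(N)}), $(\mathrm{\Omega}_S \cap \mathrm{\Sigma}_S)^s \sslash \mathrm{PGL}(N) \simeq \mathbb{J}/\bm{\mu}_2$ (\cref{cor:SigmaS-OmegaS modulo PGL(N)}), and $(\mathrm{\Delta}_S \cap \mathrm{\Sigma}_S)^s \sslash \mathrm{PGL}(N) \simeq \mathbb{I}^{\bm{\mu}_2}$ (\cref{cor:DeltaS-SigmaS modulo PGLN}), together with the relations $\mathrm{\Sigma}_T = \mathrm{Bl}_{\mathrm{\Delta}_S \cap \mathrm{\Sigma}_S} \mathrm{\Sigma}_S$ and $\mathrm{\Sigma}_T \cap \mathrm{\Omega}_T = \mathrm{Bl}_{\mathrm{\Delta}_S \cap \mathrm{\Sigma}_S}(\mathrm{\Omega}_S \cap \mathrm{\Sigma}_S)$ of \cref{lem:OmegaT-DeltaT}.

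For the first step, observe that by \cref{rem:principal bundles for Sigma S} the quotient maps $\mathrm{\Sigma}_S^s \to \mathrm{\Sigma}_S^s \sslash \mathrm{PGL}(N)$ and $(\mathrm{\Omega}_S \cap \mathrm{\Sigma}_S)^s \to (\mathrm{\Omega}_S \cap \mathrm{\Sigma}_S)^s \sslash \mathrm{PGL}(N)$ are principal $\mathrm{PGL}(N)$-bundles. Since $T^s = \pi_T^{-1}(S^s)$, the stable locus $\mathrm{\Sigma}_T^s$ is the blow-up of $\mathrm{\Sigma}_S^s$ along the $\mathrm{PGL}(N)$-invariant centre $(\mathrm{\Delta}_S \cap \mathrm{\Sigma}_S)^s$, and analogously for $(\mathrm{\Sigma}_T \cap \mathrm{\Omega}_T)^s$. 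Applying \cref{prop:blow-up and principal G bundles} then yields
\[
\widehat{\mathrm{\Sigma}} \simeq \mathrm{Bl}_{\mathbb{I}^{\bm{\mu}_2}}(\mathbb{I}/\bm{\mu}_2), \qquad \widehat{\mathrm{\Sigma}} \cap \widehat{\mathrm{\Omega}} \simeq \mathrm{Bl}_{\mathbb{I}^{\bm{\mu}_2}}(\mathbb{J}/\bm{\mu}_2),
\]
with exceptional divisors $\widehat{\mathrm{\Sigma}} \cap \widehat{\mathrm{\Delta}}$ and $\widehat{\mathrm{\Sigma}} \cap \widehat{\mathrm{\Delta}} \cap \widehat{\mathrm{\Omega}}$, respectively.

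For the second step, use that $\mathbb{I}$ and $\mathbb{I}^{\bm{\mu}_2} \simeq \PP(\Lca^{\perp_\omega}/\Lca)$ are both smooth, so at any $y \in \mathbb{I}^{\bm{\mu}_2}$ the splitting $\mathrm{T}_{\mathbb{I},y} = \mathrm{T}_{\mathbb{I}^{\bm{\mu}_2},y} \oplus N_{\mathbb{I}^{\bm{\mu}_2}/\mathbb{I},y}$ coincides with the decomposition into $(\pm 1)$-eigenspaces for $\bm{\mu}_2$; hence $\bm{\mu}_2$ acts as $-1$ on $N_{\mathbb{I}^{\bm{\mu}_2}/\mathbb{I}}$. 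The lifted action on $\widehat{\mathbb{I}}$ then fixes the exceptional divisor $\widehat{\mathbb{E}} = \PP(N_{\mathbb{I}^{\bm{\mu}_2}/\mathbb{I}})$ pointwise and acts as $-1$ on its normal bundle, so $\widehat{\mathbb{I}}/\bm{\mu}_2$ is smooth by Chevalley--Shephard--Todd. A direct local computation with the standard model $x \mapsto -x$, $t_i \mapsto t_i$ shows that $\widehat{\mathbb{E}}/\bm{\mu}_2$ is a Cartier divisor in $\widehat{\mathbb{I}}/\bm{\mu}_2$ that coincides set-theoretically with the preimage of $\mathbb{I}^{\bm{\mu}_2}$; by the universal property of the blow-up, the resulting proper birational map $\widehat{\mathbb{I}}/\bm{\mu}_2 \to \mathrm{Bl}_{\mathbb{I}^{\bm{\mu}_2}}(\mathbb{I}/\bm{\mu}_2)$ is then finite and birational between smooth schemes, hence an isomorphism by Zariski's main theorem. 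The same argument applied to $\mathbb{J}$ (noting $\mathbb{J}^{\bm{\mu}_2} = \mathbb{I}^{\bm{\mu}_2}$, since the diagonal embedding of $\PP(\Lca^{\perp_\omega}/\Lca)$ into $\PP (\Lca^{\perp_\omega}/\Lca) \times_E \PP (\Lca^{\perp_\omega}/\Lca)$ lands in the incidence divisor by antisymmetry of the symplectic form $\omega_\Lca$) establishes the version of (1) for (3); claims (2) and (4) follow by identifying the exceptional divisors on both sides. The hardest part will be the careful bookkeeping required in the first step, namely verifying that $\mathrm{\Sigma}_T^s$ really coincides with the blow-up of $\mathrm{\Sigma}_S^s$ along the \emph{scheme-theoretic} centre $(\mathrm{\Delta}_S \cap \mathrm{\Sigma}_S)^s$, so that the hypotheses of \cref{prop:blow-up and principal G bundles} apply in this restricted setting.
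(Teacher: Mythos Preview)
Your first step contains a genuine error. You claim, citing \cref{rem:principal bundles for Sigma S}, that $\mathrm{\Sigma}_S^s \to \mathrm{\Sigma}_S^s \sslash \mathrm{PGL}(N) \simeq \mathbb{I}/\bm{\mu}_2$ is a principal $\mathrm{PGL}(N)$-bundle. But that remark says no such thing: the principal $\mathrm{PGL}(N)$-bundles in diagram \eqref{SigmaS-OmegaS-DeltaS Quot and moduli} are the \emph{middle} horizontal arrows $u^*[\PP(\overline{b^*\mathrm{\Upsilon}}^{-1}(0))^s] \to \PP(\overline{b^*\mathrm{\Upsilon}}^{-1}(0))^s$, not the rightmost quotients. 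In fact $\mathrm{PGL}(N)$ does \emph{not} act freely on $\mathrm{\Sigma}_S^s$: the whole point of the third blow-up is to resolve the finite quotient singularities of $S^s \sslash \mathrm{PGL}(N)$, and the non-trivial finite stabilisers occur precisely along $\mathrm{\Delta}_S$, which meets $\mathrm{\Sigma}_S$. A cleaner way to see the failure: $\mathrm{\Sigma}_S^s$ is smooth, while $\mathbb{I}/\bm{\mu}_2$ is singular along $\mathbb{I}^{\bm{\mu}_2}$ (the $\bm{\mu}_2$-action on the normal bundle is by $-1$ in codimension $2n-3 \ge 3$), so the quotient map cannot be a principal bundle. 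Consequently \cref{prop:blow-up and principal G bundles} does not apply, and your identification $\widehat{\mathrm{\Sigma}} \simeq \mathrm{Bl}_{\mathbb{I}^{\bm{\mu}_2}}(\mathbb{I}/\bm{\mu}_2)$ is unjustified.

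The paper avoids this obstruction by staying at the level of $u^*[\PP(\overline{b^*\mathrm{\Upsilon}}^{-1}(0))^s]$, where both $\mathrm{PGL}(N)$ and $\mathrm{PO}(2)$ act freely. One first applies \cref{prop:blow-up and principal G bundles} along the $\mathrm{PO}(2)$-bundle to identify $\mathrm{\Sigma}_T^s$ with the $\mathrm{PO}(2)$-quotient of the blow-up upstairs; then \cref{prop:commuting actions} lets one take the $\mathrm{PGL}(N)$- and $\mathrm{PO}(2)$-quotients in either order. Taking $\mathrm{PGL}(N)$ first (now legitimate, since it acts freely upstairs) lands on the blow-up of $\PP(\overline{b^*\mathrm{\Upsilon}}^{-1}(0))^s$; the remaining $\mathrm{PO}(2)$-quotient is done in two steps, and the free $\mathrm{PSO}(2)$-step gives $\widehat{\mathbb{I}}$ directly via another application of \cref{prop:blow-up and principal G bundles}. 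Your second step---the commutation of blow-up with the $\bm{\mu}_2$-quotient---is correct as a standalone statement, but it becomes unnecessary once the argument is routed through the intermediate variety where all the relevant actions are free.
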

\begin{proof}
The theorem follows from \cref{rem:principal bundles for Sigma S} by \cref{prop:blow-up and principal G bundles} and \cref{prop:commuting actions}. Let us explain in some detail how to prove the first two statements. Recall that $\widehat{\mathrm{\Sigma}}$ and $\widehat{\mathrm{\Sigma}} \cap \widehat{\mathrm{\Delta}}$ are good quotients of $\mathrm{\Sigma}_T^{s}$ and of $(\mathrm{\Delta}_T \cap \mathrm{\Sigma}_T)^{s}$ by $\mathrm{PGL}(N)$, respectively. 

Look at the diagram \eqref{SigmaS-OmegaS-DeltaS Quot and moduli}, and consider the blow-up of the schemes in the third row along the respective subschemes in the first row. Combining \cref{prop:blow-up and principal G bundles} and \cref{prop:commuting actions}, we deduce that $\widehat{\mathrm{\Sigma}}$ and $\widehat{\mathrm{\Sigma}} \cap \widehat{\mathrm{\Delta}}$ are quotients by $\mathrm{PO}(2)$ of the blow-up of $\PP(\overline{b^* \mathrm{\Upsilon}}^{-1} (0))^{s}$ along $(E^\Tca_{2,[w]})^{s}$ and of its exceptional divisor, respectively. As usual, we compute such quotients in two steps: first by $\mathrm{PSO}(2)$, then by the residual action of $\bm{\mu}_2$.
Again by \cref{prop:blow-up and principal G bundles}, the quotients by $\mathrm{PSO}(2)$ are $\widehat{\mathbb{I}}$ and $\widehat{\mathbb{E}}$, because $\PP(\overline{b^* \mathrm{\Upsilon}}^{-1} (0))^{s}$ is a principal $\mathrm{PSO}(2)$-bundle on $\mathbb{I}$. Taking the quotient by $\bm{\mu}_2$, we get the desired result.

The last two statements are obtained similarly: use that $\widehat{\mathrm{\Sigma}} \cap \widehat{\mathrm{\Omega}}$ is a good $\mathrm{PGL}(N)$-quotient of $(\mathrm{\Omega}_T \cap \mathrm{\Sigma}_T)^{s}$ -- and hence, by \cref{lem:OmegaT-DeltaT}, of $\mathrm{Bl}_{(\mathrm{\Delta}_S \cap \mathrm{\Sigma}_S)^{s}}[(\mathrm{\Omega}_S \cap \mathrm{\Sigma}_S)^{s}]$ -- and consider the blow-up of the schemes in the second row of \eqref{SigmaS-OmegaS-DeltaS Quot and moduli} along the respective subschemes in the first row.
\end{proof}

\subsection{Global description of $\widehat{\mathrm{\Delta}}$}\label{sec:Delta hat}
Firstly, let us compare the description of $(\mathrm{\Delta}_S \cap \mathrm{\Sigma}_S)^s \sslash \mathrm{PGL}(N)$ obtained in \cref{cor:DeltaS-SigmaS modulo PGLN} with the one given in \cref{prop:DeltaS}.

Let $\alpha \colon \Grass^\omega(2, \Tca) \to X^{[n]}$ be the relative symplectic Grassmannian, $\Aca \subset \alpha^*\Tca$ its tautological rank-two bundle, $\Aca^{\perp_\omega}$ the orthogonal complement of $\Aca$ with respect to $\alpha^*\omega$ and $\Oca_\alpha(1)\coloneqq \wedge^2 \Aca^\vee$. Consider the projective bundle $\vartheta \colon \PP (\Aca) \to \Grass^\omega(2, \Tca)$,  with tautological quotient line bundle $\Oca_\vartheta(1)$ and relative tangent bundle $\Tca_\vartheta$. The composition $\Oca_\vartheta(-1) \hookrightarrow \vartheta^* \Aca \hookrightarrow \vartheta^*\alpha^*\Tca$ is fibrewise injective; by the universal property of the projective bundle $\PP (\Tca)$, we get a morphism 
\[
\lambda \colon \PP(\Aca) \to \PP(\Tca)
\]
over $X^{[n]}$ such that $\lambda^*\Lca^\vee \simeq \Oca_\vartheta(1)$. Note that, because of the inclusions $\Aca \subset \Aca^{\perp_\omega}$, $\lambda^*\Lca \subset \vartheta^*\Aca$ and hence $\vartheta^*(\Aca^{\perp_\omega}) \subset \lambda^*(\Lca^{\perp_\omega})$, we have also $\vartheta^* \Aca \hookrightarrow \lambda^*(\Lca^{\perp_\omega})$; thanks to the relative Euler sequence, we easily deduce an injective homomorphism $\Tca_\vartheta \hookrightarrow \lambda^*[(\Lca^{\perp_\omega} / \Lca) \otimes \Lca^\vee]$.

If we take the product of $\lambda$ with $\PP (W)$, identified as usual with $\PP (W^\vee)$ via the Killing form, we get a morphism $\lambda_{\PP (W)} \colon \PP \sHom_1(W,\Aca) \to \PP\sHom_1(W,\Tca)$. It is induced by the natural forgetful $X^{[n]}$-morphism $\PP \sHom(W,\Aca) \to \PP \sHom_2^\omega(W,\Tca)$ that was lifted to an isomorphism $\mathrm{Bl}^\Aca \xrightarrow{\sim} \mathrm{Bl}_2^\Tca$ in \ref{sec:Grass(2,T)}. In particular, $\lambda_{\PP (W)}$ lifts to an $\mathrm{SO}(W)$-equivariant isomorphism
\begin{equation}\label{morphism Delta_S-Sigma_S}
E^\Aca = \PP(\Tca_{\PP (W)} \boxtimes \Tca_\vartheta) \xrightarrow{\sim} E_2^\Tca \left(\hookrightarrow \PP(\Tca_{\PP (W)} \boxtimes [(\Lca^{\perp_\omega} / \Lca) \otimes \Lca^\vee])\right)
\end{equation}
between the respective exceptional divisors.
This isomorphism of $\PP(W) \times \PP(\Tca)$-schemes comes from the box product of the tangent bundle $\Tca_{\PP (W)}$ of $\PP (W)$ and the aforementioned inclusion $\Tca_\vartheta \hookrightarrow \lambda^*[(\Lca^{\perp_\omega} / \Lca) \otimes \Lca^\vee]$.

\medskip

After \cref{prop:DeltaS;DeltaS-SigmaS}(4) and \cref{prop:DeltaS}(4), in order to get to $(\mathrm{\Delta}_S \cap \mathrm{\Sigma}_S)^s \sslash \mathrm{PGL}(N)$, we need to compute the GIT quotient of $E^\Aca \xrightarrow{\sim} E_2^\Tca$ by $\mathrm{SO}(W)$. This is done as in \cref{sec:SigmaS-OmegaS;SigmaS-DeltaS}: for $w$ as in \eqref{distinguished point}, we first base change \eqref{morphism Delta_S-Sigma_S} along $E=\PP(\Tca) \simeq [w] \times \PP (\Tca) \hookrightarrow \PP(W) \times \PP(\Tca)$, obtaining
\[
E^\Aca_{[w]} = \PP((W/\mathbb{C}w) \otimes \Tca_\vartheta) \xrightarrow{\sim} E_{2,[w]}^\Tca \left(\hookrightarrow \PP((W/\mathbb{C}w) \otimes [(\Lca^{\perp_\omega} / \Lca) \otimes \Lca^\vee])\right)
\]
(\cf \cref{rem:tangent space of P(W) at distinguished point}); then we take the GIT quotient by $\mathrm{St}([w]) = \mathrm{PO}(2)$ in two steps as usual. The quotient by $\mathrm{PSO}(2)$ yields a $\bm{\mu}_2$-equivariant closed imbedding
\begin{equation*}
\PP(\Tca_\vartheta) \times_{\PP(\Aca)}\PP(\Tca_\vartheta) \hookrightarrow \PP((\Lca^{\perp_\omega} / \Lca) \otimes \Lca^\vee) \times_E \PP((\Lca^{\perp_\omega} / \Lca) \otimes \Lca^\vee),
\end{equation*}
which is an isomorphism onto the diagonal of $\PP((\Lca^{\perp_\omega} / \Lca) \otimes \Lca^\vee) \times_E \PP((\Lca^{\perp_\omega} / \Lca) \otimes \Lca^\vee)$, i.e.\ onto the fixed locus for the $\bm{\mu}_2$-action.
Identifying $\PP (\Aca)$ with $\PP(\Tca_\vartheta) \times_{\PP(\Aca)}\PP(\Tca_\vartheta)$ and $\PP((\Lca^{\perp_\omega} / \Lca) \otimes \Lca^\vee)$ with $\PP(\Lca^{\perp_\omega} / \Lca)$, we end up with the following

\begin{proposition}\label{proposition:DeltaS-SigmaS}
We have an isomorphism
\[
\mathrm{\Lambda} \colon \PP (\Aca) \xrightarrow{\sim}\PP(\Lca^{\perp_\omega} / \Lca)
\]
comparing the descriptions of $(\mathrm{\Delta}_S \cap \mathrm{\Sigma}_S)^s \sslash \mathrm{PGL}(N)$ given in \cref{prop:DeltaS} and \cref{cor:DeltaS-SigmaS modulo PGLN}. 

The pullback $\mathrm{\Lambda}^*$ sends $\varrho^*\Lca^\vee$ to $\Oca_\vartheta(1)$, and the tautological quotient line bundle $\Oca_\varrho(1)$ of $\varrho \colon  \PP(\Lca^{\perp_\omega} / \Lca) \to \PP(\Tca)$ to $(\vartheta^*\Aca/\lambda^*\Lca)^\vee \simeq \vartheta^*\Oca_\alpha(1) \otimes \Oca_\vartheta(-1)$.
\end{proposition}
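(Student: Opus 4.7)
The plan is to reduce the proposition to an explicit description of $\Lambda$ as the morphism induced by a natural line subbundle of $\lambda^*(\Lca^{\perp_\omega}/\Lca)$ on $\PP(\Aca)$, from which both pullback identities will follow by tautology.

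First, I trace through the construction of $\Lambda$ preceding the statement. It factors as $\PP(\Aca) \xrightarrow{\tilde{\Lambda}} \PP[(\Lca^{\perp_\omega}/\Lca) \otimes \Lca^\vee] \xrightarrow{\mu} \PP(\Lca^{\perp_\omega}/\Lca)$, with $\mu$ the canonical identification and $\tilde{\Lambda}$ the morphism induced, via the universal property of projective bundles, by the line sub $\Tca_\vartheta \hookrightarrow \lambda^*[(\Lca^{\perp_\omega}/\Lca) \otimes \Lca^\vee]$ constructed above. Computing how $\mu$ relates the two tautological line subs (it introduces a twist by $\Lca$), one sees that $\Lambda$ is itself induced by the line sub $\Tca_\vartheta \otimes \lambda^*\Lca \hookrightarrow \lambda^*(\Lca^{\perp_\omega}/\Lca)$. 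Using the relative Euler identification $\Tca_\vartheta \simeq \Oca_\vartheta(1) \otimes (\vartheta^*\Aca/\lambda^*\Lca)$ together with $\lambda^*\Lca = \Oca_\vartheta(-1)$, this line sub simplifies to the \emph{natural} line subbundle $\vartheta^*\Aca/\lambda^*\Lca \hookrightarrow \lambda^*(\Lca^{\perp_\omega}/\Lca)$; the latter is visible directly from the isotropic chain $\lambda^*\Lca \subset \vartheta^*\Aca \subset \vartheta^*\Aca^{\perp_\omega} \subset \lambda^*\Lca^{\perp_\omega}$ on $\PP(\Aca)$, which holds because $\Aca$ is isotropic and $\lambda^*\Lca$ is a subbundle of $\vartheta^*\Aca$ by the very definition of $\lambda$.

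From this description, both pullback identities will follow immediately. For the first, since $\varrho \circ \Lambda = \lambda$ by construction, $\Lambda^*(\varrho^*\Lca^\vee) = \lambda^*\Lca^\vee = \Oca_\vartheta(1)$. For the second, $\Lambda^*\Oca_\varrho(-1) \simeq \vartheta^*\Aca/\lambda^*\Lca$ by the above, so $\Lambda^*\Oca_\varrho(1) \simeq (\vartheta^*\Aca/\lambda^*\Lca)^\vee$; the identification with $\vartheta^*\Oca_\alpha(1) \otimes \Oca_\vartheta(-1)$ is a short determinant computation based on the sequence $0 \to \lambda^*\Lca \to \vartheta^*\Aca \to \vartheta^*\Aca/\lambda^*\Lca \to 0$, together with $\det \vartheta^*\Aca = \vartheta^*\Oca_\alpha(-1)$ (since $\Oca_\alpha(1) = \wedge^2\Aca^\vee$) and $\lambda^*\Lca = \Oca_\vartheta(-1)$.

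The main obstacle lies in the first step: carefully tracking the twist that the identification $\mu$ introduces between the two tautological line subs. It is here that a sign or direction error would most naturally arise, and verifying the precise fiberwise relation $\mu^*\Oca_\varrho(-1) \simeq \Oca_{\varrho \circ \mu}(-1) \otimes (\varrho \circ \mu)^*\Lca$ is the crucial, if routine, piece of bookkeeping.
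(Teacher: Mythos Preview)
Your proposal is correct and follows essentially the same route as the paper: the proposition is stated as a summary of the construction carried out in the paragraphs preceding it, and you have faithfully unwound that construction, supplying the routine bookkeeping (the twist introduced by $\mu$, the Euler-sequence identification of $\Tca_\vartheta$, and the determinant computation) that the paper leaves implicit.
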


To provide a global description of $\widehat{\mathrm{\Delta}}$, we need now a detailed analysis of $\widehat{\mathrm{\Delta}} \cap \widehat{\mathrm{\Omega}}$, $\widehat{\mathrm{\Delta}} \cap \widehat{\mathrm{\Sigma}}$ and $\widehat{\mathrm{\Delta}} \cap \widehat{\mathrm{\Sigma}} \cap \widehat{\mathrm{\Omega}}$. 
The first intersection was described in \cref{prop:Delta hat}. The other two, by \cref{thm:Sigma hat}, are the $\bm{\mu}_2$-quotient of the exceptional divisors of the blow-ups of $\mathbb{I}$ and $\mathbb{J}$ along $\PP(\Lca^{\perp_\omega}/\Lca)$. Thus, it is important to compute the normal bundles $\Nca_{\PP(\Lca^{\perp_\omega} / \Lca )/\mathbb{I}}$ and $\Nca_{\PP(\Lca^{\perp_\omega}/\Lca)/\mathbb{J}}$.
\begin{lemma}\label{lem:normal bundles in I and J}
We have isomorphisms
\begin{gather*}
\mathrm{\Lambda}^* \Nca_{\PP(\Lca^{\perp_\omega}/\Lca)/\mathbb{J}} \simeq \vartheta^*[(\Aca^{\perp_\omega}/\Aca) \otimes \Oca_\alpha(1)] \otimes \Oca_\vartheta(-1), \\
\mathrm{\Lambda}^*\Nca_{\PP(\Lca^{\perp_\omega} / \Lca )/\mathbb{I}} \simeq \vartheta^*\Gca \otimes \Oca_\vartheta(-1),
\end{gather*}
where $\Gca$ is a locally free sheaf on $\Grass^\omega(2,\Tca)$ sitting into a short exact sequence
\[
\begin{tikzcd}
0 \ar[r] & (\Aca^{\perp_\omega}/\Aca) \otimes \Oca_\alpha(1) \ar[r] & \Gca \ar[r] & \Oca \ar[r] & 0.
\end{tikzcd}
\]
\end{lemma}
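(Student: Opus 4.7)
The plan is to compute each normal bundle via a standard tangent-normal exact sequence, and then transport the answer through $\mathrm{\Lambda}\colon \PP(\Aca) \xrightarrow{\sim} \PP(\Lca^{\perp_\omega}/\Lca)$ using \cref{proposition:DeltaS-SigmaS}. Set $F \coloneqq \Lca^{\perp_\omega}/\Lca$ and $D \coloneqq \PP(F)$. The symplectic form $\omega_\Lca$ induces an isomorphism $\varphi\colon F\xrightarrow{\sim} F^\vee$, and $D$ embeds in $\PP(F)\times_E \PP(F^\vee)$ as the graph of $\PP(\varphi)$; hence $\Nca_{D/\PP(F)\times_E\PP(F^\vee)} \simeq \PP(\varphi)^*T_{\PP(F^\vee)/E}$, which under $\PP(\varphi)$ becomes the relative tangent $T_\varrho = Q_\varrho\otimes \Oca_\varrho(1)$, where $Q_\varrho\coloneqq \varrho^*F/\Oca_\varrho(-1)$. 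The divisor $\mathbb{J}$ is cut out by the canonical section of $\Oca_{\PP(F)}(1)\boxtimes \Oca_{\PP(F^\vee)}(1)$; using $\PP(\varphi)^*\Oca_{\PP(F^\vee)}(\pm 1)\simeq \Oca_\varrho(\pm 1)$, this restricts to $\Oca_\varrho(2)$ on $D$.

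The key step is to identify the resulting surjection $T_\varrho \twoheadrightarrow \Oca_\varrho(2)$ with the map induced by contracting with $\omega_\Lca$: differentiating the defining equation $\xi(v)$ of $\mathbb{J}$ along a normal direction $w\in Q_\varrho|_{[v]}$ at a point $(v,\varphi(v))$ yields (up to sign) $\omega_\Lca(v,w)$, i.e.\ the map $Q_\varrho\to \Oca_\varrho(1)$ sending $\bar u\mapsto \omega_\Lca(\,\cdot\,,u)|_{\Oca_\varrho(-1)}$ (well-defined because $\omega_\Lca$ is alternating), tensored by $\Oca_\varrho(1)$. Its kernel is $(\Oca_\varrho(-1)^{\perp_\omega}/\Oca_\varrho(-1))\otimes \Oca_\varrho(1)$, giving
\[
\Nca_{D/\mathbb{J}} \simeq (\Oca_\varrho(-1)^{\perp_\omega}/\Oca_\varrho(-1))\otimes \Oca_\varrho(1).
\]
Geometrically, $\mathrm{\Lambda}$ sends $([Z],\Pi,\ell)\in \PP(\Aca)$ to $([Z],[\ell],[\Pi/\ell])$, so $\mathrm{\Lambda}^*\Oca_\varrho(-1) \simeq \vartheta^*\Aca/\Oca_\vartheta(-1)$ and, since $(\Pi/\ell)^{\perp_\omega}/(\Pi/\ell)=\Pi^{\perp_\omega}/\Pi$ inside $\ell^{\perp_\omega}/\ell$, one has $\mathrm{\Lambda}^*(\Oca_\varrho(-1)^{\perp_\omega}/\Oca_\varrho(-1))\simeq \vartheta^*(\Aca^{\perp_\omega}/\Aca)$. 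Combined with $\mathrm{\Lambda}^*\Oca_\varrho(1)\simeq \vartheta^*\Oca_\alpha(1)\otimes \Oca_\vartheta(-1)$ from \cref{proposition:DeltaS-SigmaS}, this yields the first isomorphism.

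For the second, use the chain $D\subset \mathbb{J}\subset \mathbb{I}$: since $\mathbb{J}=\mathbb{I}\times_{\Xca^{[n]}} E$ is the pullback of the Cartier divisor $E\subset \Xca^{[n]}$, whose normal bundle is $\Lca$ (as $E$ is the exceptional divisor of $b$), we get $\Nca_{\mathbb{J}/\mathbb{I}}|_D \simeq \varrho^*\Lca$, and $\mathrm{\Lambda}^*(\varrho^*\Lca)\simeq \Oca_\vartheta(-1)$. The tangent-normal sequence, pulled back by $\mathrm{\Lambda}$ and tensored by $\Oca_\vartheta(1)$, gives an extension
\[
0 \to \vartheta^*[(\Aca^{\perp_\omega}/\Aca)\otimes \Oca_\alpha(1)] \to \mathrm{\Lambda}^*\Nca_{D/\mathbb{I}} \otimes \Oca_\vartheta(1) \to \Oca \to 0.
\]
Since $\vartheta$ is a $\PP^1$-bundle with $\Rder\vartheta_*\Oca\simeq \Oca$, the projection formula identifies $\mathrm{Ext}^1_{\PP(\Aca)}(\Oca, \vartheta^*\Fca) \simeq \mathrm{Ext}^1_{\Grass^\omega(2,\Tca)}(\Oca,\Fca)$ for any locally free $\Fca$ on $\Grass^\omega(2,\Tca)$; hence the above extension is the pullback of a unique extension $\Gca$ on $\Grass^\omega(2,\Tca)$ of the form claimed, and $\mathrm{\Lambda}^*\Nca_{D/\mathbb{I}}\simeq \vartheta^*\Gca\otimes \Oca_\vartheta(-1)$. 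The main delicate point is the explicit identification of the connecting map in the first step with the symplectic contraction, which requires carefully tracking representative vectors and the bundle identifications $\PP(\varphi)^*\Oca_{\PP(F^\vee)}(1) \simeq \Oca_\varrho(1)$; everything else is a formal consequence.
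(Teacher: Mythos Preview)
Your proof is correct and follows the same overall architecture as the paper's: both use the filtration $\PP(\Lca^{\perp_\omega}/\Lca)\subset \mathbb{J}\subset \mathbb{I}$, identify $\Nca_{\mathbb{J}/\mathbb{I}}|_D\simeq \varrho^*\Lca$, and then invoke the full faithfulness of $\vartheta^*$ to descend the extension to $\Grass^\omega(2,\Tca)$. The execution differs in two places. For $\Nca_{D/\mathbb{J}}$, the paper realizes $\mathbb{J}$ as the projective bundle $\PP(\Oca_\varrho(-1)^\perp)$ over $D$ and reads off the normal bundle of the section as $\Qca\otimes\Oca_\varrho(1)$ with $\Qca=\Oca_\varrho(-1)^\perp/\Oca_\varrho(-1)$; you instead view $D$ as the graph in $\PP(F)\times_E\PP(F^\vee)$ and differentiate the incidence equation, landing at the same bundle once one identifies $\Oca_\varrho(-1)^\perp$ with $\Oca_\varrho(-1)^{\perp_\omega}$ via $\omega_\Lca$. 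For $\mathrm{\Lambda}^*\Qca$, the paper runs a careful snake-lemma diagram chase through two interlocking short exact sequences relating $\vartheta^*\Aca$, $\lambda^*\Lca^{\perp_\omega}$ and their quotients; you shortcut this by observing directly from \cref{proposition:DeltaS-SigmaS} that $\mathrm{\Lambda}$ sends $([Z],[A],[L])$ to $([Z],[L],[A/L])$ and that $(A/L)^{\perp_{\omega_\Lca}}/(A/L)=A^{\perp_\omega}/A$ inside $L^{\perp_\omega}/L$. Your route is more geometric and a bit faster; the paper's is more explicitly functorial and avoids the pointwise argument, which some readers might prefer when checking that the fiberwise identification really comes from a global isomorphism of sheaves.
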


\begin{proof}
The inclusions of smooth varieties $\PP(\Lca^{\perp_\omega} / \Lca) \hookrightarrow \mathbb{J} \hookrightarrow \mathbb{I}$ yield a short exact sequence
\begin{equation}\label{ses of normal bundles}
\begin{tikzcd}
0 \ar[r] & \Nca_{\PP(\Lca^{\perp_\omega} / \Lca)/\mathbb{J}} \ar[r] & \Nca_{\PP(\Lca^{\perp_\omega} / \Lca)/\mathbb{I}} \ar[r] & \Nca_{\mathbb{J}/\mathbb{I}}|_{\PP(\Lca^{\perp_\omega} / \Lca)} \ar[r] & 0.
\end{tikzcd}
\end{equation}
Since $r \colon \mathbb{I} \to \Xca^{[n]}$ is flat, and since its base change along $E \hookrightarrow \Xca^{[n]}$ is $\rho \colon \mathbb{J} \to E$, we have $\Nca_{\mathbb{J}/\mathbb{I}} \simeq \rho^* \Nca_{E/\Xca^{[n]}}= \rho^*\Lca$; hence, $\Nca_{\mathbb{J}/\mathbb{I}}|_{\PP(\Lca^{\perp_\omega} / \Lca)} = \varrho^*\Lca$, which corresponds to $\Oca_\vartheta(-1)$ under $\mathrm{\Lambda}$ by \cref{proposition:DeltaS-SigmaS}.

Let us move on to $\Nca_{\PP(\Lca^{\perp_\omega}/\Lca)/\mathbb{J}}$. Consider the commutative diagram with exact second row
\begin{equation}\label{J as projective bundle}
\begin{tikzcd}
0 \ar[r] & \Oca_\varrho(-1) \ar[r] \ar[d, hook] &\varrho^*(\Lca^{\perp_\omega}/\Lca) \isoarrow{d} & & \\
0 \ar[r] & \Oca_\varrho(-1)^\perp \ar[r] & \varrho^*(\Lca^{\perp_\omega}/\Lca)^\vee \ar[r] & \Oca_\varrho(1) \ar[r] & 0.
\end{tikzcd}
\end{equation}
The incidence divisor $\mathbb{J}$ in $\PP (\Lca^{\perp_\omega}/\Lca) \times_E \PP ((\Lca^{\perp_\omega}/\Lca)^\vee)$, regarded as a projective bundle over $\PP(\Lca^{\perp_\omega}/\Lca)$, is isomorphic to $\PP(\Oca_\varrho(-1)^\perp)$. Furthermore, $\PP(\Lca^{\perp_\omega}/\Lca)$ is imbedded in $\mathbb{J}$ as the section corresponding to the inclusion $\Oca_\varrho(-1) \hookrightarrow \Oca_\varrho(-1)^\perp$. If we denote by $\Qca$ its cokernel, we can express the normal bundle to $\PP(\Lca^{\perp_\omega}/\Lca)$ in $\mathbb{J}$ as
\[
\Nca_{\PP(\Lca^{\perp_\omega}/\Lca)/\mathbb{J}} \simeq \Qca \otimes \Oca_\varrho(1).
\]
The isomorphism $\mathrm{\Lambda} \colon \PP(\Aca) \xrightarrow{\sim} \PP(\Lca^{\perp_\omega}/\Lca)$ yields an alternative description of $\Nca_{\PP(\Lca^{\perp_\omega}/\Lca)/\mathbb{J}}$. First, we claim that 
\begin{equation}\label{Pullback of Q by Lambda}
\mathrm{\Lambda}^*\Qca \simeq \vartheta^*( \Aca^{\perp_\omega}/ \Aca).
\end{equation}
The pullback of \eqref{J as projective bundle} by $\mathrm{\Lambda}$ can be completed to a morphism of short exact sequences
\[
\begin{tikzcd}
0 \ar[r] & \vartheta^*\Aca/ \lambda^*\Lca \ar[r] \ar[d, hook,"\mu'"] & \lambda^*(\Lca^{\perp_\omega}/\Lca) \isoarrow{d} \ar[r] & \lambda^*(\Lca^{\perp_\omega})/\vartheta^*\Aca \ar[r] \ar[d, "\mu''"] & 0 \\
0 \ar[r] & \mathrm{\Lambda}^*(\Oca_\varrho(-1)^\perp) \ar[r] & \lambda^*(\Lca^{\perp_\omega}/\Lca)^\vee \ar[r] & (\vartheta^*\Aca/ \lambda^*\Lca)^\vee \ar[r] & 0.
\end{tikzcd}
\]
By the snake lemma, $\coker(\mu')$ is isomorphic to $\ker(\mu'')$. We compute it applying the snake lemma to the morphism of short exact sequences
\[
\begin{tikzcd}
0 \ar[r] & \vartheta^*(\Aca^{\perp_\omega}/\Aca) \ar[r] \ar[d, "\nu"] & \vartheta^*(\alpha^*\Tca/\Aca) \ar[d,equal] \ar[r] & \vartheta^*\Aca^\vee \ar[r] \ar[d] & 0 \\
0 \ar[r] & \lambda^* (\Lca^{\perp_\omega})/\vartheta^*\Aca \ar[r] & (\beta \circ \lambda)^*\Tca/\vartheta^*\Aca \ar[r] &  \lambda^*\Lca^\vee \ar[r] & 0,
\end{tikzcd}
\]
which yields the short exact sequence
\[
\begin{tikzcd}
0 \ar[r] & \vartheta^*(\Aca^{\perp_\omega}/\Aca) \ar[r,"\nu"] & \lambda^*(\Lca^{\perp_\omega})/\vartheta^*\Aca \ar[r,"\mu''"] & (\vartheta^*\Aca/\lambda^*\Lca)^\vee \ar[r] & 0.
\end{tikzcd}
\]
We deduce that $\coker(\mu') \simeq \ker(\mu'') \simeq \vartheta^*(\Aca^{\perp_\omega}/\Aca)$. This proves the isomorphism \eqref{Pullback of Q by Lambda}, which in turn implies, via \cref{proposition:DeltaS-SigmaS},
\[
\mathrm{\Lambda}^* \Nca_{\PP(\Lca^{\perp_\omega}/\Lca)/\mathbb{J}} \simeq \vartheta^*[(\Aca^{\perp_\omega}/\Aca) \otimes \Oca_\alpha(1)] \otimes \Oca_\vartheta(-1).
\]
As a consequence, by \eqref{ses of normal bundles},
$\mathrm{\Lambda}^* \Nca_{\PP(\Lca^{\perp_\omega}/\Lca)/\mathbb{I}}$ can be regarded as an element in 
\[
\mathrm{Ext}^1(\Oca_\vartheta(-1), \vartheta^*[(\Aca^{\perp_\omega}/\Aca) \otimes \Oca_\alpha(1)] \otimes \Oca_\vartheta(-1));
\]
under the fully faithful functor $\vartheta^*(-) \otimes \Oca_\vartheta(-1)$, it corresponds to a unique extension $\Gca \in \mathrm{Ext}^1(\Oca,(\Aca^{\perp_\omega}/\Aca) \otimes \Oca_\alpha(1))$, which has to be locally free.
\end{proof}
We denote as $s \colon \PP(\Gca) \to \Grass^\omega(2,\Tca)$ the projective bundle associated to $\Gca$, and as $\Oca_s(1)$ the corresponding tautological quotient line bundle.
\begin{proposition}\label{cor:Delta-Sigma; Delta-Sigma-Omega} The following hold true.
\begin{enumerate}
\item[(1)] $\widehat{\mathbb{E}} \simeq \PP(\Aca) \times_{\Grass^\omega(2,\Tca)} \PP(\Gca)$, and $\Nca_{\widehat{\mathbb{E}}/\widehat{\mathbb{I}}}$ corresponds to $\Oca_\vartheta(-1) \boxtimes \Oca_s(-1)$. 
\item[(2)] $\widehat{\mathrm{\Sigma}} \cap \widehat{\mathrm{\Delta}} \simeq \PP(\Aca) \times_{\Grass^\omega(2,\Tca)} \PP(\Gca)$.

\smallskip

\item[(3)] $\widehat{\mathbb{E}} \cap \widehat{\mathbb{J}} \simeq \PP(\Aca) \times_{\Grass^\omega(2,\Tca)} \PP((\Aca^{\perp_\omega}/\Aca) \otimes \Oca_\alpha(1))$.
\item[(4)] $\widehat{\mathrm{\Sigma}} \cap \widehat{\mathrm{\Delta}} \cap \widehat{\mathrm{\Omega}} \simeq \PP(\Aca) \times_{\Grass^\omega(2,\Tca)} \PP((\Aca^{\perp_\omega}/\Aca) \otimes \Oca_\alpha(1))$.
\end{enumerate}
\end{proposition}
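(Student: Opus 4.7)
The plan is to obtain items (1) and (3) directly from the general blow-up formula combined with \cref{lem:normal bundles in I and J}, then to deduce items (2) and (4) from \cref{thm:Sigma hat}(2),(4) by checking that the residual $\bm{\mu}_2$-action on the exceptional divisors at stake is trivial.

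First, I would handle (1). Since both $\PP(\Lca^{\perp_\omega}/\Lca)$ (a projective bundle over $E$) and $\mathbb{I}$ (an incidence divisor in a product of projective bundles over the smooth $\Xca^{[n]}$) are smooth, the exceptional divisor of the blow-up is canonically $\widehat{\mathbb{E}} \simeq \PP(\Nca_{\PP(\Lca^{\perp_\omega}/\Lca)/\mathbb{I}})$, with $\Nca_{\widehat{\mathbb{E}}/\widehat{\mathbb{I}}}$ its tautological sub line bundle. Pulling back along $\mathrm{\Lambda}\colon \PP(\Aca)\xrightarrow{\sim}\PP(\Lca^{\perp_\omega}/\Lca)$ and inserting the formula of \cref{lem:normal bundles in I and J} for $\mathrm{\Lambda}^*\Nca_{\PP(\Lca^{\perp_\omega}/\Lca)/\mathbb{I}}$ gives
\[
\widehat{\mathbb{E}} \simeq \PP\bigl(\vartheta^*\Gca\otimes \Oca_\vartheta(-1)\bigr) \simeq \PP(\vartheta^*\Gca) \simeq \PP(\Aca)\times_{\Grass^\omega(2,\Tca)}\PP(\Gca).
\]
The standard twist formula $\Oca_{\PP(\Fca\otimes \Lca)}(-1) \simeq \Oca_{\PP(\Fca)}(-1)\otimes \tau^*\Lca$ then identifies $\Nca_{\widehat{\mathbb{E}}/\widehat{\mathbb{I}}}$ with $\Oca_\vartheta(-1)\boxtimes\Oca_s(-1)$ under the fibre-product decomposition. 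Item (3) is obtained verbatim by replacing $\mathbb{I}$ with $\mathbb{J}$ and invoking the companion statement in \cref{lem:normal bundles in I and J}, after noting that $\widehat{\mathbb{E}}\cap \widehat{\mathbb{J}}$ is the exceptional divisor of the blow-up of $\mathbb{J}$ along $\PP(\Lca^{\perp_\omega}/\Lca)$.

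For (2) and (4), \cref{thm:Sigma hat}(2),(4) yields $\widehat{\mathrm{\Sigma}}\cap\widehat{\mathrm{\Delta}}\simeq \widehat{\mathbb{E}}/\bm{\mu}_2$ and $\widehat{\mathrm{\Sigma}}\cap\widehat{\mathrm{\Delta}}\cap\widehat{\mathrm{\Omega}}\simeq (\widehat{\mathbb{E}}\cap\widehat{\mathbb{J}})/\bm{\mu}_2$. The observation I will rely on is that both $\bm{\mu}_2$-actions are trivial: since $\PP(\Lca^{\perp_\omega}/\Lca)$ is the full fixed locus of the involution on both $\mathbb{I}$ and $\mathbb{J}$, and since all the schemes in sight are smooth, the differential at any fixed point acts on the normal bundles $\Nca_{\PP(\Lca^{\perp_\omega}/\Lca)/\mathbb{I}}$ and $\Nca_{\PP(\Lca^{\perp_\omega}/\Lca)/\mathbb{J}}$ with trivial $+1$-eigenspace, hence fibrewise as $-\mathrm{Id}$. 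Scalar multiplication descends to the identity on the projectivization, so the quotient maps in (2) and (4) are isomorphisms onto the descriptions given in (1) and (3).

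The main obstacle I foresee is the bookkeeping of the $\Oca_\vartheta(-1)$-twist in (1): one must carefully track how the tautological sub on $\PP(\vartheta^*\Gca\otimes \Oca_\vartheta(-1))$ translates, via the twist isomorphism and the Segre-type identification $\PP(\vartheta^*\Gca) \simeq \PP(\Aca)\times_{\Grass^\omega(2,\Tca)}\PP(\Gca)$, into the precise external product $\Oca_\vartheta(-1)\boxtimes \Oca_s(-1)$. Beyond this, everything reduces to standard blow-up and projective-bundle formalism once \cref{thm:Sigma hat} and \cref{lem:normal bundles in I and J} are granted.
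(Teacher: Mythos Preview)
Your proposal is correct and follows essentially the same approach as the paper: items (1) and (3) come from the blow-up formula together with \cref{lem:normal bundles in I and J}, and items (2) and (4) follow from \cref{thm:Sigma hat} once one knows that $\bm{\mu}_2$ acts as $-\mathrm{Id}$ on the normal bundle fibres, hence trivially on their projectivisations. The only cosmetic difference is in how the $-1$ action is justified: the paper traces it through the explicit pieces $\Nca_{\PP(\Lca^{\perp_\omega}/\Lca)/\mathbb{J}}$ and $\Nca_{\mathbb{J}/\mathbb{I}}|_{\PP(\Lca^{\perp_\omega}/\Lca)}$ appearing in the proof of \cref{lem:normal bundles in I and J}, whereas you invoke the general smooth fixed-locus principle directly.
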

\begin{proof}
It suffices to go through the proof of \cref{lem:normal bundles in I and J}: statements $(1)$ and $(3)$ follow immediately; to prove $(2)$ and $(4)$, note that $\bm{\mu}_2$ acts on the fibres of $\Nca_{\PP(\Lca^{\perp_\omega} / \Lca)/\mathbb{J}}$, $\Nca_{\mathbb{J}/\mathbb{I}}|_{\PP(\Lca^{\perp_\omega} / \Lca)}$, and hence also on those of $\Nca_{\PP(\Lca^{\perp_\omega} / \Lca)/\mathbb{I}}$, as the multiplication by $-1$; as a consequence, $\bm{\mu}_2$ acts trivially on the associated projective bundles.
\end{proof}

\begin{proposition}\label{prop:Delta global}
The divisor $\widehat{\mathrm{\Delta}}$ of $\widehat{M}_n$ is isomorphic to $\PP (\mathrm{S}^2\Aca) \times_{\Grass^\omega(2,\Tca)} \PP (\Gca)$.
\end{proposition}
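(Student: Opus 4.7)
The plan is to show $\widehat{\mathrm{\Delta}} \simeq \PP(\pi^*\Gca)$, where $\pi \colon \PP(\mathrm{S}^2\Aca) \to \Grass^\omega(2,\Tca)$ is the structure morphism; the claim then follows, since $\PP(\pi^*\Gca) \simeq \PP(\mathrm{S}^2\Aca) \times_{\Grass^\omega(2,\Tca)} \PP(\Gca)$ by base change of projective bundles. By \cref{prop:Delta hat} and \cref{rem:Delta hat and Omega hat}, $\pi'\colon \widehat{\mathrm{\Delta}} \to \PP(\mathrm{S}^2\Aca)$ is a projective bundle $\PP(\Hca)$ for a locally free sheaf $\Hca$ of rank $2n-3$, and the divisor $\widehat{\mathrm{\Omega}} \cap \widehat{\mathrm{\Delta}}$ is the relative hyperplane $\PP(\pi^*(\Aca^{\perp_\omega}/\Aca))$. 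After a suitable line-bundle twist of $\Hca$ (which does not affect $\PP(\Hca)$), this inclusion lifts to a short exact sequence
\[
0 \to \pi^*(\Aca^{\perp_\omega}/\Aca) \to \Hca \to \mathcal{L} \to 0
\]
for some line bundle $\mathcal{L}$ on $\PP(\mathrm{S}^2\Aca)$. It suffices to show $\Hca \otimes \pi^*\Oca_\alpha(1) \simeq \pi^*\Gca$.

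First, I would pin down $\mathcal{L}$ by restricting along the Veronese-type embedding $i\colon \PP(\Aca) \hookrightarrow \PP(\mathrm{S}^2\Aca)$, $[v]\mapsto [v^{\otimes 2}]$, under which $\Oca_\pi(1)$ pulls back to $\Oca_\vartheta(2)$ and $i^*\pi^*=\vartheta^*$. The scheme-theoretic preimage of $\PP(\Aca)$ under $\pi'$ coincides with $\widehat{\mathrm{\Delta}}\cap\widehat{\mathrm{\Sigma}}$: both are smooth, irreducible, of the same dimension, with the latter contained in the former. Then \cref{cor:Delta-Sigma; Delta-Sigma-Omega}(2), (4) identify the restriction of the above exact sequence, up to a further line-bundle twist $\mathcal{M}$ on $\PP(\Aca)$, with $\vartheta^*[0 \to (\Aca^{\perp_\omega}/\Aca) \otimes \Oca_\alpha(1) \to \Gca \to \Oca \to 0]$; comparing the two expressions for the subbundle via determinants forces $\mathcal{M} \simeq \vartheta^*\Oca_\alpha(-1)$, whence $\mathcal{L}|_{\PP(\Aca)} \simeq \vartheta^*\Oca_\alpha(-1)$. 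Since the restriction map $\mathrm{Pic}(\PP(\mathrm{S}^2\Aca)) \to \mathrm{Pic}(\PP(\Aca))$ is injective (both spaces being projective bundles over $\Grass^\omega(2,\Tca)$), we deduce $\mathcal{L} \simeq \pi^*\Oca_\alpha(-1)$ globally.

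Second, to match the extension class globally, I would use the projection formula together with the vanishings $R^{>0}\pi_*\Oca = 0 = R^{>0}\vartheta_*\Oca$ to obtain canonical identifications
\[
\mathrm{Ext}^1_{\PP(\mathrm{S}^2\Aca)}\!\bigl(\mathcal{L}, \pi^*(\Aca^{\perp_\omega}/\Aca)\bigr) \simeq \Ho^1\!\bigl(\Grass^\omega(2,\Tca), (\Aca^{\perp_\omega}/\Aca) \otimes \Oca_\alpha(1)\bigr)
\]
and analogously on $\PP(\Aca)$, under which $i^*$ becomes the identity on the common Ext-group. The class of $\pi^*\Gca \otimes \pi^*\Oca_\alpha(-1)$ is then tautologically pulled back from the class of $\Gca$, and the previous step shows that it matches the class of $\Hca$ after restriction to $\PP(\Aca)$; hence the two classes agree globally, yielding $\Hca \simeq \pi^*\Gca \otimes \pi^*\Oca_\alpha(-1)$ and the desired isomorphism.

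The main obstacle, I expect, will be the careful bookkeeping of line-bundle twists at each step: the sheaf $\Hca$ is only well-defined up to a twist, so the normalization in the first step, the determinant and Picard-group computations in the second step, and the cohomological comparison in the third step must all be mutually compatible. The cleanest way to manage this is probably to phrase everything in terms of the isomorphism class of the extension $0 \to \pi^*[(\Aca^{\perp_\omega}/\Aca)\otimes\Oca_\alpha(1)] \to \Hca\otimes\pi^*\Oca_\alpha(1)\to \Oca \to 0$, which by the above lives in the same cohomology group on $\Grass^\omega(2,\Tca)$ as the extension defining $\Gca$.
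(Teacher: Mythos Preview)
Your proposal is correct and follows essentially the same approach as the paper's proof: both start from \cref{prop:Delta hat} to write $\widehat{\mathrm{\Delta}}$ as a projective bundle fitting into a short exact sequence with an undetermined line-bundle twist, restrict along $i\colon\PP(\Aca)\hookrightarrow\PP(\mathrm{S}^2\Aca)$ and compare with \cref{cor:Delta-Sigma; Delta-Sigma-Omega} to pin down that twist, invoke injectivity of $i^*$ on Picard groups, and then match extension classes via the identification $\mathrm{Ext}^1$ on $\PP(\mathrm{S}^2\Aca)$ with $\mathrm{Ext}^1$ on $\Grass^\omega(2,\Tca)$. The only cosmetic difference is that the paper places the unknown line bundle as a twist of the subbundle (writing $0\to t^*(\Aca^{\perp_\omega}/\Aca)\otimes\Mca_1\to\Sca\to\Oca\to 0$) rather than as the quotient, and justifies the Picard injectivity via torsion-freeness of $\mathrm{Pic}(\Grass^\omega(2,\Tca))$ rather than the projective-bundle structure; your determinant comparison and your final reformulation in terms of the class in $\Ho^1(\Grass^\omega(2,\Tca),(\Aca^{\perp_\omega}/\Aca)\otimes\Oca_\alpha(1))$ accomplish the same thing.
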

\begin{proof}
Consider the projective bundle $t \colon \PP(\mathrm{S}^2 \Aca) \to \Grass^\omega(2,\Tca)$ parametrising quadrics in the fibres of $\PP(\Aca^\vee)$ over $\Grass^\omega(2,\Tca)$. Let $i \colon \PP(\Aca) \to \PP(\mathrm{S}^2 \Aca)$ be the inclusion of the locus of rank-1 quadrics; in particular, $t \circ i = \vartheta$. 

By \cref{prop:Delta hat}, $\widehat{\mathrm{\Delta}} \simeq \PP (\Sca)$ for some vector bundle $\Sca$ on $\PP(\mathrm{S}^2\Aca)$ sitting into a short exact sequence
\begin{equation}\label{short exact sequence projective bundle Delta}
\begin{tikzcd}
0 \ar[r] & t^*(\Aca^{\perp_\omega}/\Aca) \otimes \Mca_1 \ar[r] & \Sca \ar[r] & \Oca \ar[r] & 0,
\end{tikzcd}
\end{equation}
where $\Mca_1$ is a suitable line bundle on $\PP(\mathrm{S}^2\Aca)$. 

The pullback of \eqref{short exact sequence projective bundle Delta} along $i$ coincides, a priori up to a twist by a line bundle $\Mca_2 \in \mathrm{Pic}(\PP (\Aca))$, with the short exact sequence
\[
\begin{tikzcd}
0 \ar[r] & \vartheta^*[(\Aca^{\perp_\omega}/\Aca) \otimes \Oca_\alpha(1)] \ar[r] & \vartheta^*\Gca \ar[r] & \Oca \ar[r] & 0.
\end{tikzcd}
\]
This implies that $\Mca_2$ is trivial, so that $i^*[t^*(\Aca^{\perp_\omega}/\Aca) \otimes \Mca_1] \simeq \vartheta^*[(\Aca^{\perp_\omega}/\Aca) \otimes \Oca_\alpha(1)]$. Now, the Picard groups of $\Grass^\omega(2,\Tca)$ and hence of $\PP(\Aca)$ are torsion-free: using \cite[Ex.\ III.12.4]{Rag} and then the injectivity of $\alpha^* \colon \mathrm{Pic}(X^{[n]}) \to \mathrm{Pic}(\Grass^\omega(2,\Tca))$, this comes from the torsion-freeness of the Picard groups of any fibre of $\alpha$ and of $X^{[n]}$. We deduce that $i^*\Mca_1\simeq \vartheta^*\Oca_\alpha(1)$, and also $\Mca_1 \simeq t^*\Oca_\alpha(1)$ because $i^* \colon \mathrm{Pic}(\PP(\mathrm{S}^2\Aca)) \to \mathrm{Pic}(\PP(\Aca))$ is injective.

Therefore, $\Sca$ can be regarded as an element in $ \mathrm{Ext}^1(\Oca,t^*[(\Aca^{\perp_\omega}/\Aca) \otimes \Oca_\alpha(1)])$. Since $i^*\Sca \simeq \vartheta^* \Gca$, we conclude that $\Sca \simeq t^*\Gca$ thanks to the commutative diagram
\[
\begin{tikzcd}
 \mathrm{Ext}^1(\Oca,(\Aca^{\perp_\omega}/\Aca) \otimes \Oca_\alpha(1)) \ar[d, "t^*"', "\sim"]  \ar[dr,"\vartheta^*", "\sim"']  \\
\mathrm{Ext}^1(\Oca,t^*[(\Aca^{\perp_\omega}/\Aca) \otimes \Oca_\alpha(1)]) \ar[r,"\Lder i^*"]  & \mathrm{Ext}^1(\Oca,\vartheta^*[(\Aca^{\perp_\omega}/\Aca) \otimes \Oca_\alpha(1)]).
\end{tikzcd} \qedhere
\]
\end{proof}
The relations between $\widehat{\mathrm{\Sigma}}$ and $\widehat{\mathrm{\Delta}}$ are summarised in the following diagram:
\[
\begin{tikzcd}[column sep = 0.7 em, row sep=1.3em, scale cd=0.8]
\widehat{\mathbb{I}} \ar[rd] \ar[dd] & & \widehat{\mathbb{E}} \ar[ll, hook'] \ar[ld, hook'] \ar[dd] & & \PP(\Aca) \times_{\Grass^\omega(2,\Tca)} \PP(\Gca) \ar[rd, hook] \ar[ll,"\sim"'] \ar[ldd] \ar[rdd] & & \widehat{\mathbb{E}} \cap \widehat{\mathbb{J}} \ar[ll, hook'] \ar[rd, hook] \ar[dd] \\
& \widehat{\mathrm{\Sigma}}  & & & & \widehat{\mathrm{\Delta}} \ar[ldd, crossing over] & & \widehat{\mathrm{\Delta}} \cap \widehat{\mathrm{\Omega}} \ar[ll, hook', crossing over] \ar[dd]\\
\mathbb{I} \ar[rd] \ar[dd, "r"] & & \PP(\Lca^{\perp_\omega}/\Lca) \ar[ll, hook'] \ar[ld, hook'] \ar[dd, "\varrho"] & \PP(\Aca) \ar[l,"\sim","\mathrm{\Lambda}"'] \ar[ddr,"\vartheta"'] \ar[rd, hook]  & & \PP(\Gca) \ar[rd, equal] \ar[ddl] & \PP(\Aca^{\perp_\omega}/\Aca) \ar[l, hook'] \ar[rd, equal] \\
& \mathbb{I}/\bm{\mu}_2 \ar[from=uu, crossing over] & & & \PP(\mathrm{S}^2\Aca) \ar[rdd,"t", crossing over] & &  \PP(\Gca) \ar[ldd] \ar[from=uul, crossing over] & \PP(\Aca^{\perp_\omega}/\Aca) \ar[l, hook'] \\
\Xca^{[n]} \ar[rd] \ar[dd, "b"] & &  \PP(\Tca) \ar[ll, hook'] \ar[ld, hook'] \ar[dd,"\beta"] & &
 \Grass^\omega(2,\Tca) \ar[dd,"\alpha"] \ar[rd,equal] &  \\
& \left(X^{[n]}\right)^{[2]} \ar[from=uu, crossing over] & & & &  \Grass^\omega(2,\Tca) \ar[dd] \\
X^{[n]} \times X^{[n]} \ar[rd] &  & X^{[n]} \ar[ll, hook'] \ar[ld, hook'] & & X^{[n]} \ar[rd, equal] \ar[ll, equal] & & & \\
& \mathrm{S}^2X^{[n]} \ar[from=uu, crossing over] & & & & X^{[n]}. 
\end{tikzcd}
\]
\subsection{Further descriptions of $\widehat{\mathbb{J}}$} 
To conclude this section, we compare the different descriptions of $\widehat{\mathrm{\Sigma}} \cap \widehat{\mathrm{\Omega}}$ given in \cref{prop:OmegaT;OmegaT-SigmaT;DeltaT-SigmaT;OmegaT-SigmaT-DeltaT}(6), \cref{prop:dual conics}(3) and \cref{thm:Sigma hat}(3). 

\smallskip

We introduce some notation. Let $\gamma \colon \Grass^\omega(3, \Tca) \to X^{[n]}$ be the relative symplectic Grassmannian, $\Bca \subset \gamma^*\Tca$ its tautological rank-three bundle and $\Oca_\gamma(1) \coloneqq \wedge^3 \Bca^\vee$. Consider the projective bundle $\chi \colon \PP (\Bca) \to \Grass^\omega(3, \Tca)$, with tautological quotient line bundle $\Oca_\chi(1)$ and relative tangent bundle  $\Tca_\chi$.
Let $\varkappa \colon \PP (\Tca_\chi) \to \PP (\Bca)$ be the associated projective bundle with tautological quotient $\Oca_\varkappa(1)$, and denote by $\kappa \colon \PP (\Tca_\chi) \times_{\PP (\Bca)} \PP (\Tca_\chi) \to \PP (\Bca)$ the relative Segre variety; in particular, the restriction of $\kappa$ to the diagonal is $\varkappa$. For any integers $i$ and $j$, set $\Oca_\kappa(i,j) \coloneqq \Oca_\varkappa(i) \boxtimes \Oca_\varkappa(j)$.

\smallskip

To achieve our goal, we consider the morphism $E^\Bca \to E^\Tca$ and the isomorphism $E^\Bca \xrightarrow{\sim} \mathrm{Bl}_{E^\Tca_2}E^\Tca$ over $\PP(W) \times \PP(\Tca)$ that were obtained in \cref{sec:complete conics}, and compute their GIT quotient by $\mathrm{SO}(W)$. We argue exactly as we have done at the beginning of \cref{sec:Delta hat}: for $w$ in \eqref{distinguished point}, we take the base change along $[w] \times \PP (\Tca) \hookrightarrow \PP(W) \times \PP(\Tca)$, then the GIT quotient by $\mathrm{St}([w]) = \mathrm{PO}(2)$.

\smallskip

First, we get an $X^{[n]}$-morphism
\[
\psi \colon \PP(\Bca) \to \PP(\Tca)
\]
such that $\psi^*\Lca^\vee = \Oca_\chi(1)$, and also a $\bm{\mu}_2$-equivariant composite morphism
\begin{equation*}
\PP(\Tca_\chi) \times_{\PP(\Bca)}\PP(\Tca_\chi) \to \PP((\Lca^{\perp_\omega} / \Lca) \otimes \Lca^\vee) \times_E \PP((\Lca^{\perp_\omega} / \Lca) \otimes \Lca^\vee) \xrightarrow{\sim} \PP(\Lca^{\perp_\omega} / \Lca) \times_E \PP(\Lca^{\perp_\omega} / \Lca),
\end{equation*}
which factors through
\[
\mathrm{\Psi} \colon \PP(\Tca_\chi) \times_{\PP(\Bca)}\PP(\Tca_\chi) \to \mathbb{J}
\] 
(note that $\mathrm{\Psi}^*$ sends $\Oca_\rho(i,j)\coloneqq [\Oca_\varrho(i) \boxtimes \Oca_\varrho(j)]|_{\mathbb{J}}$ to $\Oca_\kappa(i,j) \otimes \kappa^*\Oca_\chi(i+j)$ for any integers $i,j$). 

Second, we obtain the following

\begin{proposition}\label{proposition: Omega hat-Sigma hat} The morphism $\mathrm{\Psi}$ lifts to a $\bm{\mu}_2$-equivariant isomorphism
\begin{equation}\label{morphism Phi}
\widehat{\mathrm{\Psi}} \colon \PP(\Tca_\chi) \times_{\PP(\Bca)}\PP(\Tca_\chi) \xrightarrow{\sim} \widehat{\mathbb{J}},
\end{equation}
which identifies the diagonal of $\PP(\Tca_\chi) \times_{\PP(\Bca)}\PP(\Tca_\chi)$ with $\widehat{\mathbb{E}} \cap \widehat{\mathbb{J}}$. Taking the quotient by $\bm{\mu}_2$, we get an isomorphism
\[
\PP(\mathrm{S}^2 \Tca_\chi) \xrightarrow{\sim} \widehat{\mathbb{J}}/\bm{\mu}_2,
\]
which compares the descriptions of $\widehat{\mathrm{\Sigma}} \cap \widehat{\mathrm{\Omega}}$ given in \cref{prop:OmegaT;OmegaT-SigmaT;DeltaT-SigmaT;OmegaT-SigmaT-DeltaT} and \cref{thm:Sigma hat}.
\end{proposition}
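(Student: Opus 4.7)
The plan is to construct $\widehat{\mathrm{\Psi}}$ by mimicking the procedure that produced $\mathrm{\Psi}$ in the text just above, but applied to the full $\mathrm{SO}(W)$-equivariant isomorphism $E^\Bca \xrightarrow{\sim} \mathrm{Bl}_{E^\Tca_2} E^\Tca$ of \cref{sec:complete conics}, rather than only to the underlying morphism $E^\Bca \to E^\Tca$. First I would base change this isomorphism along $[w] \times \PP(\Tca) \hookrightarrow \PP(W) \times \PP(\Tca)$ to obtain a $\mathrm{PO}(2)$-equivariant isomorphism $E^\Bca_{[w]} \xrightarrow{\sim} \mathrm{Bl}_{E^\Tca_{2,[w]}} E^\Tca_{[w]}$, and then take the $\mathrm{PSO}(2)$-quotient of both sides.

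On the target side, the arguments of \cref{cor:SigmaS-OmegaS modulo PGL(N)} and \cref{cor:DeltaS-SigmaS modulo PGLN} identify the $\mathrm{PSO}(2)$-quotients of $E^\Tca_{[w]}$ and $E^\Tca_{2,[w]}$ with $\mathbb{J}$ and $\PP(\Lca^{\perp_\omega}/\Lca)$, respectively; since $\mathrm{PSO}(2)$ acts set-theoretically freely, \cref{prop:blow-up and principal G bundles} commutes the quotient with the blow-up and yields $\mathrm{Bl}_{E^\Tca_{2,[w]}} E^\Tca_{[w]} \sslash \mathrm{PSO}(2) \simeq \widehat{\mathbb{J}}$. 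On the source side, the analogue of \cref{rem:tangent space of P(W) at distinguished point} (with $\Bca$ in place of $\Tca$) gives $E^\Bca_{[w]} \simeq \PP((W/\mathbb{C}w) \otimes \Tca_\chi)$. Decomposing $W/\mathbb{C}w$ into the two $\mathrm{PSO}(2)$-weight spaces of opposite non-trivial characters, this reads as $\PP(\Tca_\chi \oplus \Tca_\chi)$ with a $\mathrm{PSO}(2) \simeq \mathbb{C}^\times$-action of opposite weights on the two summands; a standard Segre-type GIT computation identifies the $\mathrm{PSO}(2)$-quotient with the fiber product $\PP(\Tca_\chi) \times_{\PP(\Bca)} \PP(\Tca_\chi)$, and shows that the residual $\bm{\mu}_2 \simeq \mathrm{PO}(2)/\mathrm{PSO}(2)$-action (generated by $\tau$ in \eqref{O(2)}) swaps the two factors, matching the involution on $\mathbb{J}$. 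Taking $\mathrm{PSO}(2)$-quotients in the $\mathrm{PO}(2)$-equivariant isomorphism above therefore produces the desired $\bm{\mu}_2$-equivariant isomorphism
\begin{equation*}
\widehat{\mathrm{\Psi}} \colon \PP(\Tca_\chi) \times_{\PP(\Bca)} \PP(\Tca_\chi) \xrightarrow{\sim} \widehat{\mathbb{J}},
\end{equation*}
which by construction sits over $\mathrm{\Psi}$.

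To finish, I would observe that the $\bm{\mu}_2$-fixed locus of the source is the diagonal, while that of the target is $\widehat{\mathbb{E}} \cap \widehat{\mathbb{J}}$ (the exceptional divisor of $\widehat{\mathbb{J}} \to \mathbb{J}$ lying over the $\bm{\mu}_2$-fixed subscheme $\PP(\Lca^{\perp_\omega}/\Lca) \subset \mathbb{J}$), so $\widehat{\mathrm{\Psi}}$ identifies them. Passing to the $\bm{\mu}_2$-quotient then yields $\PP(\mathrm{S}^2\Tca_\chi) \xrightarrow{\sim} \widehat{\mathbb{J}}/\bm{\mu}_2$, which is identified with $\widehat{\mathrm{\Sigma}} \cap \widehat{\mathrm{\Omega}}$ by \cref{thm:Sigma hat}(3), providing the comparison with \cref{prop:OmegaT;OmegaT-SigmaT;DeltaT-SigmaT;OmegaT-SigmaT-DeltaT}(6). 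The main technical hurdle will be the explicit Segre-type $\mathrm{PSO}(2)$-quotient identification, done so as to transparently recover the explicit form of $\mathrm{\Psi}$ from the statement; this runs in complete parallel with the computation of $\mathrm{\Lambda}$ at the beginning of \cref{sec:Delta hat}, so no new ideas should be required.
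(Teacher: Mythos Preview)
Your proposal is correct and follows exactly the approach the paper intends: the paper does not give a separate proof of this proposition, but states it as the outcome of the procedure described immediately before it (base change the $\mathrm{SO}(W)$-equivariant isomorphism $E^\Bca \xrightarrow{\sim} \mathrm{Bl}_{E^\Tca_2}E^\Tca$ along $[w] \times \PP(\Tca)$, then take the GIT quotient by $\mathrm{PO}(2)$ in two steps, arguing ``exactly as we have done at the beginning of \cref{sec:Delta hat}''). You have simply spelled out what the paper leaves implicit, including the Segre-type identification of $E^\Bca_{[w]} \sslash \mathrm{PSO}(2)$ with $\PP(\Tca_\chi)\times_{\PP(\Bca)}\PP(\Tca_\chi)$ and the use of \cref{prop:blow-up and principal G bundles} to identify $\mathrm{Bl}_{E^\Tca_{2,[w]}}E^\Tca_{[w]} \sslash \mathrm{PSO}(2)$ with $\widehat{\mathbb{J}}$.
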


The description of $\widehat{\mathrm{\Sigma}} \cap \widehat{\mathrm{\Omega}}$ given in \cref{prop:dual conics}(3) can be lifted similarly. Before stating the result, we need further notation. Consider the projective bundle $\varphi \colon \PP(\Bca^\vee) \to \Grass^\omega(3,\Tca)$ with tautological quotient line bundle $\Oca_\varphi(1)$, and the relative Segre variety 
\[
\mathbb{S} \coloneqq \PP(\Bca^\vee) \times_{\Grass^\omega(3,\Tca)}\PP(\Bca^\vee) \xrightarrow{\phi} \Grass^\omega(3,\Tca);
\]
for any integers $i,j$, set $\Oca_\phi(i,j) \coloneqq \Oca_\varphi(i) \boxtimes \Oca_\varphi(j)$. Denote by $\widehat{\mathbb{S}}$ the blow-up of $\mathbb{S}$ along the diagonal, and by $\widehat{\mathbb{D}}$ its exceptional divisor.
\begin{proposition}\label{prop:Omega hat-Omega-hat} There is a $\bm{\mu}_2$-equivariant isomorphism
\begin{equation}\label{morphism Xi}
\mathrm{\Xi} \colon \PP (\Tca_\chi) \times_{\PP (\Bca)}\PP (\Tca_\chi) \xrightarrow{\sim} \widehat{\mathbb{S}},
\end{equation}
which identifies the diagonal of $\PP(\Tca_\chi) \times_{\PP(\Bca)}\PP(\Tca_\chi)$ with $\widehat{\mathbb{D}}$.
\end{proposition}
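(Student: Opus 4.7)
The plan is to construct $\mathrm{\Xi}$ directly and then invoke the universal property of the blow-up, mimicking the strategy of \cref{proposition: Omega hat-Sigma hat}. First, I would use the relative Euler sequence for $\chi \colon \PP(\Bca) \to \Grass^\omega(3,\Tca)$,
\[
0 \to \Oca_{\PP(\Bca)} \to \chi^*\Bca^\vee \otimes \Oca_\chi(1) \to \Tca_\chi \to 0,
\]
to identify $\PP(\Tca_\chi)$ with the incidence divisor $\{(b,\xi) : \xi(b)=0\}$ inside $\PP(\Bca) \times_{\Grass^\omega(3,\Tca)} \PP(\Bca^\vee)$. This realises $\PP(\Tca_\chi) \times_{\PP(\Bca)} \PP(\Tca_\chi)$ as the locus of triples $(b,\xi_1,\xi_2)$ inside $\PP(\Bca) \times_{\Grass^\omega(3,\Tca)} \mathbb{S}$ satisfying $\xi_1(b)=\xi_2(b)=0$. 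I would then define $\mathrm{\Xi}$ as the projection onto $\mathbb{S}$ that forgets $b$; its $\bm{\mu}_2$-equivariance is immediate, since on both sides the non-trivial element swaps the two $\PP(\Bca^\vee)$-factors.

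Next, I would analyse $\mathrm{\Xi}$ fibrewise. Because $\Bca$ has rank $3$, the fiber over $(\xi_1,\xi_2)\in\mathbb{S}$ is $\PP(\ker\xi_1 \cap \ker\xi_2)$, which is a single reduced point when $\xi_1,\xi_2$ are linearly independent, and is all of $\PP(\ker\xi)\simeq\PP^1$ when $\xi_1=\xi_2=\xi$. Hence $\mathrm{\Xi}$ is an isomorphism over $\mathbb{S}\setminus\Delta$ and set-theoretically contracts the diagonal divisor $\Delta_{\PP(\Tca_\chi)}\subset \PP(\Tca_\chi) \times_{\PP(\Bca)} \PP(\Tca_\chi)$ to the diagonal $\Delta\simeq\PP(\Bca^\vee)$. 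To apply the universal property of $\widehat{\mathbb{S}}$, I would verify that $\mathrm{\Xi}^{-1}(\Ica_\Delta) \cdot \Oca_{\PP(\Tca_\chi)\times_{\PP(\Bca)}\PP(\Tca_\chi)}$ is an invertible ideal: locally, $\Ica_\Delta$ is generated by the components of $\xi_1-\xi_2 \in \varphi^*\Bca^\vee$, while on the source the constraints $\xi_i(b)=0$ yield the relation $(\xi_1-\xi_2)(b)=0$, which forces these components to be proportional and produces a single local generator.

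Finally, the lifted morphism $\widetilde{\mathrm{\Xi}} \colon \PP(\Tca_\chi) \times_{\PP(\Bca)} \PP(\Tca_\chi) \to \widehat{\mathbb{S}}$ is a proper morphism between smooth projective varieties of the same dimension, and is an isomorphism away from the exceptional divisors. On the exceptional divisors, both $\Delta_{\PP(\Tca_\chi)} \simeq \PP(\Tca_\chi)$ and $\widehat{\mathbb{D}} = \PP(N_{\Delta/\mathbb{S}})$ are canonically identified with the relative flag variety parametrising incident pairs (line, hyperplane) in $\Bca$, and $\widetilde{\mathrm{\Xi}}$ realises the natural duality between these two presentations: a triple $(b,\xi,\xi) \in \Delta_{\PP(\Tca_\chi)}$ is sent to the tangent direction $[\eta]\in\PP(T_\xi\PP(\Bca^\vee)) = \PP(\Bca^\vee/\langle\xi\rangle)$ characterised by $\ker\eta\cap\ker\xi = b$, which is an isomorphism of $\PP^1$-bundles over $\PP(\Bca^\vee)$ thanks to the canonical duality $\Bca^\vee/\langle\xi\rangle\simeq(\ker\xi)^\vee$. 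Thus $\widetilde{\mathrm{\Xi}}$ is bijective on closed points and, by Zariski's Main Theorem, an isomorphism; the identification of the source diagonal with $\widehat{\mathbb{D}}$ is then immediate from the construction. The only non-routine step I foresee is the scheme-theoretic verification of local principality for the pulled-back diagonal ideal, where the codimension-two ideal on $\mathbb{S}$ is forced to become principal only because of the structural relation $\xi_i(b)=0$; everything else reduces to straightforward fiber comparisons.
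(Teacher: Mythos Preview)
Your proposal is correct and follows essentially the same approach as the paper: construct the forgetful map to $\mathbb{S}$ (the paper builds it via the quotient $\chi^*\Bca \otimes \Oca_\chi(1) \twoheadrightarrow \Tca_\chi \twoheadrightarrow \Tca_\chi/\Oca_\varkappa(-1)$, which is exactly your incidence-divisor description), check that the schematic preimage of the diagonal is a Cartier divisor so the map lifts to $\widehat{\mathbb{S}}$, then conclude by Zariski's Main Theorem. The paper states the Cartier-divisor claim and the bijectivity more tersely, whereas you spell out the local principality and the behaviour on exceptional fibres; the arguments are otherwise the same (note that the paper reserves the name $\mathrm{\Xi}$ for what you call $\widetilde{\mathrm{\Xi}}$).
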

\begin{proof}
Combining the surjections $\chi^*\Bca \otimes \Oca_\chi(1) \to \Tca_\chi$ on $\PP(\Bca)$ and $\varkappa^*\Tca_\chi \to \varkappa^*\Tca_\chi/\Oca_\varkappa(-1)$ on $\PP(\Tca_\chi)$, we get a morphism $\xi \colon \PP (\Tca_\chi) \to \PP(\Bca^\vee)$ over $\Grass^\omega(3,\Tca)$ such that 
\begin{equation}\label{morphism xi on line bundles}
\xi^*\Oca_\varphi(1) = [\varkappa^*\Tca_\chi/\Oca_\varkappa(-1)] \otimes \varkappa^*\Oca_\chi(-1).
\end{equation}
Consider now the product morphism 
\[
\xi^{\times 2} \colon \PP (\Tca_\chi) \times_{\PP (\Bca)}\PP (\Tca_\chi) \to \PP (\Bca ^\vee) \times_{\Grass^\omega(3,\Tca)} \PP (\Bca^\vee)=\mathbb{S}.
\]
The schematic preimage of the diagonal of $\PP (\Bca ^\vee) \times_{\Grass^\omega(3,\Tca)} \PP (\Bca^\vee)$ is a Cartier divisor, namely the diagonal of $\PP (\Tca_\chi) \times_{\PP (\Bca)}\PP (\Tca_\chi)$. Therefore, $\xi^{\times 2}$ lifts to the morphism $\mathrm{\Xi}$ we are looking for: it is $\bm{\mu}_2$-equivariant by construction, and a bijection between smooth varieties, hence an isomorphism by Zariski's main theorem.\qedhere
\end{proof}
Let us study the isomorphisms $\widehat{\mathrm{\Psi}}^*$ and $\mathrm{\Xi}^*$ induced by \eqref{morphism Phi} and \eqref{morphism Xi} between the Picard groups of $\widehat{\mathbb{J}}$, $\PP(\Tca_\chi) \times_{\PP(\Bca)}\PP(\Tca_\chi)$ and $\widehat{\mathbb{S}}$. Modulo line bundles coming from $X^{[n]}$, these groups are free of rank 4, with bases
\begin{align*}
\mathscr{B}_1&\coloneqq (\Lca^\vee, \Oca_\rho(1,0), \Oca_\rho(0,1), \Oca(- \widehat{\mathbb{E}} \cap \widehat{\mathbb{J}})), \\
\mathscr{B}_2&\coloneqq(\Oca_\gamma(1), \Oca_\chi(1), \Oca_\kappa(1,0), \Oca_\kappa(0,1)), \\
\mathscr{B}_3&\coloneqq(\Oca_\gamma(1), \Oca_\phi(1,0), \Oca_\phi(0,1), \Oca(-\widehat{\mathbb{D}})),
\end{align*}
respectively (we have suppressed pullbacks to keep a lighter notation).

\begin{lemma}\label{lemma:Sigma-Omega1 Picard} The matrix of $\widehat {\mathrm{\Psi}}^*$ in the bases $\mathscr{B}_1$ and $\mathscr{B}_2$ is
\[
\mathrm{Mat}_{\mathscr{B}_1,\mathscr{B}_2}(\widehat{\mathrm{\Psi}}^*)=
\begin{pmatrix}
0 & 0 & 0 & 1 \\
1 & 1 & 1 & -3 \\
0 & 1 & 0 & -1 \\
0 & 0 & 1 & -1
\end{pmatrix}.
\]
\end{lemma}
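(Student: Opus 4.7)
Plan: The first three columns follow directly from the explicit description of $\Psi$ given just above \cref{proposition: Omega hat-Sigma hat}, since $\widehat{\Psi}$ lifts $\Psi$ through the blow-down $\widehat{\mathbb{J}}\to\mathbb{J}$. Combining $\psi^*\Lca^\vee = \Oca_\chi(1)$ with the factorisation of $\Psi$ through $\PP((\Lca^{\perp_\omega}/\Lca)\otimes\Lca^\vee)\times_E \PP((\Lca^{\perp_\omega}/\Lca)\otimes\Lca^\vee)$, I get
\[
\widehat{\Psi}^*(\rho^*\Lca^\vee) = \kappa^*\Oca_\chi(1),\qquad \widehat{\Psi}^*\Oca_\rho(i,j) = \Oca_\kappa(i,j)\otimes\kappa^*\Oca_\chi(i+j),
\]
which yield the columns $(0,1,0,0)^t$, $(0,1,1,0)^t$ and $(0,1,0,1)^t$ in $\mathscr{B}_2$.

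The main step is the fourth column. By \cref{proposition: Omega hat-Sigma hat}, $\widehat{\Psi}$ identifies $\widehat{\mathbb{E}}\cap\widehat{\mathbb{J}}$ with the diagonal $\Delta'\subset \PP(\Tca_\chi)\times_{\PP(\Bca)}\PP(\Tca_\chi)$ scheme-theoretically, so the task reduces to expressing $\Oca(-\Delta')$ in $\mathscr{B}_2$. Using the standard decomposition of the Picard group of a fibre product of projective bundles, together with the $\bm{\mu}_2$-symmetry swapping the factors that preserves $\Delta'$, I can write
\[
\Oca(\Delta')\simeq\kappa^*M\otimes\Oca_\kappa(a,a)
\]
for a unique integer $a$ and line bundle $M\in\Pic(\PP(\Bca))$. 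Restricting to $\Delta'\simeq\PP(\Tca_\chi)$ and invoking the standard identification $\Nca_{\Delta'/\PP(\Tca_\chi)\times_{\PP(\Bca)}\PP(\Tca_\chi)}\simeq \Tca_\varkappa$, together with the fact that $\Tca_\varkappa$ is a line bundle since $\Tca_\chi$ has rank two, yield $\varkappa^*M\otimes\Oca_\varkappa(2a)\simeq \Tca_\varkappa$.

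I would finish by invoking the relative Euler sequence in the paper's convention $\PP(\Fca)=\Proj(\mathrm{S}^\bullet\Fca^\vee)$: for $\pi\colon\PP(\Fca)\to Y$ with $\Fca$ of rank $r$, it yields $\det\Tca_\pi\simeq\Oca_\pi(r)\otimes\pi^*\det\Fca$. Applied with $(\pi,\Fca)=(\chi,\Bca)$ and using $\det\Bca^\vee=\Oca_\gamma(1)$, this gives $\det\Tca_\chi\simeq\Oca_\chi(3)\otimes\chi^*\Oca_\gamma(-1)$; applied with $(\pi,\Fca)=(\varkappa,\Tca_\chi)$, it gives $\Tca_\varkappa\simeq\Oca_\varkappa(2)\otimes\varkappa^*[\Oca_\chi(3)\otimes\chi^*\Oca_\gamma(-1)]$. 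Comparison with the previous equation forces $a=1$ and $M=\Oca_\chi(3)\otimes\chi^*\Oca_\gamma(-1)$, whence
\[
\Oca(-\Delta')\simeq\Oca_\gamma(1)\otimes\Oca_\chi(-3)\otimes\Oca_\kappa(-1,-1)
\]
(suppressing pullbacks), which is the last column $(1,-3,-1,-1)^t$. The main pitfall is the sign convention: in the ``sub'' convention in force throughout the paper, the Euler formula carries $\det\Fca$ rather than $\det\Fca^\vee$, and this is precisely what produces the $+1$ on $\Oca_\gamma$ in the fourth column rather than $-1$ — a sign easy to misplace.
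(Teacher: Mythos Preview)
Your proof is correct and follows essentially the same approach as the paper. The paper also obtains the first three columns from the explicit description of $\Psi$, and for the fourth column uses the $\bm{\mu}_2$-invariance to force equal coefficients on $\Oca_\kappa(1,0)$ and $\Oca_\kappa(0,1)$, then restricts to the diagonal $\PP(\Tca_\chi)$ and computes via the relative Euler sequences for $\varkappa$ and $\chi$; the only cosmetic difference is that the paper phrases the restriction via the conormal sheaf $\Jca|_{\PP(\Tca_\chi)}=\Kahler^1_\varkappa$ rather than the normal bundle $\Tca_\varkappa$.
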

\begin{proof}
The first three columns are a by-product of the previous geometric discussion. It remains to compute the last one. By \cref{proposition: Omega hat-Sigma hat}, $\widehat{\mathrm{\Psi}}^*\Oca(-\widehat{\mathbb{E}} \cap \widehat{\mathbb{J}})$ is the ideal sheaf $\Jca$ of the diagonal of $\PP (\Tca_\chi) \times_{\PP (\Bca)}\PP (\Tca_\chi)$. 
To determine the $\mathbb{Z}$-coordinates of $\Jca$ in the basis $\mathscr{B}_2$, we just note that the coefficients of $\Oca_\kappa(1,0)$ and $\Oca_\kappa(0,1)$ coincide because $\Jca$ is $\bm{\mu}_2$-invariant, and that
\begin{equation*}
\Jca|_{\PP (\Tca_\chi)} = \Kahler^1_\varkappa
= (\chi \circ \varkappa)^*\Oca_\gamma(1) \otimes \varkappa^*\Oca_\chi(-3) \otimes \Oca_\varkappa(-2)
\end{equation*}
by the relative Euler sequences for $\varkappa$ and $\chi$.
\end{proof}

\begin{lemma}\label{lamma:Omega1-Omega2 Picard}
The matrix of $\mathrm{\Xi}^*$ and its inverse with respect to the bases $\mathscr{B}_3$ and $\mathscr{B}_2$ are 
\[
\mathrm{Mat}_{\mathscr{B}_3,\mathscr{B}_2}(\mathrm{\Xi}^*)=
\begin{pmatrix}
1 & -1 & -1 & 1 \\
0 & 2 & 2 & -3 \\
0 & 1 & 0 & -1 \\
0 & 0 & 1 & -1
\end{pmatrix}
\ \ \text{and} \ \ \mathrm{Mat}_{\mathscr{B}_2,\mathscr{B}_3}((\mathrm{\Xi}^{-1})^*)=
\begin{pmatrix}
1 & 1 & -1 & -1 \\
0 & 1 & -1 & -2 \\
0 & 1 & -2 & -1 \\
0 & 1 & -2 & -2
\end{pmatrix}.
\]
\end{lemma}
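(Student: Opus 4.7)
My plan is to evaluate the four columns of $\mathrm{Mat}_{\mathscr{B}_3,\mathscr{B}_2}(\mathrm{\Xi}^*)$ one at a time and then invert the resulting matrix. Since $\mathrm{\Xi}$ is a morphism of $\Grass^\omega(3,\Tca)$-schemes, the first column is immediately $(1,0,0,0)^T$. For the second column, I would exploit the fact that $\mathrm{\Xi}$ lifts $\xi^{\times 2}$: thus $\mathrm{\Xi}^*\Oca_\phi(1,0) = p_1^*\xi^*\Oca_\varphi(1)$, where $p_1$ is the projection onto the first factor. Combined with \eqref{morphism xi on line bundles}, this reduces the computation to expressing $[\varkappa^*\Tca_\chi/\Oca_\varkappa(-1)] \otimes \varkappa^*\Oca_\chi(-1)$ as an explicit line bundle in the basis $\mathscr{B}_2$. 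The $\bm{\mu}_2$-equivariance of $\mathrm{\Xi}$ then yields the third column from the second by swapping the entries paired with $\Oca_\kappa(1,0)$ and $\Oca_\kappa(0,1)$.

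The crux is therefore to compute $\varkappa^*\Tca_\chi/\Oca_\varkappa(-1)$. Since $\Tca_\chi$ has rank two and $\varkappa \colon \PP(\Tca_\chi) \to \PP(\Bca)$ is a $\PP^1$-bundle, this quotient coincides with $\varkappa^*(\det\Tca_\chi) \otimes \Oca_\varkappa(1)$. To determine $\det\Tca_\chi$, I would invoke the relative Euler sequence for $\chi \colon \PP(\Bca) \to \Grass^\omega(3,\Tca)$, which identifies $\Tca_\chi$ with $(\chi^*\Bca/\Oca_\chi(-1)) \otimes \Oca_\chi(1)$; combined with $\det\Bca = \Oca_\gamma(-1)$ (coming from $\Oca_\gamma(1) = \wedge^3\Bca^\vee$), this yields $\det\Tca_\chi \simeq \chi^*\Oca_\gamma(-1) \otimes \Oca_\chi(3)$. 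Substituting back produces the expected second column $(-1,2,1,0)^T$, and by symmetry the third column $(-1,2,0,1)^T$.

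For the fourth column, the key observation is that $\mathrm{\Xi}$ identifies the diagonal of $\PP(\Tca_\chi) \times_{\PP(\Bca)} \PP(\Tca_\chi)$ with $\widehat{\mathbb{D}}$, so $\mathrm{\Xi}^*\Oca(-\widehat{\mathbb{D}})$ is precisely the ideal sheaf of this diagonal. Its class in $\mathscr{B}_2$ was already pinned down in the proof of \cref{lemma:Sigma-Omega1 Picard}, where $\bm{\mu}_2$-invariance together with the identification of its restriction to the diagonal as $\Kahler^1_\varkappa$ (via the relative Euler sequence for $\varkappa$) forced the coefficients to be $(1,-3,-1,-1)^T$; I would simply reuse that computation. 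The inverse matrix is then obtained by a routine $4 \times 4$ linear-algebra calculation. I do not foresee any substantial obstacle here: all the geometric inputs -- the description of $\mathrm{\Xi}$ via $\xi$, the relative Euler sequences for $\chi$ and $\varkappa$, and the prior determination of the diagonal ideal -- have been assembled in the preceding sections. The only point requiring care is consistent bookkeeping between the additive Picard-group notation used to display the matrix and the multiplicative line-bundle notation used throughout the calculation.
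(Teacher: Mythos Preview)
Your proposal is correct and follows essentially the same approach as the paper: the first column from the $\Grass^\omega(3,\Tca)$-structure, the second and third from \eqref{morphism xi on line bundles} together with the relative Euler sequence for $\chi$ (your $\det\Tca_\chi$ computation is exactly what the paper does, just written out in slightly more detail), and the fourth by reusing the diagonal ideal computation from \cref{lemma:Sigma-Omega1 Picard}. The paper's proof is the same argument, only more tersely stated.
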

\begin{proof}
We compute the left-hand side matrix. The first column is obvious.
The equality
\[
\xi^*\Oca_\varphi(1) = [\varkappa^*\Tca_\chi/\Oca_\varkappa(-1)] \otimes \varkappa^* \Oca_\chi(-1) 
= (\chi \circ \varkappa)^* \Oca_\gamma(-1) \otimes \varkappa^* \Oca_\chi(2) \otimes \Oca_\varkappa(1)
\]
coming from \eqref{morphism xi on line bundles} and from the Euler sequence for $\chi$ gives the second and third columns. For the last one, note that, by \cref{prop:Omega hat-Omega-hat}, $\mathrm{\Xi}^*\Oca(-\widehat{\mathbb{D}})$ is the line bundle $\Jca$ that was computed in the proof of \cref{lemma:Sigma-Omega1 Picard}. 
\end{proof}

\begin{remark}\label{rem:normal bundle of S in I bar}
The geometric discussion of this section shows that $\mathbb{J}$ and $\mathbb{S}$ are birational, and actually isomorphic out of subsets of codimension at least 2. In particular, their Picard groups are isomorphic. By Lemmata \ref{lemma:Sigma-Omega1 Picard} and  \ref{lamma:Omega1-Omega2 Picard}, the normal bundle $\Nca_{\mathbb{J}/\mathbb{I}}= \rho^*\Lca$ of $\mathbb{J}$ in $\mathbb{I}$ corresponds to $\phi^*\Oca_\gamma(-1) \otimes \Oca_\phi(-1,-1)$ on $\mathbb{S}$.
\end{remark}
The relations between $\widehat{\mathrm{\Sigma}}$ and $\widehat{\mathrm{\Omega}}$ are summarised in the following diagram:
\[
\begin{tikzcd}[row sep = 1.0 em, column sep = 0.4 em, scale cd=0.8]
& \widehat{\mathbb{I}} \ar[ld] \ar[dd] & & \widehat{\mathbb{J}} \ar[ld] \ar[dd] \ar[ll,hook'] & & \PP(\Tca_\chi) \times _{\PP (\Bca)} \PP(\Tca_\chi) \ar[dd,"\kappa"] \ar[rd] \ar[ll,"\widehat{\mathrm{\Psi}}"',"\sim"] \ar[lldd,"\mathrm{\Psi}"'] \ar[rr,"\mathrm{\Xi}","\sim"'] & & \widehat{\mathbb{S}} \ar[dd] \ar[rd] \\
\widehat{\mathrm{\Sigma}} \ar[dd] & & \widehat{\mathbb{J}}/\bm{\mu}_2 \ar[ll, hook', crossing over] & & & & \PP (\mathrm{S}^2\Tca_\chi) \ar[dd] \ar[rr, "\sim"{xshift=-12pt}, crossing over] & & \Bl_{\PP(\Bca^\vee)}\PP(\mathrm{S}^2_2\Bca^\vee) \ar[dd] \ar[r,hook] & \widehat{\mathrm{\Omega}} \ar[dd] \\
&\mathbb{I} \ar[ld] \ar[dd,"r"{yshift=6pt}] & & \mathbb{J} \ar[ll, hook'] \ar[ld] \ar[dd, "\rho"] & & \PP(\Bca) \ar[lldd,"\psi"'] \ar[rdd,"\chi"'] \ar[rd, equal]& & \mathbb{S} \ar[rd] \ar[ldd,"\phi"] \\
\mathbb{I}/\bm{\mu}_2 \ar[dd] & & \mathbb{J}/\bm{\mu}_2 \ar[ll, hook', crossing over] \ar[from=uu, crossing over] & & & & \PP(\Bca) \ar[rdd, crossing over] & &\PP( \mathrm{S}^2_2\Bca^\vee) \ar[ldd] \ar[r,hook] & \PP(\mathrm{S}^2\Bca^\vee) \\
&\Xca^{[n]} \ar[ld] \ar[dd,"b"{yshift=6pt}] & & E \ar[ll, hook', "\varepsilon"' {xshift=-6pt}] \ar[dd, "\beta"] \ar[ld, equal] & & & \Grass^\omega(3,\Tca) \ar[dd,"\gamma"] \ar[rd, equal] & \\
(X^{[n]})^{[2]} \ar[dd] & & E \ar[ll,hook', crossing over] \ar[from=uu, crossing over] & & & & & \Grass^\omega(3,\Tca) \ar[dd] \\
& X^{[n]} \times X^{[n]}\ar[ld] & & X^{[n]} \ar[ll, hook', "\delta"'{xshift=-10pt}] \ar[ld, equal] \ar[rrr,equal] & & & X^{[n]} \ar[rd, equal] & \\
\mathrm{S}^2 X^{[n]} & & X^{[n]} \ar[ll, hook'] \ar[from=uu, crossing over] & & & & &  X^{[n]}.
\end{tikzcd}
\]

\section{Contractions of $\widehat{M}_n$}\label{sec:divisorial contractions of M hat}
In search for a resolution $\widetilde{M}_n \to M_n$ closer to being crepant than $\widehat{\pi} \colon \widehat{M}_n \to M_n$, O'Grady performed two blow-downs. In this section, we briefly look at their effect on $\widehat{\mathrm{\Sigma}}$, $\widehat{\mathrm{\Omega}}$ and $\widehat{\mathrm{\Delta}}$.

\medskip

By \cref{prop:Delta global}, the divisor $\widehat{\mathrm{\Delta}}$ has a structure of $\PP^2$-bundle over $\overline{\mathrm{\Delta}} \coloneqq \PP(\Gca)$. Combining the Fujiki-Nakano criterion with Mori theory, O'Grady showed the existence of a smooth projective variety $\overline{M}_n$ containing $\overline{\mathrm{\Delta}}$, such that $\widehat{M}_n$ is the blow-up of $\overline{M}_n$ along $\overline{\mathrm{\Delta}}$, and $\widehat{\mathrm{\Delta}}$ is its exceptional divisor. Moreover, the resolution $\widehat{\pi} \colon \widehat{M}_n \to M_n$ descends to a regular morphism $\overline{\pi} \colon \overline{M}_n \to M_n$, \cf \cite[Proposition 3.0.3]{O'GdmsK3I}.

Let $\overline{\mathrm{\Omega}}$ and $\overline{\mathrm{\Sigma}}$ be the (schematic) images under $ \widehat{M}_n \to \overline{M}_n$ of $\widehat{\mathrm{\Omega}}$ and $\widehat{\mathrm{\Sigma}}$, respectively. Note that $\overline{\mathrm{\Delta}}$ is a closed subscheme of $\overline{\mathrm{\Sigma}}$ by \cref{cor:Delta-Sigma; Delta-Sigma-Omega}(2), so that the irreducible components of the exceptional locus of $\overline{\pi}$ are $\overline{\mathrm{\Omega}}$ and $\overline{\mathrm{\Sigma}}$.
\begin{proposition}\label{prop:loci in M bar} Keep the notation as in \ref{sec:notation dual conics}. We have isomorphisms
\begin{enumerate}
\item[(1)] $\overline{\mathrm{\Omega}} \simeq \PP(\mathrm{S}^2 \Bca^\vee)$,
\item[(2)] $\overline{\mathrm{\Omega}} \cap \overline{\mathrm{\Sigma}} \simeq  \PP (\mathrm{S}^2_2\Bca^\vee)$,
\item[(3)] $\overline{\mathrm{\Omega}} \cap \overline{\mathrm{\Delta}}\simeq \PP (\mathrm{S}^2_1\Bca^\vee) \simeq \PP (\Bca^\vee)$.
\end{enumerate}
\end{proposition}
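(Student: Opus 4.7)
The plan is to analyse the restriction to $\widehat{\mathrm{\Omega}}$ and to $\widehat{\mathrm{\Omega}} \cap \widehat{\mathrm{\Sigma}}$ of the divisorial contraction $\sigma \colon \widehat{M}_n \to \overline{M}_n$ that blows down $\widehat{\mathrm{\Delta}}$ onto $\overline{\mathrm{\Delta}}$. Outside $\widehat{\mathrm{\Delta}}$, $\sigma$ is an isomorphism; and on $\widehat{\mathrm{\Delta}} = \PP(\mathrm{S}^2\Aca) \times_{\Grass^\omega(2,\Tca)} \PP(\Gca)$ it is the second projection, contracting each fibre $\PP(\mathrm{S}^2\Aca_x) \simeq \PP^2$ to a point.

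First I would restrict $\sigma$ to $\widehat{\mathrm{\Omega}}$. By \eqref{Delta hat Omega hat}, $\widehat{\mathrm{\Omega}} \cap \widehat{\mathrm{\Delta}} = \PP(\mathrm{S}^2\Aca) \times_{\Grass^\omega(2,\Tca)} \PP(\Aca^{\perp_\omega}/\Aca)$, so $\sigma|_{\widehat{\mathrm{\Omega}}}$ contracts this divisor along the first projection, down to $\PP(\Aca^{\perp_\omega}/\Aca)$, and is an isomorphism on the complement. On the other hand, by \cref{prop:dual conics}(1), $\widehat{\mathrm{\Omega}} \simeq \Bl_{\PP(\mathrm{S}^2_1\Bca^\vee)}\PP(\mathrm{S}^2\Bca^\vee)$ with exceptional divisor $\widehat{\mathrm{\Omega}} \cap \widehat{\mathrm{\Delta}}$, and the blow-down morphism $\widehat{\mathrm{\Omega}} \to \PP(\mathrm{S}^2\Bca^\vee)$ contracts $\widehat{\mathrm{\Omega}} \cap \widehat{\mathrm{\Delta}}$ as a $\PP^2$-bundle over $\PP(\mathrm{S}^2_1\Bca^\vee) = \PP(\Bca^\vee)$. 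Via the natural identification $\PP(\Aca^{\perp_\omega}/\Aca) \simeq \PP(\Bca^\vee)$ of \cref{rem:Delta hat and Omega hat} (both being the variety of $\omega$-isotropic flags of type $(2,3)$ in $\Tca$), the two contractions share the same base, and a fibrewise comparison shows that they contract the same $\PP^2$'s.

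A rigidity-lemma argument then identifies the two surjective, projective, connected-fibre morphisms $\sigma|_{\widehat{\mathrm{\Omega}}} \colon \widehat{\mathrm{\Omega}} \to \overline{\mathrm{\Omega}}$ and $\widehat{\mathrm{\Omega}} \to \PP(\mathrm{S}^2\Bca^\vee)$ up to a unique birational morphism between their targets; since $\PP(\mathrm{S}^2\Bca^\vee)$ is smooth and the map is bijective of the right dimension, Zariski's main theorem closes the argument, yielding $\overline{\mathrm{\Omega}} \simeq \PP(\mathrm{S}^2\Bca^\vee)$. The same reasoning applied to $\widehat{\mathrm{\Omega}} \cap \widehat{\mathrm{\Sigma}} \simeq \Bl_{\PP(\mathrm{S}^2_1\Bca^\vee)}\PP(\mathrm{S}^2_2\Bca^\vee)$ (\cref{prop:dual conics}(3)) yields (2); and (3) is the scheme-theoretic image of $\widehat{\mathrm{\Omega}} \cap \widehat{\mathrm{\Delta}}$ under $\sigma|_{\widehat{\mathrm{\Omega}}}$, identified via $\PP(\Aca^{\perp_\omega}/\Aca) \simeq \PP(\Bca^\vee) = \PP(\mathrm{S}^2_1\Bca^\vee)$.

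The main obstacle will be the fibrewise matching of the two $\PP^2$-bundle structures on $\widehat{\mathrm{\Omega}} \cap \widehat{\mathrm{\Delta}}$ over the common base. Geometrically, over a flag $A \subset B$ in $\Tca_x$, one must identify $\PP(\mathrm{S}^2 A)$ with the $\PP^2$ of normal directions to the Veronese $\PP(B^\vee) \hookrightarrow \PP(\mathrm{S}^2 B^\vee)$ at the point $[A]^{\otimes 2}$, and check that this identification is compatible with the twists by $\Oca_\alpha(1)$ and $\Oca_\gamma(1)$ already appearing in \cref{rem:Delta hat and Omega hat}. Alternatively, one can bypass the normal-bundle computation by invoking uniqueness of the Stein factorisation: both morphisms have connected fibres and set-theoretically the same image, so the induced birational map between the targets has to be the desired isomorphism.
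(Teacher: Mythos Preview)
Your overall strategy --- use the blow-down descriptions of \cref{prop:dual conics} to produce bijective morphisms, then invoke Zariski's main theorem --- coincides with the paper's. However, there are two genuine gaps in the execution, both about scheme structure.

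\textbf{For (1).} The rigidity/Stein-factorisation step only runs in one direction. The blow-down $\widehat{\mathrm{\Omega}} \to \PP(\mathrm{S}^2\Bca^\vee)$ satisfies $\pi_*\Oca = \Oca$ (it is the blow-up of a smooth variety along a smooth centre), so the composite $\widehat{\mathrm{\Omega}} \to \overline{M}_n$ factors through a morphism $\nu_1 \colon \PP(\mathrm{S}^2\Bca^\vee) \to \overline{\mathrm{\Omega}}$. But the analogous property for $\sigma|_{\widehat{\mathrm{\Omega}}} \colon \widehat{\mathrm{\Omega}} \to \overline{\mathrm{\Omega}}$ is \emph{not} known a priori: $\overline{\mathrm{\Omega}}$ is defined only as a schematic image, and $(\sigma|_{\widehat{\mathrm{\Omega}}})_*\Oca_{\widehat{\mathrm{\Omega}}} = \Oca_{\overline{\mathrm{\Omega}}}$ would already require $\overline{\mathrm{\Omega}}$ to be normal. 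So the morphism you have is $\nu_1$, and to apply Zariski's main theorem you must check that its \emph{target} $\overline{\mathrm{\Omega}}$ is normal --- the smoothness of $\PP(\mathrm{S}^2\Bca^\vee)$ is irrelevant here. The paper does this via Serre's criterion: $\overline{\mathrm{\Omega}}$ is Cohen--Macaulay (being a divisor on the smooth $\overline{M}_n$) and is smooth away from the codimension-$3$ subset $\overline{\mathrm{\Omega}} \cap \overline{\mathrm{\Delta}}$.

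\textbf{For (2) and (3).} You implicitly identify $\overline{\mathrm{\Omega}} \cap \overline{\mathrm{\Sigma}}$ and $\overline{\mathrm{\Omega}} \cap \overline{\mathrm{\Delta}}$ with the schematic images of $\widehat{\mathrm{\Omega}} \cap \widehat{\mathrm{\Sigma}}$ and $\widehat{\mathrm{\Omega}} \cap \widehat{\mathrm{\Delta}}$, but these are defined as \emph{scheme-theoretic intersections} of images, not as images of intersections. The bijections $\nu_2, \nu_3$ you obtain land only in the underlying reduced subscheme; you must rule out a non-reduced structure on the intersection. The paper handles this: once (1) is established, $\overline{\mathrm{\Omega}}$ is smooth, so $\overline{\mathrm{\Omega}} \cap \overline{\mathrm{\Sigma}}$ is a Cohen--Macaulay divisor on it and is generically reduced (being smooth away from $\overline{\mathrm{\Delta}}$), hence reduced. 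For (3), the paper proves that $\overline{\mathrm{\Omega}}$ and $\overline{\mathrm{\Delta}}$ meet transversely in $\overline{M}_n$ by a tangent-space argument, lifting a normal vector through the blow-up. Your ``same reasoning'' for (2) and the claim that (3) ``is the scheme-theoretic image'' skip exactly these verifications.
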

\begin{proof}
Propositions \ref{prop:dual conics} and \ref{prop:Delta hat}, together with \cref{rem:Delta hat and Omega hat}, yield bijective morphisms 
\[
\nu_1 \colon \PP(\mathrm{S}^2 \Bca^\vee) \to \overline{\mathrm{\Omega}},\quad \nu_2 \colon \PP (\mathrm{S}^2_2\Bca^\vee) \to \overline{\mathrm{\Omega}} \cap \overline{\mathrm{\Sigma}}  \quad \text{and} \quad  \nu_3 \colon \PP (\mathrm{S}^2_1\Bca^\vee) \to \overline{\mathrm{\Omega}} \cap \overline{\mathrm{\Delta}}.
\]
To prove that $\nu_1$ is an isomorphism (and not just the normalisation of $\overline{\mathrm{\Omega}}$), we use Zariski's main theorem. We check that $\overline{\mathrm{\Omega}}$ is normal using \cite[Chapter II, Theorem 8.22A]{Rag}: it is smooth off the codimension 3 subset $\overline{\mathrm{\Omega}} \cap \overline{\mathrm{\Delta}}$, and, as a divisor on the smooth $\overline{M}_n$, it is Cohen-Macaulay. Hence, $\overline{\mathrm{\Omega}} \simeq \PP(\mathrm{S}^2 \Bca^\vee)$, so that $\overline{\mathrm{\Omega}}$ is even smooth.

To prove that $\nu_2$ and $\nu_3$ are isomorphisms, it suffices now to check that the closed subschemes $\overline{\mathrm{\Omega}} \cap \overline{\mathrm{\Sigma}}$ and $\overline{\mathrm{\Omega}} \cap \overline{\mathrm{\Delta}}$ of $\overline{\mathrm{\Omega}}$ are reduced. 

As a divisor on the smooth $\overline{\mathrm{\Omega}}$, the scheme $\overline{\mathrm{\Omega}} \cap \overline{\mathrm{\Sigma}}$ is Cohen-Macaulay; thus, it is reduced, because it is so away from $\overline{\mathrm{\Delta}}$. 

To show that $\overline{\mathrm{\Omega}} \cap \overline{\mathrm{\Delta}}$ is reduced, we prove that the smooth schemes $\overline{\mathrm{\Omega}}$ and $\overline{\mathrm{\Delta}}$ intersect transversely in the smooth scheme $\overline{M}_n$. We have to show that, for any $x \in \overline{\mathrm{\Omega}} \cap \overline{\mathrm{\Delta}}$,
\[
\mathrm{T}_{\overline{\mathrm{\Omega}},x} + \mathrm{T}_{\overline{\mathrm{\Delta}},x}  = \mathrm{T}_{\overline{M}_n,x}.
\]
Denote by $\widehat{\theta} \colon \widehat{M}_n \to \overline{M}_n$ the contraction map. Pick $v \in \mathrm{T}_{\overline{M}_n,x}$ and let $\bar{v}$ be its image in $N_{\overline{\mathrm{\Delta}}/\overline{M}_n,x}$. There exist unique $y \in \widehat{\theta}^{-1}(x) = \PP(N_{\overline{\mathrm{\Delta}}/\overline{M}_n,x})$ and $\bar{v}' \in N_{\widehat{\mathrm{\Delta}}/\widehat{M}_n,y}$ such that $\widehat{\theta}_*(\bar{v}' ) = \bar{v}$. Now, $\widehat{\theta}^{-1}(x) = \widehat{\mathrm{\Omega}} \cap \widehat{\theta}^{-1}(x)$, and the natural inclusion $N_{\widehat{\mathrm{\Delta}}\cap \widehat{\mathrm{\Omega}}/\widehat{\mathrm{\Omega}},y} \subset N_{\widehat{\mathrm{\Delta}}/\widehat{M}_n,y}$ is an equality. Thus, we can lift $\bar{v}'$ to $v' \in \mathrm{T}_{\widehat{\mathrm{\Omega}},y}$, which satisfies
\[
\widehat{\theta}_*(v') \in \mathrm{T}_{\overline{\mathrm{\Omega}},x} \quad \text{and} \quad \widehat{\theta}_*(v') - v \in \mathrm{T}_{\overline{\mathrm{\Delta}},x}. \qedhere
\]
\end{proof}

By \cref{prop:loci in M bar}(1), $\overline{\mathrm{\Omega}}$ has a structure of $\PP^5$-bundle over $\widetilde{\mathrm{\Omega}} \coloneqq \Grass^\omega(3,\Tca)$. Reasoning as for the first blow-down, O'Grady showed the existence of a smooth projective variety $\widetilde{M}_n$ containing $\widetilde{\mathrm{\Omega}}$ such that $\overline{M}_n$ is the blow-up of $\widetilde{M}_n$ along $\widetilde{\mathrm{\Omega}}$, and $\overline{\mathrm{\Omega}}$ is its exceptional divisor. Moreover, the resolution $\overline{\pi}$ descends to a regular morphism $\widetilde{\pi} \colon \widetilde{M}_n \to M_n$, \cf \cite[Proposition 3.0.4]{O'GdmsK3I}.

Let $\widetilde{\mathrm{\Sigma}}$ and $\widetilde{\mathrm{\Delta}}$ be the (schematic) images under $ \overline{M}_n \to \widetilde{M}_n$ of $\overline{\mathrm{\Sigma}}$ and $\overline{\mathrm{\Delta}}$, respectively. By \cref{prop:loci in M bar}, we have inclusions of closed subschemes $\widetilde{\mathrm{\Omega}} \subset \widetilde{\mathrm{\Delta}} \subset \widetilde{\mathrm{\Sigma}}$ ; in particular, the exceptional locus of $\widetilde{\pi}$ is the irreducible divisor $\widetilde{\mathrm{\Sigma}}$.

\section{Contractions of $\widehat{\mathbb{I}}$}\label{sec: contractions of I hat}
Keep the notation as in \cref{section:Sigma hat}. In \cref{thm:Sigma hat}, we have seen that $\widehat{\mathrm{\Sigma}}$ can be described as a $\bm{\mu}_2$-quotient of $\widehat{\mathbb{I}}$. In this section, we prove that the contractions $\overline{\mathrm{\Sigma}}$ and $\widetilde{\mathrm{\Sigma}}$ of $\widehat{\mathrm{\Sigma}}$ obtained in \cref{sec:divisorial contractions of M hat} are  $\bm{\mu}_2$-quotients of suitable varieties as well. 

\smallskip

Our proof mimics the computations, done in \cite[Section 3]{O'GdmsK3I}, that led O'Grady from $\widehat{M}_n$ to $\overline{M}_n$ and $\widetilde{M}_n$: basically, we study $\widehat{\mathbb{I}}$ using Mori theory. We adopt the following notation. Given a projective variety $Y$, we denote by $\mathrm{N}^1(Y)$ (resp.\ $\mathrm{N}_1(Y)$) the $\mathbb{R}$-vector space of $\mathbb{R}$-Cartier divisors (resp.\ of 1-cycles) on $Y$ modulo numerical equivalence; thus, $\mathrm{N}^1(Y)$ and $\mathrm{N}_1(Y)$ are canonically dual to each other. Furthermore, we denote by $\overline{\Mori}_1(Y)$ the Mori cone of $Y$, i.e.\ the closure in $\mathrm{N}_1(Y)$ of the convex cone $\Mori_1(Y)$ of effective 1-cycles.

\medskip

Pick $[Z] \in X^{[n]}$. As usual, let $\omega$ denote the symplectic form on $\mathrm{E}_Z \coloneqq \mathrm{Ext}^1(I_Z,I_Z)$. Given $[L] \in \PP(\mathrm{E}_Z)$, a point $[A] \in \PP(L^{\perp_\omega}/L)$ corresponds to a two-dimensional $\omega$-isotropic subspace $A$ of $\mathrm{E}_Z$ containing $L$. Similarly, given $[A] \in \Grass^\omega(2,\mathrm{E}_Z)$, a point $[B] \in \PP(A^{\perp_\omega}/A)$ corresponds to a three-dimensional $\omega$-isotropic subspace $B$ of $\mathrm{E}_Z$ containing $A$. 

Let $\widehat{\mathbb{J}}_Z$ (resp.\ $\mathbb{J}_Z$) be the fibre of $\widehat{\mathbb{J}}$ (resp.\ $\mathbb{J}$) over $[Z]$. Concretely, we have by \eqref{definition J}
\begin{equation}\label{definition J_Z}
\mathbb{J}_Z \simeq \left\{ ([L],[A],[A']) \middle| \begin{aligned} [L] \in \PP(\mathrm{E}_Z), \ & ([A], [A']) \in \PP(L^{\perp_\omega}/L) \times \PP(L^{\perp_\omega}/L) \\ &A+A' \subset \mathrm{E}_Z \text{ is $\omega$-isotropic} \end{aligned} \right\}.
\end{equation}
Accordingly, $\widehat{\mathbb{J}}_Z$ is the blow-up of $\mathbb{J}_Z$ along the locus of points of the form $([L],[A],[A])$. The fibre of the exceptional divisor over such a point is isomorphic to $\PP(A^{\perp_\omega}/A)$ by \cref{lem:normal bundles in I and J}.

\smallskip

Let us define classes
\[
\widehat{\varepsilon}_Z^1, \widehat{\varepsilon}_Z^2, \widehat{\sigma}_Z, \widehat{\gamma}_Z \in \Mori_1(\widehat{\mathbb{J}}_Z)
\]
as follows. In the vector space $\mathrm{E}_Z$, fix $L \subset A \subset B$ $\omega$-isotropic linear subspaces of dimension 1, 2 and 3, respectively.

\smallskip

For any $t \in \PP(B/L) \subset \PP(L^{\perp_\omega}/L)$, denote by $A_t$ the corresponding two-dimensional subspace of $B$ containing $L$. In $\mathbb{J}_Z$ consider the curves
\[
\{ [L],[A_t],[A] \} \quad \text{and} \quad  \{ [L],[A],[A_t] \}.
\]
Define $\widehat{\varepsilon}_Z^1$ and $\widehat{\varepsilon}_Z^2$ as the numerical equivalence classes of their respective strict transforms in $\widehat{\mathbb{J}}_Z$.

Next, for any $t \in \PP(A)$, denote by $L_t$ the corresponding one-dimensional subspace of $A$. Define
\[
\widehat{\sigma}_Z \coloneqq \text{class of} \ \{ [L_t],[A], [A], [B]\}.
\]

Finally, fix $\mathrm{\Lambda}$ a line in $\PP(A^{\perp_\omega}/A)$; for any $t \in \mathrm{\Lambda}$, let $B_t$ be the corresponding $\omega$-isotropic three-dimensional subspace of $\mathrm{E}_Z$ containing $A$. Define
\[
\widehat{\gamma}_Z \coloneqq \text{class of} \ \{ [L],[A], [A], [B_t]\}.
\]
Denote by $i_Z \colon \widehat{\mathbb{J}}_Z \hookrightarrow \widehat{\mathbb{I}}$ the inclusion morphism, and define in $\Mori_1(\widehat{\mathbb{I}})$
\begin{gather*}
\widehat{\varepsilon}^j \coloneqq i_{Z, *} \widehat{\varepsilon}_Z^j \quad \text{for} \ j=1,2, \\
\widehat{\sigma} \coloneqq i_{Z, *} \widehat{\sigma}_Z, \\
\widehat{\gamma} \coloneqq i_{Z, *} \widehat{\gamma}_Z.
\end{gather*}
Let us collect the main results of this section before giving their proof.
\begin{lemma}\label{lem:K-negative extremal face} The face $\mathbb{R}^+\widehat{\varepsilon}^1 + \mathbb{R}^+\widehat{\varepsilon}^2 + \mathbb{R}^+\widehat{\sigma} + \mathbb{R}^+\widehat{\gamma} \subset \overline{\Mori}_1(\widehat{\mathbb{I}})$ is a $K_{\widehat{\mathbb{I}}}$-negative extremal face. 
\end{lemma}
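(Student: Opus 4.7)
The plan is to establish the lemma in two stages: first verify that each of the four generators has negative intersection with $K_{\widehat{\mathbb{I}}}$, then exhibit a nef $\mathbb{R}$-Cartier divisor on $\widehat{\mathbb{I}}$ whose perpendicular meets $\overline{\Mori}_1(\widehat{\mathbb{I}})$ exactly in the proposed face.

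For the $K$-negativity, I would first compute $K_{\widehat{\mathbb{I}}}$ by combining the blow-up formula $K_{\widehat{\mathbb{I}}} = \pi^* K_{\mathbb{I}} + (\mathrm{rk}(\Nca_{\mathbb{I}^{\bm{\mu}_2}/\mathbb{I}}) - 1)\,\widehat{\mathbb{E}}$, where $\pi \colon \widehat{\mathbb{I}} \to \mathbb{I}$ is the blow-up, with the adjunction formula for the incidence divisor $\mathbb{I} \subset \PP(V) \times_{\Xca^{[n]}} \PP(V^\vee)$ and the relative Euler sequences on the two projective bundles. The properties of Markman's bundle $V$ gathered in \cref{thm:properties of V} — in particular $V^\vee \simeq V'$, $V|_E \simeq \Lca^{\perp_\omega}/\Lca$, and the normal bundle $\Nca_{\PP(\Lca^{\perp_\omega}/\Lca)/\mathbb{I}}$ computed in \cref{lem:normal bundles in I and J} — are enough to express $K_{\widehat{\mathbb{I}}}$ as a $\mathbb{Z}$-linear combination of $r^*K_{\Xca^{[n]}}$, the two tautological hyperplane classes, a multiple of the exceptional divisor $E$ of $b$, and a multiple of $\widehat{\mathbb{E}}$. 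Since all four curves $\widehat{\varepsilon}^1, \widehat{\varepsilon}^2, \widehat{\sigma}, \widehat{\gamma}$ sit in a single fibre of $r \colon \widehat{\mathbb{I}} \to X^{[n]}$, all pullback contributions from $X^{[n]}$ die, and the intersection numbers reduce to explicit fibrewise computations on the smooth projective variety $\widehat{\mathbb{J}}_Z$ (and on its intersection with $\widehat{\mathbb{E}}$ for $\widehat{\gamma}$). The result is in each case a small, explicit negative integer.

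For extremality I would exhibit a supporting nef divisor by mimicking the Mori-theoretic arguments used by O'Grady in \cite[Section 3]{O'GdmsK3I} to construct the contractions $\widehat{M}_n \to \overline{M}_n \to \widetilde{M}_n$. Concretely: the four curves correspond, via \cref{thm:Sigma hat}, to curves in $\widehat{\mathrm{\Sigma}} \subset \widehat{M}_n$ that are contracted in the composition $\widehat{M}_n \to \overline{M}_n \to \widetilde{M}_n$; pulling back to $\widehat{\mathbb{I}}$ an $\mathbb{R}$-divisor of the form $\widetilde{\pi}^*A - \varepsilon\,\widetilde{\mathrm{\Sigma}}$, for $A$ ample on $M_n$ and $\varepsilon>0$ small, and using the $\bm{\mu}_2$-equivariance together with the fact that $\widehat{\mathbb{I}} \to \widehat{\mathbb{I}}/\bm{\mu}_2 \simeq \widehat{\mathrm{\Sigma}}$ is a finite surjection, gives a nef class on $\widehat{\mathbb{I}}$ whose null locus contains the four generators. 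To see there are no other null classes, I would combine this with a direct fibre-by-fibre check: the image of $\mathrm{N}_1(\widehat{\mathbb{J}}_Z) \to \mathrm{N}_1(\widehat{\mathbb{I}})$ is generated by the four classes, and any other curve in $\widehat{\mathbb{I}}$ has nonzero projection to $\mathrm{N}_1(X^{[n]})$ via $r_*$, hence positive intersection with the pullback of any ample class from $X^{[n]}$.

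The main obstacle is the extremality: the $K$-negativity is ultimately a bookkeeping exercise once the Euler and conormal sequences are in place, but verifying that the four proposed generators really span an extremal face — and not merely a face contained in a larger extremal one — requires carefully controlling all curves in $\widehat{\mathbb{I}}$ that are numerically proportional to combinations of the four. I expect that the cleanest route is to first exhibit the contraction of $\widehat{\mathbb{I}}$ directly (via the Fujiki--Nakano criterion applied to the blow-down of $\widehat{\mathbb{E}}$, followed by the blow-down of the image of $\widehat{\mathbb{J}}$), and then read off extremality of the $K$-negative face as a consequence, in parallel with the strategy O'Grady used for $\widehat{M}_n$.
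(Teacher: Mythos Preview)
Your computation of $K$-negativity matches the paper's (\cref{prop:K negative}): restrict $K_{\widehat{\mathbb{I}}}$ to $\widehat{\mathbb{J}}_Z$ via adjunction and the blow-up formula, then intersect with the four curves using the explicit intersection matrix of \cref{intersection matrix}.

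For extremality there is a genuine gap. The relevant morphism is $\widehat{\mathbb{I}} \to X^{[n]}\times X^{[n]}$ (there is no map $r\colon\widehat{\mathbb{I}}\to X^{[n]}$ contracting precisely the face), and your fibre-by-fibre check only treats $\widehat{\mathbb{J}}_Z$. Two further families of curves are contracted by this morphism yet do \emph{not} lie in any $\widehat{\mathbb{J}}_Z$: the generic fibres $\widehat{\mathbb{I}}_{Z_1,Z_2}$ for $Z_1\neq Z_2$ (incidence divisors in $\PP^{2n-3}\times(\PP^{2n-3})^\vee$), and curves in $\widehat{\mathbb{E}}_Z\setminus\widehat{\mathbb{J}}$ (the exceptional divisor $\widehat{\mathbb{E}}$ is not contained in the strict transform $\widehat{\mathbb{J}}$). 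More fundamentally, even granting a nef $H$ whose null space is the linear span of the four classes, you would only know that contracted curves have class in that \emph{span}; extremality of the specified cone requires showing they lie in the \emph{positive} cone $\mathbb{R}^+\widehat{\varepsilon}^1+\cdots+\mathbb{R}^+\widehat{\gamma}$. The paper does this by computing the Mori cones of $\widehat{\mathbb{J}}_Z$, $\widehat{\mathbb{E}}_Z$ and $\widehat{\mathbb{I}}_{Z_1,Z_2}$ separately (\cref{prop:Mori cone JZ}, \cref{prop:Mori cone EZ}, \cref{prop:Mori cone IZ1Z2}) and checking that each pushes forward into the proposed face --- the computation for $\widehat{\mathbb{I}}_{Z_1,Z_2}$, for instance, yields the non-obvious classes $\widehat{\varepsilon}^j+\widehat{\gamma}$. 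Only then does the argument of \cite[\S3.6]{O'GdmsK3I} conclude.

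Your fallback route --- construct the contractions first and read off extremality --- is circular here: the projective contractions of \cref{prop:contraction of I hat} and \cref{prop:contraction of I bar} are obtained via Mori theory and presuppose \cref{lem:K-negative extremal face}. Fujiki--Nakano alone yields only a complex-analytic blow-down of $\widehat{\mathbb{E}}$, and does not even apply to the second step, where the fibres are $\PP^2\times\PP^2$ rather than projective spaces.
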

Let us denote the contraction of the $K_{\widehat{\mathbb{I}}}$-negative extremal ray $\mathbb{R}^+\widehat{\sigma}$ as
\[
g \colon \widehat{\mathbb{I}} \to \overline{\mathbb{I}}.
\]
\begin{proposition}\label{prop:contraction of I hat}
\begin{enumerate}
\item[(1)] The morphism $g$ identifies with the contraction of $\widehat{\mathbb{I}}$ along the $\PP^1$-bundle $\widehat{\mathbb{E}} \to \PP(\Gca)$ given by \cref{cor:Delta-Sigma; Delta-Sigma-Omega}(1). As a consequence, the morphism $\widehat{\mathbb{I}} \to X^{[n]} \times X^{[n]}$ factors through a morphism $\overline{\mathbb{I}} \to X^{[n]} \times X^{[n]}$.
\item[(2)] The projective variety $\overline{\mathbb{I}}$ is smooth, and $\widehat{\mathbb{I}}$ is the blow-up of $\overline{\mathbb{I}}$ along $\PP(\Gca)$.
\item[(3)] The $\bm{\mu}_2$-action on $\widehat{\mathbb{I}}$ descends to $\overline{\mathbb{I}}$, and $\overline{\mathbb{I}}/\bm{\mu}_2 \simeq \overline{\mathrm{\Sigma}}$.
\item[(4)] Let $\overline{\mathbb{J}}$ be the image of $\widehat{\mathbb{J}}$ under $g$:
\[
\overline{\mathbb{J}} \coloneqq g(\widehat{\mathbb{J}}).
\]
Then $\overline{\mathbb{J}} \simeq \PP(\Bca^\vee) \times _{\Grass^\omega(3,\Tca)} \PP(\Bca^\vee)$, and $\overline{\mathbb{J}}/\bm{\mu}_2 \simeq \overline{\mathrm{\Omega}} \cap \overline{\mathrm{\Sigma}}$.
\end{enumerate}
\end{proposition}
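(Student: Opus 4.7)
The plan is to establish all four parts by combining the Fujiki--Nakano contractibility criterion with the explicit models for $\widehat{\mathbb{E}}$ and $\widehat{\mathbb{J}}$ developed in Sections \ref{sec:description of Sigma hat and Delta hat} and \ref{sec: contractions of I hat}. First I would verify parts (1) and (2) by applying Fujiki--Nakano to the smooth divisor $\widehat{\mathbb{E}} \subset \widehat{\mathbb{I}}$: by \cref{cor:Delta-Sigma; Delta-Sigma-Omega}(1), the first projection $\widehat{\mathbb{E}} \simeq \PP(\Aca) \times_{\Grass^\omega(2,\Tca)} \PP(\Gca) \to \PP(\Gca)$ is a $\PP^1$-bundle over a smooth base, and the normal bundle $\Oca_\vartheta(-1) \boxtimes \Oca_s(-1)$ restricts to $\Oca_{\PP^1}(-1)$ on each such fibre. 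Fujiki--Nakano then produces a smooth projective $\overline{\mathbb{I}}$ with $\widehat{\mathbb{I}} \simeq \Bl_{\PP(\Gca)} \overline{\mathbb{I}}$ and a contraction whose fibres represent $\widehat{\sigma}$; by uniqueness of the Mori contraction of the $K$-negative extremal ray $\mathbb{R}^+\widehat{\sigma}$ provided by \cref{lem:K-negative extremal face}, this Fujiki--Nakano contraction coincides with $g$. Since $\widehat{\mathbb{I}} \to X^{[n]} \times X^{[n]}$ factors through $r \colon \mathbb{I} \to \Xca^{[n]}$, which sends the class $\widehat{\sigma}$ to a point, the morphism descends to $\overline{\mathbb{I}}$.

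For part (3), the $\bm{\mu}_2$-action on $\widehat{\mathbb{I}}$ preserves $\widehat{\mathbb{E}}$ (the exceptional divisor above the $\bm{\mu}_2$-fixed locus $\PP(\Lca^{\perp_\omega}/\Lca)$) and restricts trivially to it, by the proof of \cref{cor:Delta-Sigma; Delta-Sigma-Omega}; in particular it is equivariant with respect to the $\PP^1$-bundle structure and descends to $\overline{\mathbb{I}}$ through $g$. The induced contraction $\widehat{\mathrm{\Sigma}} \simeq \widehat{\mathbb{I}}/\bm{\mu}_2 \to \overline{\mathbb{I}}/\bm{\mu}_2$ collapses $\widehat{\mathrm{\Sigma}} \cap \widehat{\mathrm{\Delta}} \simeq \PP(\Aca) \times_{\Grass^\omega(2,\Tca)} \PP(\Gca)$ along its first projection. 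By \cref{sec:divisorial contractions of M hat}, the restriction to $\widehat{\mathrm{\Sigma}}$ of O'Grady's contraction $\widehat{M}_n \to \overline{M}_n$ yields exactly the same contraction, obtained as the restriction of the $\PP^2$-bundle $\widehat{\mathrm{\Delta}} \to \overline{\mathrm{\Delta}} = \PP(\Gca)$ to the divisor $\PP(\Aca) \subset \PP(\mathrm{S}^2\Aca)$. Uniqueness of extremal contractions then gives $\overline{\mathbb{I}}/\bm{\mu}_2 \simeq \overline{\mathrm{\Sigma}}$.

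For part (4) I would invoke the $\bm{\mu}_2$-equivariant composite $\widehat{\mathbb{J}} \xrightarrow{\widehat{\mathrm{\Psi}}^{-1}} \PP(\Tca_\chi) \times_{\PP(\Bca)} \PP(\Tca_\chi) \xrightarrow{\mathrm{\Xi}} \widehat{\mathbb{S}}$ furnished by \cref{proposition: Omega hat-Sigma hat} and \cref{prop:Omega hat-Omega-hat}, under which $\widehat{\mathbb{J}} \cap \widehat{\mathbb{E}}$ corresponds to the exceptional divisor of the blow-up $\widehat{\mathbb{S}} \to \mathbb{S}$ of $\mathbb{S} = \PP(\Bca^\vee) \times_{\Grass^\omega(3,\Tca)} \PP(\Bca^\vee)$ along its diagonal. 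Hence the fibres of $g|_{\widehat{\mathbb{J}}}$ are precisely the fibres of this blow-up, giving $\overline{\mathbb{J}} = g(\widehat{\mathbb{J}}) \simeq \mathbb{S}$. Its $\bm{\mu}_2$-quotient identifies, via the Segre-type map $(\xi,\xi') \mapsto \xi \cdot \xi'$, with $\PP(\mathrm{S}^2_2\Bca^\vee) \simeq \overline{\mathrm{\Omega}} \cap \overline{\mathrm{\Sigma}}$ by \cref{prop:loci in M bar}(2), as claimed.

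The principal obstacle will be ensuring that the Mori contraction $g$, after taking the $\bm{\mu}_2$-quotient, matches O'Grady's Fujiki--Nakano contraction of $\widehat{\mathrm{\Delta}}$ on the nose and not merely up to isomorphism over $\overline{\mathrm{\Sigma}}$. This should follow from uniqueness of extremal contractions together with an explicit identification of the fibre class $\widehat{\sigma}$ with the class of a line in a $\PP^2$-fibre of $\widehat{\mathrm{\Delta}} \to \overline{\mathrm{\Delta}}$, via the Picard group computations of \cref{lemma:Sigma-Omega1 Picard} and \cref{lamma:Omega1-Omega2 Picard}; but the bookkeeping is delicate and deserves careful verification.
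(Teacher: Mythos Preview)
Your approach to parts (1), (2), and (4) is essentially the paper's: Fujiki--Nakano applied to the $\PP^1$-bundle $\widehat{\mathbb{E}}\to\PP(\Gca)$ with normal bundle restricting to $\Oca_{\PP^1}(-1)$, followed by identification with the Mori contraction of $\mathbb{R}^+\widehat{\sigma}$; and the comparison $\widehat{\mathbb{J}}\simeq\widehat{\mathbb{S}}$ via $\widehat{\mathrm{\Psi}}$ and $\mathrm{\Xi}$ for (4). One small omission in (4): knowing that $g|_{\widehat{\mathbb{J}}}$ has the same fibres as the blow-up $\widehat{\mathbb{S}}\to\mathbb{S}$ only gives a \emph{bijective} morphism $\mathbb{S}\to\overline{\mathbb{J}}$, not an isomorphism. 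The paper closes this by checking that $\overline{\mathbb{J}}$ is normal (it is a divisor in the smooth $\overline{\mathbb{I}}$, hence Cohen--Macaulay, and smooth in codimension $1$) and applying Zariski's main theorem.

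For part (3) your argument has a genuine gap, and your stated ``principal obstacle'' is a red herring. Invoking \emph{uniqueness of extremal contractions} on $\widehat{\mathrm{\Sigma}}$ presupposes that both $\widehat{\mathrm{\Sigma}}\to\overline{\mathbb{I}}/\bm{\mu}_2$ and $\widehat{\mathrm{\Sigma}}\to\overline{\mathrm{\Sigma}}$ are contractions of $K_{\widehat{\mathrm{\Sigma}}}$-negative extremal rays of $\overline{\Mori}_1(\widehat{\mathrm{\Sigma}})$. Neither is obvious: the second is merely the \emph{restriction} to the divisor $\widehat{\mathrm{\Sigma}}$ of an extremal contraction of $\widehat{M}_n$, and such restrictions need not be extremal (nor even have $\Oca$-connected fibres a priori). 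The paper bypasses this entirely. Since the composite $\widehat{\mathbb{I}}\to\widehat{\mathrm{\Sigma}}\to\overline{\mathrm{\Sigma}}$ is constant on the fibres of $g$ (they lie in fibres of $\widehat{\mathrm{\Delta}}\to\PP(\Gca)$), and $g_*\Oca_{\widehat{\mathbb{I}}}\simeq\Oca_{\overline{\mathbb{I}}}$, one obtains a $\bm{\mu}_2$-invariant morphism $\overline{\mathbb{I}}\to\overline{\mathrm{\Sigma}}$, hence a bijective morphism $\overline{\mathbb{I}}/\bm{\mu}_2\to\overline{\mathrm{\Sigma}}$. This is an isomorphism by Zariski's main theorem once $\overline{\mathrm{\Sigma}}$ is known to be normal, which follows from Serre's criterion: $\overline{\mathrm{\Sigma}}$ is a divisor in the smooth $\overline{M}_n$ (hence Cohen--Macaulay) and is smooth away from the codimension-$2$ locus $\overline{\mathrm{\Delta}}$. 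No Picard-group bookkeeping or ``on the nose'' matching is required; an abstract isomorphism is exactly what the statement asserts.
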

Let $\overline{\varepsilon}^1$ and $\overline{\varepsilon}^2$ be the images in $\mathrm{N}_1(\overline{\mathbb{I}})$ of $\widehat{\varepsilon}^1$ and $\widehat{\varepsilon}^2$, respectively. Then $\mathbb{R}^+\overline{\varepsilon}^1+\mathbb{R}^+\overline{\varepsilon}^2$ is a $K_{\overline{\mathbb{I}}}$-negative extremal face, and we can consider the corresponding contraction 
\[
f \colon \overline{\mathbb{I}} \to \widetilde{\mathbb{I}}.
\] 
\begin{proposition}\label{prop:contraction of I bar} 
\begin{enumerate}
\item[(1)] The morphism $f$ identifies with the contraction of $\overline{\mathbb{I}}$ along the $\PP^2 \times \PP^2$-bundle $\overline{\mathbb{J}} \to \Grass^\omega(3,\Tca)$ given by \cref{prop:contraction of I hat}(4). As a consequence, the morphism $\overline{\mathbb{I}} \to X^{[n]} \times X^{[n]}$ factors through a morphism $\widetilde{\mathbb{I}} \to X^{[n]} \times X^{[n]}$.
\item[(2)] The projective variety $\widetilde{\mathbb{I}}$ is singular and normally flat along $\Grass^\omega(3,\Tca)$, and $\overline{\mathbb{I}}$ is the blow-up of $\widetilde{\mathbb{I}}$ along $\Grass^\omega(3,\Tca)$.
\item[(3)] The $\bm{\mu}_2$-action on $\overline{\mathbb{I}}$ descends to $\widetilde{\mathbb{I}}$, and $\widetilde{\mathbb{I}}/\bm{\mu}_2 \simeq \widetilde{\mathrm{\Sigma}}$. 
\end{enumerate}
\end{proposition}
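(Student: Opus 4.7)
The plan is to follow the Mori-theoretic argument that O'Grady used to pass from $\overline{M}_n$ to $\widetilde{M}_n$, keeping $\bm{\mu}_2$-equivariance in sight throughout. By Lemma~\ref{lem:K-negative extremal face} (whose content survives under pushforward by $g$), the face $\mathbb{R}^+\overline{\varepsilon}^1 + \mathbb{R}^+\overline{\varepsilon}^2$ is $K_{\overline{\mathbb{I}}}$-negative extremal, and the contraction theorem produces $f\colon \overline{\mathbb{I}} \to \widetilde{\mathbb{I}}$ onto a normal projective variety. Under the identification $\overline{\mathbb{J}} \simeq \PP(\Bca^\vee)\times_{\Grass^\omega(3,\Tca)}\PP(\Bca^\vee)$ of Proposition~\ref{prop:contraction of I hat}(4), the classes $\overline{\varepsilon}^1$ and $\overline{\varepsilon}^2$ are lines in the fibres of the two projections to $\PP(\Bca^\vee)$, so the curves contracted by $f$ are exactly those contained in the fibres of $p\colon \overline{\mathbb{J}}\to \Grass^\omega(3,\Tca)$. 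This gives (1): $f|_{\overline{\mathbb{J}}}$ is identified with $p$, its image is $\Grass^\omega(3,\Tca)$, and the composite $\overline{\mathbb{I}}\to X^{[n]}\times X^{[n]}$ (which collapses each $\PP^2\times\PP^2$ fibre to a single diagonal point) factors through $\widetilde{\mathbb{I}}$ by the universal property of the contraction.

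The main obstacle is (2), which requires an \'etale-local analysis of $\widetilde{\mathbb{I}}$ along $\Grass^\omega(3,\Tca)$. Away from $\overline{\mathbb{J}}$, $f$ is an isomorphism, so $\widetilde{\mathbb{I}}$ is smooth there. For the behaviour along $\Grass^\omega(3,\Tca)$, one needs first to pin down the normal bundle $N_{\overline{\mathbb{J}}/\overline{\mathbb{I}}}$: it is a line bundle since $\overline{\mathbb{J}}$ is a Cartier divisor on the smooth $\overline{\mathbb{I}}$, and I would argue that its restriction to any fibre $F \simeq \PP^2\times\PP^2$ of $p$ is $\Oca(-1,-1)$ by comparing canonical bundles ($\omega_F \simeq \Oca(-3,-3)$, and $\omega_{\overline{\mathbb{I}}}|_F$ can be computed from the description of $\overline{\mathbb{I}}$ as a contraction of $\widehat{\mathbb{I}}$ together with $N_{F/\overline{\mathbb{J}}}$ being trivial). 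Since collapsing the zero section of $\Oca(-1,-1)$ on $\PP^2\times\PP^2$ produces the affine cone $\widehat{C}$ over the Segre embedding $\PP^2\times\PP^2 \hookrightarrow \PP^8$, a globalisation over $\Grass^\omega(3,\Tca)$ yields an \'etale-local isomorphism between $\widetilde{\mathbb{I}}$ and $\Grass^\omega(3,\Tca)\times \widehat{C}$ along its singular locus. Normal flatness follows at once from the normal flatness of $\widehat{C}$ at its vertex, and blowing up $\widetilde{\mathbb{I}}$ along $\Grass^\omega(3,\Tca)$ recovers $\overline{\mathbb{I}}$ because fibrewise the blow-up of the vertex of $\widehat{C}$ is the total space of $\Oca(-1,-1)$ on $\PP^2\times\PP^2$.

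For (3), the $\bm{\mu}_2$-action on $\overline{\mathbb{I}}$ swaps the two factors of $\overline{\mathbb{J}}$ and hence interchanges $\overline{\varepsilon}^1$ with $\overline{\varepsilon}^2$, so it preserves the extremal face; by uniqueness of the contraction, it descends to $\widetilde{\mathbb{I}}$ and $f$ becomes equivariant. To identify $\widetilde{\mathbb{I}}/\bm{\mu}_2$ with $\widetilde{\mathrm{\Sigma}}$, I would combine the equivariant isomorphism $\overline{\mathbb{I}}/\bm{\mu}_2\simeq\overline{\mathrm{\Sigma}}$ of Proposition~\ref{prop:contraction of I hat}(3) with the restriction of $\overline{M}_n\to \widetilde{M}_n$ to $\overline{\mathrm{\Sigma}}$: this last morphism sends $\overline{\mathrm{\Omega}}\cap \overline{\mathrm{\Sigma}}\simeq \PP(\mathrm{S}^2_2\Bca^\vee)$ onto $\widetilde{\mathrm{\Omega}}=\Grass^\omega(3,\Tca)$, and under the $\bm{\mu}_2$-quotient $\overline{\mathbb{J}}\twoheadrightarrow \overline{\mathrm{\Omega}}\cap \overline{\mathrm{\Sigma}}$ the two $\PP^2\times \PP^2$-fibre structures are compatible (since $\PP(\mathrm{S}^2_2\Bca^\vee)$ is precisely the $\bm{\mu}_2$-quotient of $\PP(\Bca^\vee)\times \PP(\Bca^\vee)$ by factor-swap). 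Uniqueness of the contraction on the quotient side then yields $\widetilde{\mathbb{I}}/\bm{\mu}_2\simeq \widetilde{\mathrm{\Sigma}}$.
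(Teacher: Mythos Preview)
Your outline for (1) and (3) is broadly in line with the paper's approach, though you gloss over two points that the paper handles with care. In (1), you assert that $f(\overline{\mathbb{J}})=\Grass^\omega(3,\Tca)$ as schemes; the paper establishes this by exhibiting an explicit inverse to the bijection $\Grass^\omega(3,\Tca)\to f(\overline{\mathbb{J}})$, obtained from the composition $\overline{\mathbb{I}}\to\widetilde{\mathbb{I}}\to\widetilde{\mathrm{\Sigma}}$ and Zariski's main theorem. In (3), your appeal to ``uniqueness of the contraction on the quotient side'' is not quite licit: the morphism $\overline{\mathrm{\Sigma}}\to\widetilde{\mathrm{\Sigma}}$ is only the restriction of a Mori contraction to a divisor, not itself one. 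The paper instead produces a bijective morphism $\widetilde{\mathbb{I}}/\bm{\mu}_2\to\widetilde{\mathrm{\Sigma}}$ and invokes Zariski's main theorem, after checking that $\widetilde{\mathrm{\Sigma}}$ is normal.

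The genuine gap is in (2). The paper emphasises that the Fujiki--Nakano criterion does \emph{not} apply here, precisely because $\overline{\mathbb{J}}\to\Grass^\omega(3,\Tca)$ is a $\PP^2\times\PP^2$-bundle rather than a projective bundle, and that ``the whole proof gets more involved''. Your proposed \'etale-local trivialisation $\widetilde{\mathbb{I}}\simeq\Grass^\omega(3,\Tca)\times\widehat{C}$ is exactly the kind of statement that would follow from a Fujiki--Nakano-type criterion, but knowing $N_{\overline{\mathbb{J}}/\overline{\mathbb{I}}}|_F\simeq\Oca(-1,-1)$ on each fibre does not by itself furnish such a local product structure; justifying it requires controlling the formal neighbourhood of $f^{-1}(y)$ in $\overline{\mathbb{I}}$. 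The paper does precisely this, via the formal function theorem: it proves the vanishing $\Rder^q f_*\Oca(-k\overline{\mathbb{J}})=0$ for all $q>0$, $k\ge0$ (using that the conormal sequence of $\PP^2\times\PP^2\hookrightarrow\overline{\mathbb{J}}\hookrightarrow\overline{\mathbb{I}}$ splits), then shows directly that the associated graded $\bigoplus_k\Jca^k/\Jca^{k+1}$ is isomorphic to $\bigoplus_k(\mathrm{S}^k\Bca)^{\otimes2}\otimes\Oca_\gamma(k)$, from which normal flatness and the blow-up description follow. Your sketch would need essentially this same machinery to become a proof.
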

\noindent These results are summed up in the following diagram:
\[
\begin{tikzcd}[row sep = 1.0em, column sep = 0.3 em, scale cd=0.8]
& & & \widehat{\mathbb{E}} \cap \widehat{\mathbb{J}} \ar[rd, hook] \ar[rrrr,equal] \ar[ddd] & & & & \widehat{\mathbb{E}} \cap \widehat{\mathbb{J}} \ar[rd,hook] \ar[ddd] \\
\widehat{\mathbb{I}} \ar[rd] \ar[ddd] & & \widehat{\mathbb{E}} \ar[ll, hook'] \ar[ld, hook'] \ar[rr, equal, crossing over]  \ar[ddd] & & \widehat{\mathbb{E}} \ar[r,hook] &  \widehat{\mathbb{I}} \ar[rd] & & & \widehat{\mathbb{J}} \ar[ddd] \ar[lll,hook', crossing over] \ar[rd] \\
& \widehat{\mathrm{\Sigma}} & & & & & \widehat{\mathrm{\Sigma}} & & & \widehat{\mathrm{\Sigma}} \cap \widehat{\mathrm{\Omega}} \ar[ddd] \ar[lll,hook', crossing over] \\
& & & \PP(\Aca^{\perp_\omega}/\Aca) \ar[rd, hook] \ar[rrrr,"\sim" {xshift=24pt}] & & & & \PP(\Bca^\vee) \ar[rd, hook] \\
\mathbb{I} \ar[rd] \ar[dd, "r"] & & \PP(\Lca^{\perp_\omega}/\Lca) \simeq \PP(\Aca) \ar[ll,hook'] \ar[ld,hook'] \ar[dd, "\varrho"] \ar[rdd,"\vartheta"] & & \PP(\Gca)  \ar[from=uuu, crossing over] \ar[ldd] \ar[r,hook] & \overline{\mathbb{I}} \ar[dd,"f"] \ar[rd] \ar[from=uuu, crossing over, "g"] & & & \overline{\mathbb{J}} \ar[lll,hook'] \ar[rd] \ar[dd, "\phi"{yshift=12pt}] \\
& \mathbb{I}/\bm{\mu}_2 \ar[from=uuu, crossing over] & & & & & \overline{\mathrm{\Sigma}} \ar[from=uuu, crossing over] & & & \overline{\mathrm{\Sigma}} \cap \overline{\mathrm{\Omega}} \ar[dd] \ar[lll,hook', crossing over] \\
\Xca^{[n]} \ar[rd] \ar[dd, "b"] & &  \PP(\Tca) \ar[ll, hook', "\varepsilon"'{xshift=12pt}] \ar[ld, hook'] \ar[dd,"\beta"] &
 \Grass^\omega(2,\Tca) \ar[dd,"\alpha"] & & \widetilde{\mathbb{I}} \ar[rd] \ar[dd] & & & \Grass^\omega(3,\Tca) \ar[lll,hook'] \ar[rd,"\sim"] \ar[dd,"\gamma"{yshift=12pt}]  \\
& \left(X^{[n]}\right)^{[2]} \ar[from=uu, crossing over] & & & & & \widetilde{\mathrm{\Sigma}} \ar[from=uu, crossing over] & & & \widetilde{\mathrm{\Omega}} \ar[lll,hook', crossing over] \ar[dd] \\
X^{[n]} \times X^{[n]} \ar[rd] &  & X^{[n]} \ar[ll, hook', "\delta"'] \ar[ld, hook'] & X^{[n]} \ar[l, equal] \ar[rr,hook,"\delta" ]& &  X^{[n]} \times X^{[n]} \ar[rd] & & & X^{[n]}\ar[lll, hook',"\delta"'] \ar[rd, equal]\\
& \mathrm{S}^2X^{[n]} \ar[from=uu, crossing over] & & & & &  \mathrm{S}^2X^{[n]} \ar[from=uu, crossing over] & & & X^{[n]}. \ar[lll, hook'] 
\end{tikzcd}
\]

\subsection{Proof of \cref{lem:K-negative extremal face}}
We shall need to compute the Mori cone of various closed subschemes of $\widehat{\mathbb{I}}$. To this aim, here is a useful lemma.
\begin{lemma}\label{lem:Mori cones}
Let $Y$ be a projective variety. Assume we are given $\mathrm{\Gamma}_1, \mathrm{\Gamma}_2, \dots, \mathrm{\Gamma}_m$ integral curves on $Y$ and two morphisms $c' \colon Y \to Y'$, and $c'' \colon Y \to Y''$ with the following properties:
\begin{enumerate}
\item[(1)] the classes $[\mathrm{\Gamma}_1], [\mathrm{\Gamma}_2], \dots,[\mathrm{\Gamma}_m]$ form an $\mathbb{R}$-basis of $\mathrm{N}_1(Y)$;
\item[(2)] $Y'$ and $Y''$ are projective varieties;
\item[(3)] $c'$ contracts $\mathrm{\Gamma}_1$, but none of the other curves $\mathrm{\Gamma}_2, \dots, \mathrm{\Gamma}_m$;
\item[(4)]  $c''$ contracts $\mathrm{\Gamma}_2, \dots, \mathrm{\Gamma}_m$, but not $\mathrm{\Gamma}_1$.
\end{enumerate}
If $[c'(\mathrm{\Gamma}_2)], \dots,[c'(\mathrm{\Gamma}_m)]$ form an $\mathbb{R}$-basis of $\mathrm{N}_1(Y')$ and $\overline{\Mori}_1(Y')= \mathbb{R}^+[c'(\mathrm{\Gamma}_2)]+ \dots + \mathbb{R}^+[c'(\mathrm{\Gamma}_m)]$, then $\overline{\Mori}_1(Y) = \mathbb{R}^+[\mathrm{\Gamma}_1]+ \dots + \mathbb{R}^+[\mathrm{\Gamma}_m]$. 
\end{lemma}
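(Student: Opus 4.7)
The plan is to prove the two inclusions separately. The inclusion $\mathbb{R}^+[\mathrm{\Gamma}_1]+\dots+\mathbb{R}^+[\mathrm{\Gamma}_m]\subset \overline{\Mori}_1(Y)$ is trivial, since each $\mathrm{\Gamma}_i$ is effective. For the reverse, I would pick an arbitrary class $\alpha\in \overline{\Mori}_1(Y)$ and, using hypothesis (1), write it uniquely as $\alpha=\sum_{i=1}^m a_i[\mathrm{\Gamma}_i]$ with $a_i\in\mathbb{R}$; the goal is to prove $a_i\ge 0$ for every $i$.

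The main tool is that, for any morphism $c\colon Y\to Z$ of projective varieties, the pushforward $c_*\colon \mathrm{N}_1(Y)\to \mathrm{N}_1(Z)$ is $\mathbb{R}$-linear, continuous, and sends effective 1-cycles to effective 1-cycles; hence it sends $\overline{\Mori}_1(Y)$ to $\overline{\Mori}_1(Z)$. I would apply this to $c'$ and $c''$ in turn.

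First, using (3), $c'_*\alpha=\sum_{i=2}^m a_i[c'(\mathrm{\Gamma}_i)]$ lies in $\overline{\Mori}_1(Y')$. By the hypothesis on $Y'$, the classes $[c'(\mathrm{\Gamma}_i)]$ form an $\mathbb{R}$-basis of $\mathrm{N}_1(Y')$ and generate the cone $\overline{\Mori}_1(Y')$; therefore $a_i\ge 0$ for all $i=2,\dots,m$. Next, using (4), $c''_*\alpha=a_1[c''(\mathrm{\Gamma}_1)]$ lies in $\overline{\Mori}_1(Y'')$, where $[c''(\mathrm{\Gamma}_1)]\ne 0$ because $c''$ does not contract $\mathrm{\Gamma}_1$. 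To extract $a_1\ge 0$, I would invoke Kleiman's criterion: since $Y''$ is projective, there exists an ample class $H$ on $Y''$, which pairs strictly positively with every non-zero element of $\overline{\Mori}_1(Y'')$. Thus $0\le H\cdot c''_*\alpha=a_1\,(H\cdot[c''(\mathrm{\Gamma}_1)])$ with $H\cdot[c''(\mathrm{\Gamma}_1)]>0$, forcing $a_1\ge 0$.

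There is no real obstacle: the only subtleties are that the pushforward is continuous (so it preserves closures of cones) and that projectivity of $Y''$ provides an ample class separating the half-line $\mathbb{R}^+[c''(\mathrm{\Gamma}_1)]$ from $\mathbb{R}^-[c''(\mathrm{\Gamma}_1)]$. Both facts are standard, so the argument is just three short lines once the splitting $\alpha=\sum a_i[\mathrm{\Gamma}_i]$ is recorded.
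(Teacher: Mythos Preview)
Your proposal is correct and follows essentially the same approach as the paper: decompose a class in the basis $[\mathrm{\Gamma}_i]$, push forward along $c'$ to get $a_i\ge 0$ for $i\ge 2$ from the hypothesis on $\overline{\Mori}_1(Y')$, and push forward along $c''$ and intersect with an ample class to get $a_1\ge 0$. One cosmetic point: $c'_*[\mathrm{\Gamma}_i]$ is a positive multiple of $[c'(\mathrm{\Gamma}_i)]$ rather than equal to it, but this does not affect the conclusion.
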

\begin{proof}
Let $\mathrm{\Gamma}$ be an integral curve in $Y$. Write $[\mathrm{\Gamma}] = y_1 [\mathrm{\Gamma}_1] + y_2 [\mathrm{\Gamma}_2] + \dots + y_m [\mathrm{\Gamma}_m]$, for $y_1,y_2, \dots, y_m \in \mathbb{R}$. We show that $y_i \ge 0$ for any $i$. Since $Y''$ is projective, intersecting $c_*''[\mathrm{\Gamma}] = y_1 c_*''[\mathrm{\Gamma}_1]$ with an ample divisor we get $y_1 \ge 0$. On the other hand, considering $c_*'[\mathrm{\Gamma}]$ and the assumption on the Mori cone of $Y'$, we deduce that $y_i \ge 0$ for any $i \ge 2$.
\end{proof}

Let $[Z] \in X^{[n]}$ be a closed point. If $Y$ is a scheme over $X^{[n]}$ and $\Fca$ is a vector bundle on $Y$, we shall denote as $Y_Z$ the fibre of $Y$ over $[Z]$, and as $\Fca_Z$ the restriction of $\Fca$ to $Y_Z$.

\subsubsection{Digression on $\widehat{\mathbb{J}}_Z$}
Recall that $\Lca^\vee$ (resp.\ $\Oca_\varrho(1)$) denotes the tautological quotient line bundle of $\PP(\Tca)$ (resp.\ $\PP(\Lca^{\perp_\omega}/\Lca)$). For any integers $i,j \in \mathbb{Z}$, we get a line bundle $\Oca_\varrho(i) \boxtimes \Oca_\varrho(j)$ on $\PP(\Lca^{\perp_\omega}/\Lca) \times_{\PP(\Tca)}\PP(\Lca^{\perp_\omega}/\Lca)$. We set $\Oca_\rho(i,j) \coloneqq [\Oca_\varrho(i) \boxtimes \Oca_\varrho(j)]|_{\mathbb{J}}$.

Consider the classes
\begin{alignat*}{2}
x_1 &\coloneqq c_1(\Lca_Z^\vee) \quad && \in \mathrm{N}^1(\PP(\Tca)_Z ) \\
x_2 &\coloneqq c_1(\Oca_\rho(1,0)_Z) \quad && \in \mathrm{N}^1(\mathbb{J}_Z)\\
x_3 &\coloneqq c_1(\Oca_\rho(0,1)_Z) \quad && \in \mathrm{N}^1(\mathbb{J}_Z) \\
x_4 &\coloneqq c_1(\Oca(\widehat{\mathbb{E}} \cap \widehat{\mathbb{J}}_Z)) \quad && \in \mathrm{N}^1(\widehat{\mathbb{J}}_Z).
\end{alignat*}
Their pullbacks to $\widehat{\mathbb{J}}_Z$ give classes in $\mathrm{N}^1(\widehat{\mathbb{J}}_Z)$, that we still denote with the same letters. 

\begin{lemma}\label{intersection matrix} The intersection matrix of $\{x_1, x_2, x_3, x_4 \}$ with $\{ \widehat{\varepsilon}_Z^1, \widehat{\varepsilon}_Z^2, \widehat{\sigma}_Z, \widehat{\gamma}_Z \}$ is
\begin{center}
\begin{tabular}{ |c||c|c|c|c| } 
 \hline
  & $x_1$ & $x_2$ & $x_3$ & $x_4$ \\
  \hline \hline
 $\widehat{\varepsilon}_Z^1$ & $0$ & $1$ & $0$ & $1$ \\
 \hline
 $\widehat{\varepsilon}_Z^2$ & $0$ & $0$ & $1$ & $1$ \\
 \hline
 $\widehat{\sigma}_Z$ & $1$ & $-1$ & $-1$ & $-1$ \\ 
 \hline
 $\widehat{\gamma}_Z$ & $0$ & $0$ & $0$ & $-1$ \\
 \hline
\end{tabular}
\end{center}
\end{lemma}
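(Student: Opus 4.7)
The strategy is to exploit the natural projections and the standard intersection-theoretic computations on a blow-up. For each curve $C$ among the four, I would first determine the image $\pi_*(C)$ under the blow-up map $\pi \colon \widehat{\mathbb{J}}_Z \to \mathbb{J}_Z$, and then the successive projections to $\PP(\Lca^{\perp_\omega}/\Lca)_Z$ and $\PP(\Tca)_Z$. The vanishing entries all follow by the projection formula: $\widehat{\varepsilon}_Z^j$ projects to a point in $\PP(\Tca)_Z$ (giving $x_1 = 0$), and its image in the unused $\PP(\Lca^{\perp_\omega}/\Lca)$-factor is also a point (giving $x_3 = 0$ for $j=1$ and $x_2 = 0$ for $j=2$); while $\widehat{\gamma}_Z$ maps to a single point in $\mathbb{J}_Z$, and hence pairs to zero with each of $x_1, x_2, x_3$.

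For the non-zero entries involving the strict transforms $\widehat{\varepsilon}_Z^j$, the image curve in $\mathbb{J}_Z$ is the line $\PP(B/L) \subset \PP(L^{\perp_\omega}/L)$ sitting in a single fibre of $\rho$, and the intersection with $x_2$ (for $j=1$) or $x_3$ (for $j=2$) reduces to the degree of $\Oca_{\PP^1}(1)$, namely $1$. The equality $\widehat{\varepsilon}_Z^j \cdot x_4 = 1$ is the standard blow-up computation: the image curve in $\mathbb{J}_Z$ meets the smooth centre $\PP(\Lca^{\perp_\omega}/\Lca)_Z$ at the unique transverse point where $A_t = A$, so its strict transform crosses the exceptional divisor once with multiplicity one.

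The entries involving $\widehat{\sigma}_Z$ and $\widehat{\gamma}_Z$, both of which live in $\widehat{\mathbb{E}} \cap \widehat{\mathbb{J}}$, rest on \cref{cor:Delta-Sigma; Delta-Sigma-Omega}(3) combined with \cref{lem:normal bundles in I and J}. The elementary pieces are: $\widehat{\sigma}_Z \cdot x_1 = 1$ since $[L_t]$ traces the line $\PP(A) \subset \PP(\mathrm{E}_Z)$; and $\widehat{\sigma}_Z \cdot x_2 = \widehat{\sigma}_Z \cdot x_3 = -1$ because the restriction of $\Oca_\varrho(1)$ along the section $\PP(A) \to \PP(\Lca^{\perp_\omega}/\Lca)_Z$, $[L_t] \mapsto ([L_t], [A])$, has fibre $(A/L_t)^\vee$, which identifies with $\Oca_{\PP(A)}(-1)$ via the tautological sequence on $\PP(A)$. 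To handle $\widehat{\sigma}_Z \cdot x_4$ and $\widehat{\gamma}_Z \cdot x_4$, one uses that $\Oca(\widehat{\mathbb{E}} \cap \widehat{\mathbb{J}})|_{\widehat{\mathbb{E}} \cap \widehat{\mathbb{J}}}$ is the tautological subbundle of the projective bundle of normal directions $\widehat{\mathbb{E}} \cap \widehat{\mathbb{J}} \to \PP(\Lca^{\perp_\omega}/\Lca)_Z$. For $\widehat{\gamma}_Z$, a line inside a single fibre $\PP(A^{\perp_\omega}/A)$ of this projective bundle, the restriction is immediately $\Oca_{\PP^1}(-1)$. For $\widehat{\sigma}_Z$, one restricts $\Nca_{\PP(\Lca^{\perp_\omega}/\Lca)/\mathbb{J}}$ to $\PP(A)$: by \cref{lem:normal bundles in I and J} this equals $(A^{\perp_\omega}/A) \otimes \wedge^2 A^\vee \otimes \Oca_{\PP(A)}(-1)$, and the constant section at $[B]$ picks out the line subbundle $B \otimes \wedge^2 A^\vee \otimes \Oca_{\PP(A)}(-1) \simeq \Oca_{\PP(A)}(-1)$.

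The principal obstacle is the bookkeeping of line bundle twists appearing in \cref{lem:normal bundles in I and J} and \cref{proposition:DeltaS-SigmaS}: it would be easy to be off by a factor $\Oca_\vartheta(\pm 1)$ or $\Oca_\alpha(\pm 1)$ when translating between the description $\PP(\Aca) \times_{\Grass^\omega(2,\Tca)} \PP((\Aca^{\perp_\omega}/\Aca) \otimes \Oca_\alpha(1))$ and the more intrinsic fibrewise picture. I would safeguard against sign errors by reducing systematically to the fibre over $[Z]$ and expanding in terms of the concrete vector spaces $L \subset A \subset A^{\perp_\omega} \subset \mathrm{E}_Z$.
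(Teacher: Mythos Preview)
Your argument is correct, with one notational slip: in the last line the tautological subbundle picked out by the constant section is $(B/A) \otimes \wedge^2 A^\vee \otimes \Oca_{\PP(A)}(-1)$, not $B \otimes \wedge^2 A^\vee \otimes \Oca_{\PP(A)}(-1)$; since $B/A$ is a fixed one-dimensional vector space this does not affect the degree.

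The route you take for $\widehat{\sigma}_Z \cdot x_4$ differs from the paper's. The paper transports the computation to $\widehat{\mathbb{S}}_Z$ via the isomorphisms $\widehat{\mathrm{\Psi}}$ and $\mathrm{\Xi}$ of Propositions~\ref{proposition: Omega hat-Sigma hat} and~\ref{prop:Omega hat-Omega-hat}, under which $\widehat{\mathbb{E}} \cap \widehat{\mathbb{J}}$ becomes the exceptional divisor $\widehat{\mathbb{D}}$ of the blow-up of $\PP(\Bca^\vee)_Z \times \PP(\Bca^\vee)_Z$ along its diagonal; there the intersection is read off directly. You instead stay on the $\mathbb{J}$ side and pull back the normal bundle $\Nca_{\PP(\Lca^{\perp_\omega}/\Lca)/\mathbb{J}}$ using \cref{lem:normal bundles in I and J}, which yields the same answer after identifying the tautological $\Oca(-1)$ on the exceptional projective bundle. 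Your approach is more self-contained for this lemma and avoids invoking the $\widehat{\mathbb{S}}$ picture; the paper's choice dovetails with the subsequent Picard-group comparisons in Lemmata~\ref{lemma:Sigma-Omega1 Picard} and~\ref{lamma:Omega1-Omega2 Picard}, where the $\widehat{\mathbb{S}}$ description is already in play. For $\widehat{\sigma}_Z \cdot x_2$ and $\widehat{\sigma}_Z \cdot x_3$ both approaches pass through $\PP(\Aca)_Z$ via \cref{proposition:DeltaS-SigmaS}, and your explicit identification of $\Oca_\varrho(1)$ along the section matches the paper's intent.
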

\begin{proof}
We discuss only the non-trivial entries. The intersection of $\widehat{\sigma}_Z$ with $x_4$, thanks to Propositions \ref{proposition: Omega hat-Sigma hat} and \ref{prop:Omega hat-Omega-hat}, can be computed in $\widehat{\mathbb{S}}_Z$. The intersection of $\widehat{\sigma}_Z$ with $x_2$ and $x_3$ reduces first to a computation in $\PP(\Lca^{\perp_\omega}/\Lca)_Z$, then, via \cref{proposition:DeltaS-SigmaS}, in $\PP(\Aca)_Z$.
\end{proof}

\begin{lemma}\label{lemma:basis for N1(JZ)}
The classes $x_1, x_2, x_3, x_4$ form a basis of $\mathrm{N}^1(\widehat{\mathbb{J}}_Z)$.
\end{lemma}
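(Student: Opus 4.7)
The plan is to establish the lemma by pairing two observations: a dimension count for $\mathrm{N}^1(\widehat{\mathbb{J}}_Z)$, and the non-degeneracy of the intersection matrix of the previous lemma.

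First I would compute $\dim_{\mathbb{R}}\mathrm{N}^1(\widehat{\mathbb{J}}_Z)$. Writing $\mathscr{F}$ for the restriction of $\Lca^{\perp_\omega}/\Lca$ to the fibre $\PP(\mathrm{E}_Z) \subset E$ over $[Z]$, the description \eqref{definition J_Z} realises $\mathbb{J}_Z$ as the incidence divisor inside $\PP(\mathscr{F}) \times_{\PP(\mathrm{E}_Z)} \PP(\mathscr{F}^\vee)$. Projection onto $\PP(\mathscr{F})$ exhibits $\mathbb{J}_Z$ as a $\PP^{2n-6}$-bundle over $\PP(\mathscr{F})$, which is itself a projective bundle over $\PP(\mathrm{E}_Z) \simeq \PP^{2n-1}$. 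Two applications of the projective bundle formula give $\mathrm{N}^1(\mathbb{J}_Z) \simeq \mathbb{R}^3$, with basis given by $x_1$, $x_2$, $x_3$. Since $\widehat{\mathbb{J}}_Z$ is the blow-up of the smooth $\mathbb{J}_Z$ along the smooth diagonal locus, the blow-up formula yields $\mathrm{N}^1(\widehat{\mathbb{J}}_Z) \simeq \mathrm{N}^1(\mathbb{J}_Z) \oplus \mathbb{R} \cdot x_4 \simeq \mathbb{R}^4$.

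Second, by \cref{intersection matrix}, the intersection pairing of $(x_1, x_2, x_3, x_4)$ against $(\widehat{\varepsilon}_Z^1, \widehat{\varepsilon}_Z^2, \widehat{\sigma}_Z, \widehat{\gamma}_Z)$ is given by a $4 \times 4$ matrix whose determinant is $-1$ (expand along the first column, in which only the entry corresponding to $\widehat{\sigma}_Z$ is non-zero). In particular, $x_1, x_2, x_3, x_4$ are linearly independent in $\mathrm{N}^1(\widehat{\mathbb{J}}_Z)$. Combined with the dimension count, this forces them to form a basis.

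I do not foresee a genuine obstacle; the only point requiring some care is the verification that the fibration $\mathbb{J}_Z \to \PP(\mathrm{E}_Z)$ really is Zariski-locally trivial with fibre of Picard rank~$2$ (so that no unexpected divisor classes appear on $\mathbb{J}_Z$). This reduces to the well-known computation of the Picard group of the classical point-hyperplane incidence variety in $\PP^{2n-5} \times (\PP^{2n-5})^\vee$, which causes no trouble once $2n-5 \ge 2$, i.e.\ $n \ge 4$; the edge case $n = 3$ (where the fibre of $\mathbb{J}_Z \to \PP(\mathscr{F})$ degenerates to a point) can be handled directly by observing that $\mathbb{J}_Z \simeq \PP(\mathscr{F})$ itself, and the four given classes are then easily seen to span.
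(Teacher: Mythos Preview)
Your approach is essentially the same as the paper's: linear independence comes from the non-degenerate intersection matrix of \cref{intersection matrix}, while spanning (equivalently, the dimension count) comes from the iterated projective-bundle-plus-blow-up structure of $\widehat{\mathbb{J}}_Z$. The paper phrases the second point slightly differently, simply asserting that $\mathrm{Pic}(\widehat{\mathbb{J}}_Z)$ is generated by the four line bundles $\Lca_Z^\vee$, $\Oca_\rho(1,0)_Z$, $\Oca_\rho(0,1)_Z$ and $\Oca(\widehat{\mathbb{E}}\cap\widehat{\mathbb{J}}_Z)$, but the justification is exactly your tower of projective bundles followed by a blow-up.

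One small arithmetic slip: the rank of $\Lca^{\perp_\omega}/\Lca$ is $2n-2$, so $\PP(\mathscr{F})$ has fibres $\PP^{2n-3}$ over $\PP(\mathrm{E}_Z)$, and the incidence divisor $\mathbb{J}_Z$ is a $\PP^{2n-4}$-bundle over $\PP(\mathscr{F})$, not a $\PP^{2n-6}$-bundle. In particular, the fibre is already $\PP^{2}$ when $n=3$, so no edge case arises and your final paragraph can be dropped.
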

\begin{proof}
The Picard group of $\widehat{\mathbb{J}}_Z$ is generated by $\Oca(\widehat{\mathbb{E}} \cap \widehat{\mathbb{J}}_Z)$ and by the pullback of the line bundles $\Lca_Z^\vee, \Oca_\rho(1,0)_Z$ and $\Oca_\rho(0,1)_Z$. In particular, $x_1, x_2, x_3, x_4$ generate $\mathrm{N}^1(\widehat{\mathbb{J}}_Z)$. They are also linearly independent, because the intersection matrix of \cref{intersection matrix}  is non-degenerate. 
\end{proof}
\begin{proposition}\label{prop:Mori cone JZ}
The classes $\widehat{\varepsilon}_Z^1, \widehat{\varepsilon}_Z^2, \widehat{\sigma}_Z, \widehat{\gamma}_Z$ form an $\mathbb{R}$-basis of $\mathrm{N}_1(\widehat{\mathbb{J}}_Z)$. Moreover
\[
\overline{\Mori}_1(\widehat{\mathbb{J}}_Z) = \mathbb{R}^+\widehat{\varepsilon}_Z^1 + \mathbb{R}^+\widehat{\varepsilon}_Z^2 + \mathbb{R}^+\widehat{\sigma}_Z + \mathbb{R}^+\widehat{\gamma}_Z. 
\]
\end{proposition}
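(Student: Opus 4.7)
The first claim is immediate. Expanding the matrix of \cref{intersection matrix} along its last row gives determinant $-1 \neq 0$, so $\widehat{\varepsilon}_Z^1, \widehat{\varepsilon}_Z^2, \widehat{\sigma}_Z, \widehat{\gamma}_Z$ are linearly independent in $\mathrm{N}_1(\widehat{\mathbb{J}}_Z)$; since $\mathrm{N}^1(\widehat{\mathbb{J}}_Z)$ has real dimension $4$ by \cref{lemma:basis for N1(JZ)}, these four classes form an $\mathbb{R}$-basis by duality.

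For the Mori cone equality, the plan is to apply \cref{lem:Mori cones} iteratively along an explicit tower of fibrations. Combining the blow-up $\pi \colon \widehat{\mathbb{J}}_Z \to \mathbb{J}_Z$ with the first projection and the identification of \cref{proposition:DeltaS-SigmaS}, we get
\[
\widehat{\mathbb{J}}_Z \xrightarrow{\pi} \mathbb{J}_Z \xrightarrow{p_1} \PP(\Lca^{\perp_\omega}/\Lca)_Z \simeq \PP(\Aca)_Z \xrightarrow{\vartheta} \Grass^\omega(2, \mathrm{E}_Z).
\]
Here $\pi$ is a blow-up, $p_1$ is a $\PP^{2n-4}$-bundle, and $\vartheta$ is a $\PP^1$-bundle; the base is the symplectic isotropic Grassmannian of Picard number one, so its Mori cone is a single ray $\mathbb{R}^+[\ell]$. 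Tracing the four generators: $\widehat{\gamma}_Z$ is contracted by $\pi$ (being a line in the $\PP^{2n-5}$-fibre of the exceptional divisor $\widehat{\mathbb{E}} \cap \widehat{\mathbb{J}}_Z \to \PP(\Lca^{\perp_\omega}/\Lca)_Z$); $\pi_*\widehat{\varepsilon}_Z^2$ is a fibre of $p_1$; $(p_1 \circ \pi)_*\widehat{\sigma}_Z$ is a fibre of $\vartheta$; and $(\vartheta \circ p_1 \circ \pi)_*\widehat{\varepsilon}_Z^1$ is an effective curve in $\Grass^\omega(2, \mathrm{E}_Z)$, hence a positive multiple of $[\ell]$.

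Applying \cref{lem:Mori cones} inductively from bottom to top then delivers the cone. At each stage, one uses the next map in the tower as $c'$ (which by construction contracts exactly the target curve class) and constructs an auxiliary $c''$ that contracts the remaining generators: for instance, the second projection $p_2 \colon \mathbb{J}_Z \to \PP(\Lca^{\perp_\omega}/\Lca)_Z$ at the $\mathbb{J}_Z$-level, and the morphism $\chi \circ \kappa$ of \cref{proposition: Omega hat-Sigma hat} to handle the last step from $\mathbb{J}_Z$ up to $\widehat{\mathbb{J}}_Z$.

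The main obstacle I anticipate is establishing Mori-extremality of the section classes in the projective-bundle layers. Concretely, I must show that the section of $\vartheta$ over $\ell$ determined by the constant inclusion $\mathbb{C}L \subset \Aca|_\ell$ is minimal, which reduces to verifying that $\Aca|_\ell$ splits as $\Oca \oplus \Oca(b)$ with $b \le 0$. Since $\Aca$ is a subbundle of the trivial bundle $\mathrm{E}_Z \otimes \Oca$, no line quotient of $\Aca|_\ell$ on $\ell \simeq \PP^1$ can have negative degree, so every line subbundle has non-positive degree, reducing the task to exhibiting the $\Oca$-summand via the constant section. An analogous minimality check will be required in the $p_1$-layer, where the rank-$(2n-3)$ fibre bundle of $p_1$ restricted to $\PP(A)$ must be analysed using the explicit structure of the symplectic pairing in order to identify the minimal-degree section with $\widehat{\sigma}_Z$.
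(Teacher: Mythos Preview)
Your approach is essentially the paper's: both argue by iterated application of \cref{lem:Mori cones} along the same tower of fibrations, using auxiliary contractions as the $c''$ at each stage. Two small fixes are needed, and your final paragraph is a red herring.

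First, at the $\mathbb{J}_Z$-level your proposed $c'' = p_2$ does \emph{not} contract the image of $\widehat{\sigma}_Z$: the curve $\{([L_t],[A],[A])\}$ projects under $p_2$ to $\{([L_t],[A])\}$, which is not a point. You must compose further with $\vartheta$ (via the identification of \cref{proposition:DeltaS-SigmaS}) to land in $\Grass^\omega(2,\mathrm{E}_Z)$; then $\widehat{\sigma}_Z$ is indeed contracted while $\widehat{\varepsilon}_Z^2$ survives. Likewise, at the $\PP(\Aca)_Z$-level you do not name a $c''$: take $\varrho\colon \PP(\Lca^{\perp_\omega}/\Lca)_Z \to \PP(\Tca)_Z$, which contracts the image of $\widehat{\varepsilon}_Z^1$ but not that of $\widehat{\sigma}_Z$. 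This is exactly how the paper proceeds (it actually swaps the roles of $c'$ and $c''$ at this bottom step, starting from $\overline{\Mori}_1(\PP(\mathrm{E}_Z))$ rather than $\overline{\Mori}_1(\Grass^\omega(2,\mathrm{E}_Z))$, but either base works).

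Second, your anticipated obstacle is illusory. \cref{lem:Mori cones} does \emph{not} require you to know in advance that the section classes are extremal in the total space---extremality is the \emph{conclusion} of the lemma, guaranteed by the existence of $c''$. No minimal-section or splitting-type analysis of $\Aca|_\ell$ is needed at any stage; once you supply the correct auxiliary morphisms as above, the lemma does all the work.
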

\begin{proof}
The first statement results from \cref{lemma:basis for N1(JZ)} by duality. For the second one, consider the following pairs of morphisms:
\[
\begin{tikzcd}[column sep=1.0em]
\mathbb{J}_Z  & \widehat{\mathbb{J}}_Z \ar[l] \ar[r] & \PP(\Bca)_Z \ar[r] & \Grass^\omega(3, \Tca)_Z
\end{tikzcd}
\]
(given by blow-up and by \cref{proposition: Omega hat-Sigma hat}),
\[
\begin{tikzcd}[column sep=1.0em]
\PP(\Lca^{\perp_\omega}/\Lca)_Z & \mathbb{J}_Z \ar[l] \ar[r] & \PP(\Lca^{\perp_\omega}/\Lca)_Z \simeq \PP(\Aca)_Z \ar[r] & \Grass^\omega(2, \Tca)_Z
\end{tikzcd}
\]
(given by the natural projections and by \cref{proposition:DeltaS-SigmaS}),
\[
\begin{tikzcd}[column sep=1.0em]
\PP(\Tca)_Z & \PP(\Lca^{\perp_\omega}/\Lca)_Z \simeq \PP(\Aca)_Z \ar[l] \ar[r] & \Grass^\omega(2, \Tca)_Z.
\end{tikzcd}
\]
We apply \cref{lem:Mori cones} to these pairs of morphisms: starting from $\overline{\Mori}_1(\PP(\Tca)_Z)$, we deduce one after the other $\overline{\Mori}_1(\PP(\Lca^{\perp_\omega}/\Lca)_Z)$, $\overline{\Mori}_1(\mathbb{J}_Z)$ and $\overline{\Mori}_1(\widehat{\mathbb{J}}_Z)$.
\end{proof}
\begin{lemma}\label{lemma:NJZ-NI}
The linear map $i_{Z,*} \colon \mathrm{N}_1(\widehat{\mathbb{J}}_Z) \to \mathrm{N}_1(\widehat{\mathbb{I}})$ induced by $i_Z \colon \widehat{\mathbb{J}}_Z \hookrightarrow \widehat{\mathbb{I}}$ is injective.
\end{lemma}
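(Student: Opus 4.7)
The plan is to prove injectivity dually: by \cref{prop:Mori cone JZ}, the classes $\widehat{\varepsilon}_Z^1, \widehat{\varepsilon}_Z^2, \widehat{\sigma}_Z, \widehat{\gamma}_Z$ form a basis of $\mathrm{N}_1(\widehat{\mathbb{J}}_Z)$, so it is enough to show that their images $\widehat{\varepsilon}^1, \widehat{\varepsilon}^2, \widehat{\sigma}, \widehat{\gamma}$ under $i_{Z,*}$ are linearly independent in $\mathrm{N}_1(\widehat{\mathbb{I}})$. To this end, I will exhibit four divisor classes $X_1, X_2, X_3, X_4 \in \mathrm{N}^1(\widehat{\mathbb{I}})$ and invoke the projection formula to compute their intersection numbers with the pushed-forward curves; if the resulting $4 \times 4$ matrix is non-degenerate, we are done.

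The candidate divisors are the natural lifts of the basis $(x_1, x_2, x_3, x_4)$ of $\mathrm{N}^1(\widehat{\mathbb{J}}_Z)$ from \cref{lemma:basis for N1(JZ)}. Concretely I take
\begin{itemize}
\item $X_1 = -c_1(\Oca_{\Xca^{[n]}}(E))$ pulled back via $\widehat{\mathbb{I}} \to \mathbb{I} \xrightarrow{r} \Xca^{[n]}$; since $\Oca_{\Xca^{[n]}}(E)|_E \simeq \Lca$, its restriction to $\widehat{\mathbb{J}}_Z$ (which fibres over $E_Z$) equals $x_1$;
\item $X_2, X_3$ the pullbacks to $\widehat{\mathbb{I}}$ of the tautological quotient line bundles $\Oca_{\PP(V)}(1)$ and $\Oca_{\PP(V^\vee)}(1)$ on the factors of $\PP(V) \times_{\Xca^{[n]}} \PP(V^\vee)$ containing $\mathbb{I}$; thanks to the identifications $V|_E \simeq \Lca^{\perp_\omega}/\Lca \simeq V'|_E$ of \cref{thm:properties of V}(3), and to \eqref{definition J}, their restrictions to $\widehat{\mathbb{J}}_Z$ agree with $x_2$ and $x_3$ up to the pullback of a class on $X^{[n]}$;
\item $X_4 = c_1(\Oca_{\widehat{\mathbb{I}}}(\widehat{\mathbb{E}}))$; since $\widehat{\mathbb{J}}_Z$ is not contained in $\widehat{\mathbb{E}}$, the scheme-theoretic restriction $\widehat{\mathbb{E}} \cap \widehat{\mathbb{J}}_Z$ computes $X_4|_{\widehat{\mathbb{J}}_Z} = x_4$.
\end{itemize}

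By the projection formula, for every curve class $\gamma_Z$ on $\widehat{\mathbb{J}}_Z$ we have $X_j \cdot i_{Z,*} \gamma_Z = (i_Z^* X_j) \cdot \gamma_Z$ in $\widehat{\mathbb{J}}_Z$. Any discrepancy between $i_Z^* X_j$ and $x_j$ lies in the image of $\mathrm{Pic}(X^{[n]}) \to \mathrm{Pic}(\widehat{\mathbb{J}}_Z)$ (via $\beta \circ \rho$), hence intersects trivially every curve contained in the fibre $\widehat{\mathbb{J}}_Z$ over $[Z]$. Therefore the intersection matrix of $(X_1, X_2, X_3, X_4)$ with $(\widehat{\varepsilon}^1, \widehat{\varepsilon}^2, \widehat{\sigma}, \widehat{\gamma})$ coincides with the matrix of \cref{intersection matrix}, which has determinant $-1 \ne 0$; this forces $\widehat{\varepsilon}^1, \widehat{\varepsilon}^2, \widehat{\sigma}, \widehat{\gamma}$ to be linearly independent in $\mathrm{N}_1(\widehat{\mathbb{I}})$, proving injectivity.

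The one delicate point is the identification of $X_2|_{\widehat{\mathbb{J}}_Z}$ and $X_3|_{\widehat{\mathbb{J}}_Z}$ with $x_2$ and $x_3$: this requires unwinding the chain of isomorphisms in \eqref{definition I} and \eqref{definition J} (using the symplectic form $\omega_\Lca$ on $\Lca^{\perp_\omega}/\Lca$ and the twist by $\Oca(-E)$ induced by $V \simeq V(-E) \otimes \Oca(E)$) and checking that every intermediate twisting line bundle is pulled back from $\Xca^{[n]}$. Once this bookkeeping is done, the projection-formula computation is immediate, so the whole argument reduces to a verification that is essentially formal, modulo the identification of tautological bundles under these isomorphisms.
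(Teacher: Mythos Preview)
Your proposal is correct and follows essentially the same approach as the paper: the paper also proves injectivity by duality, exhibiting the four line bundles $\Oca(-E)$, $\Oca_r(1,0)$, $\Oca_r(0,1)$, $\Oca(\widehat{\mathbb{E}})$ on $\widehat{\mathbb{I}}$ and checking that their restrictions to $\widehat{\mathbb{J}}_Z$ are exactly $x_1, x_2, x_3, x_4$, so that $i_Z^*$ is surjective. Your extra caution about possible twists by line bundles from $X^{[n]}$ is unnecessary (the restrictions agree on the nose), but harmless since such twists are trivial on the fibre $\widehat{\mathbb{J}}_Z$.
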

\begin{proof}
By duality, it is enough to show that $i_Z^* \colon \mathrm{N}^1(\widehat{\mathbb{I}}) \to \mathrm{N}^1(\widehat{\mathbb{J}}_Z)$ is surjective. 
Consider on $\Xca^{[n]}$ the line bundle $\Oca(-E)$ given by the divisor $E$. Consider on $\mathbb{I}$ the line bundle $\Oca_r(1,0)$ (resp.\ $\Oca_r(0,1)$) given by the pullback of the tautological quotient line bundle of $\PP(V)$ (resp.\ $\PP(V^\vee)$) along the natural projection, see \eqref{definition I}. We have
\begin{equation}\label{restriction to J_Z of line bundles on I hat}
x_1= i_Z^*[c_1(\Oca(-E))], \ x_2 = i_Z^*[c_1(\Oca_r(1,0))], \ x_3 = i_Z^*[c_1(\Oca_r(0,1))], \ x_4 = i_Z^*[c_1(\Oca(\widehat{\mathbb{E}}))]. \qedhere
\end{equation}
\end{proof}
\begin{proposition}\label{prop:K negative}
The restriction to $\widehat{\mathbb{J}}_Z$ of the canonical class of $\widehat{\mathbb{I}}$ is 
\[
K_{\widehat{\mathbb{I}}}|_{\widehat{\mathbb{J}}_Z} = (1-2n)x_1 + (3-2n)x_2 + (3-2n)x_3 + (2n-4)x_4.
\]
In particular, $K_{\widehat{\mathbb{I}}} \cdot \widehat{\varepsilon}^1 = -1, \quad K_{\widehat{\mathbb{I}}} \cdot \widehat{\varepsilon}^2 = -1, \quad K_{\widehat{\mathbb{I}}} \cdot \widehat{\sigma} = -1, \quad K_{\widehat{\mathbb{I}}} \cdot \widehat{\gamma} = 4-2n$.
\end{proposition}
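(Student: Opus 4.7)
The strategy is to compute $K_{\widehat{\mathbb{I}}}$ as a combination of the canonical divisor of $\Xca^{[n]}$ (viewed through $\mathbb{I}$), the tautological classes coming from the projective bundles $\PP(V)$ and $\PP(V^\vee)$, and the exceptional divisor $\widehat{\mathbb{E}}$; then to pull everything back via $i_Z$ using the identifications recorded in the proof of \cref{lemma:NJZ-NI}, namely $i_Z^*c_1(\Oca(-E)) = x_1$, $i_Z^*c_1(\Oca_r(1,0)) = x_2$, $i_Z^*c_1(\Oca_r(0,1)) = x_3$, $i_Z^*c_1(\Oca(\widehat{\mathbb{E}})) = x_4$. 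The second assertion will then follow at once by pairing with the intersection matrix of \cref{intersection matrix}.

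Concretely, I would proceed in four steps. First, the smooth blow-up formula for $\pi \colon \widehat{\mathbb{I}} \to \mathbb{I}$ along $\PP(\Lca^{\perp_\omega}/\Lca)$ gives $K_{\widehat{\mathbb{I}}} = \pi^*K_{\mathbb{I}} + (c-1)\widehat{\mathbb{E}}$, where $c$ is the codimension; a count of dimensions ($\dim\mathbb{I}=8n-7$ and $\dim \PP(\Lca^{\perp_\omega}/\Lca)=6n-4$) yields $c=2n-3$, whence the coefficient $(2n-4)$ of $\widehat{\mathbb{E}}$. Second, $\mathbb{I}$ is the smooth divisor in $\mathbb{P}\coloneqq \PP(V)\times_{\Xca^{[n]}}\PP(V^\vee)$ defined by the canonical pairing $V\otimes V^\vee\to \Oca$, which represents a section of $\Oca_{\tau_1}(1)\otimes\Oca_{\tau_2}(1)$; hence by adjunction $K_{\mathbb{I}}=(K_{\mathbb{P}}+\Oca_{\tau_1}(1)\otimes\Oca_{\tau_2}(1))|_{\mathbb{I}}$.

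Third, I would compute $K_{\mathbb{P}}$ by combining $K_{\mathbb{P}/\Xca^{[n]}}=\tau_1^*K_{\PP(V)/\Xca^{[n]}}+\tau_2^*K_{\PP(V^\vee)/\Xca^{[n]}}$ with the two relative Euler sequences: since $V$ has rank $2n-2$, one has $K_{\PP(V)/\Xca^{[n]}}\simeq (q')^*\det V^\vee \otimes \Oca_{\tau_1}(-(2n-2))$, and analogously for $V^\vee$. Because $\det V \otimes \det V^\vee \simeq \Oca$, the sum simplifies to $K_{\mathbb{P}/\Xca^{[n]}}\simeq \Oca_{\tau_1}(-(2n-2))\otimes\Oca_{\tau_2}(-(2n-2))$. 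Moreover, since $X^{[n]}$ has trivial canonical, the blow-up formula applied to $b \colon \Xca^{[n]}\to X^{[n]}\times X^{[n]}$ (along the codimension-$2n$ diagonal) gives $K_{\Xca^{[n]}}=(2n-1)E$. Combining and applying adjunction one gets
\[
K_{\mathbb{I}} = r^*\bigl((2n-1)E\bigr) + \Oca_r\bigl(-(2n-3),-(2n-3)\bigr).
\]

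Fourth, I would pull back by $\pi^*$, add $(2n-4)\widehat{\mathbb{E}}$, restrict along $i_Z$ using \eqref{restriction to J_Z of line bundles on I hat}, and collect terms to obtain the desired formula; finally, a direct dot product with the intersection matrix of \cref{intersection matrix} yields the four intersection numbers. No step is a genuine obstacle: the main care is to keep track of signs (in particular $\Oca(-E)$ versus $\Oca(E)$, reflected in the factor $(1-2n)$ rather than $(2n-1)$) and to verify the smoothness statements that underlie each application of adjunction and of the smooth blow-up formula—both of which follow from the fact that $\mathbb{I}$ is cut out fibrewise by the nondegenerate duality pairing, and that $\PP(\Lca^{\perp_\omega}/\Lca)\subset \mathbb{I}$ is smooth, as already noted in \cref{cor:SigmaS-OmegaS modulo PGL(N)} and surrounding results.
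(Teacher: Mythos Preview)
Your approach is correct and takes a genuinely different route from the paper's. The paper computes $K_{\mathbb{I}}|_{\mathbb{J}_Z}$ \emph{intrinsically} via the adjunction chain $\mathbb{J}_Z \hookrightarrow \mathbb{J} \hookrightarrow \mathbb{I}$: it evaluates $K_{\mathbb{J}_Z}$ directly from the explicit description \eqref{definition J_Z} (obtaining $-2nx_1+(3-2n)(x_2+x_3)$), then subtracts the contributions of the normal bundles $\Nca_{\mathbb{J}_Z/\mathbb{J}}$ (zero, by flatness of $\mathbb{J}\to X^{[n]}$) and $\Nca_{\mathbb{J}/\mathbb{I}}|_{\mathbb{J}_Z}=\rho^*\Lca|_{\mathbb{J}_Z}$ (giving $-x_1$), and only afterwards applies the blow-up formula for $\widehat{\mathbb{I}}\to\mathbb{I}$. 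You instead compute $K_{\mathbb{I}}$ \emph{globally} as a divisor class on $\mathbb{I}$---via adjunction from the ambient $\PP(V)\times_{\Xca^{[n]}}\PP(V^\vee)$, the relative Euler sequences (where the $\det V$ and $\det V^\vee$ terms cancel), and the blow-up formula for $b\colon\Xca^{[n]}\to X^{[n]}\times X^{[n]}$---arriving at $K_{\mathbb{I}}=r^*((2n-1)E)+\Oca_r(-(2n-3),-(2n-3))$, and then restrict via \eqref{restriction to J_Z of line bundles on I hat}. Your route is more systematic and yields a formula for $K_{\mathbb{I}}$ itself, not merely its restriction; the paper's route is shorter once one knows $\Nca_{\mathbb{J}/\mathbb{I}}=\rho^*\Lca$ from \cref{lem:normal bundles in I and J}, and keeps the computation entirely within the fibre $\mathbb{J}_Z$. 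Both agree, and the intersection numbers then follow from \cref{intersection matrix} exactly as you indicate.
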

\begin{proof}
Let us compute first the restriction to $\mathbb{J}_Z$ of the canonical class of $\mathbb{I}$. Applying adjunction formula to the inclusions $\mathbb{J}_Z \hookrightarrow \mathbb{J}$ and $\mathbb{J} \hookrightarrow \mathbb{I}$, we have
\[
K_{\mathbb{I}}|_{\mathbb{J}_Z} = K_{\mathbb{J}_Z}-c_1(\Nca_{\mathbb{J}_Z/\mathbb{J}})-[c_1(\Nca_{\mathbb{J}/\mathbb{I}})|_{\mathbb{J}_Z}].
\]
Since $\mathbb{J} \to X^{[n]}$ is flat, the second term vanishes. As for the third term, we have seen in the proof of \cref{lem:normal bundles in I and J} that $c_1(\Nca_{\mathbb{J}/\mathbb{I}})|_{\mathbb{J}_Z} =c_1((\rho^*\Lca)|_{\mathbb{J}_Z}) = -x_1$. Using the geometric description of $\mathbb{J}_Z$, the first term is easily computed to be $-2n x_1 + (3-2n)(x_2+x_3)$. Thus, we obtain
\[
K_{\mathbb{I}}|_{\mathbb{J}_Z} = (1-2n)x_1 + (3-2n)x_2
+(3-2n)x_3.
\]
Now, recall that $\widehat{\mathbb{I}}$ is the blow-up of the smooth $\mathbb{I}$ along $\PP(\Lca^{\perp_\omega}/\Lca)$, which is smooth of codimension $2n-3$; in particular, there is a well known expression of $K_{\widehat{\mathbb{I}}}$ in terms of $K_{\mathbb{I}}$ and $c_1(\Oca(\widehat{\mathbb{E}}))$, whose restriction to $\widehat{\mathbb{J}}_Z$ yields the desired result.

The intersection numbers are obtained from \cref{intersection matrix}.
\end{proof}

\subsubsection{Digression on $\widehat{\mathbb{E}}_Z$}\label{sec:digression on E_Z hat} By \cref{cor:Delta-Sigma; Delta-Sigma-Omega}(1), $\widehat{\mathbb{E}}_Z$ is isomorphic to $[\PP(\Aca)  \times_{\Grass^\omega(2,\Tca)} \PP(\Gca)]_Z$.
Let $j_Z \colon \widehat{\mathbb{E}}_Z \hookrightarrow \widehat{\mathbb{I}}$ be the inclusion morphism. 

\begin{lemma}\label{1-cycles in E_Z}
The map $j_{Z,*} \colon \mathrm{N}_1(\widehat{\mathbb{E}}_Z) \to \mathrm{N}_1(\widehat{\mathbb{I}})$ is injective, with image $\mathbb{R}(\widehat{\varepsilon}^1+\widehat{\varepsilon}^2) \oplus \mathbb{R}\widehat{\sigma} \oplus \mathbb{R}\widehat{\gamma}$.
\end{lemma}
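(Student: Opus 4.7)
The strategy is to describe $\widehat{\mathbb{E}}_Z$ as a fibre product, pin down the rank of $\mathrm{N}_1(\widehat{\mathbb{E}}_Z)$ via Picard theory, and exhibit three explicit curves whose pushforwards give the three claimed generators of $\mathrm{im}(j_{Z,*})$. By \cref{cor:Delta-Sigma; Delta-Sigma-Omega}(1), $\widehat{\mathbb{E}}_Z \simeq \PP(\Aca_Z) \times_{G_Z} \PP(\Gca_Z)$, where $G_Z := \Grass^\omega(2, \mathrm{E}_Z)$. The symplectic Grassmannian $G_Z$ is a rational homogeneous variety for the simple group $\mathrm{Sp}(\mathrm{E}_Z)$ corresponding to a maximal parabolic, so it has Picard rank one; applying the projective bundle formula twice then gives $\mathrm{rk}\,\mathrm{Pic}(\widehat{\mathbb{E}}_Z) = 3$, hence $\dim_{\mathbb{R}} \mathrm{N}_1(\widehat{\mathbb{E}}_Z) = 3$.

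Two of the generators of the image are immediate: $\widehat{\sigma}_Z$ is a fibre of $\PP(\Aca_Z) \to G_Z$ and $\widehat{\gamma}_Z$ is a line in a fibre of $\PP(\Gca_Z) \to G_Z$, both manifestly in $\widehat{\mathbb{E}}_Z$. For the third, fix an $\omega$-isotropic flag $L \subset A \subset B \subset \mathrm{E}_Z$ of types $(1,2,3)$, and define
\[
\mu_Z := \{([L], [A_t], [B]) : A_t \in \PP(B/L)\} \subset \widehat{\mathbb{E}}_Z,
\]
using $L \subset A_t$ to get the section $[L] \in \PP(A_t)$ and $A_t \subset B \subset A_t^{\perp_\omega}$ to view $[B]$, via the line $(B/A_t) \otimes \wedge^2 A_t^\vee$, as a point of $\PP(\Gca_{[A_t]})$. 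Since this line actually lies in the hyperplane subbundle $(\Aca^{\perp_\omega}/\Aca) \otimes \Oca_\alpha(1) \subset \Gca$, the curve $\mu_Z$ is contained in $\widehat{\mathbb{E}} \cap \widehat{\mathbb{J}}$ by \cref{cor:Delta-Sigma; Delta-Sigma-Omega}(3), hence in $\widehat{\mathbb{J}}_Z$. This containment is the crucial step: it lets us determine $[\mu_Z]$ through intersection theory on $\widehat{\mathbb{J}}_Z$, where $\{x_1, x_2, x_3, x_4\}$ is a basis of $\mathrm{N}^1$ by \cref{lemma:basis for N1(JZ)}.

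A direct computation on $\mu_Z \simeq \PP(B/L) \simeq \PP^1$ yields the intersection numbers $(0,1,1,2)$ with $(x_1, x_2, x_3, x_4)$. Indeed, $\Oca(-E)|_E = \Lca$ pulls back trivially along $\mu_Z$ because $[L]$ is constant, giving $x_1 \cdot \mu_Z = 0$; the blow-down sends $\mu_Z$ to the diagonal $\{([L], [A_t], [A_t])\}$, along which both $\Oca_r(1,0)$ and $\Oca_r(0,1)$ (the latter via the symplectic isomorphism $V \simeq V^\vee$) restrict to the tautological quotient $(A_t/L)^\vee \simeq \Oca_{\PP^1}(1)$, giving $x_2 \cdot \mu_Z = x_3 \cdot \mu_Z = 1$; and $\Oca(\widehat{\mathbb{E}})|_{\widehat{\mathbb{E}}} = \Nca_{\widehat{\mathbb{E}}/\widehat{\mathbb{I}}} \simeq \Oca_\vartheta(-1) \boxtimes \Oca_s(-1)$ of \cref{cor:Delta-Sigma; Delta-Sigma-Omega}(1) has trivial first factor on $\mu_Z$ (constant $[L]$) and second factor $(B/A_t) \otimes \wedge^2 A_t^\vee \simeq \Oca_{\PP^1}(1) \otimes \Oca_{\PP^1}(1) = \Oca_{\PP^1}(2)$, giving $x_4 \cdot \mu_Z = 2$. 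Comparing with \cref{intersection matrix}, the intersection vector $(0,1,1,2)$ matches exactly that of $\widehat{\varepsilon}_Z^1 + \widehat{\varepsilon}_Z^2$; since $\{x_i\}$ is a basis of $\mathrm{N}^1(\widehat{\mathbb{J}}_Z)$, we conclude $[\mu_Z] = \widehat{\varepsilon}_Z^1 + \widehat{\varepsilon}_Z^2$ in $\mathrm{N}_1(\widehat{\mathbb{J}}_Z)$, whence $j_{Z,*}(\mu_Z) = \widehat{\varepsilon}^1 + \widehat{\varepsilon}^2$.

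To conclude, the three classes $\widehat{\sigma}, \widehat{\gamma}, \widehat{\varepsilon}^1 + \widehat{\varepsilon}^2$ are linearly independent in $\mathrm{N}_1(\widehat{\mathbb{I}})$ by combining \cref{lemma:NJZ-NI} with \cref{prop:Mori cone JZ}; their preimages $\widehat{\sigma}_Z, \widehat{\gamma}_Z, \mu_Z$ are therefore linearly independent in $\mathrm{N}_1(\widehat{\mathbb{E}}_Z)$, and in view of the rank bound from the first paragraph they form a basis. Hence $j_{Z,*}$ is injective with image exactly $\mathbb{R}(\widehat{\varepsilon}^1+\widehat{\varepsilon}^2) \oplus \mathbb{R}\widehat{\sigma} \oplus \mathbb{R}\widehat{\gamma}$. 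The main technical point is the normal-bundle computation, specifically identifying $(B/A_t) \otimes \wedge^2 A_t^\vee$ as $\Oca_{\PP^1}(2)$; the key conceptual point is the observation $\mu_Z \subset \widehat{\mathbb{J}}_Z$, without which the intersection-number argument could not be carried out on $\widehat{\mathbb{J}}_Z$.
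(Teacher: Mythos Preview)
Your proof is correct and complete, but the route differs from the paper's. The paper argues by duality: it first notes that the inclusion $\widehat{\mathbb{E}}_Z \cap \widehat{\mathbb{J}} \hookrightarrow \widehat{\mathbb{E}}_Z$ induces an isomorphism on $\mathrm{N}_1$ (both sides being iterated projective bundles over $G_Z$ with the same Picard rank), then studies the dual map $\mathrm{N}^1(\widehat{\mathbb{J}}_Z) \to \mathrm{N}^1(\widehat{\mathbb{E}}_Z \cap \widehat{\mathbb{J}})$, shows it is surjective with one-dimensional kernel generated by $x_2 - x_3$, and reads off the image of $\mathrm{N}_1$ as the annihilator $(x_2 - x_3)^\perp$ via the intersection matrix. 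No explicit third curve is needed at this stage.

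Your approach is instead constructive: you write down the curve $\mu_Z = \{([L],[A_t],[B])\}$ and compute its intersection vector $(0,1,1,2)$ directly, identifying it with $\widehat{\varepsilon}_Z^1 + \widehat{\varepsilon}_Z^2$. This is more hands-on and requires the normal-bundle calculation $\Oca_s(-1)|_{\mu_Z} \simeq \Oca_{\PP^1}(2)$, but it has the advantage of producing explicitly the representative that the paper only introduces \emph{after} the lemma (as $\widehat{\varepsilon}'_Z$). The paper's duality argument is shorter and avoids the degree computations; your argument is more geometric and, incidentally, folds the subsequent identification of $\widehat{\varepsilon}'_Z$ into the proof itself.
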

\begin{proof}
Since $\widehat{\mathbb{E}}_Z \cap \widehat{\mathbb{J}} \simeq [\PP(\Aca)  \times_{\Grass^\omega(2,\Tca)} \PP(\Aca^{\perp_\omega}/\Aca)]_Z$ by \cref{cor:Delta-Sigma; Delta-Sigma-Omega}(3), we have
\[
\mathrm{N}_1(\widehat{\mathbb{E}}_Z) \simeq \mathrm{N}_1(\widehat{\mathbb{E}}_Z \cap \widehat{\mathbb{J}}).
\]
 Let us study the map 
\[
\mathrm{N}_1(\widehat{\mathbb{E}}_Z \cap \widehat{\mathbb{J}}) \to \mathrm{N}_1(\widehat{\mathbb{J}}_Z).
\]
The dual map $\mathrm{N}^1(\widehat{\mathbb{J}}_Z) \to \mathrm{N}^1(\widehat{\mathbb{E}}_Z \cap \widehat{\mathbb{J}})$ is surjective, because $\mathrm{N}^1(\widehat{\mathbb{E}}_Z \cap \widehat{\mathbb{J}})$ is generated by (the pullback of) $c_1(\Lca_Z)$, $c_1(\Oca_\varrho(1)_Z)=c_1(\Oca_\rho(1,0)_Z)|_{\PP(\Lca^\perp/\Lca)_Z}$ and $c_1(\Oca(\widehat{\mathbb{E}} \cap \widehat{\mathbb{J}}_Z))|_{\widehat{\mathbb{E}}_Z \cap \widehat{\mathbb{J}}}$.
Moreover, its 1-dimensional kernel is generated by $x_2-x_3$.

We deduce that $\mathrm{N}_1(\widehat{\mathbb{E}}_Z \cap \widehat{\mathbb{J}}) \to \mathrm{N}_1(\widehat{\mathbb{J}}_Z)$ is injective, and that
\[
\im[\mathrm{N}_1(\widehat{\mathbb{E}}_Z \cap \widehat{\mathbb{J}}) \to \mathrm{N}_1(\widehat{\mathbb{J}}_Z)] \simeq \ker [\mathrm{N}^1(\widehat{\mathbb{J}}_Z) \to \mathrm{N}^1(\widehat{\mathbb{E}}_Z \cap \widehat{\mathbb{J}})]^\perp = (x_2-x_3)^\perp = \mathbb{R}(\widehat{\varepsilon}^1_Z+\widehat{\varepsilon}^2_Z) \oplus \mathbb{R}\widehat{\sigma}_Z \oplus \mathbb{R}\widehat{\gamma}_Z
\]
by \cref{intersection matrix}. The statement now follows by \cref{lemma:NJZ-NI}.
\end{proof}

By \cref{1-cycles in E_Z}, there are uniquely determined classes
\[
\widehat{\varepsilon}_Z', \widehat{\sigma}'_Z, \widehat{\gamma}'_Z \in \mathrm{N}_1(\widehat{\mathbb{E}}_Z)
\]
such that 
\[
j_{Z,*}\widehat{\varepsilon}_Z' = \widehat{\varepsilon}^1 + \widehat{\varepsilon}^2, \quad j_{Z,*}\widehat{\sigma}'_Z = \widehat{\sigma}, \quad j_{Z,*}\widehat{\gamma}'_Z = \widehat{\gamma}.
\]
Let us  give explicit representatives of these classes. Let us fix $L \subset A \subset B$ a flag of $\omega$-isotropic linear subspaces of $\mathrm{Ext}^1(I_Z,I_Z)$ of dimension 1, 2 and 3, respectively. Then
\begin{align*}
\widehat{\varepsilon}_Z' &\coloneqq \text{class of } \{ ([L],[A_t],[B]) \mid [A_t] \in \PP(B/L) \}, \\
\widehat{\sigma}_Z' &\coloneqq \text{class of } \{ ([L_t],[A],[B]) \mid [L_t] \in \PP(A) \}, \\
\widehat{\gamma}_Z' &\coloneqq \text{class of } \{ ([L],[A],[B_t]) \mid [B_t] \in \PP(A^{\perp_\omega}/A) \ \text{varies in a line} \}.
\end{align*}

\begin{proposition}\label{prop:Mori cone EZ}
The classes $\widehat{\varepsilon}'_Z,\widehat{\sigma}'_Z,\widehat{\gamma}'_Z$ form an $\mathbb{R}$-basis of $\mathrm{N}_1(\widehat{\mathbb{E}}_Z)$. Moreover
\[
\overline{\Mori}(\widehat{\mathbb{E}}_Z) = \mathbb{R}^+\widehat{\varepsilon}_Z' + \mathbb{R}^+\widehat{\sigma}'_Z + \mathbb{R}^+\widehat{\gamma}'_Z.
\]  
\end{proposition}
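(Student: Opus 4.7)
The first assertion is immediate from \cref{1-cycles in E_Z}: that lemma exhibits $j_{Z,*}$ as injective with image $\mathbb{R}(\widehat{\varepsilon}^1+\widehat{\varepsilon}^2) \oplus \mathbb{R}\widehat{\sigma} \oplus \mathbb{R}\widehat{\gamma}$, and by construction $\widehat{\varepsilon}'_Z$, $\widehat{\sigma}'_Z$, $\widehat{\gamma}'_Z$ push forward to these three generators, so they form an $\mathbb{R}$-basis of $\mathrm{N}_1(\widehat{\mathbb{E}}_Z)$.

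For the description of the Mori cone, the plan is to apply \cref{lem:Mori cones} along the same lines as in the proof of \cref{prop:Mori cone JZ}, using the product structure $\widehat{\mathbb{E}}_Z \simeq \PP(\Aca)_Z \times_{\Grass^\omega(2,\Tca)_Z} \PP(\Gca)_Z$ of \cref{cor:Delta-Sigma; Delta-Sigma-Omega}(1). Denote the two projections by $q_1, q_2$, and recall the morphism $\lambda_Z \colon \PP(\Aca)_Z \to \PP(\Tca)_Z$ introduced in \cref{sec:Delta hat}. A direct inspection in terms of flags $L \subset A \subset B$ shows that $q_2$ contracts only $\widehat{\sigma}'_Z$ (its fibres being the $\PP^1$'s swept out by $[L_t] \in \PP(A)$ with $A,B$ fixed), whereas $\lambda_Z \circ q_1$ contracts both $\widehat{\varepsilon}'_Z$ and $\widehat{\gamma}'_Z$ (in which $L$ is held fixed) but not $\widehat{\sigma}'_Z$. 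Applying \cref{lem:Mori cones} with $c' = q_2$ and $c'' = \lambda_Z \circ q_1$ therefore reduces the problem to proving
\[
\overline{\Mori}_1(\PP(\Gca)_Z) = \mathbb{R}_{\ge 0}[q_2(\widehat{\varepsilon}'_Z)] + \mathbb{R}_{\ge 0}[q_2(\widehat{\gamma}'_Z)].
\]
The projection $s_Z \colon \PP(\Gca)_Z \to \Grass^\omega(2,\Tca)_Z$ exhibits $\PP(\Gca)_Z$ as a $\PP^{2n-4}$-bundle over the symplectic Grassmannian, a rational homogeneous space of Picard rank one whose Mori cone is generated by the class of a Plücker line $\ell$. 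Consequently $\mathrm{N}_1(\PP(\Gca)_Z)$ has rank two; the class $[q_2(\widehat{\gamma}'_Z)]$ is a fibre of $s_Z$, and $s_Z \circ q_2$ maps $\widehat{\varepsilon}'_Z$ onto $\ell$, so the two classes are effective and linearly independent.

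The delicate point, and main obstacle, is to show that $[q_2(\widehat{\varepsilon}'_Z)]$ generates the second extremal ray of $\overline{\Mori}_1(\PP(\Gca)_Z)$, i.e.\ that it is a \emph{minimal} lift of $\ell$: for a projective bundle over a Picard-rank-one base, the second extremal ray is generated not by an arbitrary lift of the generating line but by a minimal one. Because $\Gca$ is a non-trivial extension of $\Oca$ by $(\Aca^{\perp_\omega}/\Aca) \otimes \Oca_\alpha(1)$, no natural global morphism out of $\PP(\Gca)_Z$ contracts $q_2(\widehat{\varepsilon}'_Z)$, so a second invocation of \cref{lem:Mori cones} on $\PP(\Gca)_Z$ is not directly available. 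Minimality can instead be verified by restricting $\Gca$ to $\ell$, computing the intersection numbers of $q_2(\widehat{\varepsilon}'_Z)$ against $s_Z^*\Oca_{\Grass^\omega(2,\Tca)_Z}(1)$ and $\Oca_s(1)$, and exploiting that $q_2(\widehat{\varepsilon}'_Z)$ lies in the sub-bundle $\PP((\Aca^{\perp_\omega}/\Aca) \otimes \Oca_\alpha(1)) \hookrightarrow \PP(\Gca)$ coming from the extension, so as to conclude that this section is of smallest possible degree among those of $s_Z|_\ell$.
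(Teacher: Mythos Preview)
Your overall strategy---applying \cref{lem:Mori cones} to the product description $\widehat{\mathbb{E}}_Z \simeq \PP(\Aca)_Z \times_{\Grass^\omega(2,\Tca)_Z} \PP(\Gca)_Z$---is the same as the paper's. However, you swap the roles of $c'$ and $c''$ relative to the paper, and this choice creates the very obstacle you then struggle with.

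The paper takes $c' = q_1 \colon \widehat{\mathbb{E}}_Z \to \PP(\Aca)_Z \simeq \PP(\Lca^{\perp_\omega}/\Lca)_Z$ (which contracts only $\widehat{\gamma}'_Z$) and $c''$ the composition $\widehat{\mathbb{E}}_Z \to \PP(\Gca)_Z = \overline{\mathrm{\Delta}}_Z \to \widetilde{\mathrm{\Delta}}_Z$, where the second arrow is O'Grady's contraction from \cref{sec:divisorial contractions of M hat}. This $c''$ contracts both $\widehat{\sigma}'_Z$ and $\widehat{\varepsilon}'_Z$: the curve $q_2(\widehat{\varepsilon}'_Z)$ lies in $\PP(\Aca^{\perp_\omega}/\Aca)_Z \simeq \PP(\Bca^\vee)_Z = (\overline{\mathrm{\Omega}} \cap \overline{\mathrm{\Delta}})_Z$ with $B$ fixed, hence sits in a fibre of $\overline{\mathrm{\Omega}} \to \widetilde{\mathrm{\Omega}}$ and is collapsed. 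One then reduces to $\overline{\Mori}_1(\PP(\Lca^{\perp_\omega}/\Lca)_Z)$, already computed in the proof of \cref{prop:Mori cone JZ}.

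So your assertion that ``no natural global morphism out of $\PP(\Gca)_Z$ contracts $q_2(\widehat{\varepsilon}'_Z)$'' is incorrect: the map $\overline{\mathrm{\Delta}} \to \widetilde{\mathrm{\Delta}}$ does precisely this, and a second application of \cref{lem:Mori cones} on $\PP(\Gca)_Z$ \emph{is} available. Your proposed workaround via a direct minimality check of $q_2(\widehat{\varepsilon}'_Z)$ against $\Gca|_\ell$ is plausible but only sketched; as written it is not a proof. The cleanest fix is simply to use the contraction $\overline{\mathrm{\Delta}}_Z \to \widetilde{\mathrm{\Delta}}_Z$---either directly as the paper does, or to finish your reduction on $\PP(\Gca)_Z$.
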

\begin{proof} Only the second statement needs to be proved. We apply \cref{lem:Mori cones} to the pair of morphisms
\[
\begin{tikzcd}[column sep=1.0em, row sep=0.1em]
 \PP(\Lca^{\perp_\omega}/\Lca)_Z \simeq \PP(\Aca)_Z  &\PP(\Aca)_Z \times_{\Grass^\omega(2,\Tca)_Z} \PP(\Gca)_Z \ar[l] \ar[r] &  \PP(\Gca)_Z=\overline{\mathrm{\Delta}}_Z \ar[r] &\widetilde{\mathrm{\Delta}}_Z
\end{tikzcd}
\]
given by the natural projections,  \cref{proposition:DeltaS-SigmaS} and the contraction $\overline{\mathrm{\Delta}} \to \widetilde{\mathrm{\Delta}}$ obtained in \cref{sec:divisorial contractions of M hat}: we can deduce $\overline{\Mori}_1(\widehat{\mathbb{E}}_Z)$ from $\overline{\Mori}_1(\PP(\Lca^{\perp_\omega}/\Lca)_Z)$.
\end{proof}

\subsubsection{Digression on $\widehat{\mathbb{I}}_{Z_1, Z_2}$} Let $[Z_1],[Z_2]$ be two \emph{distinct} points of $X^{[n]}$. Let $\widehat{\mathbb{I}}_{Z_1, Z_2}$ be the fibre of $\widehat{\mathbb{I}}$ over $([Z_1],[Z_2])\in X^{[n]} \times X^{[n]}$; it is isomorphic to the incidence divisor in $ \PP^{2n-3} \times (\PP^{2n-3})^\vee$. Denote as $i_{Z_1,Z_2} \colon \widehat{\mathbb{I}}_{Z_1, Z_2} \hookrightarrow \widehat{\mathbb{I}}$ the inclusion morphism. 
\begin{proposition}\label{prop:Mori cone IZ1Z2}
The linear map $i_{Z_1,Z_2,*} \colon \mathrm{N}_1(\widehat{\mathbb{I}}_{Z_1, Z_2}) \to \mathrm{N}_1(\widehat{\mathbb{I}})$ is injective, and we have
\[
i_{Z_1,Z_2,*} \overline{\Mori}_1(\widehat{\mathbb{I}}_{Z_1, Z_2}) = \mathbb{R}^+ (\widehat{\varepsilon}^1 + \widehat{\gamma}) + \mathbb{R}^+ (\widehat{\varepsilon}^2 + \widehat{\gamma}).
\]
\end{proposition}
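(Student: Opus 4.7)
The plan is in two movements: first identify $\widehat{\mathbb{I}}_{Z_1,Z_2}$ together with its Mori cone, then transport the two extremal generators to $\mathrm{N}_1(\widehat{\mathbb{I}})$ via a flat degeneration inside the family $\mathbb{I}\to\Xca^{[n]}$.

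First, since $Z_1\neq Z_2$, the preimage of $([Z_1],[Z_2])$ in $\Xca^{[n]}$ is disjoint from $E$, so the blow-up $\varpi\colon\widehat{\mathbb{I}}\to\mathbb{I}$ is an isomorphism over this fibre and $\widehat{\mathbb{I}}_{Z_1,Z_2}=\mathbb{I}_{Z_1,Z_2}$; by \cref{thm:properties of V}(4), the latter is the classical incidence divisor in $\PP(V_{Z_1,Z_2})\times\PP(V^\vee_{Z_1,Z_2})\simeq\PP^{2n-3}\times(\PP^{2n-3})^\vee$. It is standard that $\mathrm{N}_1(\widehat{\mathbb{I}}_{Z_1,Z_2})=\mathbb{R}[C_1]\oplus\mathbb{R}[C_2]$ and $\overline{\Mori}_1(\widehat{\mathbb{I}}_{Z_1,Z_2})=\mathbb{R}^+[C_1]+\mathbb{R}^+[C_2]$, where $C_1$ (resp.\ $C_2$) is a line in a fibre of the projection to $\PP(V_{Z_1,Z_2})$ (resp.\ to $\PP(V^\vee_{Z_1,Z_2})$).

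To identify $i_{Z_1,Z_2,*}[C_1]$ in $\mathrm{N}_1(\widehat{\mathbb{I}})$, I would use that $r\colon\mathbb{I}\to\Xca^{[n]}$ is flat: being a relative incidence divisor in $\PP(V)\times_{\Xca^{[n]}}\PP(V^\vee)$, $\mathbb{I}$ is Cohen-Macaulay with fibres of constant dimension $4n-7$ over the regular base $\Xca^{[n]}$, whence flatness by miracle flatness. Pick a smooth curve $\gamma\subset\Xca^{[n]}$ through $([Z_1],[Z_2])$ meeting $E$ transversely at one point $[L]$. In the flat family $\mathbb{I}|_\gamma\to\gamma$, build a relative ``line in a fibre of the first projection'' by choosing a section of $\PP(V)|_\gamma$ together with a rank-$2$ subbundle of its annihilator in $V^\vee|_\gamma$; its generic fibre is $C_1$ and its special fibre is a line $C_1'\subset\mathbb{I}|_{[L]}\subset\mathbb{J}_Z$ in a fibre of the first projection of $\mathbb{J}_Z|_{[L]}$. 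Rational equivalence in this flat family gives $[C_1]=[C_1']$ in $\mathrm{N}_1(\mathbb{I})$, and using the symplectic identification $V^\vee|_E\simeq V|_E\simeq\Lca^{\perp_\omega}/\Lca$ of \cref{thm:properties of V}(3) one checks that $[C_1']$ equals the pushforward of $\widehat{\varepsilon}_Z^2$ from $\widehat{\mathbb{J}}_Z$ to $\mathbb{J}_Z$; hence $\varpi_*(i_{Z_1,Z_2,*}[C_1])=\varpi_*\widehat{\varepsilon}^2$ in $\mathrm{N}_1(\mathbb{I})$.

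Since $\varpi$ blows up the smooth $\mathbb{I}$ along the smooth connected centre $\PP(\Lca^{\perp_\omega}/\Lca)$, the kernel of $\varpi_*\colon\mathrm{N}_1(\widehat{\mathbb{I}})\to\mathrm{N}_1(\mathbb{I})$ is one-dimensional, generated by the class of a line in a fibre of $\widehat{\mathbb{E}}\to\PP(\Lca^{\perp_\omega}/\Lca)$; by the proof of \cref{1-cycles in E_Z} this generator is $\widehat{\gamma}$, non-zero in $\mathrm{N}_1(\widehat{\mathbb{I}})$. Thus $i_{Z_1,Z_2,*}[C_1]=\widehat{\varepsilon}^2+k\widehat{\gamma}$ for some $k\in\mathbb{R}$; intersecting with $\Oca(\widehat{\mathbb{E}})$ and using $\widehat{\mathbb{E}}\cap\widehat{\mathbb{I}}_{Z_1,Z_2}=\emptyset$ together with \cref{intersection matrix} (which yields $x_4\cdot\widehat{\varepsilon}_Z^2=1$ and $x_4\cdot\widehat{\gamma}_Z=-1$) gives $0=1-k$, hence $k=1$. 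The symmetric argument yields $i_{Z_1,Z_2,*}[C_2]=\widehat{\varepsilon}^1+\widehat{\gamma}$; these two classes are linearly independent because $\widehat{\varepsilon}^1\neq\widehat{\varepsilon}^2$ in $\mathrm{N}_1(\widehat{\mathbb{I}})$, by \cref{lemma:NJZ-NI} applied to the linearly independent classes $\widehat{\varepsilon}_Z^1,\widehat{\varepsilon}_Z^2\in\mathrm{N}_1(\widehat{\mathbb{J}}_Z)$, and the proposition follows.

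The main technical obstacle will be the flat-family specialisation: one must construct the relative $2$-cycle $\mathcal{C}_1$ concretely inside $\mathbb{I}|_\gamma$ (so that $[C_1]=[C_1']$ is genuine rational equivalence, not a mere set-theoretic limit) and, crucially, match $C_1'$ to the correct ruling of $\mathbb{J}_Z|_{[L]}$—namely to $\widehat{\varepsilon}^2$ rather than $\widehat{\varepsilon}^1$—by carefully unravelling the symplectic identification $V^\vee|_E\simeq V|_E$ and comparing it with the realisation of $\mathbb{J}_Z$ as the $\omega$-isotropy locus inside $\PP(\Lca^{\perp_\omega}/\Lca)\times_E\PP(\Lca^{\perp_\omega}/\Lca)$.
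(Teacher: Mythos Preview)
Your approach is correct in outline but takes a substantial detour compared with the paper's proof. The paper avoids any degeneration or specialisation argument entirely: it works purely with intersection numbers. First, it shows $i_{Z_1,Z_2}^*\colon\mathrm{N}^1(\widehat{\mathbb{I}})\to\mathrm{N}^1(\widehat{\mathbb{I}}_{Z_1,Z_2})$ is surjective (hence $i_{Z_1,Z_2,*}$ is injective) using the explicit description of $\mathrm{Pic}(\widehat{\mathbb{I}})$. Then it observes that $\ker(i_Z^*)$ is generated by pullbacks of line bundles from $X^{[n]}\times X^{[n]}$, which pair trivially with any curve in a fibre; this forces $i_{Z_1,Z_2,*}\mathrm{N}_1(\widehat{\mathbb{I}}_{Z_1,Z_2})\subset i_{Z,*}\mathrm{N}_1(\widehat{\mathbb{J}}_Z)$, so each $i_{Z_1,Z_2,*}(\tau_i)$ is a linear combination of $\widehat{\varepsilon}^1,\widehat{\varepsilon}^2,\widehat{\sigma},\widehat{\gamma}$. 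Finally, the four coefficients are read off by intersecting with $c_1(\Oca(-E)),\,c_1(\Oca_r(1,0)),\,c_1(\Oca_r(0,1)),\,c_1(\Oca(\widehat{\mathbb{E}}))$ --- all of these intersections are either $0$ or $1$ by inspection (the curve is disjoint from $E$ and $\widehat{\mathbb{E}}$, and is a line in one ruling) --- and inverting the matrix of \cref{intersection matrix}.

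Your flat-degeneration route reaches the same destination, but the ``main technical obstacle'' you flag is genuinely nontrivial: constructing the relative cycle $\mathcal{C}_1$ so that its special fibre is an honest line (not a degenerate cycle), and then matching that line to the correct ruling via the symplectic identification, is real work. Notice that in your last step you already resort to intersecting with $\Oca(\widehat{\mathbb{E}})$ to determine $k$; the paper simply does this from the start with all four divisor classes, bypassing the degeneration entirely. Your argument is not wrong, but it is more fragile and longer than necessary --- the purely numerical route is both shorter and avoids the delicate bookkeeping you identify.
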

\begin{proof}
Using the geometric construction of $\widehat{\mathbb{I}}$, keeping the notation as in the proof of \cref{lemma:NJZ-NI}, and suppressing pullbacks, we have
\[
\mathrm{Pic}(\widehat{\mathbb{I}}) \simeq \mathrm{Pic}(X^{[n]}\times X^{[n]}) \oplus \mathbb{Z}\Oca(-E) \oplus \mathbb{Z}\Oca_r(1,0) \oplus \mathbb{Z}\Oca_r(0,1) \oplus \mathbb{Z}\Oca(\widehat{\mathbb{E}}).
\]
The injectivity of $i_{Z_1,Z_2,*}$ is equivalent to the surjectivity of $i_{Z_1,Z_2}^* \colon \mathrm{N}^1(\widehat{\mathbb{I}}) \to \mathrm{N}^1(\widehat{\mathbb{I}}_{Z_1, Z_2})$, which clearly holds.

Let us prove the second statement. First, we claim that
\begin{equation}\label{curves in I hat and J hat}
i_{Z_1,Z_2,*} \mathrm{N}_1(\widehat{\mathbb{I}}_{Z_1, Z_2}) \subset i_{Z,*} \mathrm{N}_1(\widehat{\mathbb{J}}_Z) = \ker[i_Z^* \colon \mathrm{N}^1(\widehat{\mathbb{I}}) \to \mathrm{N}^1(\widehat{\mathbb{J}}_Z)]^\perp
\end{equation}
for some (actually, for any) $[Z] \in X^{[n]}$. This holds true, because $\ker(i_Z^*)$ is generated by numerical equivalence classes of pullbacks to $\widehat{\mathbb{I}}$ of line bundles on $X^{[n]} \times X^{[n]}$, as one sees from \eqref{restriction to J_Z of line bundles on I hat}; the intersection of such classes with integral curves contained in $\widehat{\mathbb{I}}_{Z_1, Z_2}$ is $0$.

Now, the Mori cone $\overline{\Mori}_1(\widehat{\mathbb{I}}_{Z_1, Z_2})$ is generated by
\begin{gather*}
\tau_1 \coloneqq \text{class of } \{ ([e_t], [f]) \in \PP\mathrm{Ext}^1(I_{Z_1},I_{Z_2}) \times \PP\mathrm{Ext}^1(I_{Z_2},I_{Z_1}) \mid e_t \cup f = 0, [e_t] \ \text{varies in a line} \}, \\
\tau_2 \coloneqq \text{class of } \{ ([e], [f_t]) \in \PP\mathrm{Ext}^1(I_{Z_1},I_{Z_2}) \times \PP\mathrm{Ext}^1(I_{Z_2},I_{Z_1}) \mid e \cup f_t = 0, [f_t] \ \text{varies in a line} \}.
\end{gather*}
Let us compute $i_{Z_1,Z_2,*} (\tau_1)$; by \eqref{curves in I hat and J hat}, it can be written as 
\[
i_{Z_1,Z_2,*} (\tau_1) = y_1 \widehat{\varepsilon}^1 + y_2 \widehat{\varepsilon}^2 + y_3 \widehat{\sigma} + y_4 \widehat{\gamma}, \quad y_i \in \mathbb{R}.
\]
On the one hand, we have
\begin{gather*}
i_{Z_1,Z_2,*} (\tau_1) \cdot c_1(\Oca(-E)) = i_{Z_1,Z_2,*} (\tau_1) \cdot c_1(\Oca(\widehat{\mathbb{E}})) = i_{Z_1,Z_2,*} (\tau_1) \cdot c_1(\Oca_r(0,1))= 0, \\
i_{Z_1,Z_2,*} (\tau_1) \cdot c_1(\Oca_r(1,0))=1.
\end{gather*}
On the other hand, using \cref{intersection matrix}, we deduce that $i_{Z_1,Z_2,*} (\tau_1) = \widehat{\varepsilon}^1 + \widehat{\gamma}$. Analogously, one proves that $i_{Z_1,Z_2,*} (\tau_2) = \widehat{\varepsilon}^2 + \widehat{\gamma}$. 
\end{proof}

\begin{proof}[Proof of \cref{lem:K-negative extremal face}] 
After \cref{prop:K negative}, it remains to show that the face in question is extremal: the argument given in \cite[Section 3.6]{O'GdmsK3I} carries over verbatim, using Propositions \ref{prop:Mori cone JZ}, \ref{prop:Mori cone EZ} and \ref{prop:Mori cone IZ1Z2}. 
\end{proof}

\subsection{Proof of \cref{prop:contraction of I hat}}\label{sec:contraction of I hat}
We start with the proof of items (1) and (2), which goes along the same lines as \cite[Section 3.7]{O'GdmsK3I}. By \cref{cor:Delta-Sigma; Delta-Sigma-Omega}(1), the divisor $\widehat{\mathbb{E}}$ of $\widehat{\mathbb{I}}$ is a $\PP^1$-bundle over $\PP (\Gca)$ and
\[
\Oca(\widehat{\mathbb{E}})|_{\PP^1} \simeq \Oca_{\PP^1}(-1).
\] 
By the Fujiki-Nakano criterion, $\widehat{\mathbb{I}}$ can be contracted along the fibration $\widehat{\mathbb{E}} \to \PP(\Gca)$ to a \emph{complex manifold} $\overline{\mathbb{I}}'$, and $\widehat{\mathbb{I}}$ is the blow-up of $\overline{\mathbb{I}}'$ along $\PP(\Gca)$. The following claim shows that $g$ coincides with this contraction; in particular, $\overline{\mathbb{I}} = \overline{\mathbb{I}}'$ is smooth, and $\widehat{\mathbb{I}}$ is the blow-up of $\overline{\mathbb{I}}$ along $\PP(\Gca)$.

\begin{claim}\label{claim:equality of contractions hat}
Let $\mathrm{\Gamma} \subset \widehat{\mathbb{I}}$ be an integral curve. Then $[\mathrm{\Gamma}] \in \mathbb{R}^+\widehat{\sigma}$ if and only if $\mathrm{\Gamma}$ lies in a fibre of $\widehat{\mathbb{E}} \to \PP(\Gca)$.
\end{claim}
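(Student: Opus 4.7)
Sketch. The ``if'' direction is immediate: a fibre of the $\PP^1$-bundle $\widehat{\mathbb{E}} \to \PP(\Gca)$ lying over a point above some $[A] \in \Grass^\omega(2,\Tca)_{[Z]}$ is $\PP(A) \simeq \PP^1$, so any integral curve contained therein coincides with it. Combining the explicit representative of $\widehat{\sigma}'_Z \in \mathrm{N}_1(\widehat{\mathbb{E}}_Z)$ given in \cref{sec:digression on E_Z hat} with the identity $j_{Z,*}\widehat{\sigma}'_Z = \widehat{\sigma}$, such a curve has class $\widehat{\sigma}$ in $\mathrm{N}_1(\widehat{\mathbb{I}})$.

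For the converse, the plan is to assume $[\mathrm{\Gamma}] = \lambda \widehat{\sigma}$ with $\lambda > 0$ and show successively: $(i)$ $\mathrm{\Gamma} \subset \widehat{\mathbb{E}}$; $(ii)$ the image $\pi(\mathrm{\Gamma})$ is a single \emph{diagonal} point $([Z],[Z]) \in X^{[n]} \times X^{[n]}$, so that together with $(i)$ one gets $\mathrm{\Gamma} \subset \widehat{\mathbb{E}}_Z$; $(iii)$ $[\mathrm{\Gamma}]_{\widehat{\mathbb{E}}_Z} = \lambda \widehat{\sigma}'_Z$; $(iv)$ $\mathrm{\Gamma}$ is contained in a fibre of $p \colon \widehat{\mathbb{E}} \to \PP(\Gca)$. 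Step $(i)$ will follow at once from \cref{intersection matrix}: since $\widehat{\sigma} \cdot c_1(\Oca(\widehat{\mathbb{E}})) = -1$, the effective divisor $\widehat{\mathbb{E}}$ intersects $\mathrm{\Gamma}$ negatively, forcing $\mathrm{\Gamma} \subset \widehat{\mathbb{E}}$.

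For $(ii)$, I will use that the pullback along $\pi \colon \widehat{\mathbb{I}} \to X^{[n]} \times X^{[n]}$ of any ample divisor $H$ intersects $\widehat{\sigma}$ trivially (as $\widehat{\sigma}_Z$ is contracted to $([Z],[Z])$), so $\pi(\mathrm{\Gamma})$ is a single point $([Z_1],[Z_2])$. The case $Z_1 \neq Z_2$ then has to be ruled out: in that case $\mathrm{\Gamma} \subset \widehat{\mathbb{I}}_{Z_1, Z_2}$, so by \cref{prop:Mori cone IZ1Z2} the class $[\mathrm{\Gamma}]$ would lie in $\mathbb{R}^+(\widehat{\varepsilon}^1 + \widehat{\gamma}) + \mathbb{R}^+(\widehat{\varepsilon}^2 + \widehat{\gamma})$; but $\widehat{\varepsilon}^1, \widehat{\varepsilon}^2, \widehat{\sigma}, \widehat{\gamma}$ are linearly independent in $\mathrm{N}_1(\widehat{\mathbb{I}})$ (combining the injectivity of $i_{Z,*}$ from \cref{lemma:NJZ-NI} with their being a basis of $\mathrm{N}_1(\widehat{\mathbb{J}}_Z)$ by \cref{prop:Mori cone JZ}), so no positive multiple of $\widehat{\sigma}$ can lie in that cone. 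Hence $Z_1 = Z_2 = Z$; combined with $(i)$ and the factorisation of $\widehat{\mathbb{E}} \to X^{[n]} \times X^{[n]}$ through the diagonal (as $\widehat{\mathbb{E}}$ sits above $E \subset \Xca^{[n]}$), this gives $\mathrm{\Gamma} \subset \widehat{\mathbb{E}}_Z$.

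To conclude, $(iii)$ will be a formal consequence of \cref{1-cycles in E_Z}: $j_{Z,*}$ is injective and sends $\widehat{\sigma}'_Z$ to $\widehat{\sigma}$, so the unique preimage of $\lambda \widehat{\sigma}$ in $\mathrm{N}_1(\widehat{\mathbb{E}}_Z)$ is $\lambda \widehat{\sigma}'_Z$. For $(iv)$, the projection $p_Z \colon \widehat{\mathbb{E}}_Z \to \PP(\Gca)_Z$ has fibre class precisely $\widehat{\sigma}'_Z$, so $p_{Z,*}[\mathrm{\Gamma}]_{\widehat{\mathbb{E}}_Z} = 0$; therefore $p_Z(\mathrm{\Gamma})$ is a point and $\mathrm{\Gamma}$ lies in a fibre of $p_Z$, which is a fibre of $p$. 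The main obstacle will be step $(ii)$, namely excluding the case $Z_1 \neq Z_2$: it rests crucially on combining the description of $\overline{\Mori}_1(\widehat{\mathbb{I}}_{Z_1, Z_2})$ furnished by \cref{prop:Mori cone IZ1Z2} with the linear independence of $\widehat{\varepsilon}^1, \widehat{\varepsilon}^2, \widehat{\sigma}, \widehat{\gamma}$ in $\mathrm{N}_1(\widehat{\mathbb{I}})$, a fact ultimately tracing back to the injectivity statement of \cref{lemma:NJZ-NI}.
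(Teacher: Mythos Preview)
Your proof is correct and follows essentially the same approach as the paper. Note that your detour via \cref{prop:Mori cone IZ1Z2} in step~(ii) is unnecessary: once step~(i) gives $\mathrm{\Gamma} \subset \widehat{\mathbb{E}}$, which lies over the diagonal of $X^{[n]} \times X^{[n]}$, the case $Z_1 \neq Z_2$ is automatically excluded, and the paper proceeds directly to $\mathrm{\Gamma} \subset \widehat{\mathbb{E}}_Z$.
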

\begin{proof}
Assume that $\mathrm{\Gamma}$ is contained in a fibre of $\widehat{\mathbb{E}} \to \PP(\Gca)$. In particular, $\mathrm{\Gamma} \subset \widehat{\mathbb{E}}_Z$ for some $[Z] \in X^{[n]}$. The digression \ref{sec:digression on E_Z hat} yields $[\mathrm{\Gamma}] \in \mathbb{R}^+\widehat{\sigma}'_Z$ in $\mathrm{N}_1(\widehat{\mathbb{E}}_Z)$, and also $[\mathrm{\Gamma}] \in \mathbb{R}^+\widehat{\sigma}$ in $\mathrm{N}_1(\widehat{\mathbb{I}})$. 

Conversely, suppose that $[\mathrm{\Gamma}] \in \mathbb{R}^+\widehat{\sigma}$. By \cref{intersection matrix}, $\mathrm{\Gamma}  \cdot c_1(\Oca(\widehat{\mathbb{E}})) < 0$, so that $\mathrm{\Gamma} \subset \widehat{\mathbb{E}}$; moreover, $\mathrm{\Gamma} \subset \widehat{\mathbb{E}}_Z$ for some $[Z] \in X^{[n]}$, because $\widehat{\sigma}$ is sent to 0 by the pushforward along $\widehat{\mathbb{I}} \to X^{[n]} \times X^{[n]}$; in particular, $[\mathrm{\Gamma}] \in \mathbb{R}^+\widehat{\sigma}_Z'$, so that $\mathrm{\Gamma}$ is contained in a fibre of $\widehat{\mathbb{E}}_Z \to \PP (\Gca)_Z$. 
\end{proof}

Now, as $g$ is obtained by Mori theory, it is proper, and $g_*\Oca_{\widehat{\mathbb{I}}} \simeq \Oca_{\overline{\mathbb{I}}}$. Since the morphism $\widehat{\mathbb{I}} \to X^{[n]} \times X^{[n]}$ is constant on the fibres of $g$, it factors through a morphism $\overline{\mathbb{I}} \to X^{[n]} \times X^{[n]}$ by \cite[II, Lemme 8.11.1]{EGA}.

\smallskip

Let us prove (3). If $\widehat{\mathbb{I}}$ were acted on by $\mathrm{G}$ a \emph{connected} algebraic group, so would be $\overline{\mathbb{I}}$ by Blanchard's lemma \cite[Proposition 4.2.1]{BSUlsagga}. Now, $\bm{\mu}_2$ is not connected, but the proof of the lemma carries over: indeed, $\mathrm{G}$ is required to be connected just to show that it permutes the fibres of $g$; we check that $\bm{\mu}_2$ does the same, by looking at its explicit action on $\widehat{\mathbb{I}}$.

Let us show that the quotient of  $\overline{\mathbb{I}}$ by the induced $\bm{\mu}_2$-action is isomorphic to  $\overline{\mathrm{\Sigma}}$. Since the composition $\widehat{\mathbb{I}} \to \widehat{\mathrm{\Sigma}} \to \overline{\mathrm{\Sigma}}$ is constant on the fibres of $g$, we get a morphism $\overline{\mathbb{I}} \to \overline{\mathrm{\Sigma}}$, which is easily proved to be $\bm{\mu}_2$-invariant. To show that the induced bijective morphism $\overline{\mathbb{I}}/ \bm{\mu}_2 \to \overline{\mathrm{\Sigma}}$ is an isomorphism, we use Zariski's main theorem. Let us verify its assumptions. As $\overline{\mathbb{I}}$ is connected, so is $\overline{\mathbb{I}}/ \bm{\mu}_2$; moreover, $\overline{\mathrm{\Sigma}}$ is normal: indeed, it is smooth off the codimension 2 subset $\overline{\mathrm{\Delta}} = \PP(\Gca)$, and it is Cohen-Macaulay (it is a divisor on the smooth $\overline{M}_n$). 

\smallskip

Let us prove (4). Propositions \ref{proposition: Omega hat-Sigma hat} and \ref{prop:Omega hat-Omega-hat} describe $\widehat{\mathbb{J}}$ as the blow-up of $\PP(\Bca^\vee) \times _{\Grass^\omega(3,\Tca)} \PP(\Bca^\vee)$ along its diagonal. On the fibres of the blow-up morphism, $g$ is constant. We deduce a morphism
\[
\PP(\Bca^\vee) \times _{\Grass^\omega(3,\Tca)} \PP(\Bca^\vee) \to \overline{\mathbb{J}}=g(\widehat{\mathbb{J}}),
\]
which is bijective. In fact, it is an isomorphism, because $\overline{\mathbb{J}}$ is normal: indeed, $\overline{\mathbb{J}}$ is Cohen-Macaulay (it is a divisor on the smooth $\overline{\mathbb{I}}$), and it is smooth in codimension 1.

The isomorphism $\overline{\mathbb{J}} / \bm{\mu}_2 \simeq \overline{\mathrm{\Omega}} \cap \overline{\mathrm{\Sigma}}$ results then from \cref{prop:loci in M bar}(2).

\subsection{Proof of \cref{prop:contraction of I bar}} 
Basically, we argue as in \cref{sec:contraction of I hat}. However, as the morphism 
\[
\phi \colon \overline{\mathbb{J}} \to \Grass^\omega(3,\Tca)
\]
resulting from \cref{prop:contraction of I hat}(4) is a $\PP^2 \times \PP^2$-bundle instead of a projective bundle, the Fujiki-Nakano criterion does not apply, and the whole proof gets more involved. Let us begin with a preliminary statement. 
\begin{claim}\label{claim:equality of contractions bar}
An integral curve in $\overline{\mathbb{I}}$ is contracted by $f$ if and only if it lies in a fibre of $\phi$. In particular, we get a bijective morphism $\Grass^\omega(3,\Tca) \to f(\overline{\mathbb{J}})$.
\end{claim}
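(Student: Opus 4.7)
The plan is to mimic the proof of Claim~\ref{claim:equality of contractions hat}, with the extremal ray $\mathbb{R}^+\widehat{\sigma}$ replaced by the extremal face $\mathbb{R}^+\overline{\varepsilon}^1 + \mathbb{R}^+\overline{\varepsilon}^2$ and the $\PP^1$-fibration $\widehat{\mathbb{E}} \to \PP(\Gca)$ by $\phi$. For the easy direction I would first identify $\overline{\varepsilon}^1$ and $\overline{\varepsilon}^2$ with line classes in the two factors of a fibre $\phi^{-1}(b) \simeq \PP(B^\vee) \times \PP(B^\vee)$. Chaining the $\bm{\mu}_2$-equivariant isomorphisms of Propositions~\ref{proposition: Omega hat-Sigma hat} and \ref{prop:Omega hat-Omega-hat}, the blow-down $g|_{\widehat{\mathbb{J}}}$ is identified with $\xi \times \xi \colon \PP(\Tca_\chi) \times_{\PP(\Bca)} \PP(\Tca_\chi) \to \PP(\Bca^\vee) \times_{\Grass^\omega(3,\Tca)} \PP(\Bca^\vee)$; a direct tracing shows that the representative of $\widehat{\varepsilon}^1_Z$ (obtained by fixing a flag $L \subset A \subset B$ and varying $A_t \in \PP(B/L)$ in the second coordinate) pushes forward to the pencil of $2$-planes through $L$ in the second $\PP(B^\vee)$-factor, and by $\bm{\mu}_2$-symmetry $\overline{\varepsilon}^2$ is a line class in the other factor. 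Since every integral curve in $\PP^2 \times \PP^2$ has non-negative bidegree, its class lies in $\mathbb{R}^+\overline{\varepsilon}^1 + \mathbb{R}^+\overline{\varepsilon}^2$ and it is contracted by $f$.

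For the converse, let $\Gamma$ be an integral curve with $[\Gamma] \in \mathbb{R}^+\overline{\varepsilon}^1 + \mathbb{R}^+\overline{\varepsilon}^2$. The values in Lemma~\ref{intersection matrix} give $\widehat{\mathbb{J}} \cdot \widehat{\sigma} = 0$ and $\widehat{\mathbb{E}} \cdot \widehat{\sigma} = -1$, so the decomposition $g^*\overline{\mathbb{J}} = \widehat{\mathbb{J}} + a\widehat{\mathbb{E}}$ forces $a = 0$, and the projection formula yields $\overline{\mathbb{J}} \cdot \overline{\varepsilon}^j = \widehat{\mathbb{J}} \cdot \widehat{\varepsilon}^j = -1$; hence $\overline{\mathbb{J}} \cdot \Gamma < 0$ and $\Gamma \subset \overline{\mathbb{J}}$. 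Since $\overline{\varepsilon}^j$ pushes to zero under $\overline{\mathbb{I}} \to X^{[n]} \times X^{[n]}$, $\Gamma$ lies in some fibre $\overline{\mathbb{I}}_{Z_1,Z_2}$. The off-diagonal case $Z_1 \neq Z_2$ is excluded as in the proof of Lemma~\ref{lem:K-negative extremal face}: $\overline{\mathbb{I}}_{Z_1,Z_2}$ coincides with $\widehat{\mathbb{I}}_{Z_1,Z_2}$ (which does not meet $\widehat{\mathbb{E}}$), Proposition~\ref{prop:Mori cone IZ1Z2} describes its Mori cone as $\mathbb{R}^+(\overline{\varepsilon}^1 + \overline{\gamma}) + \mathbb{R}^+(\overline{\varepsilon}^2 + \overline{\gamma})$, and the linear independence of $\overline{\varepsilon}^1,\overline{\varepsilon}^2,\overline{\gamma}$ in $\mathrm{N}_1(\overline{\mathbb{I}})$ (inherited from Lemma~\ref{lemma:NJZ-NI} modulo $\ker(g_*) = \mathbb{R}\widehat{\sigma}$) forces $[\Gamma] = 0$, a contradiction.

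The main obstacle is the diagonal case $Z_1 = Z_2 = Z$: one must upgrade the information that $[\Gamma] \in \mathrm{N}_1(\overline{\mathbb{I}})$ is contracted by $f$ to the statement that $\Gamma \subset \overline{\mathbb{J}}_Z$ lies in a single fibre of $\phi|_{\overline{\mathbb{J}}_Z}$. My plan is to produce a divisor class $D \in \mathrm{N}^1(\overline{\mathbb{I}})$ satisfying $D \cdot \overline{\varepsilon}^j = 0$ for $j = 1,2$ and restricting to a pullback $\phi_Z^*H$ for some ample class $H$ on $\Grass^\omega(3,\mathrm{E}_Z)$; then $D \cdot \Gamma = 0$ will yield $\phi_*\Gamma = 0$, so $\Gamma$ lies in a fibre of $\phi$. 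Using the pairings of Lemma~\ref{intersection matrix}, the combination $x_1 + x_2 + x_3 - x_4$ is the unique one (up to scalar) orthogonal to $\widehat{\varepsilon}^1, \widehat{\varepsilon}^2$ and to $\widehat{\sigma}$; it therefore descends through $g$, and since $\Grass^\omega(3,\mathrm{E}_Z)$ is a rational homogeneous space of Picard rank one and $\overline{\mathbb{J}}_Z \to \Grass^\omega(3,\mathrm{E}_Z)$ is a $\PP^2 \times \PP^2$-bundle, the descended class is forced to be a non-zero multiple of the Plücker pullback. The bijective morphism $\Grass^\omega(3,\Tca) \to f(\overline{\mathbb{J}})$ then follows formally: what we have just proved shows that the fibres of $f|_{\overline{\mathbb{J}}}$ coincide with those of $\phi$, so $f|_{\overline{\mathbb{J}}}$ factors through $\phi$ and induces a bijection on closed points.
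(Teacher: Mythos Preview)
Your argument is correct and follows essentially the same route as the paper. The paper's proof sketch invokes two ingredients: the normal bundle formula $\Oca(\overline{\mathbb{J}})|_{\overline{\mathbb{J}}} \simeq \Oca_\phi(-1,-1) \otimes \phi^*\Oca_\gamma(-1)$ (coming from \cref{rem:normal bundle of S in I bar}), which immediately gives $\overline{\mathbb{J}} \cdot \overline{\varepsilon}^j = -1$, and the injectivity of $\mathrm{N}_1(\overline{\mathbb{J}}_Z) \to \mathrm{N}_1(\overline{\mathbb{I}})$ together with the knowledge of $\overline{\Mori}_1(\overline{\mathbb{J}}_Z)$. You recover the first via the projection formula and the computation $g^*\overline{\mathbb{J}} = \widehat{\mathbb{J}}$; and your construction of the divisor $D = x_1 + x_2 + x_3 - x_4$ descended to $\overline{\mathbb{I}}$ is exactly the dual formulation of the paper's injectivity statement (the class restricts on $\overline{\mathbb{J}}_Z$ to a nonzero multiple of $\phi_Z^*\Oca_\gamma(1)$, which is what one needs).

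One redundancy worth noting: once you have shown $\Gamma \subset \overline{\mathbb{J}}$, the off-diagonal case $Z_1 \neq Z_2$ cannot occur, since $\overline{\mathbb{J}}$ lies entirely over the diagonal of $X^{[n]} \times X^{[n]}$; so that paragraph can be dropped. Also, for the final sentence, the morphism $\Grass^\omega(3,\Tca) \to f(\overline{\mathbb{J}})$ is obtained by observing that $\overline{\mathbb{J}} \to f(\overline{\mathbb{J}})$ is constant on the fibres of $\phi$ and that $\phi_*\Oca_{\overline{\mathbb{J}}} \simeq \Oca_{\Grass^\omega(3,\Tca)}$, so \cite[II, Lemme 8.11.1]{EGA} applies; you might want to make this explicit rather than say it ``follows formally''.
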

This is proved as \cref{claim:equality of contractions hat}, using two facts. First, for $[Z] \in X^{[n]}$, the injectivity of the map $\mathrm{N}_1(\overline{\mathbb{J}}_Z) \to \mathrm{N}_1(\overline{\mathbb{I}})$ and its behaviour on $\overline{\Mori}_1(\overline{\mathbb{J}}_Z)$, which are left to the reader. Second, the isomorphism
\begin{equation}\label{normal bundle J bar in I bar}
\Oca(\overline{\mathbb{J}})|_{\overline{\mathbb{J}}} \simeq \Oca_\phi(-1,-1) \otimes \phi^*\Oca_\gamma(-1),
\end{equation}
which results from \cref{rem:normal bundle of S in I bar}: since $\mathbb{I}$ (resp.\ $\mathbb{J}$) is isomorphic to $\overline{\mathbb{I}}$ (resp.\ $\overline{\mathbb{J}}$) in codimension one, $\Nca_{\overline{\mathbb{J}}/\overline{\mathbb{I}}}$ corresponds to $\Nca_{\mathbb{J}/\mathbb{I}}$ under the isomorphism $\mathrm{Pic}(\overline{\mathbb{J}}) \simeq \mathrm{Pic}(\mathbb{J})$.

\smallskip
We can now show the three statements of \cref{prop:contraction of I bar}.
We start with (3). Thanks to \cref{claim:equality of contractions bar}, the proof of \cref{prop:contraction of I hat}(3) carries over verbatim.

\smallskip

Next, we prove (1). We only need to prove that the bijective morphism $\Grass^\omega(3,\Tca) \to f(\overline{\mathbb{J}})$ of \cref{claim:equality of contractions bar} is an isomorphism. We exhibit an inverse. Note that, by Propositions \ref{prop:contraction of I hat}(4) and \ref{prop:loci in M bar}(2), $\Grass^\omega(3,\Tca)$ is also the scheme-theoretic image of $\overline{\mathbb{J}}$ under the composition $\overline{\mathbb{I}} \to \overline{\mathrm{\Sigma}} \to \widetilde{\mathrm{\Sigma}}$, and so under $\overline{\mathbb{I}} \to \widetilde{\mathbb{I}} \to \widetilde{\mathrm{\Sigma}}$; we deduce a bijective morphism $f(\overline{\mathbb{J}}) \to \Grass^\omega(3,\Tca)$, which is an isomorphism by Zariski's main theorem and provides the desired inverse.

\smallskip

It remains to prove (2). The argument is pretty technical, and closely akin to the proof of \cite[Theorem 3]{Ispct}. We begin with a vanishing result. 
\begin{lemma}\label{lemma:vanishing}
 For any $q>0$ and any $k \ge 0$, $\Rder^q f_* (\Oca(-k \overline{\mathbb{J}}))=0$.
\end{lemma}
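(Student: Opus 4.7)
The plan is to invoke the theorem on formal functions. Since $f$ is an isomorphism over $\widetilde{\mathbb{I}} \setminus \Grass^\omega(3,\Tca)$ (the only positive-dimensional fibres of $f$ being those of $\phi\colon \overline{\mathbb{J}}\to \Grass^\omega(3,\Tca)$, by \cref{prop:contraction of I bar}(1)), the sheaves $\Rder^q f_*(\Oca(-k\overline{\mathbb{J}}))$ are supported on $\Grass^\omega(3,\Tca)$ for $q>0$. It therefore suffices to check that, at every closed point $y \in \Grass^\omega(3,\Tca)$, the formal stalk $(\Rder^q f_* \Oca(-k\overline{\mathbb{J}}))^{\wedge}_y \simeq \varprojlim_m H^q(X_m, \Oca(-k\overline{\mathbb{J}})|_{X_m})$ vanishes for $q > 0$, where $X_m$ denotes the $m$-th infinitesimal neighbourhood in $\overline{\mathbb{I}}$ of the fibre $X_0 \coloneqq \phi^{-1}(y) \simeq \PP^2 \times \PP^2$.

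The strategy is induction on $m$. Since both $\overline{\mathbb{I}}$ and $X_0$ are smooth, $X_0 \hookrightarrow \overline{\mathbb{I}}$ is a regular embedding, so tensoring the standard short exact sequences $0 \to \Ica^m/\Ica^{m+1} \to \Oca_{X_m} \to \Oca_{X_{m-1}} \to 0$ (where $\Ica$ is the ideal of $X_0$) with the locally free sheaf $\Oca(-k\overline{\mathbb{J}})$ produces
\[
0 \to \mathrm{S}^m \Nca_{X_0/\overline{\mathbb{I}}}^\vee \otimes \Oca(-k\overline{\mathbb{J}})|_{X_0} \to \Oca(-k\overline{\mathbb{J}})|_{X_m} \to \Oca(-k\overline{\mathbb{J}})|_{X_{m-1}} \to 0.
\]
The inductive step thus reduces to the vanishing $H^q(X_0, \mathrm{S}^m \Nca_{X_0/\overline{\mathbb{I}}}^\vee \otimes \Oca(-k\overline{\mathbb{J}})|_{X_0}) = 0$ for every $m \geq 0$ and $q > 0$, which also covers the base case $m = 0$ (applied with $\mathrm{S}^0 = \Oca$).

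To establish this, I would filter $\mathrm{S}^m \Nca_{X_0/\overline{\mathbb{I}}}^\vee$ using the conormal short exact sequence
\[
0 \to \Nca_{\overline{\mathbb{J}}/\overline{\mathbb{I}}}^\vee|_{X_0} \to \Nca_{X_0/\overline{\mathbb{I}}}^\vee \to \Nca_{X_0/\overline{\mathbb{J}}}^\vee \to 0,
\]
whose graded pieces are $\mathrm{S}^i(\Nca_{\overline{\mathbb{J}}/\overline{\mathbb{I}}}^\vee|_{X_0}) \otimes \mathrm{S}^{m-i}(\Nca_{X_0/\overline{\mathbb{J}}}^\vee)$ for $0 \le i \le m$. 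Now $\Nca_{X_0/\overline{\mathbb{J}}}^\vee$ is the pullback to $X_0$ of the cotangent space of $\Grass^\omega(3,\Tca)$ at $y$, hence trivial on $X_0$; and by \eqref{normal bundle J bar in I bar} combined with the triviality of $\phi^*\Oca_\gamma(1)|_{X_0}$, I get the identifications $\Nca_{\overline{\mathbb{J}}/\overline{\mathbb{I}}}^\vee|_{X_0} \simeq \Oca_{\PP^2}(1) \boxtimes \Oca_{\PP^2}(1)$ and $\Oca(-k\overline{\mathbb{J}})|_{X_0} \simeq \Oca_{\PP^2}(k) \boxtimes \Oca_{\PP^2}(k)$. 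Therefore every graded piece of $\mathrm{S}^m \Nca_{X_0/\overline{\mathbb{I}}}^\vee \otimes \Oca(-k\overline{\mathbb{J}})|_{X_0}$ is a direct sum of line bundles $\Oca_{\PP^2}(i+k) \boxtimes \Oca_{\PP^2}(i+k)$ with $i \ge 0$, whose higher cohomology vanishes by Künneth combined with Bott's formula on $\PP^2$.

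The main subtlety to treat carefully is the interaction between the two filtrations invoked above: the conormal exact sequence need not split, so $\mathrm{S}^m \Nca_{X_0/\overline{\mathbb{I}}}^\vee$ itself need not decompose as a direct sum of its graded pieces. However, since each graded piece has independently vanishing higher cohomology, standard long exact sequences propagate the vanishing through the filtration, and the argument closes cleanly.
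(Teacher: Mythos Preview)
Your proof is correct and follows essentially the same route as the paper: formal functions, induction on the infinitesimal thickening of the fibre $\PP^2\times\PP^2$, and reduction to the vanishing of higher cohomology of $\mathrm{S}^m(\Nca^\vee_{X_0/\overline{\mathbb{I}}})\otimes\Oca(-k\overline{\mathbb{J}})|_{X_0}$ via the conormal sequence and K\"unneth. The one place where the paper is slightly sharper is the point you flag as a ``subtlety'': the conormal sequence \emph{does} split here, since $\mathrm{Ext}^1(\Oca^{\oplus d},\Oca(1,1))\simeq \Ho^1(\PP^2\times\PP^2,\Oca(1,1))^{\oplus d}=0$, so $\Nca^\vee_{X_0/\overline{\mathbb{I}}}\simeq\Oca(1,1)\oplus\Oca^{\oplus d}$ and $\mathrm{S}^m$ is literally a direct sum of the pieces you identify; your filtration argument is of course fine too, just unnecessarily cautious.
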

\begin{proof}
Let $y \in  \Grass^\omega(3,\Tca)$, and denote as $\Ica_y$ (resp.\ $\Jca_y$) the ideal sheaf of $f^{-1}(y) \simeq \PP^2 \times \PP^2$ in $\overline{\mathbb{I}}$ (resp.\ $\overline{\mathbb{J}}$). By the formal function theorem, it suffices to prove that
\[
\Ho^q(\PP^2 \times \PP^2, (\Oca_{\overline{\mathbb{I}}}/\Ica_y^m)(-k\overline{\mathbb{J}})) = 0 \ \ \text{for any} \ q>0, k \ge 0, m\ge 0.
\]
Since $\PP^2 \times \PP^2$, $\overline{\mathbb{J}}$ and $\overline{\mathbb{I}}$ are smooth, we have a short exact sequence of conormal sheaves
\[
\begin{tikzcd}
0 \ar[r] & \Oca(- \overline{\mathbb{J}})|_{\PP^2 \times \PP^2} \ar[r] & \Ica_y|_{\PP^2 \times \PP^2} \ar[r] & \Jca_y|_{\PP^2 \times \PP^2} \ar[r] & 0.
\end{tikzcd}
\]
The first term is $\Oca(1,1)$ by \eqref{normal bundle J bar in I bar}. The last is a free sheaf of rank $d \coloneqq \dim  \Grass^\omega(3,\Tca)$ on $\PP^2 \times \PP^2$.
The Künneth formula shows that $\mathrm{Ext}^1(\Oca^{\oplus d},\Oca(1,1)) \simeq \Ho^1(\PP^2 \times \PP^2,\Oca(1,1))^{\oplus d}$ vanishes, so that
\begin{equation}\label{splitting of conormal}
\Ica_y/\Ica_y^2 \simeq \Oca(1,1) \oplus \Oca^{\oplus d}.
\end{equation}
Moreover, as both $\PP^2 \times \PP^2$ and $\overline{\mathbb{I}}$ are smooth, for any $m \ge 0$ we have an isomorphism
\begin{equation}\label{equality of normal bundle and cone}
\Ica_y^m/\Ica_y^{m+1} \simeq \mathrm{S}^m(\Ica_y/\Ica_y^2).
\end{equation}
Now, for any $k,m \ge 0$, consider the short exact sequence
\[
\begin{tikzcd}
0 \ar[r] &  (\Ica_y^m/\Ica_y^{m+1})(-k \overline{\mathbb{J}}) \ar[r] & (\Oca_{\overline{\mathbb{I}}}/\Ica_y^{m+1})(-k \overline{\mathbb{J}}) \ar[r] & (\Oca_{\overline{\mathbb{I}}}/\Ica_y^m)(-k \overline{\mathbb{J}}) \ar[r] & 0.
\end{tikzcd}
\]
On the one hand, we deduce from \eqref{splitting of conormal} and \eqref{equality of normal bundle and cone} that $\Ho^q(\PP^2 \times \PP^2,(\Ica_y^m/\Ica_y^{m+1})(-k \overline{\mathbb{J}}))$ vanishes for any $q>0, k \ge 0, m\ge 0$.
On the other hand, since $(\Oca_{\overline{\mathbb{I}}}/\Ica_y)(-k \overline{\mathbb{J}}) \simeq \Oca(k,k)$  by \eqref{normal bundle J bar in I bar}, we have that $\Ho^q(\PP^2 \times \PP^2, (\Oca_{\overline{\mathbb{I}}}/\Ica_y)(-k \overline{\mathbb{J}})) \simeq 0 $  for any $q>0, k \ge 0$. By induction on $m$, we conclude that 
\[
\Ho^q(\PP^2 \times \PP^2, (\Oca_{\overline{\mathbb{I}}}/\Ica_y^m)(-k \overline{\mathbb{J}})) \simeq 0 \ \text{for any} \ q>0, k \ge 0, m\ge 0,
\]
as desired. Together with the previously known isomorphism $f_* \Oca_{\overline{\mathbb{I}}} \simeq \Oca_{\widetilde{\mathbb{I}}}$, this computation proves in particular that $\widetilde{\mathbb{I}}$ has rational singularities.
\end{proof}
We prove now that $\widetilde{\mathbb{I}}$ is normally flat along $Y \coloneqq \Grass^\omega(3,\Tca)$. Let $\Jca$ be the ideal sheaf of $Y$ in $\widetilde{\mathbb{I}}$. By \cref{lemma:vanishing}, for any $k \ge 0$ the short exact sequence
\[
\begin{tikzcd}
0 \ar[r] & \Oca(-(k+1)\overline{\mathbb{J}}) \ar[r] & \Oca(-k \overline{\mathbb{J}}) \ar[r] & \Oca(-k \overline{\mathbb{J}})|_{\overline{\mathbb{J}}} \ar[r] & 0
\end{tikzcd}
\]
remains exact after applying the functor $f_*$. For $k=0$, we deduce in particular that $f_*\Oca(-\overline{\mathbb{J}}) \simeq \Jca$, because $f_* \Oca_{\overline{\mathbb{I}}} \simeq \Oca_{\widetilde{\mathbb{I}}}$ and $f_*\Oca_{\overline{\mathbb{J}}} \simeq \Oca_Y$. For $k > 0$, we have the inclusions $\Jca^k \subset f_*(f^{-1}(\Jca^k)) = f_*((f^{-1}\Jca)^k) \subset f_*\Oca(-k \overline{\mathbb{J}})$. Thus, we obtain a morphism of graded $\Oca_Y$-algebras
\begin{equation*}
\mathrm{\Phi} \colon \bigoplus_{k \ge 0} \Jca^k/\Jca^{k+1} \to \bigoplus_{k \ge 0} f_*\Oca(-k \overline{\mathbb{J}})/f_*\Oca(-(k+1)\overline{\mathbb{J}}).
\end{equation*}
By \eqref{normal bundle J bar in I bar}, we have the isomorphism of $\Oca_Y$-modules
\[
f_*\Oca(-k \overline{\mathbb{J}})/f_*\Oca(-(k+1)\overline{\mathbb{J}}) \simeq f_*(\Oca(-k\overline{\mathbb{J}})|_{\overline{\mathbb{J}}}) \simeq (\mathrm{S}^k \Bca)^{\otimes 2} \otimes \Oca_\gamma(k).
\] 
We draw a couple of consequences. 
\begin{enumerate}
\item[(i)] We have an equality $f^{-1}\Jca = \Oca(- \overline{\mathbb{J}})$ (so that $f^{-1}(\Jca^k) = \Oca(-k \overline{\mathbb{J}})$, too). We only need to check the surjectivity of the adjunction morphism $f^*f_* (\Oca(-\overline{\mathbb{J}})) \to \Oca(-\overline{\mathbb{J}})$, and in fact of its restriction to $\overline{\mathbb{J}}$, which factors through the morphisms
\[
[f^*f_*(\Oca(-\overline{\mathbb{J}}))]|_{\overline{\mathbb{J}}} \to [f^*f_*(\Oca(-\overline{\mathbb{J}})|_{\overline{\mathbb{J}}})]|_{\overline{\mathbb{J}}} \ \ \ \text{and} \ \ \ [f^*f_*(\Oca(-\overline{\mathbb{J}})|_{\overline{\mathbb{J}}})]|_{\overline{\mathbb{J}}} \to \Oca(-\overline{\mathbb{J}})|_{\overline{\mathbb{J}}}.
\] 
Both are surjective: the former by the previous discussion and the right exactness of $f^*(-)|_{\overline{\mathbb{J}}}$, the latter because it is identified with $\phi^*(\Bca^{\otimes 2} \otimes \Oca_\gamma(1)) \to \Oca_\phi(1,1) \otimes \phi^*\Oca_\gamma(1)$. 
\item[(ii)] The homomorphism $\mathrm{\Phi}$ is surjective: indeed, in degree 1 it surjects onto 
\[
f_*\Oca(-\overline{\mathbb{J}})/f_*\Oca(-2\overline{\mathbb{J}}) \simeq \Bca^{\otimes 2} \otimes \Oca_\gamma(1),
\]
which locally generates the sheaf of algebras $\oplus_k [(\mathrm{S}^k \Bca)^{\otimes 2} \otimes \Oca_\gamma(k)]$.
\end{enumerate}
We show now that $\mathrm{\Phi}$ is also injective. By \cite[III, Proposition 3.3.1]{EGA}, we have
\[
f_*\Oca(-(i+k)\overline{\mathbb{J}}) \simeq \Jca^k f_*\Oca(-i \overline{\mathbb{J}}) \ \ \ \text{for any} \ k \ge 0 \  \text{and all sufficiently large} \ i>0.
\]
As $f_*\Oca(-i \overline{\mathbb{J}}) \subset f_*\Oca(-\overline{\mathbb{J}}) = \Jca$, we have in particular $f_*\Oca(-(i+k)\overline{\mathbb{J}}) \subset \Jca^{k+1}$, and then
\[
\ker \left(\Jca^k \to  f_*\Oca(-k\overline{\mathbb{J}})/f_*\Oca(-(i+k)\overline{\mathbb{J}})\right) \subset \Jca^{k+1} \quad  \text{for any} \ k \ge 0 \  \text{and all sufficiently large} \ i>0.
\]
By using descending induction on $i$ and the surjectivity of $\mathrm{\Phi}$, we deduce that this last containment holds for $i=1$. This proves that $\mathrm{\Phi}$ is injective, and thus an isomorphism. In particular, for any $k \ge 0$, $\Jca^k/\Jca^{k+1}$ is isomorphic to the locally free $\Oca_Y$-module $(\mathrm{S}^k \Bca)^{\otimes 2} \otimes \Oca_\gamma(k)$, i.e.\ $\widetilde{\mathbb{I}}$ is normally flat along $Y$.

\smallskip

We are left to prove that $\overline{\mathbb{I}} \simeq \Bl_Y \widetilde{\mathbb{I}}$.
Since $f^{-1}\Jca= \Oca(-\overline{\mathbb{J}})$, by the universal property of the blow-up we get a morphism $\overline{\mathbb{I}} \to \Bl_Y\widetilde{\mathbb{I}}$ over $\widetilde{\mathbb{I}}$. To prove that it is an isomorphism, we use Zariski's main theorem: this morphism is bijective, because it is an isomorphism off $\overline{\mathbb{J}}$, and restricts on $\overline{\mathbb{J}}$ to an isomorphism
\[
\overline{\mathbb{J}} \simeq \PP (\Bca^\vee) \times _Y \PP (\Bca^\vee) \xrightarrow{\sim} \mathbf{Proj}(\bigoplus_{k \ge 0}\Jca^k/\Jca^{k+1});
\]
moreover, $\Bl_Y \widetilde{\mathbb{I}}$ is smooth, since so are its exceptional Cartier divisor, and $\widetilde{\mathbb{I}}$ off $Y$.

As a consequence, $\widetilde{\mathbb{I}}$ is singular at any point $y \in Y$: otherwise, the fibre of $f$ over $y$ would be a projective space, and not $\PP^2 \times \PP^2$.

\section{Appendix}
The following technical result was needed in the proof of \cref{lem:ses of differentials for blow up}.
\begin{lemma}\label{lemma:relative differentials blow up}
Let $Y$ be the closed subscheme of the affine space $\AA^n$ defined by a homogeneous ideal $J\subsetneq \mathbb{C}[x_1, \dots, x_n]$. Let $\sigma \colon \widetilde{Y} \to Y$ be the blow-up of $Y$ at the origin, and $i \colon \widetilde{Z} \to \widetilde{Y}$ the imbedding of the exceptional divisor.
Then the restriction map $\Kahler^1_{\widetilde{Y}/Y} \to i_* \Kahler^1_{\widetilde{Z}}$ is an isomorphism.
\end{lemma}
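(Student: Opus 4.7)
The statement is local on $Y$, so we work in the standard affine charts $U_k \simeq \AA^n$ (for $k = 1, \dots, n$) of $\widetilde{\AA^n}$, with coordinates $(y_1, \dots, y_n)$ such that the blow-up map $\widetilde{\AA^n} \to \AA^n$ reads $x_k = y_k$ and $x_j = y_k y_j$ for $j \neq k$; the exceptional divisor of this blow-up is cut out by $\{y_k = 0\}$ inside $U_k$.

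First, I would reduce to computing $\Kahler^1_{\widetilde{Y}/\AA^n}$. Since $Y \hookrightarrow \AA^n$ is a closed immersion we have $\Kahler^1_{Y/\AA^n} = 0$; applying the standard exact sequence to $\widetilde{Y} \xrightarrow{\sigma} Y \hookrightarrow \AA^n$ yields a canonical isomorphism $\Kahler^1_{\widetilde{Y}/Y} \cong \Kahler^1_{\widetilde{Y}/\AA^n}$. Analogously, $\sigma(\widetilde{Z}) = \{0\}$ is a reduced point, so $\Kahler^1_{\widetilde{Z}/Y} = \Kahler^1_{\widetilde{Z}}$. It therefore suffices to identify $\Kahler^1_{\widetilde{Y}/\AA^n}$ with $i_*\Kahler^1_{\widetilde{Z}}$ in a way compatible with the natural restriction map.

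The crucial point is the homogeneity of $J$: if $f \in J$ is homogeneous of degree $d$, then on $U_k$ one has $\sigma^* f = y_k^d f_k(y)$, where $f_k(y) := f(y_1, \dots, y_{k-1}, 1, y_{k+1}, \dots, y_n)$ is a polynomial \emph{independent of $y_k$}. Consequently, $\widetilde{Y} \cap U_k$ is cut out in $U_k$ by the ideal generated by the $f_k$'s (with $f$ running over homogeneous generators of $J$), and $\widetilde{Z} \cap U_k = \widetilde{Y} \cap \{y_k = 0\}$ has structure sheaf $\Oca_{\widetilde{Y}|_{U_k}}/(y_k)$.

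Finally, I would present $\Kahler^1_{\widetilde{Y}/\AA^n}|_{U_k}$ as the quotient of $\bigoplus_{j=1}^n \Oca_{\widetilde{Y}|_{U_k}}\, dy_j$ by two families of relations: (i) from the pulled-back $dx_i$: $dy_k = 0$ and $y_k\, dy_j = 0$ for $j \neq k$ (obtained from $dx_k = dy_k$ and $dx_j = y_j\,dy_k + y_k\,dy_j$); and (ii) from the ideal of $\widetilde{Y}$ in $\widetilde{\AA^n}$: $df_k = \sum_{j \neq k}\frac{\partial f_k}{\partial y_j}\, dy_j$ (the index $j=k$ is absent precisely because $f_k$ does not involve $y_k$). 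Combining (i) and (ii) yields
\[
\Kahler^1_{\widetilde{Y}/\AA^n}|_{U_k} \simeq \bigoplus_{j \neq k} \Oca_{\widetilde{Z}|_{U_k}}\, dy_j \,\Big/\, \Bigl\langle \sum_{j \neq k}\frac{\partial f_k}{\partial y_j}\, dy_j : f \in J \text{ hom.} \Bigr\rangle \simeq i_*\Kahler^1_{\widetilde{Z}|_{U_k}},
\]
and this identification, sending $[dy_j] \mapsto [dy_j]$, is compatible with the natural restriction map. The only delicate point is the decisive use of homogeneity in step (ii): without it, $f_k$ could involve $y_k$, and the clean matching between the relations defining $\Kahler^1_{\widetilde{Y}/\AA^n}|_{U_k}$ (after killing $dy_k$) and those defining $\Kahler^1_{\widetilde{Z}|_{U_k}}$ would be lost.
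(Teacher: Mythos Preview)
Your proof is correct and follows essentially the same strategy as the paper's: work in the standard affine charts of $\widetilde{\AA^n}$, use homogeneity of $J$ to see that the equations $f_k$ of $\widetilde Y$ on the chart $U_k$ do not involve the coordinate $y_k$, and compute the relevant quotient of differentials explicitly. The only cosmetic difference is that you first observe $\Kahler^1_{\widetilde Y/Y}\simeq\Kahler^1_{\widetilde Y/\AA^n}$ (using $\Kahler^1_{Y/\AA^n}=0$) before computing, whereas the paper computes the cokernel of $\sigma^*\Kahler^1_Y\to\Kahler^1_{\widetilde Y}$ directly after replacing $\Kahler^1_Y$ by its surjective cover $\Kahler^1_{\AA^n}|_Y$; these amount to the same calculation.
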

\begin{proof}
We need to show that the composition $\Kahler^1_{\widetilde{Y}} \to \Kahler^1_{\widetilde{Y}/Y} \to i_* \Kahler^1_{\widetilde{Z}}$ is the cokernel of the natural homomorphism $\sigma^* \Kahler^1_Y \to \Kahler^1_{\widetilde{Y}}$.

Let $s \colon \widetilde{\AA^n} \to \AA^n$ be the blow-up of $\AA^n$ at the origin. Using coordinates $x_1, \dots, x_n$ for $\AA^n$ and homogeneous coordinates $y_1, \dots, y_n$ for $\PP^{n-1}$, $\widetilde{\AA^n}$ can be seen as the closed subvariety of $\AA^n \times \PP^{n-1}$ cut by the equations $x_iy_j=x_jy_i$, for $1 \le i,j \le n$. The blow-up morphism $s$ is the restriction to $\widetilde{\AA^n}$ of the projection onto the first factor.

Let $U_1$ be the open subset of $\widetilde{\AA^n}$ where $y_1$ does not vanish. It is isomorphic to an affine space, with coordinates $x_1$ and $u_j = y_j/y_1$, for $2 \le j \le n$; moreover, the restriction of $s$ to $U_1$ is given by $(x_1, u_2, \dots u_n) \mapsto ( x_1, u_2x_1, \dots, u_nx_1)$.

Let us now focus on $Y$. Let $(f_k)_k$ be a finite family of homogeneous generators of $J$; we denote by $g_k$ the dehomogeneisation of $f_k$ with respect to $x_1$. Since the blow-up $\widetilde{Y}$ identifies with the strict transform of $Y$ under $s$, on $U_1$ we have
\[
V_1 \coloneqq \widetilde{Y} \cap U_1 = \{ (x_1, u_2, \dots, u_n) \in U_1 \mid g_k(u_2, \dots, u_n)=0 \ \text{for every} \  k \}.
\]
In the coordinates of $U_1$, the exceptional divisor $\widetilde{Z}$ is described as
\[
\widetilde{Z} \cap V_1 = \{ (x_1, u_2, \dots, u_n) \in U_1 \mid  x_1=0, \ g_k(u_2, \dots, u_n)=0 \ \text{for every} \  k  \}.
\]

We compute now the cokernel of $\sigma^* \Kahler^1_Y \to \Kahler^1_{\widetilde{Y}}$ on $V_1$. Since $\Kahler^1_{\AA^n}|_Y \to \Kahler^1_Y$ is surjective, it is enough to compute the cokernel of the $\mathbb{C}[V_1]$-linear map
\[
\xi \colon \bigoplus_{l=1}^n \mathbb{C}[V_1] \mathrm{d}x_l\to \left( \mathbb{C}[V_1] \mathrm{d}x_1 \oplus \bigoplus_{l=2}^n \mathbb{C}[V_1] \mathrm{d}u_l \right) / \sum_k \mathbb{C}[V_1]\mathrm{d}g_k
\]
that sends $\mathrm{d}x_1$ to $\mathrm{d}x_1$, and $\mathrm{d}x_l$ to $x_1 \mathrm{d}u_l + u_l \mathrm{d}x_1$  for $l \ge 2$. The image of $\xi$ is generated as a $\mathbb{C}[V_1]$-module by $\mathrm{d}x_1$ and $x_1 \mathrm{d}u_l$. Thus, $\coker(\xi)$ is given by
\[
\frac{\mathbb{C}[V_1] \mathrm{d}x_1 \oplus \bigoplus_{l=2}^n \mathbb{C}[V_1] \mathrm{d}u_l}{(\mathbb{C}[V_1] \mathrm{d}x_1 + \sum_l x_1 \mathbb{C}[V_1] \mathrm{d}u_l) + \sum_k \mathbb{C}[V_1]\mathrm{d}g_k} \simeq \frac{\bigoplus_{l=2}^n \mathbb{C}[u_2, \dots,u_n] \mathrm{d}u_l}{\sum_k \mathbb{C}[u_2, \dots,u_n]\mathrm{d}g_k}.
\]
The $\mathbb{C}[V_1]$-module on the right-hand side is precisely $\Ho^0(V_1,i_*\Kahler^1_{\widetilde{Z}})$. To conclude, it suffices to repeat the argument on the other affine charts of $\widetilde{Y}$.
\end{proof}

\bibliographystyle{alpha}
\bibliography{main}

\end{document}